\theoremstyle{definition}
\newtheorem{theorem}[equation]{Theorem}
\newtheorem{lemma}[equation]{Lemma}
\newtheorem{proposition}[equation]{Proposition}
\newtheorem{corollary}[equation]{Corollary}
\newtheorem{definition}[equation]{Definition}
\newtheorem{notation}[equation]{Notation}
\newtheorem{example}[equation]{Example}
\newtheorem{conjecture}[equation]{Conjecture}
\newtheorem{remark}[equation]{Remark}
\numberwithin{equation}{section}
\newcommand{\tpl}{\operatorname{\mathsf{top}}}
\newcommand{\DmodX}{\caD-\text{mod}_X}
\newcommand{\DbmodX}{D^b(\caD-\text{mod}_X)}
\newcommand{\DbmodY}{D^b(\caD-\text{mod}_Y)}
\newcommand{\Eu}{\operatorname{Eu}}
\newcommand{\Vect}{\operatorname{Vect}}
\newcommand{\Id}{\operatorname{Id}}
\newcommand{\reg}{{\operatorname{reg}}}
\newcommand{\red}{{\operatorname{red}}}
\newcommand{\img}{\operatorname{im}}
\newcommand{\rk}{\operatorname{rk}}
\newcommand{\ad}{\operatorname{ad}}
\newcommand{\HH}{\mathsf{HH}}
\newcommand{\Der}{\operatorname{Der}}
\newcommand{\Hilb}{\operatorname{Hilb}}
\newcommand{\pt}{\text{pt}}
\newcommand{\HP}{\operatorname{\mathsf{HP}}}
\newcommand{\SiSu}{\operatorname{\mathsf{Ch}}}
\newcommand{\IC}{\operatorname{IC}}
\newcommand{\IH}{\operatorname{IH}}
\newcommand{\bk}{\mathbf{k}}
\newcommand{\bC}{\mathbf{C}}
\newcommand{\bZ}{\mathbf{Z}}
\newcommand{\bF}{\mathbf{F}}
\newcommand{\bP}{\mathbf{P}}
\newcommand{\iso}{\mathop{\to}^\sim}
\newcommand{\caB}{\mathcal{B}}
\newcommand{\caO}{\mathcal{O}}
\newcommand{\caD}{\mathcal{D}}
\newcommand{\mfh}{\mathfrak{h}}
\newcommand{\mfb}{\mathfrak{b}}
\newcommand{\tor}{{\mathrm{tor}}}
\newcommand{\Hom}{\operatorname{Hom}}
\newcommand{\End}{\operatorname{End}}
\newcommand{\Ext}{\operatorname{Ext}}
\newcommand{\mfg}{\mathfrak{g}}
\newcommand{\SL}{\mathsf{SL}}
\newcommand{\GL}{\mathsf{GL}}
\newcommand{\Sp}{\mathsf{Sp}}
\newcommand{\SpAut}{\mathsf{SpAut}}
\newcommand{\mfsp}{\mathfrak{sp}}
\newcommand{\gr}{\operatorname{\mathsf{gr}}}
\newcommand{\Lie}{\operatorname{\mathsf{Lie}}}
\newcommand{\Spec}{\operatorname{\mathsf{Spec}}}
\newcommand{\onto}{\twoheadrightarrow}
\newcommand{\into}{\hookrightarrow}
\newcommand{\Sym}{\operatorname{\mathsf{Sym}}}
\newcommand{\bA}{\mathbf{A}}
\def\MR#1{}
\begin{document}

\subjclass[2010]{14F10, 37J05, 32S20, 53D55} \keywords{Hamiltonian
  flow, complete intersections, Milnor number, D-modules, Poisson
  homology, Poisson varieties, Poisson homology, Poisson traces,
  Milnor fibration, Calabi--Yau varieties, deformation quantization,
  Kostka polynomials, symplectic resolutions, twistor deformations}

\title{Poisson traces, D-modules, and symplectic resolutions}
\author{Pavel Etingof and Travis Schedler}
\begin{abstract}
  We survey the theory of Poisson traces (or zeroth Poisson homology)
  developed by the authors in a series of recent papers.  The goal is
  to understand this subtle invariant of (singular) Poisson varieties,
  conditions for it to be finite-dimensional, its relationship to the
  geometry and topology of symplectic resolutions, and its
  applications to quantizations. The main technique is the study of a
  canonical D-module on the variety. In the case the variety has
  finitely many symplectic leaves (such as for symplectic
  singularities and Hamiltonian reductions of symplectic vector spaces
  by reductive groups), the D-module is holonomic, and hence the space of
  Poisson traces is finite-dimensional.  As an application, there are
  finitely many irreducible finite-dimensional representations of
  every quantization of the variety.  Conjecturally, the D-module is
  the pushforward of the canonical D-module under every symplectic
  resolution of singularities, which implies that the space of Poisson
  traces is dual to the top cohomology of the resolution.  We explain
  many examples where the conjecture is proved, such as symmetric
  powers of du Val singularities and symplectic surfaces and Slodowy
  slices in the nilpotent cone of a semisimple Lie algebra.  We
  compute the D-module in the case of surfaces with isolated
  singularities, and show it is not always semisimple.  We also
  explain generalizations to arbitrary Lie algebras of vector fields,
  connections to the Bernstein--Sato polynomial, relations to
  two-variable special polynomials such as Kostka polynomials and
  Tutte polynomials, and a conjectural relationship with deformations
  of symplectic resolutions. In the appendix we give a brief
  recollection of the theory of D-modules on singular varieties that
  we require.
\end{abstract}

\maketitle
%\textbf{To do:} Write abstract, add references in the introduction,
%doublecheck for correctness particularly in the appendix, write the
%remaining section.

\section{Introduction}
\subsection{} This paper gives an introduction to the theory of traces
on Poisson algebras developed by the authors in a series of recent
papers
(\cite{ESsym,ESdm,hp0weyl,ES-dmlv,ES-ciis,PS-pdrhhvnc,BS-KpncSfc,BS-bfun}). It
is based on two minicourses given by the authors at Cargese (2014) and
ETH Zurich (2016).

Let $A$ be a Poisson algebra over $\bC$, for example,
$A=\mathcal O(X)$, where $X$ is an affine Poisson variety.  A
\emph{Poisson trace} on $A$ is a linear functional $A \rightarrow \bC$
which annihilates $\{A, A\}$, i.e., a Lie algebra character of $A$.
The space of such traces is the dual, $\HP_0(A)^*$, to the
\emph{zeroth Poisson homology}, $\HP_0(A) := A /\{A, A\}$, the
abelianization of $A$ as a Lie algebra (where $\{A,A\}$ denotes the
$\bC$-linear span of Poisson brackets of functions).

The space $\HP_0(A)$ is an important but subtle invariant of $A$. For
example, it is a nontrivial question when $\HP_0(A)$ is finite
dimensional. Indeed, even in the simple case $A=\caO(V)^G$, where $V$
is a symplectic vector space and $G$ a finite group of symplectic
transformations of $V$, finite dimensionality of $\HP_0(A)$ used to be
a conjecture due to Alev and Farkas \cite{AF-FgaPa}. It is even harder
to find or estimate the dimension of $\HP_0(A)$; this is, in general,
unknown even for $A=\caO(V)^G$.

The first main result of the paper is a wide generalization of the
Alev-Farkas conjecture, stating that $\HP_0(A)$ is finite dimensional
if $X:=\Spec A$ is a Poisson variety (or, more generally, scheme of
finite type) with finitely many symplectic leaves. Namely, the
Alev-Farkas conjecture is obtained in the special case $X=V/G$. A more
general example is $X=Y/G$ where $Y$ is an affine symplectic variety,
and $G$ is a finite group of automorphisms of $Y$ (such as symmetric
powers of affine symplectic varieties). But there are many other
examples, such as Hamiltonian reductions of symplectic vector spaces
by reductive groups acting linearly and affine symplectic
singularities (which includes nilpotent cones and Slodowy slices,
hypertoric varieties, etc.). This result can be applied to show that
any quantization of such a variety has finitely many irreducible
finite dimensional representations (at most $\dim \HP_0(\caO(X))$).

The proof of this result is based on the theory of D-modules (as
founded in 1970--1 by Bernstein and Kashiwara in, e.g.,
\cite{Ber-mordo,Kas-asspde}). Namely, we define a canonical D-module
on $X$, denoted $M(X)$, such that $HP_0(\caO(X))$ is the underived
direct image $H^0 \pi_{*}M(X)$ under the map $\pi: X\to \mathrm{pt}$.
Namely, if $i: X\hookrightarrow V$ is a closed embedding of $X$ into
an affine space, then $M(X)$, regarded as a right $\caD(V)$-module
supported on $i(X)$, is the quotient of $\mathcal{D}(V)$ by the right
ideal generated by the equations of $i(X)$ in $V$ and by Hamiltonian
vector fields on $X$.  We show that, if $X$ has finitely many
symplectic leaves, then $M(X)$ is a holonomic D-module (which extends
well-known results on group actions on varieties). Then a standard
result in the theory of D-modules implies that
$\HP_0(\mathcal O(X))=H^0\pi_{*}M(X)$ is finite dimensional.

In fact, this method can be applied to a more general problem, when we
have an affine variety $X$ acted upon by a Lie algebra $\mfg$. We say
that $X$ has finitely many $\mfg$-leaves if $X$ admits a finite
stratification with locally closed connected strata $X_i$ (called
$\mfg$-leaves) which carry a transitive action of $\mfg$ (i.e. $\mfg$
surjects to each tangent space of $X_i$). Then we show that if $X$ has
finitely many $\mfg$-leaves then the space of coinvariants
${\mathcal O}(X)/\lbrace{ \mfg,\mathcal O(X)\rbrace}$ is finite
dimensional. The previous result on Poisson varieties is then
recovered if $\mfg$ is ${\mathcal O}(X)$. The proof of this more
general result is similar: we define a canonical D-module $M(X,\mfg)$,
obtained by dividing $\mathcal{D}(V)$ by the equations of $X$ and by
$\mfg$, and show that its underived direct image
$H^0 \pi_{*}M(X,\mfg)$ to a point is
${\mathcal O}(X)/\lbrace{ \mfg,\mathcal O(X)\rbrace}$ and that it is
holonomic if $X$ has finitely many $\mfg$-leaves. In this setting, the
result is well-known if the action of $\mfg$ integrates to an action
of a connected algebraic group $G$ with $\Lie G = \mfg$, and in this
case $M(X,\mfg)$ is in fact regular (see, e.g., \cite[Lemma
1]{Tan-holsys}, \cite[Theorem 4.1.1]{Kas-Dmrt}, and \cite[Section
5]{Hot-edm}); cf.~Remark \ref{r:conj-reg-ss} below.

Moreover, the definition of $M(X,\mfg)$ makes sense when $X$ is not
necessarily affine, and $\mfg$ is a presheaf of Lie algebras on $X$
which satisfies a $\mathcal D$-localizability condition:
$\mfg(U)\mathcal D(U')=\mfg(U')\mathcal D(U')$ for any open affines
$U'\subset U\subset X$. This condition is satisfied, in particular,
when $X$ is Poisson and $\mfg={\mathcal O}(X)$.  Furthermore, it is
interesting to consider the full direct image $\pi_*M(X,\mfg)$. Its
cohomology $H^i(\pi_*M(X,\mfg))$ then ranges between $i=-\dim X$ and
$i=\dim X$, and we call it the $\mfg$-de Rham homology of $X$.  If
$\mfg=\mathcal O(X)$ for a Poisson variety $X$, we call this
cohomology the Poisson-de Rham homology of $X$, denoted
$\HP_{-i}^{DR}(X)$. For instance, if $X$ is affine, then
$\HP_0^{DR}(X) \cong \HP_0(\mathcal O(X))$.

The rest of the paper is dedicated to the study of the D-module $M(X)$
and the Poisson-de Rham homology (in particular,
$\HP_0(\mathcal O(X))$ when $X$ is affine) for specific examples of
Poisson varieties $X$. One of the main cases of interest is the case
when $X$ admits a symplectic resolution $\rho: \widetilde X\to X$. In
this case we conjecture that $M(X)\cong\rho_*\Omega_{\widetilde
  X}$.
Namely, since $\rho$ is known to be semismall, $\rho_*\Omega_X$ is a
semisimple regular holonomic D-module (concentrated in the
cohomological degree $0$), and one can show that it is isomorphic to
the semisimplification of a quotient of $M(X)$,
% a direct summand of the
%semisimplification of $M(X)$, 
so the conjecture is that this quotient
is by zero and that $M(X)$ is semisimple. This conjecture
implies that
$\dim \HP_0(\mathcal O(X))=\dim H^{\dim X}(\widetilde X,\bC)$ for
affine $X$, and, more generally,
$\HP_i^{DR}(X)\cong H^{\dim X-i}(\widetilde X,\bC)$.

We discuss a number of cases when this conjecture is known: symmetric
powers of symplectic surfaces with Kleinian singularities, Slodowy
slices of the nilpotent cone of a semisimple Lie algebra, and hypertoric
varieties. However, the conjecture is open for an important class of
varieties admitting a symplectic resolution: Nakajima quiver
varieties.

It turns out that an explicit calculation of $M(X)$ and the Poisson-de
Rham homology of $X$ (in particular, $\HP_0(\caO(X))$ for affine $X$) is
sometimes possible even when $X$ does not admit a symplectic
resolution. Namely, one can compute these invariants for symmetric
powers of symplectic varieties of any dimension, and for
%symmetric
%powers of
arbitrary complete intersection surfaces in $\bC^n$ with isolated
singularities. We discuss these calculations at the end of the paper.
For example, it is interesting to compute $M(X)$ when $X$ is the cone
of a smooth curve $C$ of degree $d$ in $\bP^2$.  Recall that the genus
of $C$ is $(d-1)(d-2)/2$, and the Milnor number of $X$ is
$\mu=(d-1)^3$. We show that $M(X) \cong \delta^{\mu-g}\oplus M(X)_{\text{ind}}$, where
$M(X)_{\text{ind}}$ is an indecomposable D-module containing $\delta^{2g}$, such
that $M(X)_{\text{ind}}/\delta^{2g}$ is an indecomposable extension of $\delta^g$
by the intersection cohomology D-module $\IC(X)$. (Here $\delta$ is
the $\delta$-function D-module supported at the vertex of $X$).

The structure of the paper is as follows. In Section 2 we define
$\mfg$-leaves of a variety with an $\mfg$-action, state the finite
dimensionality theorem for coinvariants on varieties with finitely
many leaves, and give a number of examples (notably in the Poisson
case). In Section 3 we apply this theorem to proving that
quantizations of Poisson varieties with finitely many leaves have
finitely many irreducible finite dimensional representations. In
Section 4 we prove the finite dimensionality theorem using
D-modules. In Section 5 we define the $\mfg$-de Rham and Poisson-de
Rham homology. In Section 6 we discuss the conjecture on the
Poisson-de Rham homology of Poisson varieties admitting a symplectic
resolution. In Section 7 we discuss Poisson-de Rham homology of
symmetric powers. In Section 8 we discuss the structure of $M(X)$ when
$X$ is a complete intersection with isolated singularities. In Section
9 we discuss weights on the Poisson-de Rham homology (and hence
$\HP_0$) of cones, and state an enhancement of the aforementioned
conjecture in this case which incorporates weights.  Finally, in the
appendix we review background on D-modules used in the body of the
paper.

{\bf Acknowledgements.} The work of P.~E.~was supported by the NSF
grant DMS-1502244. The work of T.~S.~was supported by the NSF grant
DMS-1406553. We thank Y.~Namikawa for pointing out a result in Slodowy's notes \cite{Slflsgs} (see Section \ref{ss:conj-desc}). We are grateful to T.~Bitoun for pointing out some errors in Section \ref{ss:RBSp}. Thanks also to the anonymous referees for carefully reading and providing useful comments.

%\end{document}

% LocalWords:  symplectic Weyl quantizations functionals cohomology th tensored
% LocalWords:  multipartitions Hochschild surjection subalgebra symmetrizer de
% LocalWords:  Cherednik nonnegatively Lagrangians Mathieu isotypic multilinear
% LocalWords:  Sinz noncommutative subvarieties codimension Kashiwara's Rham IC
% LocalWords:  underived equivariant Hamiltonians pushforward bigraded Borel dr
% LocalWords:  birational nonaffine semisimple holonomic holonomicity semismall
% LocalWords:  nilpotent Slodowy coadjoint Hotta Kashiwara Kleinian quantizing
% LocalWords:  semiuniversal parameterized hypersurfaces semicontinuous  ij aRb
% LocalWords:  genericity hpdrquantconj quasi-classical semicontinuity tuples dy
% LocalWords:  nonnegative monodromy preimages quaternionic preimage summands
% LocalWords:  summand nonabelian subvariety Stiefel conormal adjunction LHS yy
% LocalWords:  semisimplicity polydifferential Darboux symplectomorphism RHS yf
% LocalWords:  symplectomorphisms hpdreqn sympdmthm defcor Tsygan coalgebra thm
% LocalWords:  symphp Losev's adic codim tuple bijection Schedler nonpositive
% LocalWords:  cond Coxeter symmetrizations symmetrized nonvanishing sl 
% LocalWords:  nondegenerate indices adjoint indcl dngen conjecturally Etingof
% LocalWords:  affinization eigenspaces homology quantization du Sato

\subsection{Notation}
Fix throughout an algebraically closed field $\bk$ of characteristic
zero (the algebraically closed hypothesis is for convenience and is
inessential); we will restrict to $\bk=\bC$ in Sections
\ref{s:c-sr}--\ref{s:whc}. We work with algebraic varieties over
$\bk$, which we take to mean %irreducible
 reduced separated schemes of finite type over
$\bk$; we frequently work with
affine varieties. (However, see Remarks \ref{r:smooth} and
\ref{r:analytic} for some analogous results in the $C^\infty$ and
complex analytic settings.)
%
%; one can also replace these by complex analytic
%varieties without changing the statements or proofs. (More precisely,
%one can replace $\Spec \bC[x_1,\ldots,x_n]/I$ by the complex analytic
%space given by the vanishing of the ideal $I$ in $\bC^n$.)  
We will let $\caO_X$ denote the structure sheaf of $X$, and for
$U \subseteq X$, we let $\caO(U) := \Gamma(U,\caO_X)$.  Recall that
there is a coherent sheaf $T_X$, called the tangent sheaf, on $X$ with
the property that, for every affine open subset $U \subseteq X$,
$\Gamma(U,T_X) \cong \Der(\caO(U))$, the Lie algebra of $\bk$-algebra
derivations of $\caO(U)$, which by definition are the vector fields on
$U$. (In the literature, $T_X$ is often defined as
$\Hom_{\caO_X}(\Omega^1_X, T_X)$ where $\Omega^1_X$ is the sheaf of
K\"ahler differentials. In some references $T_X$ is restricted to the
case that $X$ is smooth, which implies that $T_X$ is a vector bundle,
but in general $T_X$ need not be locally free; see, e.g., \cite[pp.~88--89]{Shaf2v2} for a reference for $T_X$ in general.)  Let
$\Vect(X):=\Gamma(X,T_X)$, which is a Lie algebra whose elements are
called global vector fields on $X$, and which is a module over the
ring $\caO(X)$ of global functions.

\section{Finite-dimensionality of coinvariants under flows and zeroth Poisson homology}
\subsection{} Let $\mathfrak{g}$ be a Lie algebra over $\bk$ and $X$
be an %irreducible 
affine variety over $\bk$ (which very often will be
singular).  Suppose that $\mathfrak{g}$ acts on $X$, i.e., we have a
Lie algebra homomorphism $\alpha: \mathfrak{g} \to \Vect(X)$ (we can
take $\mathfrak{g} \subseteq \Vect(X)$ and $\alpha$ to be the
inclusion if desired).  In this case, $\mfg$ acts on $\caO(X)$ by
derivations, and we can consider the coinvariant space,
$\caO(X)_\mfg := \caO(X)/\mfg \cdot \caO(X)$, which is also denoted by
$H_0(\mfg, \caO(X))$ (it is the zeroth Lie homology of $\mfg$ with
coefficients in $\caO(X)$).  We begin with a criterion for this to be
finite-dimensional, which is the original motivation for the results
of this note.

Say that the action is transitive if the map $\alpha_x: \mfg \to T_x X$
is surjective for all $x \in X$.  It is easy to see that, if the
action is transitive, then $X$ is smooth, since $\rk \alpha_x$ is
upper semicontinuous, while $\dim T_x X$ is lower semicontinuous (in
other words, generically $\alpha$ has maximal rank and $X$ is smooth, so
if $\alpha_x$ is surjective for all $x$, then $\dim T_x X = \rk \alpha_x$
is constant for all $x$, and hence $X$ is smooth).
\begin{example}
  If $\mfg = \Vect(X)$, then $\mfg$ acts transitively if and only if
  $X$ is smooth (since it is clear that, if $X$ is smooth affine, then
  global vector fields restrict to $T_xX$ at every $x \in X$).  Moreover, if
  $X$ is singular, it is a theorem of A.~Seidenberg \cite{Sei-dirfgt}
  that $\mfg$ is tangent to the set-theoretic singular locus
  (although this would be false in characteristic $p$).
\end{example}
In the case that $\mfg$ does not act transitively, one can attempt to
partition $X$ into leaves where it does act transitively. This motivates
\begin{definition}
  A $\mfg$-leaf on $X$ is a maximal locally closed connected  
subvariety $Z \subseteq X$ such that, for all $z \in Z$, the image
  of $\alpha_z$ is $T_z Z$.
\end{definition}
Note that a $\mfg$-leaf is smooth since the tangent spaces $T_z Z$ have constant dimension, and hence it is irreducible. Thus,
 a $\mfg$-leaf is a maximal locally closed irreducible subvariety
$Z$ such that $\mfg$ preserves $Z$ and acts transitively on it.
%  Note
% that this implies that every $\mfg$-leaf is smooth.
% (and since it is
%connected, it is irreducible).
\begin{remark} Note that two distinct $\mfg$-leaves are disjoint,
  since if $Z_1, Z_2$ are two intersecting leaves, then the union
  $Z=Z_1 \cup Z_2$ is another connected locally closed set such that
  the image of $\alpha_z$ is $T_z Z$ for all $z \in Z$; by maximality
  $Z_1=Z=Z_2$.  Therefore, if $X$ is a union of $\mfg$-leaves, then this
  union is disjoint and the decomposition is canonical.
 
  On the other hand, it is not always true that $X$ is a union of its
  leaves.  Indeed, $X$ can sometimes have no leaves at all. For
  example, let $X=(\bC^\times)^2=\Spec \bC[x,x^{-1},y,y^{-1}]$ and
  $\mfg = \bC \cdot \xi$ for $\xi$ a global vector field which is not
  algebraically integrable, such as $\xi=x\partial_x - c y \partial_y$
  for $c$ irrational.  If we work instead in the analytic setting,
  then locally there do exist analytic $\mfg$-leaves, which in the
  example are the local level sets of $x^c y$; but these are not algebraic
  (and do not even extend to global analytic leaves), as we are requiring.
\end{remark}
\begin{theorem}\label{t:coinv-fd} (\cite[Theorem 3.1]{ESdm}, 
  \cite[Theorem 1.1]{ES-dmlv}) If $X$ is a union of finitely many
  $\mfg$-leaves, then the coinvariant space $\caO(X)_\mfg = \caO(X) /
  \mfg \cdot \caO(X)$ is finite-dimensional.
\end{theorem}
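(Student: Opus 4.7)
The plan is to reduce to a holonomicity statement via a canonical right $\caD$-module construction, and then compute the characteristic variety leaf by leaf.

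Choose a closed embedding $i\colon X \hookrightarrow V$ into an affine space $V = \bA^N$, and lift a generating set of $\mfg$ to vector fields $\tilde{\alpha}(g) \in \Vect(V)$ (done by extending any generators of $\mfg$-action on $\caO(X)$ to derivations of $\caO(V)$). Define
\[
M(X,\mfg) \;:=\; \caD(V)\,\big/\,\bigl(I_X \cdot \caD(V) + \tilde{\alpha}(\mfg)\cdot \caD(V)\bigr),
\]
viewed as a right $\caD(V)$-module. Here $I_X \subset \caO(V)$ is the ideal of $X$. The first step is to verify that the underived pushforward to a point equals the coinvariants. Since $M(X,\mfg)$ is a right module, we have $\pi_* M(X,\mfg) = M(X,\mfg) \otimes^{\mathbf{L}}_{\caD(V)} \caO(V)$, and the zeroth cohomology is $\caO(V)/(I_X + \tilde{\alpha}(\mfg)\cdot \caO(V)) = \caO(X)/\mfg \cdot \caO(X) = \caO(X)_\mfg$. (One should also note that $M(X,\mfg)$ is intrinsic to $(X,\mfg)$, independent of the embedding, using Kashiwara's equivalence in its appropriate form for singular $X$ recalled in the appendix; this is not strictly needed here but is conceptually useful.)

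The core step is then to prove that $M(X,\mfg)$ is \emph{holonomic}. I would do this by bounding the characteristic variety $\SiSu(M(X,\mfg)) \subset T^*V$. The principal symbols of the generators of the defining right ideal are: (a) the functions in $I_X$, which force $\SiSu(M) \subseteq T^*V|_X$; and (b) for each $g \in \mfg$, the linear-in-the-fiber function $(x,\xi) \mapsto \langle \tilde{\alpha}(g)_x,\xi\rangle$. Consequently
\[
\SiSu(M(X,\mfg)) \;\subseteq\; \bigl\{(x,\xi) \in T^*V : x \in X,\ \xi \perp \alpha_x(\mfg)\ \text{in}\ T_xV\bigr\}.
\]
Now stratify $X$ by the finitely many $\mfg$-leaves $Z_1,\ldots,Z_n$. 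On each leaf, $\alpha_x(\mfg) = T_xZ_i$ by definition of a leaf, so the fiber of the set above over $x \in Z_i$ is exactly the conormal $N^*_{Z_i/V}|_x$. Taking closures,
\[
\SiSu(M(X,\mfg)) \;\subseteq\; \bigcup_{i=1}^n \overline{N^*_{Z_i/V}} \;\subset\; T^*V.
\]
Each closed conormal variety $\overline{N^*_{Z_i/V}}$ is Lagrangian of dimension $\dim V$, and a finite union of such is still of dimension $\dim V$, proving holonomicity.

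Finally, I invoke the standard D-module fact that the direct image of a holonomic $\caD_V$-module under the proper (on the support) map $\pi\colon X \to \mathrm{pt}$ has finite-dimensional cohomology. Applied to $M(X,\mfg)$, this gives $\dim \caO(X)_\mfg = \dim H^0\pi_* M(X,\mfg) < \infty$. The main obstacle is the characteristic variety calculation: one needs the conormal-to-a-leaf description to hold not only at smooth points of each $Z_i$ (where it is essentially a definition) but along all of $X$ after closure, which is where the hypothesis of \emph{finitely many} leaves combined with upper semicontinuity of $\SiSu$ does the work; handling the singular strata carefully, so that the finite stratification by leaves actually covers $X$ and contributes finitely many conormal components, is the delicate point.
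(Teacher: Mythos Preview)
Your proof is correct and follows essentially the same route as the paper: construct the right $\caD(V)$-module $M(X,\mfg)$, identify $H^0\pi_* M(X,\mfg)$ with $\caO(X)_\mfg$, bound the characteristic variety by the union of conormal bundles to the leaves via the principal symbols of $I_X$ and the lifted vector fields, conclude holonomicity, and apply preservation of holonomicity under pushforward. Your worry in the last paragraph is unnecessary: the inclusion $\SiSu(M(X,\mfg)) \subseteq \{(x,\xi): x \in X,\ \xi \perp \alpha_x(\mfg)\}$ holds pointwise over all of $X$, and since by hypothesis every $x \in X$ lies in some leaf $Z_i$ with $\alpha_x(\mfg) = T_x Z_i$, the right-hand side is exactly $\bigcup_i N^*_{Z_i/V}$ as a set, with no closures or semicontinuity argument needed.
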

\begin{remark}
  Let $X_i:=\{x \in X \mid \rk \alpha_x = i\}$ 
%be the union of all $\mfg$-leaves of dimension $i$.  
  be the locus where the infinitesimal action of $\mfg$ restricts to
  an $i$-dimensional subspace of the tangent space.  This is a locally
  closed subvariety. If it has dimension $i$, then its connected
  components are the leaves of dimension $i$ and there are finitely
  many.  Otherwise, if $X_i$ is nonempty, it has dimension greater
  than $i$ and $X$ is not the union of finitely many $\mfg$-leaves.
  In the case that $\bk=\bC$, there are finitely many analytic leaves
  of dimension $i$ in an analytic neighborhood of every point if and
  only if $X_i$ has dimension $i$ or is empty.  See \cite[Corollary
  2.7]{ES-dmlv} for more details.
\end{remark}
The first corollary (and the original version of the result in
\cite{ESdm}) is the following special case.  Suppose that $X$ is an
affine Poisson variety, i.e., $\caO(X)$ is equipped with a Lie bracket
$\{-,-\}$ satisfying the Leibniz rule, $\{fg,h\}=f\{g,h\} + g\{f,h\}$
(called a Poisson bracket).  Equivalently, $\caO(X)$ is a Lie algebra
such that the adjoint action, $\ad(f):=\{f,-\}$, is by derivations.
We use the notation $\xi_f := \ad(f)$, which is called the Hamiltonian
vector field of $f$. Let $\mfg := \caO(X)$; then we have the action
map $\alpha: \mfg \to \Vect(X)$ given by $\alpha(f) = \xi_f$.  In this
case, the $\mfg$-leaves are called symplectic leaves, because for
every $\mfg$-leaf $Y$ and every $y \in Y$, the tangent space $T_yY$ is
equal to the space of restrictions $\xi_f|_y$ of all Hamiltonian
vector fields $\xi_f$ at $y$.  Then, it is easy (and standard) to see
that the Lie bracket restricts to a well-defined Poisson bracket on
each symplectic leaf, which is nondegenerate, i.e., it induces a
symplectic structure determined by the formula $\omega(\xi_f) = df$.

In this case, the coinvariants $\caO(X)_{\caO(X)}$ are equal to
$\caO(X)/\{\caO(X), \caO(X)\}$, which is the zeroth Poisson homology
of $\caO(X)$ (and also the zeroth Lie homology of $\caO(X)$ with
coefficients in the adjoint representation $\caO(X)$, i.e., the
abelianization of $\caO(X)$ as a Lie algebra). We 
%will prefer to think
%of it as the zeroth Poisson homology, and 
denote it by
$\HP_0(\caO(X))$.
%%, or simply as $\HP_0(X)$.
% \begin{definition}
% If $X$ is an affine Poisson variety, then its zeroth Poisson homology
% is defined as $\HP_0(X) := \HP_0(\caO(X)) := \caO(X)/\{\caO(X),\caO(X)\}$.
% \end{definition}
\begin{corollary} \cite[Theorem 3.1]{ESdm} \label{c:hp0-fd}
Suppose that $X$ is Poisson with finitely many symplectic leaves. Then
$\HP_0(\caO(X)) = \caO(X)/\{\caO(X),\caO(X)\}$ is finite-dimensional.
\end{corollary}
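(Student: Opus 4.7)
The plan is to deduce this corollary directly from Theorem \ref{t:coinv-fd}, applied to the Lie algebra $\mfg := \caO(X)$ acting on $X$ via the Hamiltonian vector field map $\alpha: f \mapsto \xi_f = \{f,-\}$. That this is a Lie algebra homomorphism to $\Vect(X)$ follows from the Jacobi identity for the Poisson bracket, which is part of the definition of a Poisson algebra.

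The first step is to check that the $\mfg$-leaves in this setting are precisely the symplectic leaves of $X$. By definition, a $\mfg$-leaf is a maximal locally closed connected subvariety $Z$ on which the image of $\alpha_z$ equals $T_z Z$ for every $z \in Z$, i.e., at every point of $Z$ the restrictions of all Hamiltonian vector fields span the tangent space. This is exactly the defining property of a symplectic leaf, as recalled just before the corollary. In particular, under the hypothesis that $X$ has finitely many symplectic leaves, it is a union of finitely many $\mfg$-leaves: for this one uses the loci $X_i=\{x \in X \mid \rk \alpha_x=i\}$ from the remark, noting that finiteness of symplectic leaves forces each nonempty $X_i$ to have dimension exactly $i$, so its connected components are the symplectic leaves of that dimension.

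Having verified the hypothesis, Theorem \ref{t:coinv-fd} gives that $\caO(X)_\mfg = \caO(X)/\mfg \cdot \caO(X)$ is finite-dimensional. It remains only to identify this with $\HP_0(\caO(X))$: the action of $f \in \mfg = \caO(X)$ on $g \in \caO(X)$ is $\xi_f(g)=\{f,g\}$, so the $\caO(X)$-linear span $\mfg \cdot \caO(X)$ coincides with the $\bk$-linear span $\{\caO(X),\caO(X)\}$ of all Poisson brackets (the Leibniz rule for $\{-,-\}$ implies closure under multiplication by elements of $\caO(X)$). Hence $\caO(X)_\mfg = \HP_0(\caO(X))$, giving the claim.

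All the substantive work lies in Theorem \ref{t:coinv-fd} itself (ultimately the D-module argument sketched in the introduction, showing that $M(X,\mfg)$ is holonomic when there are finitely many $\mfg$-leaves). The corollary itself presents no essential obstacle; the only conceptual points are the translation between symplectic and $\mfg$-leaves and the equality $\mfg \cdot \caO(X) = \{\caO(X),\caO(X)\}$, both of which are essentially formal.
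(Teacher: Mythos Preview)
Your proposal is correct and follows exactly the paper's approach: the text immediately preceding the corollary sets $\mfg=\caO(X)$ acting via Hamiltonian vector fields, identifies the $\mfg$-leaves with the symplectic leaves, and observes that $\caO(X)_{\caO(X)}=\caO(X)/\{\caO(X),\caO(X)\}=\HP_0(\caO(X))$, so the corollary is a direct specialization of Theorem~\ref{t:coinv-fd}. One small remark: in the coinvariant space $\caO(X)_\mfg$, the subspace $\mfg\cdot\caO(X)$ is by definition the $\bk$-linear span of the elements $\xi_f(g)=\{f,g\}$, so it equals $\{\caO(X),\caO(X)\}$ tautologically and the Leibniz argument is not needed.
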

\begin{example}\label{ex:finqt-svs}
  Suppose that $V$ is a symplectic vector space and $G < \Sp(V)$ is a
  finite subgroup.  Then, as observed in \cite[\S 7.4]{BrownGordon}, the quotient $X = V/G := \Spec \caO(V)^G$ has
  finitely many symplectic leaves. These leaves can be explicitly
  described: call a subgroup $P < G$ \emph{parabolic} if there exists
  $v \in V$ with stabilizer $P$.  Let $X_P \subseteq X$ be the image
  of vectors in $V$ whose stabilizer is conjugate to $P$.  Then $X_P$ is a
  symplectic leaf. To see this, let $v \in V$ have stabilizer
  $P$. Consider the projection $q: V \to V/G$. Then the kernel of
  $dq|_v$ is precisely $(V^P)^\perp$.  Hence, the differentials
  $d(q^* f)|_v$ for $f \in \caO(V)^G$ form the dual space
  $\omega(V^P, -)$ to $V^P$ at $v$, which means that the Hamiltonian
  vector fields $\xi_{q^* f}$ restrict to $V^P$ at $v$.  Since
  $dq|_{v}(V^P) = T_{q(v)} X_P$, we conclude that $T_{q(v)} X_P$ is
  indeed the space of restrictions of Hamiltonian vector fields $\xi_{f}$, as
  desired.  To conclude that $X_P$ is a symplectic leaf, we have to
  show that it is connected. This follows since it is the image under
  a regular map of a connected set (an open subset of a vector
  space).  As a result, we deduce the following corollary, which was a
  conjecture \cite{AF-FgaPa} of Alev and Farkas.
\end{example}
\begin{corollary}[\cite{BEG}]
  If $V$ is a symplectic vector space and $G < \Sp(V)$ is a finite
  subgroup, then $\HP_0(\caO(V/G))=\caO(V)^G/\{\caO(V)^G,\caO(V)^G\}$ is
  finite-dimensional.
\end{corollary}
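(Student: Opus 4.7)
The plan is to deduce this as a direct specialization of Corollary \ref{c:hp0-fd} applied to the affine Poisson variety $X := V/G = \Spec \caO(V)^G$. First I would observe that because $G < \Sp(V)$ acts by symplectomorphisms, the Poisson bracket on $\caO(V)$ restricts to a Poisson bracket on the invariant subalgebra $\caO(V)^G = \caO(V/G)$, so $V/G$ is a bona fide affine Poisson variety. The coinvariant space $\caO(V)^G/\{\caO(V)^G,\caO(V)^G\}$ in the statement is then, by definition, exactly $\HP_0(\caO(V/G))$.

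Second, the only hypothesis of Corollary \ref{c:hp0-fd} that must be checked is that $V/G$ has only finitely many symplectic leaves, and this is precisely the content of Example \ref{ex:finqt-svs}. There, the leaves are identified with the strata $X_P \subseteq V/G$ indexed by $G$-conjugacy classes of parabolic subgroups $P < G$ (i.e., those $P$ that arise as some stabilizer $G_v$ for $v \in V$). Since $G$ is finite, it has only finitely many subgroups, hence only finitely many conjugacy classes of parabolics, so there are only finitely many candidate leaves. Every $v \in V$ has some stabilizer and thus lies over some $X_P$, so these strata cover $V/G$. Having verified both hypotheses, Corollary \ref{c:hp0-fd} immediately yields finite-dimensionality of $\HP_0(\caO(V/G))$.

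There is no real obstacle at this stage; essentially all of the work has been front-loaded into Corollary \ref{c:hp0-fd} and the geometric analysis of leaves in Example \ref{ex:finqt-svs}. The one point in that analysis that warrants care, and which could be viewed as the crux, is the identification $T_{q(v)} X_P = dq|_v(V^P)$ combined with the computation $\ker dq|_v = (V^P)^\perp$, which together force the differentials $d(q^* f)|_v$ for $f \in \caO(V)^G$ to span $T_{q(v)} X_P$; this is what makes each $X_P$ a symplectic leaf rather than merely a $G$-orbit type stratum. If instead one wished to reproduce the original argument of \cite{BEG} without the D-module machinery of Corollary \ref{c:hp0-fd}, the hard part would become producing a finite spanning set for $\HP_0(\caO(V)^G)$ directly, for example via the representation theory of symplectic reflection algebras; but the leaf-counting route above is considerably shorter.
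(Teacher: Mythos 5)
Your proposal is correct and matches the paper's argument exactly: in the text, Example \ref{ex:finqt-svs} establishes that $V/G$ has finitely many symplectic leaves (indexed by conjugacy classes of parabolic subgroups $P < G$), and the corollary is then stated as an immediate consequence of Corollary \ref{c:hp0-fd}. Your identification of the crux — that $T_{q(v)} X_P$ is spanned by restrictions of Hamiltonian vector fields of invariant functions, via the computation $\ker dq|_v = (V^P)^\perp$ — is precisely the content the paper highlights in that example.
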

In fact, the same result holds if $V$ is not a symplectic vector
space, but a %connected 
symplectic affine variety, using the group
$\SpAut(V)$ of symplectic automorphisms of $V$ (and
$G < \SpAut(V)$ still a finite subgroup).  Again, we conclude that the
symplectic leaves are the connected components of the
 $X_P$ as described above; the same proof
applies except that the kernel of $dq|_v$ for $v \in V$ with
stabilizer $P$ is now $((T_v V)^P)^\perp$ (as we do not trivialize the
bundle $TV$).  Moreover, one has the following more general result:
\begin{corollary}\cite[Corollary 1.3]{ESdm} \label{c:symp-qt}
  If $V$ is a symplectic vector space (or %connected 
symplectic affine variety) and
  $G < \Sp(V)$ (or $\SpAut(V)$) is a finite subgroup, then
  $\caO(V)/\{\caO(V),\caO(V)^G\}$ is finite-dimensional.
\end{corollary}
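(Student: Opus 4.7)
The plan is to apply Theorem~\ref{t:coinv-fd} after recognizing the quotient in question as a $\mfg$-coinvariant space. I would take $X := V$ and let $\mfg := \caO(V)^G$ act on $V$ by Hamiltonian vector fields, $\alpha(g) := \xi_g$. Then
\[
\mfg \cdot \caO(V) \;=\; \{\caO(V)^G, \caO(V)\} \;=\; \{\caO(V), \caO(V)^G\}
\]
by antisymmetry of the Poisson bracket, so $\caO(V)_\mfg = \caO(V)/\{\caO(V), \caO(V)^G\}$. It therefore suffices to show that $V$ is a union of finitely many $\mfg$-leaves.

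Next I would compute the image of $\alpha_v : \mfg \to T_vV$ at each $v \in V$, adapting the calculation of Example~\ref{ex:finqt-svs}. If $P := G_v$ denotes the stabilizer, then any $g \in \caO(V)^G$ is $P$-invariant, so $dg|_v$ is $P$-invariant; conversely, averaging over $G$ produces, from any $P$-invariant covector at $v$, a $G$-invariant function with that differential at $v$. Under the symplectic pairing this identifies $\img(\alpha_v) = (T_vV)^P = T_v(V^P)$, using that the fixed locus $V^P$ of a finite group of symplectomorphisms is smooth in characteristic zero.

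Finally I would stratify $V = \bigsqcup_P V^\circ_P$ by stabilizer type, with $V^\circ_P := \{v \in V : G_v = P\}$ and $P$ ranging over the finitely many subgroups of $G$. Each $V^\circ_P$ is open in the smooth subvariety $V^P$ (being the complement there of $\bigcup_{P' \supsetneq P} V^{P'}$), hence locally closed and smooth, with finitely many connected components (a single one in the vector space case, finitely many in the symplectic affine variety case since $V^P$ is a closed subvariety of a noetherian scheme). Each component $Z$ is a $\mfg$-leaf, since for $z \in Z$ one has $T_zZ = T_z(V^P) = (T_zV)^{G_z} = \img(\alpha_z)$; maximality is automatic, as any proper enlargement would meet some $V^\circ_{P'}$ with $P' \supsetneq P$, where the image of $\alpha$ has strictly smaller dimension. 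Thus $V$ is a union of finitely many $\mfg$-leaves, and Theorem~\ref{t:coinv-fd} gives the result. I expect the only subtle point to be the passage from the $V/G$ computation of Example~\ref{ex:finqt-svs} back to $V$, which requires the symplectic pairing to convert $P$-invariant differentials into $P$-invariant Hamiltonian vector field values.
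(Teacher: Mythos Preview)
Your proof is correct and follows exactly the approach the paper intends: apply Theorem~\ref{t:coinv-fd} with $X=V$ and $\mfg=\caO(V)^G$ acting by Hamiltonian vector fields, then identify the $\mfg$-leaves as the connected components of the stabilizer-type strata $V^\circ_P$, in direct parallel with the computation for $V/G$ in Example~\ref{ex:finqt-svs}. The paper does not spell out a separate proof of this corollary, but the surrounding discussion (and the remark immediately after, extending to Poisson varieties with finitely many leaves) makes clear that this is precisely the argument.

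Two minor points you might tighten. First, the averaging step producing a $G$-invariant function with prescribed $P$-invariant differential at $v$ needs the function being averaged to have vanishing differential at the other orbit points $h\cdot v$, $h\notin P$; this is standard but worth a word. Second, in the maximality argument, the reason an enlargement $Z'\supsetneq Z$ meets only strata $V^\circ_{P'}$ with $P'\supsetneq P$ is that $Z$ is dense in $Z'$ (same dimension, $Z'$ irreducible), so $Z'\subseteq\overline{Z}\subseteq V^P$, whence any $z'\in Z'\setminus Z$ lies in $V^P\setminus V^\circ_P$ and thus has $G_{z'}\supsetneq P$. You asserted this conclusion directly; the intermediate containment $Z'\subseteq V^P$ is the one nontrivial step.
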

\begin{remark} \label{r:expl-symp-qt} For $V$ a symplectic affine
  variety over $\bk=\bC$, we can give a more explicit formula for
  $\caO(V)/\{\caO(V),\caO(V)^G\}$ (\cite[Corollary 4.20]{ESdm}), which
  reduces it to the linear case and to some topological cohomology
  groups for local systems on locally closed subvarieties:
\begin{equation}\label{e:expl-symp-qt}
\caO(V)/\{\caO(V),\caO(V)^G\} \cong
\bigoplus_P \bigoplus_{Z \in C_P} H^{\dim Z}(Z, H(TV|_Z/TZ)),
\end{equation}
where $P$ ranges over parabolic subgroups of $G$ (stabilizers of
points of $V$), $C_P$ is the set of connected components of $V^P$, and
$H(TV|_Z/TZ)$ is the topological local system on $Z$ whose fiber at
$z \in Z$ is
$\caO(T_zV/T_zZ) / \{\caO(T_zV/T_zZ),\caO(T_zV,T_zZ)^P\}$, which carries a canonical flat connection by \cite[Proposition 4.17]{ESdm} (induced along any path in $Z$ from any choice of symplectic $P$-equivariant parallel transport along $T_zV/T_zZ$, and the choice will not matter on $H(TV|_Z/TZ)$ by definition).  
\end{remark}
\begin{remark}
  As observed in \cite[Corollary 1.3]{ESdm}, Corollary \ref{c:symp-qt}
  continues to hold (with the same proof) if we only assume that $V$
  is an 
%connected 
affine Poisson variety with finitely many leaves, and let $G$
  be a finite group acting by Poisson automorphisms.
\end{remark}
\begin{example}\label{ex:symp-res}
  One case of particular interest is that of symplectic
  resolutions. By definition, a resolution of singularities $\rho:
  \widetilde X \to X$ is a symplectic resolution if $X$ is normal and
  $\widetilde X$ admits an algebraic symplectic form, i.e., a global
  nondegenerate closed two-form.  Recall that a resolution of
  singularities is a proper, birational map such that $\widetilde X$ is
  smooth.  In this situation, $X$ is equipped with a canonical Poisson
  structure (fixing the symplectic form on $\widetilde X$): for every open
  affine subset $U \subseteq X$, one has $\caO(U) = \Gamma(U, \caO_X) =
  \Gamma(\rho^{-1}(U), \caO_{\widetilde X})$ since $\rho$ is proper and
  birational and $X$ is normal.  Thus, the Poisson structure on
  $\widetilde X$ gives $\caO(U)$ a Poisson structure, which then gives
  $\caO_X$ and hence $X$ a Poisson structure. Conversely, if we begin
  with a normal Poisson variety $X$, we say that $X$ admits a
  symplectic resolution if such a symplectic resolution $\widetilde X \to
  X$ exists, which recovers the Poisson structure on $X$.

  Then, if $X$ admits a symplectic resolution
  $\rho: \widetilde X \to X$, by \cite[Theorem 2.5]{Kal-ssPpv}, it has finitely many symplectic
  leaves: indeed, for every closed irreducible subvariety $Y \subseteq X$ which is invariant under Hamiltonian flow, if $U \subseteq Y$ is the open dense subset
% locally closed, connected, and
%  smooth, and 
such that the map $\rho|_{\rho^{-1}(U)}: \rho^{-1}(U) \to U$ is
  generically smooth on every fiber $\rho^{-1}(u), u \in U$, then
% fibers
%   $\rho^{-1}(u)$ are generically of dimension $\frac{1}{2} \text{codim}(U)$
% %smooth
%   for $u \in U$, then 
  $U$ is an open subset of a leaf. By induction on the dimension of
  $Y$, this shows that $Y$ is a union of finitely many symplectic
  leaves; hence $X$ is a union of finitely many symplectic leaves. See
  \cite{Kal-ssPpv} for details.

  We conclude that, in this case, $\HP_0(\caO(X))=\caO(X)/\{\caO(X),\caO(X)\}$
  is finite-dimensional.
\end{example}
\begin{example}More generally, a variety $X$ is called a
  \emph{symplectic singularity} \cite{Beauss} if it is normal, the
  smooth locus $X_\reg$ carries a symplectic two-form $\omega_\reg$,
  and for any resolution $\rho: \widetilde{X} \to X$,
  $\rho|_{\rho^{-1}(X_\reg)}^* \omega_\reg$ extends to a regular (but
  not necessarily nondegenerate) two-form on $X$ (this condition is
  independent of the choice of resolution, as explained in
  \cite{Beauss}, since a rational differential form an a smooth variety is
  regular if and only if its pullback under a proper birational
  map is regular).
  By \cite[Theorem 2.5]{Kal-ssPpv}, every symplectic singularity has
  finitely many symplectic leaves. Therefore, $\HP_0(\caO(X))$ is finite-dimensional.
\end{example}
\begin{remark} By definition, every variety admitting a symplectic
  resolution is a symplectic singularity. However, the converse is far
  from true.  Let $\bk = \bC$. By \cite[Proposition 2.4]{Beauss}, any
  quotient of a symplectic singularity by a finite group preserving
  the generic symplectic form is still a symplectic singularity. But
  even for a symplectic vector space $V$ it is far from true that
  $V/G$ admits a symplectic resolution for all $G < \Sp(V)$ finite.
  To admit a resolution, by Verbitsky's theorem \cite{Verhsgos}, $G$
  must be generated by symplectic reflections (elements $g \in G$ with
  $\ker(g-\Id) \subseteq V$ having codimension two).  Moreover, a
  series of works \cite{Cohsrc,GordonBaby,Belscms,BSsra,BSsra2} leads
  to the expectation that every quotient $V/G$ admitting a symplectic
  resolution is a product of factors of the form
  $\bC^{2n}/(\Gamma^n \rtimes S_n)$ for $\Gamma < \SL(2,\bC)$ finite,
  or of two exceptional factors of dimension four (by
  \cite{Cohsrc,GordonBaby,Belscms}, this holds at least when $G$
  preserves a Lagrangian subspace $U \subseteq V$ and hence can be
  viewed as a subgroup of $\GL(U)$, and in general by \cite{BSsra2}
  there is a list of cases of groups in dimension $\leq 10$ which
  remain to check).
\end{remark}
\begin{example} \label{ex:ham-red} Let $V$ be a symplectic vector
  space and $G<\Sp(V)$ a reductive subgroup. There is a natural moment
  map, $\mu: V \to \mfg^*$ with $\mfg = \Lie G$, defined as
  follows. Let $\mfsp(V) = \Lie \Sp(V)$, and note
  $\mfsp(V) \cong \Sym^2 V^*$ with the Poisson structure on
  $\Sym V^* \cong \caO(V)$ induced by the symplectic form.  Then the
  moment map $V \to \mfsp(V)^* \cong \Sym^2 V$ is the squaring map,
  $v \mapsto v^2$, and by restriction we get a moment map
  $\mu: V \to \mfg^*$.  We can then define the Hamiltonian reduction,
  $X:= \mu^{-1}(0)/\!/G := \Spec \caO(\mu^{-1}(0))^G$.  This is
  well known to inherit a Poisson structure, which on functions is
  given by the same formula as that for the Poisson bracket on
  $\caO(V)$.  In general, $X$ need not be reduced, but by \cite[\S
  2.3]{Los-Bihm}, the reduced subscheme $X^\red$  has finitely
  many symplectic leaves. These leaves are explicitly given as the
  irreducible components of the locally closed subsets
%subvarieties
  $X^\red_P = \{q(x) \mid x \in \mu^{-1}(0), G_x=P, G \cdot x \text{ is closed}\}
  \subseteq X^\red$, where $q: \mu^{-1}(0) \to X$ is the quotient map.
  Therefore, $\HP_0(\caO(V/G))$ is finite-dimensional (since Theorem
  \ref{t:coinv-fd} also applies to Poisson schemes that need not be
  reduced).  Note that this example subsumes Example
  \ref{ex:finqt-svs} (which is the special case where $G$ is finite).
\end{example}
\section{Irreducible representations of quantizations}
\subsection{} We now apply the preceding results to the study of quantizations of Poisson varieties.  Let $X$ be an affine Poisson variety.  Recall the following standard definitions:
\begin{definition}
  A deformation quantization of $X$ is an associative algebra
  $A_\hbar$ over $\bk[\![\hbar]\!]$ of the form $A_\hbar =
  (\caO(X)[\![\hbar]\!], \star)$ where $\caO(X)[\![\hbar]\!] := \{\sum_{m \geq 0} a_m
  \hbar^m \mid a_m \in \caO(X)\}$, and $\star$ is an associative
  $\bk[\![\hbar]\!]$-linear multiplication
% which is continuous with respect to the $\hbar$-adic
%  topology, 
   such that $a \star b \equiv ab \pmod \hbar$ and $a \star b
  - b \star a \equiv \hbar \{a,b\} \pmod {\hbar^2}$ for all $a,b \in
  \caO(X)$.
\end{definition}
\begin{remark} Note that the multiplication $\star$ is automatically continuous in the $\hbar$-adic topology, since $(\hbar^m A_\hbar) \star (\hbar^n A_\hbar) \subseteq \hbar^{m+n} A_\hbar$.  
\end{remark}
\begin{definition} If $\caO(X)$ is nonnegatively graded with 
Poisson bracket of degree $-d < 0$, then a
  filtered quantization is a filtered
  associative algebra $A = \bigcup_{m \geq 0} A_{\leq m}$ such that
  $\gr A := \bigoplus_{m \geq 0} A_{\leq m} / A_{\leq m-1} \cong
  \caO(X)$ and such that, for $a \in A_{\leq m}, b \in A_{\leq n}$,
  then $ab-ba \in A_{\leq m+n-d}$ and $\gr_{m+n-d} (ab-ba) = \{\gr_m a,\gr_n b\}$.
\end{definition} 
Given a deformation quantization $A_\hbar$, we consider the
$\bk(\!(\hbar)\!)$-algebra $A_\hbar[\hbar^{-1}]$.
\begin{theorem}\label{t:fin-defq}
  Assume that $X$ is an affine Poisson variety with finitely many
  symplectic leaves.  Then, for every deformation quantization
  $A_\hbar$, there are only finitely many continuous irreducible
  finite-dimensional representations of $A_\hbar[\hbar^{-1}]$.
%of $A_{\hbar}[\hbar^{-1}]$ over
%  $\bk(\!(\hbar)\!)$. 
 If $\caO(X)$ is nonnegatively graded with Poisson
  bracket of degree $-d < 0$ and $A$ is a filtered quantization, then
  there are only finitely many irreducible finite-dimensional
  representations of $A$ (over $\bk$).
\end{theorem}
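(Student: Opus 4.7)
The plan is to identify each irreducible finite-dimensional representation with its trace character, use the Jacobson density theorem to show that distinct irreducibles yield linearly independent traces on the algebra, and then bound the space of trace functionals (linear functionals vanishing on commutators) by $\dim_\bk \HP_0(\caO(X))$, which is finite by Corollary \ref{c:hp0-fd}.

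For the density step, suppose $V_1,\dots,V_r$ are pairwise non-isomorphic irreducible finite-dimensional representations of $A$ in the filtered case (over $\bk$), or continuous such representations of $A_\hbar[\hbar^{-1}]$ in the deformation case (over $\bk(\!(\hbar)\!)$). Jacobson density forces the induced map into $\prod_i \End(V_i)$ to be surjective, so the matrix traces $\operatorname{tr}_{V_i}$ are linearly independent on the image and vanish on commutators. In the deformation case, each $V_i$ carries an $A_\hbar$-stable $\bk[\![\hbar]\!]$-lattice on which $A_\hbar$ acts continuously, so the trace restricts to a continuous map $A_\hbar \to \bk[\![\hbar]\!]$. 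Hence $r$ is bounded by the dimension of the space of (continuous) trace functionals on the algebra.

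To bound this trace space I would use Corollary \ref{c:hp0-fd} as follows. In the filtered case, the identity $\gr_{m+n-d}(ab-ba) = \{\gr_m a,\gr_n b\}$ gives the inclusion $\{\caO(X),\caO(X)\} \subseteq \gr[A,A]$ inside $\gr A = \caO(X)$, so $\gr(A/[A,A]) = \caO(X)/\gr[A,A]$ is a quotient of $\HP_0(\caO(X))$, and the standard filtered-to-graded dimension identity yields $\dim_\bk A/[A,A] \leq \dim_\bk \HP_0(\caO(X))$. In the deformation case, set $M := A_\hbar/\overline{[A_\hbar,A_\hbar]}$, where the bar denotes $\hbar$-adic closure in $A_\hbar$; then $M$ is $\hbar$-adically complete and separated, and $M/\hbar M$ is a quotient of $\caO(X)/\{\caO(X),\caO(X)\} = \HP_0(\caO(X))$. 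A topological Nakayama argument then shows $M$ is a finitely generated $\bk[\![\hbar]\!]$-module of rank at most $\dim_\bk \HP_0(\caO(X))$, so $M[\hbar^{-1}]$ has $\bk(\!(\hbar)\!)$-dimension bounded by the same quantity.

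The main obstacle is the topological bookkeeping in the deformation case: one must pass to the $\hbar$-adic closure $\overline{[A_\hbar,A_\hbar]}$ (not the naive commutator subspace) to make the Nakayama step applicable, and one must verify that every continuous trace of a finite-dimensional $A_\hbar[\hbar^{-1}]$-module actually descends through this closure. Both issues are handled by the $A_\hbar$-stable $\bk[\![\hbar]\!]$-lattices invoked above, which ensure that the relevant traces send $A_\hbar$ continuously into $\bk[\![\hbar]\!]$ and hence annihilate $\overline{[A_\hbar,A_\hbar]}$. By comparison, the filtered case is routine once the associated-graded inclusion is in hand.
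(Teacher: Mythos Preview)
Your filtered case is correct and matches the paper's argument essentially verbatim.

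In the deformation case there is a genuine gap. You set $M = A_\hbar/\overline{[A_\hbar,A_\hbar]}$ and assert that $M/\hbar M$ is a quotient of $\HP_0(\caO(X))$, but this is false. Since $a\star b \equiv ab \pmod \hbar$ and the classical product is commutative, every commutator $[a,b]$ lies in $\hbar A_\hbar$; as $\hbar A_\hbar$ is closed, $\overline{[A_\hbar,A_\hbar]}\subseteq \hbar A_\hbar$. Hence
\[
M/\hbar M \;=\; A_\hbar\big/\bigl(\overline{[A_\hbar,A_\hbar]}+\hbar A_\hbar\bigr) \;=\; A_\hbar/\hbar A_\hbar \;=\; \caO(X),
\]
which is typically infinite-dimensional, and your Nakayama step collapses.

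The fix, which is exactly what the paper does, is to divide the commutators by $\hbar$ before reducing: work with $\hbar^{-1}\overline{[A_\hbar,A_\hbar]}\subseteq A_\hbar$. Because $a\star b - b\star a \equiv \hbar\{a,b\}\pmod{\hbar^2}$, the image of $\hbar^{-1}\overline{[A_\hbar,A_\hbar]}$ in $A_\hbar/\hbar A_\hbar=\caO(X)$ contains $\{\caO(X),\caO(X)\}$, so $A_\hbar\big/\bigl(\hbar^{-1}\overline{[A_\hbar,A_\hbar]}+\hbar A_\hbar\bigr)$ is a quotient of $\HP_0(\caO(X))$. The paper implements this via a lemma showing $A_\hbar\subseteq V[\![\hbar]\!]+\hbar^{-1}\overline{[A_\hbar,A_\hbar]}$ for $V\subseteq\caO(X)$ a finite-dimensional lift of $\HP_0(\caO(X))$; your Nakayama argument applied to $A_\hbar/\hbar^{-1}\overline{[A_\hbar,A_\hbar]}$ would reach the same conclusion. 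Note that continuous traces do annihilate $\hbar^{-1}\overline{[A_\hbar,A_\hbar]}$ (not merely $\overline{[A_\hbar,A_\hbar]}$), since they are $\bk(\!(\hbar)\!)$-linear and vanish on commutators; your lattice argument is fine for this purpose.
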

Here by continuous we mean that the map
$\rho: A_\hbar[\hbar^{-1}] \to \text{Mat}_n(\bk(\!(\hbar)\!))$ is continuous in
the $\hbar$-adic topology, i.e., for some $m \in \bZ$, we have
$\rho(A_\hbar) \subseteq \hbar^m \text{Mat}_n(\bk[\![\hbar]\!])$.  The
basic tool we use is a standard result from Wedderburn theory:
\begin{proposition}\label{p:wedderburn}
If $A$
is an algebra over a field $F$, then the characters (i.e., traces) of
nonisomorphic irreducible finite-dimensional representations of $A$
over $F$ are linearly independent over $F$.
\end{proposition}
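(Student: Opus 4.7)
The plan is to construct, for each irreducible representation $V_i$ in a finite list $V_1,\ldots,V_n$ of pairwise nonisomorphic ones (with characters $\chi_i$), a ``separating'' element $a_i \in A$ whose image on $V_j$ is $\delta_{ij}\cdot \Id_{V_j}$. Once such $a_i$ are available, any putative linear relation $\sum_i c_i\chi_i = 0$ with $c_i \in F$, evaluated at $a_i$, forces $c_i \cdot \dim_F V_i = 0$. Since the paper works over $F=\bC$ in this section, $\dim_F V_i$ is a nonzero element of $F$, so $c_i = 0$, yielding the desired linear independence.

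The existence of the $a_i$ is supplied by the Jacobson density theorem. By Schur's lemma, each $D_i := \End_A(V_i)$ is a division algebra over $F$, and since $V_i$ is finite-dimensional over $F$, so is $D_i$. The density theorem, applied to the joint representation
\[
\rho := (\rho_1,\ldots,\rho_n) : A \longrightarrow \prod_{i=1}^n \End_{D_i}(V_i),
\]
asserts that the image of $\rho$ is dense in the target. Because each $\End_{D_i}(V_i)$ is finite-dimensional over $F$, density is the same as surjectivity, so $\rho$ is surjective. One then takes $a_i \in A$ to be any preimage of the tuple $(\delta_{ij}\Id_{V_j})_{j=1}^n$, which by construction satisfies $\rho_j(a_i) = \delta_{ij}\Id_{V_j}$.

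The main (largely formal) subtlety is passing from density for a single simple module to joint density for several pairwise nonisomorphic simples. This is standard: by induction on $n$, using that $V_1,\ldots,V_{n-1}$ are pairwise nonisomorphic to $V_n$, one combines instances of the single-module density theorem to realize arbitrary targets in the product (equivalently, one shows that $\rho(A)$ surjects onto each factor and that no simple $A$-submodule of the target appears with multiplicity greater than one, whence $\rho(A)$ is the full product). With the $a_i$ in hand, the computation $\chi_j(a_i) = \delta_{ij}\dim_F V_i$ finishes the proof.
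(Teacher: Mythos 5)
The paper does not actually give a proof of Proposition \ref{p:wedderburn}; it cites it as a standard fact from Wedderburn theory, so there is no argument to compare against and your proposal has to stand on its own. Your Jacobson-density argument is the standard one and it is correct in the setting where the paper invokes the proposition, namely over fields of characteristic zero; the relevant fields in the proof of Theorem \ref{t:fin-defq} and the remark after it are $\bk$, $\bk(\!(\hbar)\!)$, and $\overline{\bk(\!(\hbar)\!)}$, not literally $\bC$, but all you use is $\operatorname{char}(F)=0$, which these all satisfy. The joint-density step is also fine: the kernels of the projections $\rho(A)\to\End_{D_i}(V_i)$ are distinct maximal two-sided ideals (distinct because the quotients have pairwise nonisomorphic unique simple modules $V_i$), so the Chinese remainder theorem gives surjectivity onto the product. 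One substantive point worth recording is that the characteristic-zero restriction you flag is not a convenience but a necessity, because the proposition as literally stated over an arbitrary field $F$ is false. If $\operatorname{char}(F)=p>0$ and $D=\End_A(V)$ is a purely inseparable field extension of $F$ (for instance $F=\bF_p(t)$, $A=D=F(t^{1/p})$, $V=D$), then $\chi_V(a)=\operatorname{Tr}_{D/F}(a)=0$ for every $a$, so $\chi_V$ is the zero functional and already the one-element family $\{\chi_V\}$ is linearly dependent. A version of the statement valid over any $F$ replaces your specific choice $\rho_i(a_i)=\Id_{V_i}$ by an arbitrary preimage of an element supported in the $i$-th factor and needs only that the $F$-trace functional on $\End_{D_i}(V_i)$ be nonzero; this holds whenever $D_i/F$ is separable, in particular if $F$ is perfect or if the $V_i$ are absolutely irreducible ($D_i=F$), but not in general. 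For the paper's characteristic-zero applications your proof closes the gap completely.
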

\begin{proof}[Proof of Theorem \ref{t:fin-defq}]
  We begin with the second statement. Note that $[A,A]$ is a filtered
  subspace of $A$, and hence $\HH_0(A)=A/[A,A]$ is also 
  filtered. By definition we have
  $\{\caO(X), \caO(X) \} \subseteq \gr [A,A]$. Therefore we obtain a
  surjection
  $\HP_0(\caO(X))=\caO(X)/\{\caO(X),\caO(X)\} \onto \gr \HH_0(A)=\gr
  A/[A,A]$.
  As a result,
  $\dim \HH_0(A) =\dim \gr \HH_0(A) \leq \dim \HP_0(\caO(X))$, which
  is finite by Theorem \ref{t:coinv-fd}.

  Given a finite-dimensional representation $\rho:A \to \End(V)$ of
  $A$, the character $\chi_\rho := \operatorname{tr} \chi$ is a linear
  functional $\chi \in A^*$. As traces annihilate commutators,
  $\chi_\rho \in \HH_0(A)^* = [A,A]^\perp \subseteq A^*$.  By
  Proposition \ref{p:wedderburn}, we conclude that the number of such
  representations cannot exceed $\dim \HH_0(A)^*$.  By the preceding
  paragraph, this is finite-dimensional, so there can only be finitely
  many irreducible finite-dimensional representations of $A$ (at most
  $\dim \HP_0(\caO(X))$).

  For the first statement, the same proof applies, except that now we
  need to take some care with the $\hbar$-adic topology.  Namely, let
  $\overline{[A_\hbar,A_\hbar]}$ be the closure of $[A_\hbar,A_\hbar]$
  in the $\hbar$-adic topology, i.e.,
  $\{\sum_{m \geq 0} \hbar^m c_m \mid c_m \in
  [A_\hbar,A_\hbar]\}$.
  Let $V \subseteq \caO(X)$ be a finite-dimensional subspace such that
  the composition $V \into \caO(X) \onto \HP_0(\caO(X))$ is an
  isomorphism. We claim that
  $V[\![\hbar]\!] \into A_\hbar \onto
  A_\hbar/\hbar^{-1}\overline{[A_\hbar,A_\hbar]}$
  is a surjection.  This follows from the following lemma:
\begin{lemma}\label{l:cont}  $A_\hbar \subseteq V[\![\hbar]\!] + \hbar^{-1}\overline{[A_\hbar,A_\hbar]}$.
\end{lemma}
\begin{proof} We claim that, for every $m \geq 1$,
   $A_\hbar \subseteq V[\![\hbar]\!] + \hbar^{-1}\overline{[A_\hbar,A_\hbar]} +
  \hbar^m A_\hbar$.  We prove it by induction on $m$. For $m=1$ this is true by definition of $V$. Therefore also $\hbar A_\hbar \subseteq V[\![\hbar]\!] + \hbar^{-1}\overline{[A_\hbar,A_\hbar]} +
  \hbar^2 A_\hbar$.  For the inductive step, if $A_\hbar \subseteq V[\![\hbar]\!] + \hbar^{-1}\overline{[A_\hbar,A_\hbar]} +
  \hbar^m A_\hbar$ for $m \geq 1$, then substituting the previous equation into $\hbar^m A_\hbar = \hbar^{m-1} (\hbar A_\hbar)$, we obtain the desired result.

% base case by $\hbar^{m}$ to obtain $\hbar^m A_\hbar \subseteq \hbar^mV[\![\hbar\!] + \hbar^{m-1}\overline{[A_\hbar,A_\hbar]} + \hbar^{m+1} A_\hbar$, and substituting this into the last term of
% $A \hbar \subseteq V[\![\hbar]\!] + \hbar^{-1}\overline{[A_\hbar,A_\hbar]} +
%   \hbar^m A_\hbar$ yields the result.
Since $V[\![\hbar]\!]$ and
  $\overline{[A_\hbar,A_\hbar]}$ are closed subspaces of $A_\hbar$,
  it follows that
  $A_\hbar \subseteq V[\![\hbar]\!] + \hbar^{-1} \overline{[A_\hbar,A_\hbar]}$
  which proves the lemma.
\end{proof}  
Next, let $d = \dim \HP_0(\caO(X))$ and suppose that
$\chi_1, \ldots, \chi_{d+1}$ are characters of nonisomorphic
continuous irreducible representations of $A_\hbar[\hbar^{-1}]$ over
$\bk(\!(\hbar)\!)$.  Then there exist
$a_1, \ldots, a_{d+1} \in \bk[\![\hbar]\!]$, not all zero, such that
$\sum_i a_i \chi_i|_{V[\![\hbar]\!]} = 0$, since $\dim V = d < d+1$.
Since the representations were continuous,
$\chi_i(\hbar^{-1}\overline{[A_\hbar,A_\hbar]})=0$ for all $i$. By Lemma
\ref{l:cont}, $\sum_i a_i \chi_i|_{A_\hbar}=0$, and by
$\bk(\!(\hbar)\!)$-linearity, $\sum_i a_i \chi_i = 0$. This again
contradicts Proposition \ref{p:wedderburn}.
% the character of every
%   finite-dimensional representation of $A_\hbar[\hbar^{-1}]$
%   annihilates $[A_\hbar[\hbar^{-1}],A_\hbar[\hbar^{-1}]]$ and hence,
%   if it is continuous, also
%   $\overline{[A_\hbar[\hbar^{-1}],A_\hbar[\hbar^{-1}]]}$. In the
%   continuous case it thus determines a linear functional in
%   $\widetilde{\HH}_0(A_\hbar[\hbar^{-1}])^* :=
%   A_\hbar[\hbar^{-1}]/\overline{[A_\hbar[\hbar^{-1}],A_\hbar[\hbar^{-1}]]}
%   = [A_\hbar[\hbar^{-1}],A_\hbar[\hbar^{-1}]]^\perp$
%   (taking duals over $\bk(\!(\hbar)\!)$).  By the claim, the
%   composition
%   $\widetilde{\HH}_0(A_\hbar[\hbar^{-1}])^* \into
%   A_\hbar[\hbar^{-1}]^* \to V(\!(\hbar)\!)^*$
%   is injective. Since $V(\!(\hbar)\!)^*$ is finite-dimensional over
%   $\bk(\!(\hbar)\!)$ (of dimension at most $\dim \HP_0(\caO(X))$), we
%   obtain again by Wedderburn theory that there are at most finitely
%   many continuous irreducible finite-dimensional representations (at
%   most $\dim \HP_0(\caO(X))$).
% % (Note that every subrepresentation of a continuous representation is
% % still continuous.)
\end{proof}
\begin{remark} The proof actually implies the stronger result that
  $A_\hbar \otimes_{\bk[\![\hbar]\!]} K$ has finitely many continuous
  irreducible finite-dimensional representations over $K$ (also at
  most $\dim \HP_0(\caO(X))$), where $K = \overline{\bk(\!(\hbar)\!)}$
  is the algebraic closure of $\bk(\!(\hbar)\!)$ (the field of
  Puiseux series over $\bk$, i.e., $\bigcup_{r \geq 1}
  \bk(\!(\hbar^{1/r})\!)$).
  Here an $n$-dimensional representation $\rho:
A_\hbar \otimes_{\bk[\![\hbar]\!]} K \to \text{Mat}_n(K)$ 
 is continuous if $\rho(A_\hbar) \subseteq \hbar^m \text{Mat}_n(\mathcal{O}_K)$
  for some $m \in \bZ$, where $\mathcal{O}_K=\bigcup_{r \geq 1} \bk[\![\hbar^{1/r}]\!]$ is
  the ring of integers of $K$.
  % In fact the proof shows that
  % $A_{\hbar}[\hbar^{-1}] \otimes_{\bC(\!(\hbar)\!)}$
  % 
  % has finitely many irreducible finite-dimensional representations
  % (over $\overline{\bC(\!(\hbar)\!)}$).
%  Note that the theorem also implies that $A_{\hbar}[\hbar^{-1}]$ has
%finitely many irreducible representations over $\bC(\!(\hbar)\!)$,
%  statement,
This result is stronger since if $\rho_1,
\rho_2$ are two nonisomorphic irreducible representations of an
algebra $A$
over a field $F$,
then for any extension field $E$,
$\Hom_{A
  \otimes_F E} (\rho_1 \otimes_F E, \rho_2 \otimes_F E) =
\Hom_A(\rho_1, \rho_2) \otimes_F E =
0$, so all irreducible representations occurring over $E$ in $\rho_1
\otimes_F E$ and $\rho_2 \otimes_F E$ are distinct.
\end{remark}
% \begin{proof}[Proof of Theorem \ref{t:fin-defq}]
%   Using the $\hbar$-adic filtration, set $\gr^i_\hbar
%   A_{\hbar}[\hbar^{-1}] = \hbar^i A_\hbar / \hbar^{i+1}
%   A_\hbar$ and $\gr A_\hbar[\hbar^{-1}] = \bigoplus_i \gr^i_\hbar A_\hbar[\hbar^{-1}]$.  Then $\gr^1_\hbar[a,b] = \{a,b\}$.
% \end{proof}
\section{Proof of Theorem \ref{t:coinv-fd} using D-modules}\label{s:proof-t-coinv-fd}
In this section, we explain the proof of Theorem \ref{t:coinv-fd}.  We
need the theory of holonomic D-modules (the necessary definitions
and results are recalled in the appendix; see, e.g., \cite{Hotta} for
more details).  An advantage of using D-modules is that the
approach is local, hence does not essentially require affine
varieties.  However, for simplicity (to avoid, for example, presheaves
of Lie algebras of vector fields), we will explain the theory for
affine varieties, and then indicate how it generalizes.

\subsection{The affine case} The main idea is the following
construction. Given an affine Poisson variety $X$ and a Lie algebra
$\mfg$ acting on $X$ via a map $\alpha: \mfg \to \Vect(X)$, we
construct a D-module $M(X,\mfg)$ which represents the functor of
invariants under the flow of $\mfg$, i.e., such that
$\Hom(M(X,\mfg), N) = N^{\mfg}$ for all D-modules $N$, where we will define $N^{\mfg}$ below.
%This is defined as follows. 
 Without loss of generality, let us assume
$\mfg \subseteq \Vect(X)$ and that $\alpha$ is the inclusion;
otherwise we replace $\mfg$ by its image $\alpha(\mfg)$.  Let
$i: X \to V$ be any closed embedding into a smooth affine variety $V$.
Denote the ideal of $i(X)$ in $V$ by $I_X$.  Let
$\widetilde \mfg \subseteq \Vect(V)$ be the Lie subalgebra of vector
fields which are tangent to $X$ and whose restriction to $X$ is in
the image of $\alpha$. As recalled in Section \ref{ss:rdmod}, there
are mutually quasi-inverse functors
$i_{\natural}: \DmodX \to \text{mod}_X-\caD(V)$ and
$i^{\natural}: \text{mod}_X-\caD(V) \to \DmodX$ defined in Section
\ref{ss:rdmod}, where $\text{mod}_X-\caD(V)$ denotes the category of
\emph{right} modules over $\caD(V)$ supported on $i(X)$, and $\DmodX$ is
the category of D-modules on $X$; this is in fact the way we
define the category $\DmodX$. (We will call these merely D-modules on $X$,
since using left D-modules gives an equivalent definition: see Remark
\ref{r:lrdmod}.)
\begin{definition}\cite[Definition 2.2]{ESdm}, \cite[2.12]{ES-dmlv}
\label{d:mxg}
$M(X,\mfg) := i^{\natural} \bigl( (\widetilde \mfg
\cdot \caD(V) + I_X \cdot \caD(V)) \setminus \caD(V) \bigr)$.
\end{definition}
We will often work with
$\caO(V)$-coherent
right $\caD(V)$-modules supported on $i(X)$.
Note that, on a smooth variety, such modules are well known to be
vector bundles on $X$ (in more detail, one composes the equivalence between
right and left D-modules
on smooth varieties with the equivalence between $\caO$-coherent
left D-modules on a smooth variety
and vector bundles with flat algebraic connections).

% Recall that a \emph{de Rham local system} on a smooth variety is an $\caO$-coherent D-module (which for us is a right D-module); this is well known to be equivalent to a vector bundle with a flat algebraic connection.  (In contrast, a topological local system is a locally constant sheaf, which is not equivalent.)
%The Riemann-Hilbert correspondence provides a functor from de Rham to topological local systems which, when restricted to regular de Rham local systems, becomes an equivalence.
\begin{example}\label{ex:tr}
  Suppose that $\mfg$ acts transitively on $X$; in particular, this
  means $X$ is smooth, so we can take $V=X$. Assume also that $X$ is
  connected.  In this case, by \cite[Proposition 2.36]{ES-dmlv},
  $M(X,\mfg)$ is either a line bundle or zero (this can be shown by a
  straightforward computation of its associated graded module over
  $\caO(T^*X)$, cf.~the proof of Lemma \ref{l:support} below).
%(which  is similar to but simpler than the proof of Lemma \ref{l:support}).
  In the case that $\mfg$ preserves a global nonvanishing volume form
  (which is sometimes called an affine Calabi--Yau structure), we
  obtain $\Omega_X$, the canonical right $\caD_X$-module of volume
  forms; the isomorphism $M(X,\mfg) \to \Omega_X$ sends the image of
  $1 \in \caD(X) \onto M(X,\mfg)$ to the nonvanishing volume form.
  This includes the situation where $X$ is symplectic and $\mfg$ is
  either $\caO(X)$ or its image in $\Vect(X)$, the Lie algebra of
  Hamiltonian vector fields on $X$.
\end{example}
Given any D-module $N$ on $X$,
% , the Lie algebra $\mfg$ acts
% on $N$ via right multiplication. To see this, 
let
$\Gamma_{\caD}(X,N) := \Hom_{\caD(V)}(\caD_X, i_\natural N)$ be the
sections of $N$ supported on $i(X)$ (see Section \ref{ss:dx} for more
details).  For $\xi \in \mfg$, and any lift
$\widetilde \xi \in \widetilde \mfg$,
we have a linear endomorphism of $\Gamma(V,i^{\natural} N)$
given by right multiplication by $\widetilde \xi$.
This
 preserves the linear subspace
$\Gamma_{\caD}(X,N)$. The resulting endomorphism does not depend on the choice of the
lift $\widetilde{\xi}$ and defines a Lie algebra action of $\mfg$
on $\Gamma_{\caD}(X,N)$.
Therefore we may consider the
vector space
$N^\mfg := H^0(\mfg, \Gamma_{\caD}(X,N)) = \{n \in \Gamma_{\caD}(X,N) \mid n \cdot \xi =
0, \forall \xi \in \mfg\}$.
\begin{lemma}\label{l:mxg-defn}
 Definition \ref{d:mxg} does not depend on the choice of
  closed embedding $X \to V$.  Moreover, for every D-module $N$ on
  $X$, we have $\Hom(M(X,\mfg), N) = N^\mfg$.
\end{lemma}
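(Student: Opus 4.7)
The plan is to establish the Hom identity $\Hom(M(X,\mfg), N) = N^\mfg$ for a fixed closed embedding $i\colon X\into V$, and then deduce independence of the embedding formally from the universal property: the right-hand side depends only on $N$ as a D-module on $X$ and on the $\mfg$-action on $X$, so any two valid constructions represent the same functor and must be canonically isomorphic by Yoneda.

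For the Hom identity I would transport through the equivalence $i_\natural$ to get
\[
  \Hom_{\caD(V)}\bigl(\caD(V)/(\widetilde{\mfg}\cdot\caD(V)+I_X\cdot\caD(V)),\, i_\natural N\bigr).
\]
A homomorphism out of a cyclic right $\caD(V)$-module is determined by the image of the generator, which must be annihilated on the right by each relator. This identifies the Hom set canonically with $\{n\in i_\natural N : n\cdot\widetilde{\mfg}=0 \text{ and } n\cdot I_X=0\}$. The second condition carves out the subspace $\Gamma_\caD(X,N) = \Hom_{\caD(V)}(\caD(V)/I_X\caD(V),\, i_\natural N)$, matching the definition recalled in the excerpt.

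It remains to identify the subspace of $\Gamma_\caD(X,N)$ annihilated by $\widetilde{\mfg}$ with $N^\mfg$. First, right multiplication by any $\widetilde{\xi}\in\widetilde{\mfg}$ preserves $\Gamma_\caD(X,N)$: for $n\cdot I_X=0$ and $f\in I_X$, using $\widetilde{\xi} f = f\widetilde{\xi}+\widetilde{\xi}(f)$ in $\caD(V)$ together with the tangency condition $\widetilde{\xi}(f)\in I_X$, one has $(n\cdot\widetilde{\xi})\cdot f = n\cdot f\widetilde{\xi}+n\cdot \widetilde{\xi}(f)=0$. Second, the resulting operator on $\Gamma_\caD(X,N)$ depends only on the restriction $\xi\in\mfg$ of $\widetilde{\xi}$: if $\widetilde{\xi}_1,\widetilde{\xi}_2$ both restrict to $\xi$, then $\widetilde{\xi}_1-\widetilde{\xi}_2$ is a vector field on $V$ vanishing along $X$, so by local freeness of the tangent sheaf on the smooth variety $V$ its components in any local frame of derivations lie in $I_X$, giving $\widetilde{\xi}_1-\widetilde{\xi}_2\in I_X\cdot\caD(V)$. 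Associativity of the right $\caD(V)$-action then forces $n\cdot(\widetilde{\xi}_1-\widetilde{\xi}_2)=0$ whenever $n\cdot I_X=0$.

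The main subtlety is this last well-definedness check, whose crux is the identification $\{\widetilde{\xi}\in\Vect(V) : \widetilde{\xi}|_X=0\} = I_X\cdot \Vect(V)$, valid for smooth $V$. Once it is in place, the representing object $M(X,\mfg)$ is pinned down, up to unique isomorphism, by the intrinsic functor $N\mapsto N^\mfg$, and independence from the embedding is automatic.
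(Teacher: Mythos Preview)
Your argument is correct and is the same as the paper's in substance, just inverted in presentation. The paper first observes that $\mfg$ acts on the intrinsic D-module $\caD_X$ (representing $\Gamma_\caD(X,-)$) by D-module endomorphisms---sending the canonical generator $1$ to the class of any lift $\widetilde{\xi}$ in $i_\natural\caD_X=\caD(V)/I_X\caD(V)$, well-defined by precisely your check that a vector field on $V$ vanishing along $X$ lies in $I_X\cdot\caD(V)$---and then identifies $M(X,\mfg)$ with the intrinsic quotient $\caD_X/(\mfg\cdot\caD_X)$, from which both claims follow at once. Your route (Hom computation for one embedding, then Yoneda) is the dual formulation: asserting that $N\mapsto N^\mfg=\Hom(\caD_X,N)^\mfg$ is intrinsic is equivalent to asserting that the $\mfg$-action on $\caD_X$ is intrinsic. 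Either way the crux is the lift-independence you isolate; the paper's packaging has the minor advantage of producing the embedding-free object $\caD_X/(\mfg\cdot\caD_X)$ directly rather than only characterizing it up to unique isomorphism.
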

The purpose of the second statement above is to explain what functor
is represented by $M(X,\mfg)$.
\begin{proof}
  The first statement follows from the following alternative
  definition of $M(X,\mfg)$.  Note that $\mfg$ acts on the
  D-module $\caD_X$ (see Section \ref{ss:dx} for its definition)
  on the left by  D-module endomorphisms: there is a
  canonical generator $1 \in \caD_X$, so for any $\xi \in \mfg$ and
  lift $\widetilde \xi \in \widetilde \mfg$, we can define $\xi \cdot 1 =
  \widetilde \xi \in i_{\natural}\caD_X = I_X \cdot \caD_V \setminus \caD_V$,
  and this does not depend on the choice of $\widetilde \xi$. This extends
  uniquely to the claimed action. Then, one may check that $M(X,\mfg)
  = \mfg \cdot \caD_X \setminus \caD_X$.  From this one easily deduces
  the second statement.
\end{proof}
\begin{remark}\label{r:canon-gen}
We see from the proof that there is a canonical surjection $\caD_X \to
M(X,\mfg)$.  Equivalently, there is a canonical global section $1 =
1_{M(X,\mfg)} \in \Gamma_{\caD}(X,M(X,\mfg)) = \Hom(\caD_X, M(X,\mfg))$.  For
every closed embedding $i: X \to V$ into a smooth affine variety, applying
$i_\natural$ to this map and taking the composition $\caD(V) \onto i_\natural \caD_X
\onto i_\natural M(X,\mfg)$, we get a canonical generator $1 \in i_\natural M(X,\mfg)$
as a right module over $\caD(V)$. This is nothing but the image of $1
\in \caD(V)$ under the defining surjection $\caD(V) \to i_\natural
M(X,\mfg)$.  We will make use of this canonical generator below.
\end{remark}

Let $\pi: X \to \pt$ be the projection to a point. We will need the
functor of underived direct image, $H^0\pi_{*}$ (see the appendix for
the definition). Then, we have the following fundamental relationship
between the pushforward to a point of $M(X,\mfg)$ and coinvariants of
$\caO(X)$.
\begin{lemma} \label{l:pi0mx}
  $H^0\pi_{*} M(X,\mfg) \cong \caO(X)_\mfg$. 
\end{lemma}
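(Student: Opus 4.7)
The plan is to unwind the definition of the underived direct image $H^0\pi_* M(X,\mfg)$ via the closed embedding $i: X \into V$ used in Definition \ref{d:mxg}, and identify the resulting quotient directly with $\caO(X)/\mfg \cdot \caO(X)$.

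First, I would factor $\pi: X \to \pt$ as $\pi = \pi_V \circ i$ with $\pi_V: V \to \pt$, so that $\pi_* M(X,\mfg) = (\pi_V)_*\bigl(i_\natural M(X,\mfg)\bigr)$. For $V$ a smooth affine variety, the standard formula (reviewed in the appendix) for the underived pushforward to a point of a right $\caD(V)$-module $N$ is
\[
H^0(\pi_V)_* N \;=\; N \otimes_{\caD(V)} \caO(V) \;\cong\; N\big/N\cdot\Vect(V).
\]
Applied to $N = i_\natural M(X,\mfg) = \caD(V)/\bigl(I_X\caD(V) + \widetilde\mfg\caD(V)\bigr)$, this yields
\[
H^0\pi_* M(X,\mfg) \;\cong\; \caD(V)\big/\bigl(I_X\caD(V) + \widetilde\mfg\caD(V) + \caD(V)\Vect(V)\bigr).
\]

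Next, I would exploit the canonical left $\caD(V)$-module isomorphism $\caD(V)/\caD(V)\Vect(V) \cong \caO(V)$ sending $[D]$ to $D(1)$. Under this surjection, $I_X\caD(V)$ maps onto $I_X \cdot \caO(V) = I_X$, while $\widetilde\mfg\caD(V)$ maps onto $\widetilde\mfg(\caO(V)) := \{\widetilde\xi(f) : \widetilde\xi \in \widetilde\mfg,\ f \in \caO(V)\}$. Hence
\[
H^0\pi_* M(X,\mfg) \;\cong\; \caO(V)\big/\bigl(I_X + \widetilde\mfg(\caO(V))\bigr).
\]
Modding out by $I_X$ first gives $\caO(X)$, so the problem reduces to identifying the image of $\widetilde\mfg(\caO(V))$ in $\caO(X)$ with $\mfg \cdot \caO(X)$.

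For this identification, each $\widetilde\xi \in \widetilde\mfg$ preserves $I_X$ and restricts to some $\xi \in \mfg$, so $\widetilde\xi(\widetilde f) \bmod I_X = \xi(\widetilde f \bmod I_X) \in \mfg \cdot \caO(X)$, giving one inclusion. For the reverse inclusion I need that every $\xi \in \mfg$ lifts to some $\widetilde\xi \in \widetilde\mfg$: viewing $\xi$ as an $\caO(X)$-linear map $\Omega^1_V \otimes_{\caO(V)} \caO(X) \to \caO(X)$ that vanishes on the image of the conormal sheaf $I_X/I_X^2$, the projectivity of $\Omega^1_V$ (using smoothness of $V$) lets us lift to an $\caO(V)$-linear map $\widetilde\xi : \Omega^1_V \to \caO(V)$, which by construction sends $I_X$ into $I_X$ and restricts to $\xi$. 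Then $\xi(f) = \widetilde\xi(\widetilde f) \bmod I_X$ for any lift $\widetilde f \in \caO(V)$ of $f \in \caO(X)$, proving the other inclusion. Combining, $H^0\pi_* M(X,\mfg) \cong \caO(X)/\mfg \cdot \caO(X) = \caO(X)_\mfg$. The computation itself is bookkeeping with generators and relations; the only step with real content is the lifting of vector fields from $\mfg \subseteq \Vect(X)$ to $\widetilde\mfg \subseteq \Vect(V)$, which I expect to be the main technical point and which uses the smoothness of $V$ (in concrete cases such as $V = \bA^n$ one can lift coefficient-by-coefficient).
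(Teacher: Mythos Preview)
Your proof is correct and follows essentially the same route as the paper's own argument, which is the terse one-line computation
\[
H^0\pi_* M(X,\mfg) = i_\natural M(X,\mfg) \otimes_{\caD(V)} \caO(V) \cong \caO(V)/(\widetilde\mfg\cdot\caO(V)+I_X) \cong \caO(X)/(\mfg\cdot\caO(X)).
\]
You have simply unpacked the steps and, in particular, spelled out the one non-formal point---surjectivity of the restriction $\widetilde\mfg \to \mfg$, via projectivity of $\Omega^1_V$---which the paper leaves implicit.
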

\begin{proof}
Recall from \eqref{e:dirfl} and \eqref{e:dirdir} that
\begin{multline}
  H^0\pi_{*} M(X,\mfg) = (i_{\natural} M(X,\mfg)) \otimes_{\caD(V)} \caO(V) =
  (\widetilde \mfg \cdot \caD_V + I_X \cdot \caD_V) \setminus \caD_V
  \otimes_{\caD_V} \caO_V \\ \cong \caO_V / (\widetilde \mfg \cdot \caO_V + I_X)
  \cong \caO_X / (\mfg \cdot \caO_X) = (\caO_X)_\mfg. \qedhere
\end{multline}
\end{proof}
% \begin{remark}
%   In fact, parallel to Lemma \ref{l:mxg-defn}, one has the following:
%   Given any left module $N$ for the ring $\caD(X)$ of global
%   differential operators on $X$, one has $M(X,\mfg) \otimes_{\caD(X)} N =
%   N_\mfg := N / \mfg \cdot N$; the above lemma applies this to $N =
%   \caO(X)$.  In the case that $X$ is actually smooth, such a left
%   module $N$ is known as a left D-module on $X$, and there is an
%   equivalence between left and right D-modules in this case.
% \end{remark}
The proof of Theorem \ref{t:coinv-fd} rests on an estimate for the
characteristic variety (singular support) of $M(X,\mfg)$ (whose definition we recall in
Definition \ref{d:ss}).  Recall above that a $\mfg$-leaf is smooth.
Therefore, given a closed embedding $X \to V$ into a smooth variety, each
$\mfg$-leaf $Z$ has a well-defined conormal bundle, which we denote by
$T_Z^* V$, which has dimension equal to the dimension of $V$.
\begin{lemma}\label{l:support}
 Suppose $X$ is the union of finitely many $\mfg$-leaves
  and $i: X \to V$ is a closed embedding into a smooth variety. Then the
  characteristic variety of $i_\natural M(X,\mfg)$ is contained in the union of the
  conormal bundles of these $\mfg$-leaves inside $V$.
\end{lemma}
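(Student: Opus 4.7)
The plan is to bound the characteristic variety of $i_\natural M(X,\mfg) = \caD(V)/J$, where $J := \widetilde\mfg \cdot \caD(V) + I_X \cdot \caD(V)$, by passing to principal symbols with respect to the order filtration on $\caD(V)$.

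First I would equip $i_\natural M(X,\mfg)$ with the quotient of the order filtration on $\caD(V)$. Since $V$ is smooth, $\gr \caD(V) \cong \caO(T^*V)$, and the associated graded of $i_\natural M(X,\mfg)$ is a cyclic $\caO(T^*V)$-module whose defining ideal contains the principal symbols of the generators of $J$: the elements of $I_X$ (of order zero, contributing themselves as functions on $T^*V$ constant in cotangent directions), and the vector fields $\widetilde\xi \in \widetilde\mfg$ (whose symbols are the fiberwise-linear functions $\sigma(\widetilde\xi)(x,\alpha) = \langle\alpha, \widetilde\xi(x)\rangle$). Hence the characteristic variety is contained in
\begin{equation*}
S := \bigl\{(x,\alpha) \in T^*V : x \in X,\ \langle\alpha, \widetilde\xi(x)\rangle = 0 \text{ for all } \widetilde\xi \in \widetilde\mfg\bigr\}.
\end{equation*}

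Next I would identify $S$ leaf by leaf. The key input is that the restriction map $\widetilde\mfg \to \mfg$ is surjective. Given $\xi \in \mfg \subseteq \Vect(X) = \Hom_{\caO(X)}(\Omega^1_X, \caO(X))$, I would pull back along the canonical surjection $\Omega^1_V \otimes_{\caO(V)} \caO(X) \onto \Omega^1_X$ to obtain an $\caO(V)$-linear map $\Omega^1_V \to \caO(X)$, and then use projectivity of $\Omega^1_V$ (it is locally free since $V$ is smooth) to lift along $\caO(V) \onto \caO(X)$ to an $\caO(V)$-linear map $\Omega^1_V \to \caO(V)$. The corresponding derivation $\widetilde\xi \in \Vect(V)$ restricts to $\xi$ on $X$ and automatically satisfies $\widetilde\xi(I_X) \subseteq I_X$, hence lies in $\widetilde\mfg$. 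Consequently, for any $x$ in a $\mfg$-leaf $Z$, the image $\widetilde\mfg|_x \subseteq T_xV$ equals $\alpha_x(\mfg) = T_xZ$ (the first equality by the surjectivity just established, the second by transitivity on the leaf). The condition cutting out $S$ over such $x$ then becomes $\alpha \in (T_xZ)^\perp$, so $S \cap T^*V|_Z$ is exactly the conormal bundle to $Z$ in $V$.

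Since $X$ is by hypothesis the disjoint union of finitely many leaves, it follows that $S$ is set-theoretically the union of the conormal bundles to these leaves, which yields the desired containment for the characteristic variety. The principal obstacle is the surjectivity of $\widetilde\mfg \to \mfg$, for which smoothness of $V$ is essential; once this is in hand, everything else reduces to a routine computation of the principal symbols of the defining relations.
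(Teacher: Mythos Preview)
Your proof is correct and follows essentially the same approach as the paper's: both filter $i_\natural M(X,\mfg)$ by the order filtration inherited from $\caD(V)$, take principal symbols, and identify the resulting vanishing locus as $\{(x,p)\in T^*V:\ x\in X,\ p\in\img(\alpha_x)^\perp\}$, which restricts over each leaf to its conormal bundle. The paper's proof simply asserts that the symbols of $\widetilde\mfg$ cut out $\img(\alpha_x)^\perp$ in each cotangent fiber without justifying that $\{\widetilde\xi(x):\widetilde\xi\in\widetilde\mfg\}=\alpha_x(\mfg)$; your lifting argument via projectivity of $\Omega^1_V$ makes this step explicit, which is a worthwhile addition.
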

\begin{proof} 
  We make an explicit (and straightforward) computation.  For
  notational convenience, we assume that $X \subseteq V$ and that $i$
  is the inclusion. We equip $i_\natural M(X,\mfg)$ with the good filtration
  given by the canonical generator $1 \in i_\natural M(X,\mfg)$, i.e.,
  $\bigl( i_\natural M(X,\mfg) \bigr)_{\leq m} = (\caD_V)_{\leq m} \cdot 1$;
  see Remark \ref{r:canon-gen}.  Then we claim that the associated
  graded relations of the defining relations, $I_X$ and
  $\widetilde \mfg$, of $i_\natural M(X,\mfg)$, cut out the union of the
  conormal bundles.  In this case the associated graded relations are
  also just $I_X, \widetilde{\mfg} \subseteq \caO(T^*V)$. Then, in
  view of the canonical surjection
  $\caO(T^* V)/(I_X \caO(T^* V) + \widetilde \mfg \caO(T^* V)) \onto
  \gr i_\natural M(X,\mfg)$, we obtain the result.

  The claim follows from a more general one, that does not require the
  assumption that $X$ is the union of finitely many $\mfg$-leaves:
\[
Z(I_X + \widetilde \mfg) = \{(x, p): x \in X, p \in
\img(\alpha_x)^\perp\}.
\]
By definition, the restriction of the RHS to any $\mfg$-leaf is the
conormal bundle to the leaf, which proves the preceding claim.  To
prove the above formula, note first that $I_X \caO(T^*V)$ is nothing
but the ideal $I_{T^* V|_X}$ of the subset $T^* V|_X$, the
restriction of the cotangent bundle to $X \subseteq V$. Then, at each
$x \in X$, the equations $\widetilde \mfg$ cut out, in the cotangent
fiber $T^*_x V$, the perpendicular $\img(\alpha_x)^\perp$.  This
proves the claim, and hence the lemma.
\end{proof}
Since the conormal bundle to a smooth subvariety $Z \subseteq V$ of a
smooth variety $V$ has dimension equal to the dimension of $V$, we
conclude:
\begin{theorem} \label{t:mx-hol}
If $X$ is the union of finitely many $\mfg$-leaves,
  then $M(X,\mfg)$ is holonomic.
\end{theorem}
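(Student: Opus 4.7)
The plan is to combine Lemma \ref{l:support} with the standard dimension-theoretic criterion for holonomicity. Fix a closed embedding $i: X \hookrightarrow V$ into a smooth (affine) variety; recall that $M(X,\mfg)$ is holonomic by definition when the characteristic variety of $i_\natural M(X,\mfg)$ in $T^*V$ has dimension at most $\dim V$.

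By Lemma \ref{l:support}, the characteristic variety of $i_\natural M(X,\mfg)$ is contained in the union $\bigcup_Z T_Z^*V$, where $Z$ runs over the finitely many $\mfg$-leaves of $X$. Each leaf $Z$ is smooth (as noted after the definition of a $\mfg$-leaf), so the conormal bundle $T_Z^*V$ is a well-defined smooth subvariety of $T^*V$. A direct dimension count gives
\[
\dim T_Z^*V = \dim Z + \operatorname{codim}_V Z = \dim V.
\]
Thus each $T_Z^*V$ is a Lagrangian (in particular, of dimension exactly $\dim V$).

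Since there are only finitely many leaves by hypothesis, the union $\bigcup_Z T_Z^*V$ is a finite union of varieties each of dimension $\dim V$, so the union itself has dimension at most $\dim V$. Containment in this union forces the characteristic variety of $i_\natural M(X,\mfg)$ to have dimension at most $\dim V$ as well, which is exactly the definition of holonomicity. There is no real obstacle here: once Lemma \ref{l:support} is in hand, the theorem reduces to the purely elementary observation that the conormal bundle of a smooth subvariety of a smooth variety has the dimension of the ambient variety, together with the fact that a finite union of subvarieties has the maximum of their dimensions.
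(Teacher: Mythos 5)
Your argument is correct and is precisely the paper's proof: the sentence immediately preceding Theorem \ref{t:mx-hol} makes exactly the same observation, namely that the conormal bundle of a smooth subvariety has the dimension of the ambient smooth variety, and combines it with Lemma \ref{l:support} and the finiteness of the leaves. No differences of substance.
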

We remark that this result, in the case that $\mfg$ is the derivative
of the action of a (connected) algebraic group $G$ on $X$, is well-known (see, e.g., \cite[Section 5]{Hot-edm}); see also Remark \ref{r:conj-reg-ss} below.
We can now complete the proof of Theorem \ref{t:coinv-fd}. By Theorem
\ref{t:mx-hol} and Corollary \ref{c:fd-pfwd}, $H^0\pi_{*} M(X,\mfg)$
is finite-dimensional.  By Lemma \ref{l:pi0mx}, this is
$\caO(X)_\mfg$, which is hence finite-dimensional.
\begin{example}\label{ex:m-vg}
Suppose that $X=V/G$ for $V$ a symplectic vector space and $G < \Sp(V)$ a finite
group of symplectic automorphisms.  
For any parabolic subgroup $P$ in $G$ (as in Example \ref{ex:finqt-svs}), let $N(P)$ be the normalizer of
$P$ in $G$, and $N^0(P) := N(P)/P$. Let $i_K: V^P/N^0(P)\to V/G$ be
the corresponding closed embedding. Then by \cite[Corollary 4.16]{ESdm}, 
there is a canonical isomorphism, where $\text{Par}(G)/G$
% denotes the set of
%parabolic subgroups of $G$ and $\text{Par}(G)/G$ 
denotes the set of conjugacy classes $[P]$ of parabolic subgroups $P < G$,
  \begin{equation}\label{e:m-vg}
  M(V/G) \cong \bigoplus_{[P] \in \text{Par}(G)/G} \HP_0(\caO_{(V^P)^\perp/P})\otimes
  (i_K)_*(\IC(V^P/N^0(P))).
  \end{equation}
%where the sum is over \emph{conjugacy classes} $[P]$ of parabolic subgroups  $P < G$.
\end{example}

\subsection{Globalization}\label{ss:glob}
In this subsection, we briefly explain how to generalize the previous
constructions to the not necessarily affine case.  If $X$ is an
arbitrary variety, then we may consider a presheaf $\mfg$ of Lie
algebras acting on $X$ via a map $\alpha: \mfg \to T_X$.  For example, $\mfg$ could be a constant
sheaf, giving a (global) action of $\mfg$ on $X$.  Another example is
if $X$ is a Poisson variety; then $\mfg$ could be $\caO_X$, acting by
the Poisson bracket, or its image in $T_X$, which is the presheaf of
Hamiltonian vector fields.

As before, without loss of generality, let us assume that
$\mfg \subseteq T_X$ is a sub-presheaf and $\alpha$ is the inclusion
(we can just take the image of $\alpha$).  Let $i: X \to V$ be a
closed embedding into a smooth variety $V$ and $\mathcal{I}_X$ the
ideal sheaf of $i(X)$. Let $\widetilde \mfg \subseteq T_V$ be the
sub-presheaf of vector fields which are tangent to $X$ and restrict on
$X$ to vector fields in $\mfg$.  Then, given any open affine subset
$U \subseteq X$, we can consider the D-module $M(U,\mfg(U))$ defined
as in Definition \ref{d:mxg}.  Under mild conditions, these then glue
together to form a D-module on $X$:
\begin{definition}\cite[Definition 3.4]{ES-dmlv}
 The presheaf $\mfg$ is \emph{$\caD$-localizable} if,
for every chain $U' \subseteq U \subseteq X$ of open affine subsets,
\begin{equation}
\mfg(U') \cdot \caD(U') = \mfg(U) \cdot \caD(U').
\end{equation}
\end{definition}
In particular, it is immediate that, if $\mfg$ is a constant sheaf,
then it is $\caD$-localizable. By \cite[Example 3.11]{ES-dmlv}, the
presheaf of Hamiltonian vector fields on an arbitrary Poisson variety
is $\caD$-localizable. By \cite[Example 3.9]{ES-dmlv}, the sheaf of
all vector fields is $\caD$-localizable (note that merely being a
sheaf does not imply $\caD$-localizability, although by \cite[Example
3.10]{ES-dmlv}, being a quasi-coherent sheaf acting in a certain way does imply
$\caD$-localizability).
% See also Remark \ref{r:presheaves} below.
\begin{proposition}\cite[Proposition 3.5]{ES-dmlv}
If $\mfg$ is $\caD$-localizable, then there is a canonical D-module
$M(X,\mfg)$ on $X$ whose restriction to every open affine $U$ is $M(U,\mfg(U))$.
\end{proposition}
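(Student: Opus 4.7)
The plan is to glue the locally defined modules $M(U, \mfg(U))$ along an affine open cover of $X$, using $\caD$-localizability to ensure compatibility on intersections. The starting point is the intrinsic description established in the proof of Lemma \ref{l:mxg-defn}, namely $M(U, \mfg(U)) \cong \mfg(U) \cdot \caD_U \setminus \caD_U$, which manifestly does not depend on any choice of ambient embedding and is therefore well adapted to localization.

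The key step is to show that for every chain of open affines $U' \subseteq U \subseteq X$, there is a canonical isomorphism $M(U, \mfg(U))|_{U'} \iso M(U', \mfg(U'))$. The canonical right D-module $\caD_U$ from Section \ref{ss:dx} satisfies $\caD_U|_{U'} = \caD_{U'}$, and restriction to $U'$ is exact, so
\[
M(U,\mfg(U))|_{U'} \;\cong\; \bigl(\mfg(U) \cdot \caD_{U'}\bigr) \setminus \caD_{U'}.
\]
By the $\caD$-localizability hypothesis, the right $\caD_{U'}$-submodules $\mfg(U) \cdot \caD_{U'}$ and $\mfg(U') \cdot \caD_{U'}$ coincide inside $\caD_{U'}$, so the right-hand side is exactly $M(U', \mfg(U'))$, and the restriction map is an isomorphism.

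With these canonical restriction isomorphisms in hand, I would verify the cocycle condition on triple chains $U'' \subseteq U' \subseteq U$: both legs of the diagram are obtained by applying the same quotient map $\caD_U \onto \caD_{U''}$ to the intrinsic presentation, so they agree on the nose. Combined with the fact that the construction commutes with passing to a smaller affine within a single $U$ (again by $\caD$-localizability applied to $U' \subseteq U$), the modules $\{M(U_i, \mfg(U_i))\}$ on any affine open cover $\{U_i\}$ of $X$ satisfy the sheaf gluing conditions and assemble into a single D-module $M(X,\mfg)$ with the prescribed restriction to each affine open.

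The main obstacle is the sheaf-theoretic bookkeeping around the canonical module $\caD_X$ on a possibly singular $X$: one needs that restriction of $\caD_U$ to an affine subvariety $U' \subseteq U$ yields $\caD_{U'}$, and that forming the quotient by the right-$\caD$-submodule generated by a subset commutes with this restriction. Both are part of the standard formalism for D-modules on singular varieties reviewed in the appendix; once they are in hand, $\caD$-localizability translates directly into the desired gluing property and nothing further is required.
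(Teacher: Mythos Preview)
Your proposal is correct and follows the intended approach: the $\caD$-localizability condition is designed precisely so that the intrinsic presentation $M(U,\mfg(U)) = \mfg(U)\cdot\caD_U \setminus \caD_U$ is compatible with restriction along open affines, and your verification that $(\mfg(U)\cdot\caD_U)|_{U'} = \mfg(U)\cdot\caD_{U'} = \mfg(U')\cdot\caD_{U'}$ is exactly the point. The paper does not supply a proof here (it defers to \cite[Proposition 3.5]{ES-dmlv}), but the argument there proceeds in essentially the same way you outline.
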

\begin{remark} In \cite{ES-dmlv}, the above is stated somewhat more
  generally: one can fix an open affine covering and ask only that
  $\mfg$ be $\caD$-localizable with respect to this covering.  Above
  we take the covering given by \emph{all} open affine subsets, and it
  turns out that the $\mfg$ we are interested in are all
  $\caD$-localizable with respect to that covering (which is the
  strongest $\caD$-localizability condition).
\end{remark}
With this definition, all of the results and proofs of the previous
section, except for Lemma \ref{l:pi0mx} (and the proof of Theorem
\ref{t:coinv-fd}) carry over. In particular, Theorem \ref{t:mx-hol} holds
for arbitrary (not necessarily affine) $X$.
\begin{example}\label{ex:tr2}
  As in Example \ref{ex:tr}, we can consider the case where $\mfg$
  acts transitively on $X$. Assume $X$ is irreducible. As before, $X$
  is smooth, so $M(X,\mfg)$ is either a line bundle or zero. If
  $\mfg$ preserves a nonvanishing global volume form, we again deduce
  that $M(X,\mfg) = \Omega_X$, by the same isomorphism sending
  $1 \in M(X,\mfg)$ to the volume form. This includes the case that
  $X$ is symplectic (and need not be affine), with the symplectic
  volume form.
\end{example}
\begin{corollary}
If $X$ is a (not necessarily affine) Poisson variety with finitely many
symplectic leaves, and $\mfg$ is $\caO_X$ (or the presheaf of Hamiltonian vector
fields), then $M(X,\mfg)$ is holonomic. 
\end{corollary}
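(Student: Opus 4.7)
The plan is to reduce to the affine case via Theorem~\ref{t:mx-hol}, using that holonomicity of a D-module is a local property (as discussed in the appendix) and that the construction $M(X,\mfg)$ is local by the preceding proposition: $M(X,\mfg)|_U = M(U,\mfg(U))$ for every open affine $U \subseteq X$. Since $X$ is a variety of finite type, I can choose a finite open affine cover $\{U_1,\ldots,U_n\}$ of $X$, so it suffices to prove $M(U_i,\mfg(U_i))$ is holonomic for each $i$.

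For this, by Theorem~\ref{t:mx-hol}, I need to show that each $U_i$, viewed as an affine Poisson variety on its own, is the union of finitely many $\mfg(U_i)$-leaves (equivalently, finitely many symplectic leaves in the Poisson case). The key observation is that symplectic leaves restrict well to open subsets: if $L \subseteq X$ is a symplectic leaf of $X$, then $L \cap U_i$ is open in $L$; since leaves are irreducible (as noted just after the definition), $L \cap U_i$ is irreducible and in particular connected whenever it is nonempty. The Hamiltonian vector fields on $U_i$ span the tangent space to $L \cap U_i$ at each point (since this is true on $L$, and Hamiltonian vector fields on $X$ restrict to Hamiltonian vector fields on $U_i$), and $L \cap U_i$ is maximal with this property inside $U_i$ (any strictly larger connected subvariety on which Hamiltonian flow acts transitively would enlarge $L$ itself after taking closure of its image). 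Hence the symplectic leaves of $U_i$ are exactly the nonempty intersections $L \cap U_i$, and there are finitely many of them. The same argument applies with $\mfg$ the presheaf of Hamiltonian vector fields, since $\mfg(U_i)$ and $\caO(U_i)$ have the same image in $\Vect(U_i)$ and hence define the same leaves.

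Combining these facts, Theorem~\ref{t:mx-hol} gives holonomicity of $M(U_i,\mfg(U_i))$ for each $i$, which by locality yields holonomicity of $M(X,\mfg)$.

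I do not anticipate any serious obstacle here: the two ingredients (locality of the construction $M(X,\mfg)$ and locality of holonomicity) are already in place, and the only real content is the compatibility of symplectic leaves with passage to open affine subsets, which is essentially tautological once one recalls that leaves are irreducible. The mildest subtlety is just to note that $\caD$-localizability (verified earlier for $\caO_X$ and for the presheaf of Hamiltonian vector fields) is what legitimizes the identification $M(X,\mfg)|_U = M(U,\mfg(U))$ being used in the first step.
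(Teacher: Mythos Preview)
Your proof is correct and follows essentially the same route as the paper. The paper handles this by first remarking (in the paragraph immediately preceding the corollary) that Theorem~\ref{t:mx-hol} and its proof carry over verbatim to the non-affine setting once $M(X,\mfg)$ is defined via $\caD$-localizability, and then simply observes that the $\mfg$-leaves are the symplectic leaves; your explicit reduction to an affine cover, together with the check that symplectic leaves restrict to symplectic leaves on open affines, is exactly what underlies that one-line extension.

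One small comment: your maximality argument for $L\cap U_i$ is slightly informal. A cleaner way to see it is that the $j$-dimensional leaves of $X$ are precisely the connected components of $X_j=\{x:\rk\alpha_x=j\}$, each of which is open in $X_j$; hence the nonempty $L\cap U_i$ are the connected components of $X_j\cap U_i$, and any connected locally closed $Z\subseteq U_i$ satisfying the tangent condition lies in a single $X_j\cap U_i$ and therefore in a single $L\cap U_i$. This avoids the appeal to ``taking closure of its image,'' which is not quite how maximality works here.
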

\begin{proof}
  We only need to observe that the $\mfg$-leaves are the symplectic
  leaves. Then the result follows from Theorem \ref{t:mx-hol}.
\end{proof}
\begin{remark}\label{r:presheaves}
  Beware that the use of presheaves above is necessary: for a general
  Poisson variety $X$, the presheaf $\mfg$ of Hamiltonian vector
  fields is \emph{not} a sheaf: see \cite[Remark 3.16]{ES-dmlv} (even
  though $\mfg$ is the image of the action $\alpha: \caO_X \to T_X$ of
  the honest sheaf of Lie algebras $\caO_X$ on $X$). However, as we
  observed there, when $X$ is generically symplectic (which is 
  true when it has finitely many symplectic leaves, as in all of our
  main examples), then $\mfg$ is a sheaf (although it is clearly not
  quasi-coherent).
% (owing to
%  the fact that a Hamiltonian vector field is obtained in this case
%  from a unique function up to adding a constant function).
\end{remark}
\begin{example}
Suppose that $V$ is a symplectic variety (not necessarily affine) and
$G < \SpAut(V)$ a finite subgroup of symplectic automorphisms. Then
we can let $\mfg := H(V)^G$, the $G$-invariant Hamiltonian vector fields.
Then we obtain the following formula (\cite[Theorem 4.19]{ESdm}), which
implies \eqref{e:expl-symp-qt} from before. In the notation of Remark \ref{r:expl-symp-qt}, for $i_Z: Z \to V$ the closed embedding:
\begin{equation}\label{e:expl-symp-qt-dmod}
M(V,\mfg) \cong \bigoplus_P \bigoplus_{Z \in C_P} (i_Z)_* H(TV|_Z/TZ),
\end{equation}
where the sum is over all parabolic subgroups of $V$, and we view
topological local systems on smooth subvarieties of $V$ as left
D-modules and hence right D-modules in the canonical way.

Passing to $V/G$ we get a global generalization of Example
\ref{ex:m-vg} (\cite[Theorem 4.21]{ESdm}): let $\text{Par}(G)/G$ be
the set of conjugacy classes $[P]$ of parabolic subgroups $P < G$, and
for $Z \in C_P$, let $N_Z(P) < N(P)$ be the subgroup of elements of
the normalizer $N(P)$ of $P$ which map $Z$ to itself, and let
$N_Z^0(P) := N_Z(P)/P$.  Let $Z_0 := Z/N^0_Z(K)$ and
$i_{Z_0}: Z_0 \to V/G$ the closed embedding. Let
$\pi_Z: Z \to Z_0$ be the $N_Z^0(P)$-covering, and let
$H_{Z_0} := (\pi_Z)_* H(TV_Z/TZ)^{N_Z^0(P)}$ be the
D-module on $Z_0$ obtained by equivariant pushforward from
$Z$. Then we obtain:
\begin{equation}\label{e:expl-symp-qt-dmod2}
M(V/G) \cong \bigoplus_{[P] \in \text{Par}(G)/G} 
\bigoplus_{Z \in C_P/N(P)} (i_{Z_0})_* H_{Z_0}.
\end{equation}
\end{example}
\begin{remark}\label{r:smooth} In fact, one can prove a $C^\infty$ analogue of \eqref{e:expl-symp-qt-dmod}, using the analogous (but simpler) arguments for distributions rather than  D-modules. Let  $V$ be a compact $C^\infty$-manifold and $G$ be a finite group acting faithfully on $V$.
  For $P \leq G$ a parabolic subgroup and $Z$ a connected component of
  the locus of fixed points of $P$, let $H_Z$ denote the space of flat
  sections of the local system $H(TV|_Z/TZ)$ recalled in Remark
  \ref{r:expl-symp-qt}: by \cite[\S 4.6]{ESdm}, the local system is
  trivial, so $H_Z$ identifies with each fiber:
  $H_Z \cong \caO(T_zV/T_zZ) / \{\caO(T_zV/T_zZ),\caO(T_zV,T_zZ)^P\}$
  for every $z \in Z$. In particular, by Corollary \ref{c:symp-qt},
  $H_Z$ is a finite-dimensional vector space.  Then, \cite[Proposition
  4.23]{ESdm} states that the space of smooth distributions on $V$
  invariant under $G$-invariant Hamiltonian vector fields is
  isomorphic to
\begin{equation}
  \bigoplus_P \bigoplus_{Z \in C_P} H_Z^*.
\end{equation}
In particular it is finite-dimensional and of dimension
$\sum_P \sum_{Z \in C_P} \dim H_Z$.
\end{remark}
\begin{remark}\label{r:analytic} Theorem \ref{t:mx-hol} continues to
  hold in the complex analytic setting, using the results of this
  section. However, the coinvariants $\caO(X)_\mfg$ need not be
  finite-dimensional: for instance, if
  $X = \bC^\times \times (\bC \setminus \bZ)$ equipped with the usual symplectic form from the inclusion $X \subseteq \bC^2$ and $\mfg=\caO(X)$, 
then $\caO(X)_\mfg \cong H^2(X)$, which is infinite-dimensional.
\end{remark}  

\section{Poisson-de Rham and $\mfg$-de Rham homology}
\subsection{} As an application of the constructions of the previous section,
we can define a new derived version of the coinvariants $\caO(X)_\mfg$.
Let $X$ be an affine variety and $\mfg$ a Lie algebra acting on $X$.
\begin{definition}\label{d:mfg-dr}
  The $\mfg$-de Rham homology of $X$, $H^{\mfg-DR}_*(X)$, is defined
  as the full derived pushforward $H^{\mfg-DR}_i(X) := H^{-i}(\pi_*
  M(X,\mfg))$.
\end{definition}
By Lemma \ref{l:pi0mx}, $H^{\mfg-DR}_0(X)=\caO(X)_\mfg$.  In this
case, the pushforward functor $H^0\pi_*$ is right exact, and
$H^{-i}(\pi_*) = L^i (H^0\pi_*)$ is the $i$-th left derived functor
(which is why we negate the index $i$ and define a homology theory,
rather than a cohomology theory).

Using Section \ref{ss:glob}, this definition carries over to the
nonaffine setting, where now $\mfg$ may be an arbitrary
$\caD$-localizable presheaf of vector fields on $X$; however, we no
longer have an (obvious) interpretation of $H^{\mfg-DR}_0(X)$ (and
$H^0\pi_*$ is no longer right exact in general).
\begin{example}\label{ex:poisson}
  In the case that $X$ is Poisson, Definition \ref{d:mfg-dr} defines a
  homology theory which we call the \emph{Poisson-de Rham
    homology}. Recall here that, when $X$ is affine, we let $\mfg$ be
  $\caO(X)$ (or its image in $\Vect(X)$, the Lie algebra of
  Hamiltonian vector fields on $X$).  For general $X$, we let $\mfg$
  be $\caO_X$ (or its image in $T_X$, the presheaf of Lie algebras of
  Hamiltonian vector fields).  We denote this theory by
  $\HP^{DR}_*(X) = H^{\mfg-DR}_*(X)$.  If $X$ is affine, then
  $\HP^{DR}_0(X) = \HP_0(\caO(X))$ is the (ordinary) zeroth Poisson
  homology.  If $X$ is symplectic, then we claim that
  $\HP^{DR}_*(X) = H_{DR}^{\dim X - *}(X)$ is the de Rham cohomology
  of $X$.  Indeed, by Examples \ref{ex:tr} and \ref{ex:tr2}, in this
  case $M(X,\mfg) = \Omega_X$, the canonical right D-module of volume
  forms, and then
  $\HP^{DR}_i(X) = H^{-i} \pi_* \Omega_X = H_{DR}^{\dim X - i}(X)$.
\end{example}
\begin{remark} The Poisson-de Rham homology is quite different, in
  general, from the ordinary Poisson homology. If $X$ is an affine
  symplectic variety, it is true that
  $\HP^{DR}_*(X) \cong \HP_*(\caO(X))$, both producing the de Rham
  cohomology of the variety.  But when $X$ is singular, if $X$ has
  finitely many symplectic leaves, $\HP^{DR}_*(X)$ can be nonzero only in
  degrees $-\dim X \leq * \leq \dim X$, since it is the pushforward of a
  holonomic $\caD$-module on $X$. On the other hand, the ordinary
  Poisson homology $\HP_*(\caO(X))$ is in general nonzero in infinitely many
  degrees if $X$ is singular and affine.
% For example, by \cite[Theorem 2.4.1]{EGdelpezzo}, if
% $X=\{f=0\} \subseteq \bC^3$ for $f$ a (quasi-)homogenous polynomial
% in three variables, then $\HP^i(\caO(X)) \cong J_X$ for all
% $i \geq 3$.
\end{remark}

Theorem \ref{t:mx-hol} (now valid for nonaffine $X$) together
with Corollary \ref{c:fd-pfwd} implies:
\begin{corollary}
  If $X$ is the union of finitely many $\mfg$-leaves, then
  $H^{\mfg-DR}_*(X)$ is finite-dimensional.  In particular, if $X$ is
  a Poisson variety with finitely many symplectic leaves, then
  $\HP^{DR}_*(X)$ is finite-dimensional.
\end{corollary}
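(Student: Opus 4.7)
The proof plan is essentially a two-step reduction that combines the main structural theorem on $M(X,\mfg)$ with a standard finiteness result for pushforward of holonomic D-modules to a point. I would organize it as follows.

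First, I would invoke Theorem \ref{t:mx-hol} (whose globalization to non-affine $X$ was asserted at the end of Section \ref{ss:glob}) to conclude that $M(X,\mfg)$ is a holonomic D-module on $X$ whenever $X$ is a union of finitely many $\mfg$-leaves. No additional argument is needed here: the hypothesis on leaves is exactly the one appearing in that theorem, and the conormal-bundle estimate on the characteristic variety (Lemma \ref{l:support}) has already been established in sufficient generality.

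Next, by Definition \ref{d:mfg-dr}, $H^{\mfg-DR}_i(X) = H^{-i}(\pi_* M(X,\mfg))$ where $\pi: X \to \pt$. Since the pushforward to a point of a holonomic D-module is well-known to have finite-dimensional cohomology in every degree (this is the content of Corollary \ref{c:fd-pfwd} from the appendix; it follows from the preservation of holonomicity under proper pushforward together with the fact that a holonomic D-module on a point is just a finite-dimensional vector space), I may conclude that each $H^{\mfg-DR}_i(X)$ is finite-dimensional. Moreover, since $M(X,\mfg)$ is holonomic and $\pi_*$ can only produce nonzero cohomology in degrees between $-\dim X$ and $\dim X$, the total $H^{\mfg-DR}_*(X)$ is finite-dimensional.

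For the second sentence, the Poisson case follows immediately by specializing to $\mfg = \caO_X$ (or to its image, the presheaf of Hamiltonian vector fields). As observed earlier in the text, this presheaf is $\caD$-localizable (so $M(X,\mfg)$ is defined), and the $\mfg$-leaves for this choice are precisely the symplectic leaves of $X$. Thus the hypothesis that $X$ has finitely many symplectic leaves reduces directly to the hypothesis of the first assertion, and $\HP^{DR}_*(X) = H^{\mfg-DR}_*(X)$ is finite-dimensional.

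There is no significant obstacle in this argument; the content is entirely bundled into Theorem \ref{t:mx-hol} (holonomicity of $M(X,\mfg)$) and the appendix's result on pushforward of holonomic D-modules. The only minor care point would be checking that the identification of $\mfg$-leaves with symplectic leaves, and the $\caD$-localizability of the Hamiltonian presheaf, both used in Section \ref{ss:glob}, are in place — but these have been verified in the preceding discussion.
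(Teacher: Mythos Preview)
Your proposal is correct and follows essentially the same approach as the paper: the paper simply states that Theorem \ref{t:mx-hol} (in its globalized form) together with Corollary \ref{c:fd-pfwd} yields the result, and your write-up unpacks exactly this combination, with the Poisson case obtained by the specialization $\mfg=\caO_X$.
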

\begin{example}
  Suppose $\mfg$ is any Lie algebra (or presheaf of Lie algebras)
  acting transitively on $X$ (in particular, $X$ is smooth), and
  assume $X$ is connected. Then as explained in Example \ref{ex:tr},
  $M(X,\mfg)$, and hence $H^{\mfg-DR}(X)$, is either zero or a line
  bundle
% preserves a rank-one de Rham local
%  system, 
  (i.e., in the complex topology, there exist everywhere local
  sections invariant under (the flow of) $\mfg$, which forms a
  line bundle because such sections are unique up to
  constant multiple).  In the case that $M(X,\mfg)$ is a line bundle, the
  associated left D-module
  $L:=M(X,\mfg) \otimes_{\caO_X} \Omega_X^{-1}$ (for $\Omega_X$ the
  canonical bundle) has a canonical flat connection, and then
  % then letting $L$ be the associated rank-one vector bundle with
  % flat connection (via the equivalence between left and ri is such a
  % de Rham local system $L$, then
  $H^{\mfg-DR}(X) = H_{DR}^{\dim X - i}(X,L)$, the de Rham cohomology
  with coefficients in $L$. 

  This vector bundle with flat connection need not be
  trivial. Consider \cite[Example 2.38]{ES-dmlv}:
  $X = (\bA^1\setminus\{0\}) \times \bA^1 = \Spec \bk[x,x^{-1},y]$
  with $\mfg$  the Lie algebra of vector fields preserving the
  multivalued volume form $d(x^{r}) \wedge dy$ for $r \in \bk$.  It is
  easy to check that this makes sense and that the resulting Lie
  algebra $\mfg$ is transitive.  Then, $M(X,\mfg)$ is the
  D-module whose local homomorphisms to $\Omega_X$ correspond to
  scalar multiples of this volume form, and hence $M(X,\mfg)$ is
  nontrivial (but with regular singularities) when $r$ is not an
  integer.  For $\bk=\bC$, the line bundle $L=M(X,\mfg)^\ell$ with flat connection
%local system $L$ $M(X,\mfg)$ thus has
has  monodromy $e^{-2 \pi i r}$ going counterclockwise around the unit circle.
\end{example} 
% \begin{remark} \label{r:analytic2}
%  % If $X$ is compact with finitely many $\mfg$-leaves, this implies
%   % that the pushforward $\pi_* M(X,\mfg)$ has finite-dimensional
%   % cohomology. 
%  In contrast to Remark \ref{r:analytic}, in the analytic setting $\pi_* M(X,\mfg)$ need not have
%   finite-dimensional cohomology for general $X$, even if there are
%   finitely many $\mfg$-leaves. This is due to the fact that
%   pushforward for analytic D-modules does not preserve
%   holonomicity in general.  For example, one could take an analytic
%   symplectic surface $X$ which has infinite-dimensional de Rham
%   cohomology (such as $\bC \times (\bC \setminus \bZ))$) with
%   $\mfg=\caO_X$ or the Lie algebra of Hamiltonian vector fields, and
%   in this case $\pi_* M(X,\mfg) = \pi_* \Omega_X = H^{\dim X - *}(X)$,
%   which is infinite-dimensional. Instead, one can say that the
%   Riemann-Hilbert correspondence takes $M(X,\mfg)$ to a constructible
%   complex (which is a local finite-dimensionality statement), so that
%   the pushforward $\pi_* M(X,\mfg)$ is the hypercohomology of this
%   complex (which in many,  but not all, cases may be
%   finite-dimensional).
% \end{remark}

\section{Conjectures on symplectic resolutions}\label{s:c-sr}
\subsection{} In the case where $X$ is symplectic, by Example \ref{ex:poisson},
$\HP_0(\caO(X)) \cong H^{\dim X}(X)$ and
$\HP^{DR}_i(X) \cong H^{\dim X - i}(X)$ for all $i$.  It seems to happen
that, when $\widetilde X \to X$ is a symplectic resolution (see
Example \ref{ex:symp-res}), we can also describe the Poisson-de Rham
homology of $X$ in the same way, and it coincides with that of
$\widetilde X$.  For notational simplicity, let us write
$M(X) := M(X,\caO_X)$ below. In this section, we set $\bk = \bC$.
\begin{conjecture}\label{con:sympres}
If $\rho: \widetilde X \to X$ is a symplectic resolution and $X$
is affine, then:
\begin{itemize}
\item[(a)] $\HP_0(\caO(X)) \cong H^{\dim X}(\widetilde X)$;
\item[(b)] $\HP_i^{DR}(X) \cong H^{\dim X - i}(\widetilde X)$ for all $i$;
\item[(c)] $M(X) \cong \rho_* \Omega_{\widetilde X}$.
\end{itemize}
\end{conjecture}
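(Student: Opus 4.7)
The plan is first to reduce (a) and (b) to part (c), then to construct a canonical morphism $\phi: M(X) \to \rho_*\Omega_{\widetilde X}$ and study when it is an isomorphism. For the reduction: if (c) holds, then $\pi_* M(X) \cong \pi_*\rho_* \Omega_{\widetilde X} = (\pi\circ\rho)_*\Omega_{\widetilde X}$; since $\widetilde X$ is smooth symplectic with $\dim \widetilde X = \dim X$, Example \ref{ex:poisson} applied to $\widetilde X$ identifies $\HP_i^{DR}(\widetilde X) \cong H^{\dim X - i}(\widetilde X)$, so $\HP_i^{DR}(X) = H^{-i}\pi_*M(X) \cong H^{\dim X - i}(\widetilde X)$, giving (b); and (a) is the case $i=0$.

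To build $\phi$, I use the universal property of $M(X)$ (Lemma \ref{l:mxg-defn}): a morphism $M(X) \to N$ is the same as an element of $N^{\caO_X}$, i.e., a global section of $N$ annihilated by every Hamiltonian vector field. The symplectic volume form $\vol_{\widetilde X} = \omega^n/n! \in \Gamma(\widetilde X, \Omega_{\widetilde X})$ is canonical and is killed by every Hamiltonian vector field on $\widetilde X$, so properness of $\rho$ gives a canonical global section $\bar\vol \in \Gamma(X, \rho_*\Omega_{\widetilde X})$. Because $\rho$ is a Poisson map, each Hamiltonian vector field $\xi_f$ on $X$ lifts to $\xi_{\rho^*f}$ on $\widetilde X$, which kills $\vol_{\widetilde X}$; by functoriality of pushforward, $\xi_f$ then kills $\bar\vol$. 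This invariant section produces the desired $\phi$.

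Now one analyzes $\phi$. On the smooth open symplectic locus $U \subseteq X$ where $\rho$ is an isomorphism, $\phi|_U: M(U) = \Omega_U \to \Omega_U$ is the identity, so $\phi$ is generically an isomorphism. By semismallness of $\rho$ together with the Beilinson--Bernstein--Deligne decomposition theorem, $\rho_*\Omega_{\widetilde X}$ is semisimple regular holonomic of the form $\IC(X) \oplus \bigoplus_\alpha \IC(\overline{Z}_\alpha, L_\alpha)$, with the $Z_\alpha$ strata in the singular locus and the $L_\alpha$ local systems arising from the monodromy on fibers of $\rho$. Thus (c) becomes (i) $\phi$ is surjective, i.e., $\bar\vol$ generates $\rho_*\Omega_{\widetilde X}$ as a D-module and so hits every simple summand, and (ii) $\phi$ is injective and $M(X)$ is itself semisimple --- equivalently, the canonical surjection $M(X) \twoheadrightarrow \operatorname{im}\phi$ is an isomorphism.

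The main obstacle is (ii). For (i), one can argue leaf by leaf: using Kaledin's formal product decomposition of a symplectic singularity along each symplectic leaf (as the leaf times a conical transverse slice), one reduces to the transverse-slice case, where the local contribution to $\rho_*\Omega_{\widetilde X}$ at a point of $\overline{Z}_\alpha$ is computed by cohomology of the fiber of $\rho$ and should match --- via the Hamiltonian $\caD$-module action on $M(X)$ --- the local contribution of $M(X)$. Part (ii), by contrast, genuinely requires the existence of a symplectic resolution: as highlighted in the introduction, for surface singularities that do \emph{not} admit a symplectic resolution --- e.g.\ the cone over a smooth plane curve of degree $d \geq 4$ --- $M(X)$ contains indecomposable non-semisimple summands and extra copies of $\delta$. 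Proving semisimplicity when a symplectic resolution exists will presumably require exploiting structure specific to that setting, for instance twistor or universal Poisson deformations: embed $\widetilde X \to X$ in a one-parameter family with generically smooth affine total space on which $M$ reduces to the canonical D-module, then use rigidity/specialization arguments to exclude non-semisimple extensions at the central fiber. This is the central difficulty and the reason (c) remains conjectural in general.
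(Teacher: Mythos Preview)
You correctly recognize that this is a conjecture, not a theorem, and that the essential content is the chain of implications $(c)\Rightarrow(b)\Rightarrow(a)$ together with an analysis of what obstructs $(c)$. Your reduction of $(b)$ and $(a)$ to $(c)$ matches the paper exactly.

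However, your strategy for attacking $(c)$ differs from the paper's in one structural respect, and also has a technical gap. The paper does \emph{not} construct a direct morphism $\phi: M(X)\to \rho_*\Omega_{\widetilde X}$. Instead (Remark~\ref{r:conj-ss-qt} and Remark~\ref{r:conj-flat}, following \cite{PS-pdrhhvnc}) it embeds $X$ in a twistor family $\mathcal{X}\to\Delta$, defines $M(X)'$ as the quotient of $M(X)$ by the torsion of the fiberwise family $M(X_t)$, and then proves that the \emph{semisimplification} of $M(X)'$ is $\rho_*\Omega_{\widetilde X}$. Thus the conjecture becomes two assertions: the torsion vanishes ($M(X)=M(X)'$), and $M(X)'$ is already semisimple. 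In the conical case the second is automatic by rigidity of $\rho_*\Omega_{\widetilde X}$ \cite[Proposition~3.6]{PS-pdrhhvnc}, so only torsion-freeness remains---which is precisely the deformation argument you allude to at the end. Your instinct to use twistor deformations is therefore on target, but the paper uses them at an earlier and more essential stage than you do.

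The technical gap: your construction of $\phi$ via Lemma~\ref{l:mxg-defn} requires an element of $(\rho_*\Omega_{\widetilde X})^{\caO_X}$, which by definition lives in $\Gamma_{\caD}(X,\rho_*\Omega_{\widetilde X})$---the D-module sections functor of Section~\ref{ss:dx}---not in $\Gamma(\widetilde X,\Omega_{\widetilde X})$. Writing ``properness of $\rho$ gives a canonical global section $\bar\vol\in\Gamma(X,\rho_*\Omega_{\widetilde X})$'' conflates the $\caO$-module pushforward with the D-module one; these are different functors and it is not immediate that $\vol_{\widetilde X}$ yields the required element. One can get a map by adjunction (using $j^!M(X)\cong\Omega_U\cong j^!\rho_*\Omega_{\widetilde X}$ on the smooth locus $U$, or proper adjunction for $\rho_*$), but even then the map lands a priori only in the $\IC(X)$-summand, and one does not directly obtain surjectivity onto the other summands this way. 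This is why the paper's route through the family $M(X_t)$, where generically $M(X_t)\cong\Omega_{X_t}\cong\rho_*\Omega_{\widetilde X_t}$, is cleaner: it sidesteps the need to build $\phi$ by hand.
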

We remark first that (b) obviously implies (a), setting $i=0$.  Next,
(c) implies (b), since, for $\pi^X: X \to \text{pt}$ and $\pi^{\widetilde X}: \widetilde{X} \to \text{pt}$ the projections to points, we have
$\pi^X \circ \rho = \pi^{\widetilde X}$. Thus, (c)
implies that $\pi^X_* M(X) = \pi^X_* \rho_* \Omega_{\widetilde X} = \pi^{\widetilde X}_* \Omega_{\widetilde X}$, whose cohomology is $H^{\dim \widetilde X - *}(\widetilde X) = H^{\dim X - *}(\widetilde X)$.

Next, note that, if one eliminates (a), the conjecture extends to
the case where $X$ is not necessarily affine.  Indeed, part (c) is a
local statement, so conjecture (c) for affine $X$ implies the same
conjecture for arbitrary $X$ by taking an affine covering.  As before,
(c) implies (b) for arbitrary $X$. 

Since (c) is a local statement, if it holds for
$\rho: \widetilde X \to X$, then it follows that the same statement
holds for slices to every symplectic leaf $Z \subseteq X$. Namely,
recall that the Darboux-Weinstein theorem (\cite{We}; see also
\cite[Proposition 3.3]{Kal-ssPpv}) states that a formal neighborhood
$\hat X_z$ of $z \in Z$, together with its Poisson structure, splits
as a product $\hat Z_z \times X_Z$, for some formal transverse slice
$X_Z$ to $Z$ at $z$, which is unique (and independent of the choice of
$z$) up to formal Poisson isomorphism.
Now, for such a formal slice $X_Z$, letting
$\rho': \widetilde X_Z=\rho^{-1}(X_Z) \to X_Z$ be the restriction of $\rho$,
we obtain a formal symplectic resolution, and then the statement of (c) and hence also of (a) and (b) hold for $\rho'$. In the case that $X_Z$ is the formal neighborhood of the vertex of a cone $C$ (expected to
occur by \cite[Conjecture
1.8]{Kal-gtsr}) and $\rho'$ is the restriction of
 a conical symplectic resolution $\rho_C: \widetilde{C} \to C$, this implies that Conjecture \ref{con:sympres} holds for $\rho_C$ as well.  Here and below a conical symplectic resolution is a $\bC^\times$-equivariant resolution for which
the action on the base contracts to a fixed point (i.e., the base is a cone).

% a resolution, then
%then
%(c) and hence (a) and (b) follow for $\rho'$.  
In particular one can use this to compute $\HP_0(\caO(X_Z))$ and
$\HP_i^{DR}(X_Z)$ for all leaves $Z$.  Conversely, \cite[Theorem
4.1]{PS-pdrhhvnc} shows that, if one can establish the formal analogue
of (a) for all $X_Z$, and if $\rho: \widetilde X \to X$ itself is a
conical symplectic resolution, 
%(with $\rho$
%$\bC^\times$-equivariant), 
then Conjecture (c) follows for $X$.
\begin{remark}\label{r:conj-ss-qt}
 Since $\rho$ is  semismall (\cite[Lemma
  2.11]{Kal-ssPpv}), it follows from the decomposition theorem
  \cite[Th\'eor\`eme 6.2.5]{BBDfp} that $\rho_* \Omega_{\widetilde X}$
  is  a semisimple regular holonomic  D-module on
  $X$. Moreover, by \cite[Proposition 2.1]{PS-pdrhhvnc},
  $\rho_* \Omega_{\widetilde X}$ is isomorphic to the semisimplification
 of a quotient $M(X)'$ of
  $M(X)$.
%  (by \cite[Proposition 2.1]{PS-pdrhhvnc}, the semisimple
  % D-module $\rho_* \Omega_{\widetilde X}$ is always isomorphic to
  % the semisimplification of a quotient $M(X)_0$ of $M(X)$.
  Conjecture \ref{con:sympres} therefore states that
  $M(X) \cong M(X)'$ and that $M(X)'$ is semisimple. In the case
  that $\rho$ is a conical symplectic resolution, by \cite[Proposition
  3.6]{PS-pdrhhvnc}, $\rho_* \Omega_{\widetilde X}$ is actually rigid,
  which implies that any D-module whose semisimplification is
  $\rho_* \Omega_{\widetilde X}$ is already semisimple.  Thus, in the
  conical case, $M(X)' \cong \rho_* \Omega_{\widetilde X}$, and
  Conjecture \ref{con:sympres} states that in fact the quotient
  $M(X) \onto M(X)'$ is an isomorphism.  For more details on the
  quotient $M(X)'$, see Remark \ref{r:conj-flat} below.
\end{remark}
Conjecture \ref{con:sympres} has been proved in many cases, with the
notable exception of Nakajima quiver varieties.
\begin{remark}
  Let $X$ be a Nakajima quiver variety.  By \cite[Theorem
  4.1]{PS-pdrhhvnc}, since formal slices to all symplectic leaves of
  $X$ are formal neighborhoods of Nakajima quiver varieties, to prove
  Conjecture \ref{con:sympres} for $X$, it would suffice to prove part
  (a) for $X$ and for all the quiver varieties that appear by taking
  slices.
% slice to each symplectic leaf of $X$. Since
  %these slices are all formal neighborhoods of the origin of Nakajima
  %quiver varieties, 
  Thus the full conjecture for the class of Nakajima quiver varieties
  would follow from part (a) for the class of Nakajima quiver
  varieties.
\end{remark}
\begin{example}
  Let $Y$ be a smooth symplectic surface.  Then one can set $X =
  \Sym^n Y := Y^n / S_n$, the $n$-th symmetric power of $Y$.  In this
  case one has the resolution $\rho: \widetilde X = \Hilb^n Y \to X$.  In
  this case, Conjecture \ref{con:sympres}(c) (and hence the entire conjecture) follows
  from \cite[Theorem 1.17]{hp0weyl}, which gives a direct computation
  of $M(X)$ in this case; see Section \ref{s:hp0weyl} for more details.
%\cite[\S 1.3]{hp0weyl}.
\end{example}
\begin{example} Next suppose that $Y = \bC^2/\Gamma$, for $\Gamma <
  \SL(2,\bC)$ finite, is a du Val singularity, and $X := \Sym^n Y$.
  Then we can take the minimal resolution $\widetilde Y \to Y$.  We obtain
  from the previous example the resolution $\rho_1: \widetilde X :=
  \Hilb^n \widetilde Y \to \Sym^n \widetilde Y$, and we can compose this with
  $\rho_2: \Sym^n \widetilde Y \to \Sym^n Y$ to obtain the resolution
  $\rho = \rho_2 \circ \rho_1: \widetilde X \to X$.  In this case,
  % to prove the conjecture, by the previous example, it remains only
  % to
  % show that $(\rho_2)_* M(\Sym^n \widetilde Y) \cong M(\Sym^n Y)$.
  Conjecture \ref{con:sympres} is proved in \cite[\S 1.3]{hp0weyl}, using the main
  result of \cite{ESsym} together with \cite[Theorem 3]{GoSohilb}: see Section \ref{s:hp0weyl} for more details.
\end{example}
\begin{example}\label{ex:nilcone}
  Suppose that $X$ is the cone of nilpotent elements in a complex
  semisimple Lie algebra $\mfg$. Let $\mathcal{B}$ be the flag
  variety, parameterizing Borel subalgebras of $\mfg$.  The cotangent
  fiber $T^*_{\mfb} \mathcal{B}$ identifies with the annihilator of
  $\mfb$ under the Killing form, i.e., the nilradical $[\mfb,\mfb]$.
  Then one has the Springer resolution $\rho: T^* \mathcal{B} \to X$,
  given by $\rho(\mathfrak{b},x) = x$.  In this case, Conjecture \ref{con:sympres}
  is a consequence of \cite[Theorem 4.2 and Proposition
  4.8.1(2)]{HKihs} (see also \cite[\S 7]{LSssi}), as observed in
  \cite{ESwalg}.   
\end{example}
\begin{example}\label{ex:cl-walg}
  Let $X$ be as in the previous example.  In this case the symplectic
  leaves are the nilpotent adjoint orbits $G \cdot e \subseteq X$, for
  $G$ a semisimple complex Lie group with $\Lie G = \mfg$.  Let
  $e \in \mfg$ be a nilpotent element. One has a Kostant--Slodowy slice
  $S^0_e := X_{G \cdot e}$, transverse to $G \cdot e$, explicitly
  given by $S_e := e + \ker(\ad f)$ and $S_e^0 := S_e \cap X$, where
  $(e,h,f)$ is an $\mathfrak{sl}_2$-triple (whose existence is
  guaranteed by the Jacobson-Morozov theorem), equipped with a
  canonical Poisson structure, such that the formal neighborhood
  $\hat S_e$ of $e$ is a formal slice.  Let $\widetilde S_e^0$ be the
  preimage of $S^0_e$ under $\rho$.  The Poisson algebra $\caO(S_e^0)$
  is called a \emph{classical $W$-algebra}.

  As observed above, the previous example implies that Conjecture
  \ref{con:sympres} also holds for
  $\rho|_{\widetilde S_e^0}: \widetilde S_e^0 \to S_{e}^0$.  In
  particular, one concludes that
  $\HP_0(\caO(S_e^0)) \cong H^{\dim S_e^0}(\widetilde S_e^0)$
  (\cite[Theorem 1.6]{ESwalg}), and the latter is the same as the top
  cohomology of the Springer fiber over $e$,
  $H^{\dim \rho^{-1}(e)}(\rho^{-1}(e))$, since $S_e$ and hence $S_e^0$
  admits a contracting $\bC^\times$ action (the Kazhdan action) to
  $\rho^{-1}(e)$ (explicitly, this is given on $S_e$ by
  $\lambda \cdot (x) = \lambda^{2-h} x$).

  More generally, we conclude also part (b) of Conjecture \ref{con:sympres} for
  $S_e^0$, which yields in this case that $\HP_i^{DR}(S_e^0) \cong
  H^{\dim \rho^{-1}(e)-i}(\rho^{-1}(e))$. By \cite[Theorem
  1.13]{ESwalg}, we can generalize even further and consider all of
  $S_e = e + \ker(\ad f)$, and show that $\HP_*^{DR}(S_e)$ is a
  (graded) vector bundle over $\mfg/\!/G \cong \mfh/W$ with fibers given by the
  cohomology of the Springer fiber over $e$. Equivalently, we have the
  family of deformations $S_e^\eta := S_e \cap \chi^{-1}(e)$ over
  $\mfg/\!/G$, with $\chi: \mfg \to \mfg/\!/G$ the quotient; then we
  conclude that $HP_i^{DR}(S_e^\eta)$ are all isomorphic to $H^{\dim
    \rho^{-1}(e)-i}(\rho^{-1}(e))$, for all $\eta \in \mfg/\!/G$ (the
  family $\HP_i^{DR}(S_e^\eta)$ is flat over $\mfg/\!/G$). 
\end{example}
\begin{remark} \label{r:conj-flat} The deformation considered in
  Example \ref{ex:cl-walg} is part of a more general phenomenon. For a
  general projective symplectic resolution $\rho: \widetilde X \to X$, 
%together with a choice of ample line bundle on $\widetilde X$,
  in \cite{Kal-deq}, Kaledin proves that $\rho$ can be extended
to  a projective map
  $\rho:\widetilde{\mathcal{X}}\to\mathcal{X}$ of schemes over the
  formal disk $\Delta := \Spec \bC[[t]]$, such that, restricting to
  the point $0 \in \Delta$, we recover the original resolution
  $\rho: \widetilde{X} \to X$.
%for $\pi: \mathcal{X}\to \Delta$ the projection,
%we have $\pi^{-1}(0) \cong $
%$\tfX_0:=\rho^{-1}(0) \cong \tX$ and $\mathcal{X}_0:=\cong X.$$
  Furthermore, he shows that $\mathcal{X}$ is normal and flat over
  $\Delta$, and that over the generic point, $\rho$ restricts to an
  isomorphism of smooth, affine, symplectic varieties \cite[2.2 and
  2.5]{Kal-deq}. (In fact the construction provides more: for every
  choice of ample line bundle $L$ on $\widetilde{X}$, there is a unique
  triple $(\widetilde{\mathcal{X}},\mathcal{L},\omega_Z)$ up to
isomorphism
%% family 
%%  $\widetilde{\mathcal{X}}$ up to isomorphism 
  % for uniqueness, see Theorem 1.4 in
  % https://arxiv.org/pdf/math/0312134.pdf).
where $\mathcal{L}$ is a line bundle on $\widetilde{\mathcal{X}}$,
$\omega_Z$ is a symplectic structure on the associated
$\bC^\times$-torsor $Z \to \widetilde{\mathcal{X}}$, the $\bC^\times$
action is Hamiltonian for $\omega_Z$, the projection $Z \to \Delta$ is
the moment map for the action, and the restrictions of $\mathcal{L}$
and $\omega_Z$ to $\widetilde X$ recover $L$ and the original
symplectic structure.)  The family of maps over $\Delta$ (together
with $\mathcal{L}$ and $\omega_Z$) is called a {\bf twistor
  deformation}.  In the case that $\rho$ is conical, we can moreover
replace the formal disc $\Delta$ by the line $\bC=\Spec \bC[t]$ and
the map $\widetilde{\mathcal{X}} \to \mathcal{X}$ can be taken to be
$\bC^\times$-equivariant.

% one has a 
% %  universal family of
%  formal deformation $\widetilde{\mathcal{X}} \to \mathcal{X}$ of 
% $\rho$ over the formal disc $\Delta = \Spf \bC[\![t]\!]$, whose fiber
% %  $\widehat{H^2(\widetilde X)}$ which is smooth 
% over the generic point is an isomorphism of affine symplectic varieties
% % generic $t \in H^2(\widetilde X)$
%   \cite[2.2 and 2.5]{Kal-deq}. 
In the general case (where $\rho$ need not be conical), let $X_t$ be
the fiber of $\mathcal{X} \to \Delta$ over $t \in \Delta$ and
$\widetilde{X_t}$ the fiber of $\widetilde{\mathcal{X}} \to \Delta$
over $t$.
%$\in H^2(\widetilde X)$.  
Then one can show that Conjecture
  \ref{con:sympres} implies that the family $\HP_{DR}^i(X_t)$ is flat
  with fibers isomorphic to $H^{\dim X-i}(\widetilde {X_t})$ (which is
  a vector bundle equipped with the Gauss-Manin connection). More
  generally, the conjecture for $X$ implies that the family $M(X_t)$
  of fiberwise D-modules is torsion-free: indeed, as explained in
  \cite[Proposition 2.1]{PS-pdrhhvnc}, since
  $M(X_t) \cong \Omega_{X_t} \cong \rho_* \Omega_{\widetilde {X_t}}$
  for generic $t$, the quotient $M(X)'$ of $M(X)$ by the torsion of
  the family $M(X_t)$ is isomorphic to the semisimplification of
  $\rho_* \Omega_{\widetilde X}$.  Conversely, if $\rho$ is
  conical, then as explained in Remark \ref{r:conj-ss-qt}, 
  we can replace the formal deformation by an actual $\bC^\times$-equivariant
deformation over the line $\bC$, and in this case
  \cite[Proposition 3.6]{PS-pdrhhvnc} implies that, if the family
  $M(X_t)$ is torsion-free, then $M(X)$ is already semisimple and
  the conjecture holds.
  % Note that, for smooth $X_t$ we know
  % $\HP_{DR}^i(X_t) = H^{\dim X-i}(X_t)$ by Example \ref{ex:poisson};
  % since in this case $X_t = \widetilde X_t$, the statement reduces to
  % the fact that $H^*(\widetilde X_t)$ is a vector bundle, which is
  % part of the structure given by the Gauss-Manin connection.
%  In more detail, the kernel of the quotient $M(X) \onto M(X)'$ is
%   defined as follows: By \cite[2.2 and 2.5]{Kal-deq}, there is a
%   family $\mathcal{X} \to \Delta$ of varieties over the formal disc
%   $\Delta = \Spec \bC[\![t]\!]$ with special fiber $f^{-1}(0)=X$ and
%   generic fiber smooth affine. Equipping this with the fiberwise
%   family of D-modules $M(\mathcal{X}/\Delta)$ with special fiber
%   $M(X)$, the kernel of $M(X) \onto M(X)'$ is the $t$-torsion
%   submodule of $M(\mathcal{X}/\Delta)$, viewed as a submodule of
%   $M(X)$.  The conjecture then states that this family
%   $M(\mathcal{X}/\Delta)$ over $\Delta$ is torsion-free, i.e., flat,
%   so that the kernel of $M(X) \onto M(X)'$ is zero.
% %  that $M(X)$ is semisimple and this quotient is by zero.
\end{remark}

% Omitted question: can the above statement ``torsion-free'' be
% upgraded to flat?  In more detail, would the conjecture actually
% imply that $M_{X_t}$ is a flat family?
\begin{example}\label{ex:hypertoric}
  Next suppose that $X$ is a conical Hamiltonian reduction of a
  symplectic vector space by a torus.  Such a variety is called a
  hypertoric cone.  More precisely, we can assume the
  symplectic vector space is a cotangent bundle, $V = T^* U$, and the
  torus is $G = (\bC^\times)^k$  for some $k \geq 1$, acting faithfully
  on $U$ via $a: G \to \GL(U)$, with the induced Hamiltonian action on
  $V$ as in Example \ref{ex:ham-red}. Explicitly if $U=\bC^n$ for
  $n \geq k$, $a(G)$ is a subgroup of the group of invertible diagonal matrices, and $(a_{ij})$ is the matrix of weights such that
  $a(\lambda_1,\ldots,\lambda_k)(e_i) = \prod_{j=1}^k \lambda^{a_{ij}}
  e_i$,
  then
  $\mu((b_1,\ldots,b_n),(c_1,\ldots,c_n))= (\sum_{i=1}^n a_{ij} b_i
  c_i)_{j=1}^k$.
%, whose
 % moment map $\mu: V \to (\Lie G)^*$ is given by
 % $\mu(v)(\xi) = v(da(\xi))$ (applying at the end the
%somorphism $\mfgl(U) \cong U \otimes U \cong $ .  
  Then, $X = \mu^{-1}(0)/\!/G$.  In this case, for every character
  $\chi$ of $G$, we can form a GIT quotient
  $\widetilde X := \mu^{-1}(0)/\!/_\chi(G)$, mapping projectively to
  $X$.  In the case this is a symplectic resolution, Conjecture
  \ref{con:sympres} is proved in \cite[Theorem 4.1, Example
  4.6]{PS-pdrhhvnc}, by showing, as we mentioned, that Conjecture
  \ref{con:sympres} follows (for conical symplectic resolutions) from
  its part (a) for slices to the symplectic leaves; since the slices
  in this case are also hypertoric cones, part (a) follows for these
  by \cite{Pr12}.
% More generally, the result there shows that, for arbitrary conical
% $X$, Conjecture (c) follows if one establishes Conjecture (a) for
% slices to every symplectic leaf of $X$; the latter conjecture was
% proved in \cite{Pr12} in the hypertoric case.
\end{example}
\begin{remark}\label{r:conj-reg-ss}
  As noted in Remark \ref{r:conj-ss-qt}, Conjecture \ref{con:sympres}
  would imply that $M(X)$ is regular and semisimple when $X$ admits a
  symplectic resolution.  This is not true for general $X$: we will
  explain in Remark \ref{r:mx-nonss-cone} that already if $X$ is a
  surface in $\bC^3$ which is a cone over a smooth curve in $\bP^2$,
  then $M(X)$ is not semisimple unless the genus is zero (hence $X$ is
  a quadric surface).

  For regularity, \cite[Example 4.11]{ESdm} gives a simple example
  where $M(X)$ is not regular: let $X = Z \times \bC^2$ with $Z$ is
  the surface $x_1^3+x_2^3+x_3^3=0$ in $\bC^3=\Spec
  \bC[x_1,x_2,x_3]$.
  Using coordinates $p,q$ on $\bC^2$, we consider the Poisson bracket
  given by $\{p,q\}=1$, $\{x_1,x_2\}=x_3^2$ (and cyclic permutations),
  and $\{q,f\}=0$, $\{p,f\}=|f|f$ for homogeneous $f \in \caO(Z)$ of
  degree $|f|$.  Then $X$ has two symplectic leaves:
  $X \setminus (\{0\} \times \bC^2)$ and $\{0\} \times \bC^2$.  Now
  $\HP_0(\caO(Z))$ is a graded vector space (under
  $|x_1|=|x_2|=|x_3|=1$) of the form
  $\HP_0(\caO(Z)) \cong \bC \oplus \bC^3[-1] \oplus \bC^3[-2] \oplus
  \bC[-3]$
  (a basis is given from monomials in $x_1,x_2,x_3$ of degree at most
  one in each variable).  As a result, the algebraic flat connections
  on $\{0\} \times \bC^2$ given by $\nabla(f) = df - mf dp $,
  $m \in \{0,1,2,3\}$, all appear as quotients of $M(X)$ (i.e., they
  admit sections which extend to Hamiltonian-invariant distributions
  on $X$ supported on $\{0\} \times \bC^2$). As these connections have
  irregular singularities at $\{\infty\} = \bP^2 \setminus \bC^2$ for
  $m\neq 0$, we conclude that $M(X)$ is not regular.

  However, it is an open question whether, if $X$ has finitely many symplectic leaves,  $M(X)$ must  be
  \emph{locally regular} on $X$, i.e., all composition factors are
  rational connections which have no irregular singularities in $X$
  itself.  If this is true, then it would follow that $M(X)$ is
  regular whenever $X$ is proper (in particular, projective).  

  Let us remark that in the case when $\mathfrak{g} = \Lie G$ and the
  action is the infinitesimal action associated to an action of $G$ on
  $X$ with finitely many orbits, then it is well-known that $M(X)$ is
  regular holonomic (see, e.g., \cite[Section 5]{Hot-edm}). Note that,
  for general $\mathfrak{g}$, the action may not integrate to a group
  action, but formally locally it integrates to the action of a formal
  group; it would be interesting to try to use this and the argument
  of \emph{op.~cit.} to prove local regularity in general.
%However
%  the typical cases of Conjecture \ref{con:sympres} are when $X$ is
%  affine (and in fact conical), which is very far from this.
\end{remark}
\begin{remark} There are many other interesting consequences of
  Conjecture \ref{con:sympres} which would resolve open questions.
  For instance, the conjecture implies that every symplectic
  resolution of $X$ is strictly semismall in the following sense: for
  every symplectic resolution $\rho: \widetilde X \to X$ and every
  symplectic leaf $Y \subseteq X$, one has
  $\dim \rho^{-1}(Y) = \frac{1}{2}(\dim X + \dim Y)$.  The
  semismallness condition itself is equivalent to the inequality
  $\leq$.  This corollary follows because, whenever $X$ has finitely
  many symplectic leaves, the intersection cohomology D-module of
  every symplectic leaf closure (i.e., intermediate extension of the
  canonical right D-module on the leaf itself) is a composition factor
  of $M(X)$ (which follows from \cite[\S 4.3]{ESdm}; see also
  \cite[Propositions 2.14 and 2.24]{ES-dmlv}), and there is a
  composition factor of $\rho_* \Omega_{\widetilde X}$ with support
  equal to the leaf closure if and only if the dimension equality
  holds.  Another interesting potential application (pointed out to us
  by D.~Kaledin) is a conjecture variously attributed to Demailly,
  Campana, and Peternell \cite[Conjecture 1.3]{Kal-gtsr} that, if
  $T^*Z \to Y$ is a symplectic resolution of an affine variety $Y$,
  then $Z$ is a partial flag variety. Namely the conjecture implies
  that the maximal ideal $\mathfrak{m}_0 \subseteq \caO(Y)$ of the
  origin is a perfect Lie algebra; the conjecture would follow if one
  shows that $Z = G/P$ where $\Lie G \subseteq \mathfrak{m}_0$ is the
  degree-one subspace and $P$ is a parabolic subgroup of $G$.
\end{remark}

\section{Symmetric powers and Hilbert schemes}\label{s:hp0weyl}

In this section we would like to discuss results from \cite{hp0weyl}
on the zeroth Poisson homology of symmetric powers. We continue to set $\bk=\bC$.  In this section the affine variety $Y$ will always be assumed to be connected.

\subsection{The main results} \label{ss:hp0weyl-main}

Given an affine variety $Y = \Spec A$,  let $S^n Y := Y^n / S_n = \Spec \Sym^n A$ be the $n$-th symmetric
power of $Y$.  Let the symbol $\&$ denote the product in
the symmetric algebra. Note that $\bigoplus_{n \geq 0} \HP_0(\caO(S^n Y))^*$ is a graded algebra, with multiplication induced, via the inclusions $\HP_0(\caO(X))^* \subseteq \caO(X)^*$, by the maps $\caO(S^m Y)^* \otimes \caO(S^n Y)^* \to \caO(S^{m+n} Y)^*$ dual to the symmetrization maps $\caO(S^{m+n}Y) \to \caO(S^m Y) \otimes \caO(S^n Y)$ sending $f$ to the function
\[
((x_1,\ldots,x_m),(x_{m+1},\ldots,x_{m+n})) \mapsto \frac{1}{(m+n)!} \sum_{\sigma \in S_{m+n}} f(x_{\sigma(1)},\ldots,x_{\sigma(m+n)}).
\]
To see that this indeed induces maps on Poisson traces ($\HP_0^*$),
note that $\caO(Y)$ acts on $\caO(S^n Y) = \Sym^n \caO(Y)$ by Lie
bracket, and
$\HP_0(\caO(S^n Y))^* = (\caO(S^n Y)^*)^{\caO(S^n Y)} = (\caO(S^n
Y)^*)^{\caO(Y)}$.
Then it remains to observe that the maps above are compatible with
this adjoint action of $\caO(Y)$, so they indeed induce bilinear maps
as claimed on Poisson traces, which are easily seen to be associative
with unit $1 \in \HP_0(\caO(S^0Y))=\HP_0(\bC)=\bC$.
% \[
% \caO(S^mY)^* \otimes \caO(S^nY)^* = \Sym^m \caO(Y)^* \otimes \Sym^n \caO(Y)^* = (\Sym^m \caO(Y) \otimes Sym^n \caO(Y))^* \to (\Sym^{m+n} \caO(Y))^*,
% \]
% with the last map dual to the composition $\Sym^{m+n} \caO(Y) \into \caO(Y)^{\otimes(m+n)} \to \caO(Y)^{\otimes m} \otimes \caO(Y)^{\otimes n}$, with the inclusion $\Sym^n V \subseteq V^{\otimes n}$ being that of symmetric tensors. The image of this composition indeed lands in $\Sym^m \caO(Y) \otimes \Sym^n \caO(Y)$.
\begin{theorem}\label{symphp0thm} (\cite{hp0weyl}, Theorem 1.1)
  Let $Y$ be an affine symplectic variety.  Then, there is a canonical
  isomorphism of graded algebras,
\begin{gather} \label{e:symphp0thm}
  \Sym(\HP_0(\caO(Y))^*[t]) \iso \bigoplus_{n \geq 0} \HP_0(\caO(S^n
    Y))^*, \\ \notag \phi \cdot t^{m-1} \mapsto \Bigl( (f_1 \&
  \cdots \& f_m) \mapsto \phi(f_1 \cdots f_m) \Bigr),
\end{gather}
where the grading is given by $|\HP_0(\caO(S^n Y))^*| = n$ (on both
sides of the isomorphism), and $|t| = 1$.
\end{theorem}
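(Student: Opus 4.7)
The plan is twofold: construct the candidate map $\Phi$ explicitly and verify it is a well-defined algebra homomorphism, then establish bijectivity by computing $\HP_0(\caO(S^n Y))^*$ independently via the D-module $M(S^n Y)$.

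For the construction, on generators I would set $\Phi(\phi \cdot t^{m-1}) := \phi_m$, where $\phi_m \in \caO(S^m Y)^*$ is the linear functional determined by $\phi_m(f_1 \& \cdots \& f_m) = \phi(f_1 f_2 \cdots f_m)$, and extend to $\Sym(\HP_0(\caO(Y))^*[t])$ multiplicatively. The delicate check is that $\phi_m$ annihilates $\{\caO(S^m Y), \caO(S^m Y)\}$: unpacking the Poisson bracket on $\caO(S^m Y) = \Sym^m \caO(Y)$ through the product Poisson structure on $\caO(Y^m)$, for primitive symmetric tensors $F = f_1 \& \cdots \& f_m$ and $G = g_1 \& \cdots \& g_m$ the quantity $\phi_m(\{F, G\})$ is a symmetrized sum over pairs $(i, j)$ of terms $\phi\bigl((\prod_{k} f_k g_k / (f_i g_j)) \cdot \{f_i, g_j\}\bigr)$; this cancels by iterated use of the Leibniz rule together with $\phi(\{a, b\}) = 0$, a pattern already transparent in the case $m = 2$. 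Verifying that $\Phi$ respects the symmetrization-based product on the target is then a direct symbolic calculation.

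For bijectivity, I would apply \eqref{e:expl-symp-qt-dmod2} with $V = Y^n$ and $G = S_n$. Parabolic subgroups of $S_n$ acting on $Y^n$ correspond, up to conjugacy, to Young subgroups $S_\lambda$ indexed by partitions $\lambda \vdash n$; writing $\lambda = (\lambda_1 \geq \cdots \geq \lambda_\ell)$ with multiplicities $m_i = |\{j : \lambda_j = i\}|$, the fixed locus $(Y^n)^{S_\lambda} \cong Y^\ell$ embeds diagonally, $N^0(S_\lambda) \cong \prod_i S_{m_i}$ permutes parts of equal size, and its quotient is $Z_0(\lambda) = \prod_i S^{m_i} Y \hookrightarrow S^n Y$. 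The transverse symplectic slice at a generic point of $Z_0(\lambda)$ decomposes as $\bigoplus_i T_{p_i} Y \otimes W_{\lambda_i}$, with $W_{\lambda_i}$ the reflection representation of $S_{\lambda_i}$; the corresponding D-module factor $H_{Z_0(\lambda)}$ is built from the finite-dimensional $S_\lambda$-coinvariants provided by Corollary \ref{c:symp-qt}. Pushing to a point and recursively unpacking (with base case $m_i = 1$, which contributes $\HP_0(\caO(Y))^*$) should yield
\[
\HP_0(\caO(S^n Y))^* \cong \bigoplus_{\lambda \vdash n} \bigotimes_i \Sym^{m_i}\HP_0(\caO(Y))^*,
\]
matching the partition decomposition of $\Sym(\HP_0(\caO(Y))^*[t])_n$.

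The main obstacle is matching these two decompositions \emph{through} $\Phi$, rather than merely by dimension count. One must trace the canonical generator of $M(S^n Y)$ representing $\Phi(\phi \cdot t^{m-1})$ and verify that it sits in the $(m)$-summand (supported on the deepest stratum $Z_0((m)) = S^m Y$); more generally, one needs to show that a product of generators $\phi_{i_1} t^{\lambda_1 - 1} \cdots \phi_{i_\ell} t^{\lambda_\ell - 1}$ lands in the $\lambda$-summand with the correct tensor-product structure. The technical crux is therefore handling the transverse-slice coinvariants and the symmetric-group actions in a way that transparently recovers the $\Sym(\HP_0(\caO(Y))^*[t])$ structure from the Young combinatorics, rather than merely matching Hilbert series on both sides.
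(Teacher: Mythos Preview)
Your overall strategy is the same as the paper's: both reduce Theorem~\ref{symphp0thm} to a structural decomposition of the D-module $M(S^nY)$ (the paper states this as Theorem~\ref{sympdmthm}; you invoke the equivalent formula~\eqref{e:expl-symp-qt-dmod2} with $V=Y^n$, $G=S_n$) and then push to a point.  The construction of $\Phi$ and the algebra-map verification are fine and match what the paper does.

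There is, however, a genuine gap at the key technical step.  In the decomposition~\eqref{e:expl-symp-qt-dmod2}, the summand attached to a partition $\lambda$ carries the local system $H_{Z_0(\lambda)}$, whose fiber is the transverse coinvariant space
\[
\bigotimes_j \caO\bigl(T_{p_j}Y \otimes W_{\lambda_j}\bigr)\Big/
\bigl\{\caO(T_{p_j}Y \otimes W_{\lambda_j}),\ \caO(T_{p_j}Y \otimes W_{\lambda_j})^{S_{\lambda_j}}\bigr\}.
\]
Corollary~\ref{c:symp-qt}, which you cite, only says this is finite-dimensional.  For your displayed formula to hold you need each tensor factor to be \emph{one}-dimensional, and that is precisely the content of Lemma~\ref{l:hidist} (equivalently Theorem~\ref{snp1thm}): the space of $S_m$-symmetric Hamiltonian-invariant polydifferential operators $\caO(V)^{\otimes(m-1)}\to\caO(V)$ is spanned by multiplication.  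The paper isolates this as the ``central'' lemma and proves it via the formal Darboux theorem; you have not identified it.

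Your proposed ``recursive unpacking'' does not recover this computation.  The transverse slice $T_{p_j}Y \otimes W_{\lambda_j}$ is a linear symplectic space with an $S_{\lambda_j}$-action on the second factor only; it is not a symmetric power of $Y$ or of anything built from $\HP_0(\caO(Y))$, so no induction on $n$ reduces it to a known base case.  Until you supply Lemma~\ref{l:hidist}, the multiplicities in your partition-indexed sum are undetermined and you cannot conclude bijectivity of~$\Phi$.
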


If we expand the symmetric algebra on the LHS in \eqref{e:symphp0thm} and dualize, we explicitly obtain the following. Recall that a partition of $n$ of length $k$ is a tuple $\lambda = (\lambda_1, \ldots, \lambda_k)$ such that $\lambda_1 \geq \lambda_2 \geq \cdots \geq \lambda_k \geq 1$ and $\lambda_1 + \cdots + \lambda_k = n$. If $\lambda$ is a partition of $n$, we write $\lambda \vdash n$, and let $|\lambda|$ denote
its length. Let $S_\lambda <
S_{|\lambda|}$ be the subgroup preserving the partition $\lambda$.  Explicitly,
$S_{\lambda} = S_{r_1} \times \cdots \times S_{r_k}$ where, for all
$j$,
\[
\lambda_{r_1+\cdots+r_j} > \lambda_{r_1+\cdots+r_j+1} = \lambda_{r_1 +
  \cdots + r_j + 2} = \cdots = \lambda_{r_1 + \cdots + r_j + r_{j+1}}.
\]
Then \eqref{e:symphp0thm} states that, for all $n \geq 1$,
\begin{equation}
\HP_0(\caO(S^n Y)) \cong \bigoplus_{\lambda \vdash n} (\HP_0(\caO(Y))^{\otimes |\lambda|})_{S_\lambda}.
\end{equation}
Next, it is well known that, if $Y$
is connected, then $\HP_0(\caO(Y)) \cong H^{\dim Y}(Y)$, the top
cohomology of $Y$, via the isomorphism $[f] \mapsto f \cdot
\operatorname{vol}_Y$, where $\operatorname{vol}_Y$ is the canonical
volume form (i.e., the $\frac{1}{2} \dim Y$-th exterior power of the
symplectic form). We can write the above more explicitly using the
coefficients $a_n(i)$ which give the number of $i$-multipartitions
of $n$ (i.e., collections of $i$ ordered partitions whose sum of sizes is $n$), i.e.,
\begin{equation}\label{e:ani-defn}
\prod_{m \geq 1} \frac{1}{(1-t^m)^i} = \sum_{n \geq 0} a_n(i) \cdot t^n.
\end{equation}

\begin{corollary}\label{c:mpfla}(\cite{hp0weyl}, Corollary 1.2) If $Y$ is a 
%connected 
symplectic variety, then 
$\dim \HP_0(\caO(S^n Y)) = a_n(\dim H^{\dim Y}(Y))$.
\end{corollary}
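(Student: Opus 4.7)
The plan is a direct Hilbert series computation from Theorem \ref{symphp0thm}. First, since $Y$ is a connected symplectic variety, the isomorphism $[f] \mapsto f \cdot \vol_Y$ identifies $\HP_0(\caO(Y)) \cong H^{\dim Y}(Y)$; set $d := \dim H^{\dim Y}(Y)$, so $\dim \HP_0(\caO(Y))^* = d$ as well.

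Next, I would analyze the graded vector space $W := \HP_0(\caO(Y))^*[t]$ appearing on the left-hand side of \eqref{e:symphp0thm}. Under the grading of Theorem \ref{symphp0thm} (where $|t| = 1$ and, taking $n = 1$ in $|\HP_0(\caO(S^n Y))^*| = n$, we have $|\HP_0(\caO(Y))^*| = 1$), the degree $m$ piece of $W$ is $\HP_0(\caO(Y))^* \cdot t^{m-1}$, which is $d$-dimensional for every $m \geq 1$ (and zero for $m = 0$). By the standard Hilbert series formula for the symmetric algebra on a positively graded vector space,
$$\sum_{n \geq 0} \dim \bigl(\Sym W\bigr)_n \cdot t^n \;=\; \prod_{m \geq 1} (1 - t^m)^{-d},$$
which equals $\sum_{n \geq 0} a_n(d) \cdot t^n$ by the defining identity \eqref{e:ani-defn}.

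Finally, Theorem \ref{symphp0thm} provides an isomorphism of graded vector spaces $(\Sym W)_n \cong \HP_0(\caO(S^n Y))^*$, so comparing dimensions yields $\dim \HP_0(\caO(S^n Y)) = \dim \HP_0(\caO(S^n Y))^* = a_n(d)$, as desired. There is no genuine obstacle: all of the substantive content is packaged in Theorem \ref{symphp0thm} and the symplectic identification $\HP_0(\caO(Y)) \cong H^{\dim Y}(Y)$, after which the corollary is a routine generating function manipulation.
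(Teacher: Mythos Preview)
Your argument is correct and is exactly the derivation the paper intends: the corollary is stated immediately after Theorem~\ref{symphp0thm}, the identification $\HP_0(\caO(Y)) \cong H^{\dim Y}(Y)$, and the generating-function definition~\eqref{e:ani-defn}, precisely so that one reads off the result via the Hilbert series computation you wrote down.
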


\begin{notation} Whenever we take tensor products (so also symmetric
  powers) of $\bC[\![\hbar]\!]$-algebras complete in the $\hbar$-adic
  topology (e.g., $\Sym^n A_\hbar$), we mean the $\hbar$-adic
  completion of the usual tensor product (and hence symmetric power).
\end{notation}
%%Given an associative algebra $B$, recall that its zeroth
%%Hochschild homology is $\HH_0(B) := B / [B,B]$. 
\begin{notation}
When $B$ is a $\bC[\![\hbar]\!]$-algebra complete in the $\hbar$-adic topology, let  $\HH_0(B) := B / \overline{[B,B]}$ (i.e., we
 take the closure, equivalently $\hbar$-adic completion, of $[B,B]$).
\end{notation}
The results above imply the degeneration of the spectral sequence
computing the zeroth Hochschild homology of quantizations of $S^nY$.
Let $Y$ be an affine symplectic variety, and let $A_\hbar$ be any
deformation quantization of $\caO(Y)$, so that $\Sym^n A_\hbar$ is a
deformation quantization of $\caO(S^n Y)$).  Then the spectral
sequence associated to the deformation yields a natural
$\mathbb{C}(\!(\hbar)\!)$-linear surjection
 $$
 \theta: \HP_0(\caO(S^n Y))(\!(\hbar)\!)
 \onto \gr %\widehat 
\HH_0(\Sym^n A_\hbar[\hbar^{-1}]),
 $$
 where the filtration on $\HH_0(\Sym^n A_\hbar[\hbar^{-1}])$ is
 induced by the filtration of $\Sym^n A_\hbar[\hbar^{-1}]$ by powers
 of $\hbar$, and $\gr$ denotes the
 $\hbar$-adically completed
associated graded space.

\begin{corollary}\label{defcor1} (\cite{hp0weyl}, Corollary 1.3) 
$\theta$ is an isomorphism. 
\end{corollary}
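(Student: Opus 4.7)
The plan is to deduce the isomorphism from matching $\bC(\!(\hbar)\!)$-dimensions on the two sides of $\theta$. Corollary \ref{c:mpfla} gives $\dim_{\bC(\!(\hbar)\!)} \HP_0(\caO(S^n Y))(\!(\hbar)\!) = a_n(d)$, where $d := \dim H^{\dim Y}(Y)$; since $\theta$ is surjective, it therefore suffices to exhibit $a_n(d)$ linearly independent continuous $\bC(\!(\hbar)\!)$-linear functionals on $\HH_0(\Sym^n A_\hbar[\hbar^{-1}])$.

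First, I would invoke the $n=1$ case: for affine symplectic $Y$, Fedosov's construction (equivalently, the symplectic Hochschild--Kostant--Rosenberg theorem applied to the quantization) lifts every Poisson trace $\phi$ on $\caO(Y)$ to a continuous trace $\widehat\phi: A_\hbar[\hbar^{-1}] \to \bC(\!(\hbar)\!)$ whose classical limit recovers $\phi$. Fix a basis $\phi_1, \ldots, \phi_d$ of $\HP_0(\caO(Y))^*$ and lifts $\widehat\phi_1, \ldots, \widehat\phi_d$. Quantizing the dual of the decomposition in Theorem \ref{symphp0thm}, for each partition $\lambda = (\lambda_1, \ldots, \lambda_k) \vdash n$ and each $S_\lambda$-orbit $\mathbf{j} = [(j_1, \ldots, j_k)] \in \{1, \ldots, d\}^k/S_\lambda$, define a functional on $\Sym^n A_\hbar[\hbar^{-1}]$ by
\begin{equation*}
\widehat\Phi_{\lambda, \mathbf{j}}(a_1 \& \cdots \& a_n) := \frac{1}{n!}\sum_{\sigma \in S_n} \prod_{i=1}^k \widehat\phi_{j_i}\bigl(a_{\sigma(s_{i-1}+1)} \star \cdots \star a_{\sigma(s_i)}\bigr),
\end{equation*}
where $s_i := \lambda_1 + \cdots + \lambda_i$ and $s_0 := 0$. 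Cyclicity of each $\widehat\phi_{j_i}$ together with the $S_n$-symmetrization imply that $\widehat\Phi_{\lambda, \mathbf{j}}$ descends to a continuous $\bC(\!(\hbar)\!)$-linear functional on $\HH_0(\Sym^n A_\hbar[\hbar^{-1}])$.

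Next, I would verify linear independence by extracting the leading $\hbar$-coefficient on a classical lift $\Sym^n \caO(Y) \hookrightarrow \Sym^n A_\hbar$. Since $a \star b \equiv ab \pmod{\hbar}$, the classical limit of $\widehat\Phi_{\lambda, \mathbf{j}}$ is precisely the trace $\Phi_{\lambda, \mathbf{j}} \in \HP_0(\caO(S^n Y))^*$ associated under the isomorphism of Theorem \ref{symphp0thm} to the monomial $\phi_{j_1} t^{\lambda_1 - 1} \cdots \phi_{j_k} t^{\lambda_k - 1} \in \Sym(\HP_0(\caO(Y))^*[t])$. These monomials form a basis of the degree-$n$ part of $\Sym(\HP_0(\caO(Y))^*[t])$ as $(\lambda, \mathbf{j})$ varies, so the classical traces $\Phi_{\lambda, \mathbf{j}}$ are $\bC$-linearly independent, and hence the $\widehat\Phi_{\lambda, \mathbf{j}}$ are $\bC(\!(\hbar)\!)$-linearly independent. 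There are exactly $a_n(d)$ such pairs, matching the upper bound from $\theta$; this forces $\theta$ to be an isomorphism (and in particular shows $\HH_0(\Sym^n A_\hbar[\hbar^{-1}])$ is free of rank $a_n(d)$ over $\bC(\!(\hbar)\!)$).

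The main obstacle is the $n=1$ lifting step: producing the quantum trace $\widehat\phi$ from a given Poisson trace $\phi$. This relies on the structure theory of deformation quantizations of affine symplectic varieties---Fedosov's construction, or the identification of continuous Hochschild homology of the quantization with the de Rham cohomology of $Y$ tensored with $\bC(\!(\hbar)\!)$. Once this input is in hand, the combinatorial construction of $\widehat\Phi_{\lambda, \mathbf{j}}$ and the leading-order verification are essentially formal.
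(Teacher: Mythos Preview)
Your strategy is the same as the paper's---match dimensions across the surjection $\theta$---but you obtain the lower bound on $\dim \HH_0(\Sym^n A_\hbar[\hbar^{-1}])$ differently. The paper simply invokes the computation of $\HH_0(\Sym^n A_\hbar[\hbar^{-1}])$ from \cite{EO} as a black box, so that both sides have dimension $a_n(d)$ and the surjection is forced to be an isomorphism. You instead reconstruct that lower bound by hand: lift a basis of $\HP_0(\caO(Y))^*$ to quantum traces on $A_\hbar[\hbar^{-1}]$ (the $n=1$ step, which is the Nest--Tsygan/Fedosov input), then assemble partition-indexed traces $\widehat\Phi_{\lambda,\mathbf{j}}$ on $\Sym^n A_\hbar[\hbar^{-1}]$ and verify independence by passing to the classical limit and invoking Theorem~\ref{symphp0thm}. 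This is essentially how one proves the result cited from \cite{EO}, so your argument is a transparent unpacking of the black box rather than a genuinely new route; it buys self-containedness at the cost of the $n=1$ lifting input, which is itself nontrivial.

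One point you pass over too quickly: the assertion that ``cyclicity of each $\widehat\phi_{j_i}$ together with the $S_n$-symmetrization'' makes $\widehat\Phi_{\lambda,\mathbf{j}}$ a trace on $\Sym^n A_\hbar$ is correct but not immediate, since the block-multiplication map $A^{\otimes m}\to A$ is not an algebra homomorphism. The clean reduction is to observe that $\Sym^n A_\hbar \subset \Sym^{\lambda_1} A_\hbar \otimes \cdots \otimes \Sym^{\lambda_k} A_\hbar$ as algebras, so it suffices to treat a single block; then for $x,y\in\Sym^m A_\hbar$ one checks $\widehat\phi(\mu_m(xy))=\widehat\phi(\mu_m(yx))$ by writing $x,y$ as symmetrized pure tensors and using one cyclic shift of $\widehat\phi$ together with a reindexing of the sum over $S_m$. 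With that verification added, your argument is complete.
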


Namely, Corollary \ref{defcor1} follows from Corollary \ref{c:mpfla} 
and the computation of $\HH_0(\Sym^nA_\hbar[\hbar^{-1}])$ from \cite{EO}, which jointly show that 
$\dim \HP_0(\mathcal O(S^nY))=\dim \HH_0(\Sym^nA_\hbar[\hbar^{-1}])$. 

\subsection{Sketch of proof of Theorem \ref{symphp0thm}}
The proof of Theorem \ref{symphp0thm} is based on the following
theorem, giving the structure of $M(X)$ when $X = S^n Y$, for $Y$ a
symplectic variety that need not be affine.  Let
$\Delta_i: Y \into S^i Y$ be the diagonal embedding, and for
$\sum_{j=1}^k r_j i_j = n$, let
$q: (S^{i_1} Y)^{r_1} \times \cdots \times (S^{i_k} Y)^{r_k} \onto S^n
Y$ be the obvious projection.

\begin{theorem}\label{sympdmthm} (\cite{hp0weyl}, Theorem 1.17)
 \begin{equation} \label{e:sympdmthm}
M(S^n Y) \cong \bigoplus_{r_1 \cdot i_1 + \cdots + r_k \cdot i_k = n, 1 \leq i_1 < \cdots < i_k, r_j \geq 1\, \forall j}
  q_* \bigl((\Delta_{i_1})_*(\Omega_Y)^{\boxtimes r_1} \boxtimes \cdots \boxtimes
  (\Delta_{i_k})_*(\Omega_Y)^{\boxtimes r_k} \bigr)^{S_{r_1} \times \cdots \times S_{r_k}}.
\end{equation}
\end{theorem}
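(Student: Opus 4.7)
My plan is to specialize the general formula \eqref{e:expl-symp-qt-dmod2} to $V=Y^n$ (symplectic via the product symplectic form) with the $S_n$-action by permutation through symplectic automorphisms, and then to match each resulting summand with one from \eqref{e:sympdmthm}. The combinatorial setup is straightforward: stabilizers in $S_n$ of points of $Y^n$ are exactly the Young subgroups, so conjugacy classes of parabolics are in bijection with partitions $\lambda=(i_1^{r_1}\cdots i_k^{r_k})$ of $n$ (with $i_1<\cdots<i_k$ and $r_j\geq 1$), and for a representative $P_\lambda\cong \prod_j S_{i_j}^{r_j}$ one has $Z:=(Y^n)^{P_\lambda}\cong Y^l$ (with $l:=\sum r_j$, connected since $Y$ is), $N(P_\lambda)/P_\lambda\cong \prod_j S_{r_j}$, and $Z_0:=Z/N^0(P_\lambda)\cong \prod_j S^{r_j}Y$, embedded in $S^n Y$ by assigning multiplicity $i_j$ to each point of $S^{r_j}Y$. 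These data match the indexing and supports of the summands in \eqref{e:sympdmthm}.

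The heart of the proof is the identification of the D-module $H_{Z_0}$. At a point $z=(y^{(j)}_s)\in Z$, the normal space decomposes as a $P_\lambda$-invariant symplectic direct sum $\bigoplus_{j,s} N_{j,s}$ with $N_{j,s}:=(T_{y^{(j)}_s}Y)^{i_j}/T_{y^{(j)}_s}Y$, on which the corresponding copy of $S_{i_j}\subseteq P_\lambda$ acts via the standard representation. A K\"unneth-type factorization of Hamiltonian coinvariants for symplectic direct sums with commuting group actions, combined with the single-block identification $\HP_0(\caO(N_{j,s}),\caO(N_{j,s})^{S_{i_j}})\cong \bC$, implies that the local system $H(TV|_Z/TZ)$ is canonically trivial of rank one. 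As a right D-module it is then $\Omega_Z$, and so $(i_{Z_0})_* H_{Z_0}=(i_{Z_0})_*((\pi_Z)_*\Omega_Z)^{\prod_j S_{r_j}}$.

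I then identify this with the $\lambda$-summand of \eqref{e:sympdmthm}. Since $(\Delta_{i_j})_*\Omega_Y$ is the IC D-module of the small diagonal $\Delta_{i_j}(Y)\subseteq S^{i_j}Y$, its external $r_j$-fold power on $(S^{i_j}Y)^{r_j}$ is supported on $Y^{r_j}\subseteq (S^{i_j}Y)^{r_j}$ with generic fiber $\Omega_Y^{\boxtimes r_j}$. Taking the external product over $j$ and pushing forward via $q$ produces a D-module on $S^nY$ supported on the closure of $Z_0$; passing to $\prod_j S_{r_j}$-invariants then coincides with $(i_{Z_0})_*((\pi_Z)_*\Omega_Z)^{\prod_j S_{r_j}}$, via the factorization of $q|_{Y^l}$ through the quotient $\pi_Z:Y^l\to Z_0$ followed by $i_{Z_0}:Z_0\into S^nY$.

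The main obstacle is the single-block identification $\HP_0(\caO(N_{j,s}),\caO(N_{j,s})^{S_{i_j}})\cong \bC$, equivalently that on the complement of the diagonal in $(T_yY)^i$, every polynomial of positive degree lies in the span of Poisson brackets in which one factor is $S_i$-invariant. I would establish this by an explicit induction on degree using $S_i$-invariant quadratic forms built from the symplectic form (e.g., traces of products of linear coordinates), with the case $i=2$ as prototype (where the even quadratics $x^2,xy,y^2$ and their brackets already generate all polynomials modulo constants). A secondary technical point is verifying that the tensor-product factorization in the local-system computation is canonical (rather than merely a dimension count), which I would check using the canonical generator $1\in M(X,\mfg)$ from Remark \ref{r:canon-gen} restricted to the respective slices.
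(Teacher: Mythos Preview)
Your overall strategy is sound and gives a valid alternative route. The paper's own sketch does not invoke the global formula \eqref{e:expl-symp-qt-dmod2}; instead it argues directly on $Y^n$: first it shows that the candidate summands are composition factors of $M(S^nY)$, then proves semisimplicity by an $\Ext^1$-vanishing argument (the conormal bundles to distinct diagonals in $Y^n$ meet in codimension $\geq \dim Y \geq 2$, so \cite[Theorem 1.2.2]{KK-hsmde3} applies), and finally reduces the multiplicity computation to the flat case via formal Darboux, where it uses \eqref{e:m-vg}. Your approach packages the semisimplicity and the structural decomposition into the black box \eqref{e:expl-symp-qt-dmod2} and then only needs the local-system computation; this is shorter but less transparent about \emph{why} $M(S^nY)$ is semisimple. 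Your K\"unneth reduction of the local-system fiber to the single-block case is correct.

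Where both approaches converge is the single-block identification, which is exactly the paper's Theorem~\ref{snp1thm} (equivalently Lemma~\ref{l:hidist}). Here your proposed proof by ``induction on degree using $S_i$-invariant quadratic forms'' is the weak point: the $S_i$-invariant quadratics on $V^{i-1}$ span only a small subalgebra of $\mathfrak{sp}(V^{i-1})$ (essentially a diagonal $\mathfrak{sl}_2$), and it is not at all clear that iterated brackets with invariant polynomials reach every positive-degree polynomial---this would require substantial representation-theoretic work beyond the $i=2$ prototype. The paper instead gives a short conceptual proof: pass to the formal completion, use that a symmetric operator is determined by its value on $f^{\otimes(n-1)}$ for generic $f$ with $f'(0)\neq 0$, invoke the formal Darboux theorem to reduce to $f=x_1$, and observe that formal symplectomorphisms are generated by Hamiltonian flows so conjugation by them preserves invariant operators. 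You should replace your inductive plan with this argument.
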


Indeed, Theorem \ref{symphp0thm} (at the level of vector spaces) is
obtained from Theorem \ref{sympdmthm} by computing the direct image of
$M(X)$ to the point, and it is not hard to check that the
corresponding isomorphism of vector spaces is an algebra map.

\subsection{Sketch of proof of Theorem \ref{sympdmthm}} 
Now let us say a few words about the proof of Theorem
\ref{sympdmthm}. One can show that all the simple summands $S$ on the
right hand side are composition factors of $M(X)$ (by constructing
surjections $M(X)|_U\to S|_U$ on dense open sets $U$ in $X$), so it
suffices to show that: (a) they are the only composition factors, which
furthermore occur with multiplicity $1$, and (b) $M(X)$ is
semisimple.  

We first show that (a) implies (b). To do this we need to work on
$Y^n$ rather than on $X=S^n Y = Y^n/S_n$.  Let $p: Y^n \to S^n Y=X$ be
the projection. By definition, $M(X) \cong p_* (\widetilde M)^{S_n}$
where $\widetilde M := M(Y^n,\caO(Y^n)^{S_n})$ is a right D-module
on $Y^n$, which is also holonomic since the $\caO(Y^n)^{S_n}$-leaves of $Y^n$ are the diagonals (subvarieties $Z \subseteq Y^n$ obtained by setting certain components to be equal).
  Similarly, the summands $S$ in Theorem \ref{sympdmthm} are
of the form $S \cong p_*(\widetilde S)^{S_n}$ for some D-modules
$\widetilde S$ on $Y^n$. In fact, each $\widetilde S$ is the pushforward
of the canonical
right D-module $\Omega_{Z}$ for some diagonal 
 $Z \cong Y^m \subseteq Y^n, m \leq n$ under the embedding $Z \to Y^n$.  
%i.e., the locus where $y_{i_k} = y_{j_k}$ for
% some collection of indices $i_k, j_k \in Y^n$.
Since the $\widetilde S$ are also composition factors of
$\widetilde M$,  to deduce (b) from (a), it suffices to
show that $\Ext^1(\widetilde S, \widetilde{S'})=0$ for distinct
$S,S'$.  The characteristic variety of each $\widetilde S$ is the conormal
bundle $T^*_Z Y$ of the associated diagonal $Z \subseteq Y^n$, and for
distinct diagonals the intersection of these conormal bundles has
codimension at least $\dim Y \geq 2$ (i.e., the dimension is at most
$(n-1)\dim Y$).  By a well-known result from D-module theory
(\cite[Theorem 1.2.2]{KK-hsmde3}, see also \cite[1.4]{KV-mdsc3}), this
implies that $\Ext^1(\widetilde S, \widetilde {S'}) = 0$. (For a
slightly different argument avoiding \cite[Theorem 1.2.2]{KK-hsmde3},
% by showing vanishing of extensions of local systems on smooth
% varieties intersecting smoothly in codimension $\geq 2$,
see
\cite{hp0weyl}, Lemma 2.1).

%%% NOTE: probably it is better to prove (a) also working on Y^n, 
%%% working with \widetilde M, but we do not do this.
To prove (a),
% that there are no other composition factors or multiplicities $>1$,
it suffices to replace $S^nY$ with the formal neighborhood of a point
of the diagonal in $S^nY$. In other words, by the formal Darboux
theorem, it is sufficient to consider the flat case, when
$Y=\widehat V$ is the formal neighborhood of zero in a symplectic
vector space $V$.  In this case, by Example \ref{ex:m-vg}, we only have
to show that each multiplicity space for
the intersection cohomology D-module of each diagonal is
 one-dimensional for all $m \leq n$.  By induction
on $n$ we can restrict to the delta function D-module of the origin.
%  Since $\HP_0(S^n V)^*$ is the space
% of Poisson traces on $V$, which are distributions supported at zero,
% we can write $\HP_0(S^n V)^* \cong \Hom_{\mathcal D(S^n Y)}(M(S^n Y), \delta_{Y})$.
% Therefore we have to prove that
%%  identity we have to prove is
%%equivalent to
% Since the leaves in this case are all finite
% quotients of open diagonals in $V^n$, which have trivial fundamental
% group, all of the composition factors are isomorphic to intersection
% cohomology D-modules of the leaf closures.  These are also isomorphic
% to the irreducible summands appearing in the RHS of
% \eqref{e:sympdmthm}, so it suffices to check that the multiplicity of
% each of these irreducible D-modules in the LHS is one.
% In this case, all the composition factors are automatically local
% systems involved are automatically trivial, so it suffices to check
% that multiplicities are $1$.
% Moreover, by using induction in $n$, it suffices to
Then since $M(S^n V)$ is semisimple it suffices to show that
% we need to
% check that the
% multiplicity of the delta-function of the diagonal in the semisimple
% D-module $M(S^nY)$ is
% $1$, i.e., that
% Since $M(S^nY)$ is semisimple (as there are no nontrivial
% extensions between the simples, since their singular supports
% intersect in codimension $\ge 2$), it suffices to show that
\begin{equation}\label{eqq}
\Hom_{\mathcal D(S^n Y)}(M(S^n Y), \delta_{Y}) \cong
\bC.
\end{equation}
Finally, \eqref{eqq} can be restated without using
$\mathcal D$-modules in the form of the following lemma, which plays a
central role in the proof, and concludes our sketch of it:

\begin{lemma} (\cite{hp0weyl}, Lemma 2.3) \label{l:hidist} The space
  of symmetric polydifferential operators
  $\psi: \mathcal{O}(V)^{\otimes (n-1)} \rightarrow \caO(V)$ invariant
  under Hamiltonian flow is one-dimensional, and spanned by the
  multiplication map. The same holds for polydifferential operators on
  the completion $\widehat{\caO(V)}=\caO(\widehat V)$ of $\caO(V)$
  with respect to the augmentation ideal.
\end{lemma}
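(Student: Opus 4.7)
The plan is to reduce to a constant-coefficient polydifferential operator via translation invariance, and then to cut this space down using successively quadratic and cubic Hamiltonian constraints. I would proceed by induction on $n \geq 2$ (the case $n=1$ being vacuous once one notes that the multiplication is trivially Hamiltonian-invariant, since the Poisson bracket with $H$ is a derivation of the product).

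First, since the Hamiltonian vector fields $\xi_\ell := \{\ell, -\}$ with $\ell \in V^*$ linear exhaust the constant (translation) vector fields on $V$, any Hamiltonian-invariant $\psi$ is translation-invariant and hence canonically identifies with an $S_{n-1}$-symmetric element of $(\Sym V)^{\otimes(n-1)}$, where $\Sym V$ denotes the algebra of constant-coefficient differential operators on $V$. Quadratic Hamiltonians $H \in \Sym^2 V^*$ give vector fields spanning $\mfsp(V)$, so $\psi$ is also $\Sp(V)$-invariant. For the base case $n = 2$, each $\Sym^k V$ with $k \geq 1$ is a nontrivial irreducible $\Sp(V)$-representation, so $(\Sym V)^{\Sp(V)} = \bk$, giving $\psi = c \cdot \Id$. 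For the inductive step with $n > 2$, specializing the last input to $1$ (and using $\{H, 1\} = 0$) produces a symmetric Hamiltonian-invariant polydifferential operator $\psi(-, \ldots, -, 1)$ in $n - 2$ arguments, which by the induction hypothesis equals $c \cdot m_{n-2}$ (the multiplication) for some $c \in \bk$. Setting $\phi := \psi - c \cdot m_{n-1}$, the difference $\phi$ is symmetric and Hamiltonian-invariant and vanishes whenever any argument equals $1$; in the constant-coefficient expansion, this forces each tensor factor of every nonzero term to carry at least one derivative.

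The main step, and the main obstacle, is to show $\phi = 0$ via invariance under cubic Hamiltonians. For $H = \ell^3/3$ with $\ell \in V^*$, one has $\xi_H = \ell^2 \partial_{\ell^\sharp}$ where $\ell^\sharp := \xi_\ell \in V$. Using the Leibniz-type expansion $D(\ell^2 g) = \ell^2 D g + 2\ell\,(\partial_{V,\ell} D) g + (\partial_{V,\ell}^2 D) g$ for constant-coefficient $D$, where $\partial_{V,\ell}: \Sym V \to \Sym V$ is the derivation sending $v \in V$ to $\ell(v)$, the Hamiltonian-invariance identity applied to $\phi$ and evaluated at $x = 0$ (where $\ell$ vanishes) yields the family of identities
\[
\sum_{k=1}^{n-1} \bigl(\ell^\sharp \cdot (\partial_{V,\ell})^2\bigr)_{(k)} \phi \;=\; 0 \quad \text{in } (\Sym V)^{\otimes(n-1)}, \quad \text{for all } \ell \in V^*,
\]
where the subscript $(k)$ indicates the operator acts on the $k$-th tensor factor. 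Combining these identities with $\Sp(V) \times S_{n-1}$-invariance and the First Fundamental Theorem (expressing such invariants as polynomials in the symplectic pair-contractions $\omega_{ij}$ between distinct slots), the remaining task is a representation-theoretic injectivity argument: within the space of invariants having positive $V$-degree in every slot, the above family of operators has only the zero solution. This injectivity check is where the real work lies. The formal case $\caO(\widehat V)$ is handled by the same argument, since continuous polydifferential operators on the completion still have bounded order and the reduction to constant coefficients is unchanged.
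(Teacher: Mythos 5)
Your reduction to constant coefficients (via linear Hamiltonians) and $\Sp(V)$-invariance (via quadratic Hamiltonians), the base case $n=2$, and the inductive device of setting one argument to $1$ and subtracting $c\cdot m_{n-1}$ are all sound, and the observation that linear, quadratic and cubic Hamiltonians generate the full Poisson Lie algebra of polynomials justifies restricting attention to cubic constraints. But as you yourself flag, the central step is left unproved: you need to show that the system
\[
\sum_{k=1}^{n-1}\bigl(\ell^{\sharp}\cdot(\partial_{V,\ell})^{2}\bigr)_{(k)}\,\phi=0\quad\text{for all }\ell\in V^{*}
\]
has no nonzero $S_{n-1}\times\Sp(V)$-invariant solution $\phi\in(\Sym^{\geq 1}V)^{\otimes(n-1)}$, and you give no argument for this. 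Even granting the First Fundamental Theorem description of the invariants in terms of symplectic contractions $\omega_{ij}$, one has to track how $\partial_{V,\ell}^2$ and multiplication by $\ell^{\sharp}$ interact with those contractions, and verifying that the resulting linear system is injective (uniformly in $n$ and in $\dim V$) is genuinely nontrivial; it is not a routine application of Schur's lemma. As written, the proof has a real gap precisely where the content of the lemma lies.

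For comparison, the paper's argument sidesteps all of this representation theory: it uses symmetry and polarization to reduce to the value $\psi(f^{\otimes(n-1)})$, restricts (by a codimension $\geq 2$ argument in the pro-vector space) to $f$ with $f'(0)\neq 0$, applies the formal Darboux theorem to normalize $f=x_1$, observes that $\psi(x_1^{\otimes(n-1)})$ must be invariant under all formal symplectomorphisms fixing $x_1$ and hence a function of $x_1$ alone, and finally pins down $\lambda x_1^{n-1}$ using the scaling $x_1\mapsto t x_1$, $y_1\mapsto t^{-1}y_1$. That route avoids any induction on $n$ and any invariant-theoretic computation. If you want to salvage your approach, you would need to actually carry out the injectivity check; at minimum you should verify it explicitly for $n=3$ (where the symmetric $\Sp(V)$-invariants in $(\Sym^{\geq 1}V)^{\otimes 2}$ are one-dimensional in each even total degree, spanned by symmetrized powers of $\omega$) before claiming the general case.
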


\begin{proof}
  It suffices to pass to the formal completion and consider
  polydifferential operators on $\widehat{\caO(V)}$.  Such
  polydifferential operators are determined by their value on elements
  $f^{\otimes (n-1)}$ for $f \in \widehat{\caO(V)}$, since they are
  symmetric and hence determined by their restriction to
  $\Sym^{n-1} \widehat{\caO(V)}$. Furthermore, we can assume that $f'(0)
  \neq 0$, since the complement of this locus in the pro-vector space
  $\widehat{\caO(V)}$ has codimension equal to $\dim V \geq 2$.

  Write $V \cong \bC^{2n}$ with the standard symplectic form
  $\omega = \sum_{i=1}^{\dim V} dx_i \wedge dy_i$.  Applying the
  formal Darboux theorem, there is a formal symplectomorphism of $V$ whose
  pullback takes $f$ to $x_1$ (i.e., $f$ can be completed to a
  coordinate system in which the symplectic form is the standard one),
  so we can assume $f=x_1$. Since all formal symplectic automorphisms
  are obtained by integrating Hamiltonian vector fields, it suffices
  to consider the value $\psi(x_1^{\otimes (n-1)})$.  This value must
  be a function that, in coordinates, depends only on $x_1$, since
  such functions are the only ones which are invariant under all
  symplectic automorphisms fixing $x_1$. By linearity and invariance
  under conjugation by rescaling $x_1$ (and applying the inverse
  scaling to $y_1$), we deduce that
  $\psi(x_1^{\otimes (n-1)}) = \lambda \cdot x_1^{n-1}$ for some
  $\lambda \in \bC$.  Thus, on $x_1^{\otimes (n-1)}$, $\psi$ coincides
  with $\lambda$ times the multiplication operator,
  $f_1 \otimes \cdots \otimes f_{n-1} \mapsto \lambda f_1 \cdots
  f_{n-1}$.
  The latter operator is evidently symmetric and invariant under
  Hamiltonian flow. On the other hand, we have argued that a symmetric
  operator invariant under Hamiltonian flow is uniquely determined by
  its value on $x_1^{\otimes (n-1)}$.  So $\psi$ is equal to $\lambda$
  times the multiplication operator, as desired.
\end{proof}

As a by-product, we obtain the following theorem:

  \begin{theorem}\label{snp1thm} (\cite{hp0weyl}, Theorem 1.6) Let $V$ be a finite dimensional symplectic vector space, and realize $V^{n-1}$ as 
  the set of elements $(v_1 \ldots ,v_n)\in V^n$ such that $\sum_{i=1}^{n}v_i=0$. Then 
  $\HP_0(\caO(V^{n-1} / S_n))^* \cong \bC$, spanned by the augmentation map $\caO(V^{n-1}) \to \bC$.
In other words, the Lie algebra $\mfg$ of $S_n$-invariant Hamiltonian vector fields on $V^{n-1}$ 
is perfect: $\mfg=[\mfg,\mfg]$. 
\end{theorem}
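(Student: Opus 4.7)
The plan is to deduce Theorem \ref{snp1thm} from Lemma \ref{l:hidist} by combining the explicit formula \eqref{e:m-vg} for $M(V^{n-1}/S_n)$ with the Cartesian Poisson decomposition of $V^n/S_n$.

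First I would check the lower bound, namely that the augmentation $\mathrm{aug}: \caO(V^{n-1})^{S_n} \to \bC$, $f \mapsto f(0)$, is a Poisson trace. The origin is the unique $S_n$-fixed point of $V^{n-1}$, and as an $S_n$-representation $V^{n-1} \cong \mathrm{std} \otimes V$, where $\mathrm{std}$ is the (irreducible and nontrivial for $n \geq 2$) standard representation of $S_n$. Hence $(V^{n-1,*})^{S_n} = 0$, so $df(0) = 0$ for every $f \in \caO(V^{n-1})^{S_n}$, giving $\mathrm{aug}(\{f,g\}) = \omega_0^{-1}(df(0), dg(0)) = 0$. This provides $\dim \HP_0(\caO(V^{n-1}/S_n))^* \geq 1$.

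For the matching upper bound I would apply Example \ref{ex:m-vg} to $V^{n-1}$ with the finite group $S_n$. The parabolic subgroups of $S_n$ are the Young subgroups $S_\lambda$ for partitions $\lambda \vdash n$, and $(V^{n-1})^{S_\lambda}$ has dimension $(|\lambda|-1)\dim V$, vanishing only for $\lambda = (n)$. So of the summands in \eqref{e:m-vg}, only $\HP_0(\caO(V^{n-1}/S_n)) \otimes \delta_0$ (the one indexed by $\lambda = (n)$) admits a nonzero D-module homomorphism to $\delta_0$; all others are simple IC-sheaves on positive-dimensional irreducible subvarieties. Hence $\Hom_{\caD}(M(V^{n-1}/S_n), \delta_0) \cong \HP_0(\caO(V^{n-1}/S_n))^*$. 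Separately, the Cartesian decomposition $V^n = V \times V^{n-1}$ into centre of mass and traceless part is $S_n$-equivariant and Poisson (up to rescaling the form on the $V$-factor by $n$), so it descends to a Poisson isomorphism $V^n/S_n \cong V \times (V^{n-1}/S_n)$. This in turn yields factorizations $M(V^n/S_n) \cong \Omega_V \boxtimes M(V^{n-1}/S_n)$ and $\delta_V \cong \Omega_V \boxtimes \delta_0$ (the diagonal $V \hookrightarrow V^n/S_n$ corresponds to $V \times \{0\}$). Using $\Hom(\Omega_V, \Omega_V) = \bC$ and Künneth for $\Hom$ of holonomic D-modules, I would obtain
\[
\Hom_{\caD}(M(V^n/S_n), \delta_V) \cong \Hom_{\caD}(M(V^{n-1}/S_n), \delta_0) \cong \HP_0(\caO(V^{n-1}/S_n))^*.
\]
The left-hand side is exactly the content of equation \eqref{eqq} as restated by Lemma \ref{l:hidist}, which establishes that it is one-dimensional. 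Chaining the isomorphisms gives $\HP_0(\caO(V^{n-1}/S_n))^* \cong \bC$, which by the lower bound must be spanned by $\mathrm{aug}$.

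The step I expect to be the main obstacle is justifying carefully the Künneth factorizations $M(V^n/S_n) \cong \Omega_V \boxtimes M(V^{n-1}/S_n)$ and $\delta_V \cong \Omega_V \boxtimes \delta_0$, together with the Künneth isomorphism for $\Hom$ between holonomic D-modules on a product; these follow from the local nature of the construction $M$, the fact (by Example \ref{ex:tr}) that $M(V) = \Omega_V$ for the symplectic vector space $V$, and standard properties of external products of holonomic D-modules, but deserve care.
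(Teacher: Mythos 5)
Your proof is correct and rests on the same essential inputs as the paper's, namely Lemma \ref{l:hidist} (the restatement of \eqref{eqq}) together with the decomposition \eqref{e:m-vg}. One small simplification worth noting: the K\"unneth detour through $V^{n-1}/S_n$ can be skipped, since applying \eqref{e:m-vg} directly to the symplectic vector space $V^n$ with $G=S_n$ already identifies the summand indexed by $[P]=[S_n]$ as $\HP_0(\caO(V^{n-1}/S_n))\otimes\delta_V$ (here $(V^n)^{S_n}$ is the diagonal copy of $V$, its symplectic complement is $V^{n-1}$, and $N^0(S_n)=\{1\}$), and since all other summands are pushforwards of IC sheaves supported in dimension strictly greater than $\dim V$, this gives $\Hom_{\caD}(M(V^n/S_n),\delta_V)\cong\HP_0(\caO(V^{n-1}/S_n))^*$ at once, after which Lemma \ref{l:hidist} finishes; your argument for the K\"unneth step (via the simple $\caO$-coherent factor $\Omega_V$ with $\End(\Omega_V)=\bC$) is nonetheless sound.
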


\section{Structure of $M(X)$ for complete intersections with isolated
  singularities}

In this section we discuss results from \cite[\S 5]{ES-dmlv} and
\cite{ES-ciis} concerning $M(X)$ and $\HP^{DR}_*(X)$ when $X$ is a
complete intersection surface with isolated singularities (or more
generally of arbitrary dimension, if one suitably defines the Lie
algebra $\mfg$ of Hamiltonian vector fields). In particular we will
recover topological information about the singularities, including the
Milnor numbers and genera, and find examples where $M(X)$ is not
semisimple.  For concreteness, we will take $X$ to be a surface
in $\bC^3$ throughout most of the section, and explain at the end how
the arguments extend to general (locally) complete intersections (of
arbitrary dimension) in complex affine space.

\subsection{Main results for surfaces in $\bC^3$} Let $X = \{f=0\} \subseteq \bC^3=\Spec \bC[x_1,x_2,x_3]$ be a surface with $f$ irreducible. It is naturally Poisson, with bracket
$\{x_1,x_2\} = f_{x_3}$ together with cyclic
permutations of the indices.  We will be interested particularly in
$f$ having two nice properties:
\begin{definition}
  A variety $X$ is said to have isolated singularities if the singular
  locus of $X$ is finite. A function $f \in \bC[x_1,\ldots,x_n]$ is
  said to have isolated singularities if $\{f=0\}$ is a variety with isolated
  singularities.
\end{definition}
% Note that the singular locus of $\{f=0\}$ is just the intersection of
% $\{f=0\}$ with the critical locus of $f$. In particular, in order for
% $f$ to have isolated singularities, when $n \geq 2$, every irreducible
% factor of $f$ must occur with multiplicity one, otherwise the
% nonreduced (hence singular) locus of $X$ includes $\{f=0\}$.
\begin{definition} A function $f \in \bC[x_1,\ldots,x_n]$ is
  quasi-homogeneous of weight $|f|=m \geq 1$ with respect to weights
  $|x_i|=a_i \geq 1$ if $f$ is a linear combination of monomials of
  degree $m$ with respect to these weights.
\end{definition}
Note that $f$ is quasi-homogeneous if and only if $X=\{f=0\}$ is conical
with respect to the $\bC^\times$ action
$\lambda \cdot (x_1,\ldots,x_n) = (\lambda^{a_1} x_1, \ldots,
\lambda^{a_n} x_n)$,
i.e., this action (which contracts to the origin) preserves $X$.
%% (we call $X$ conical since the action
%%contracts to the origin: $\lim_{\lambda \to 0} \lambda \cdot x = 0$
%%for all $x \in X$).
%Since the singular locus of a quasi-homogeneous function is stable
%under weighted dilations
Note that, if $f$ is quasi-homogeneous with isolated singularities,
then since the singular locus is preserved under the action of
$\bC^\times$, it must be identically $\{0\}$ (or empty).
\begin{example}\label{ex:kl-ell}
  In the case that $f \in \bC[x_1,x_2,x_3]$ is quasi-homogeneous with
  weight $m$ with respect to $|x_i|=a_i$, we see that the Poisson
  bracket has degree $d:=m-(a_1+a_2+a_3)$.  If moreover $f$ has
  isolated singularities (i.e., its singular locus is $\{0\}$), then
  $X$ is well known to be isomorphic to
a du Val singularity, $\bC^2/\Gamma$ for 
%i.e.,
%  formally locally isomorphic to $\bC^2/\Gamma$ for
$\Gamma < \SL(2,\bC)$ a finite subgroup, if $d < 0$; to have a simple
elliptic singularity at the origin if $d=0$; and to have neither a du Val
nor elliptic singularity if $d > 0$.  In particular, up to
isomorphism, in the case $d < 0$ (a du Val singularity) $X$ is
isomorphic to one of the following (see, e.g., \cite{Brisessag} and
\cite[Proposition 2.3.2]{EGdelpezzo}):
\begin{gather} \label{amdesc}
A_{m-1}: \Gamma = \bZ/m, a_1=2, a_2=a_3=m, f = x_1^m + x_2^2 + x_3^2, \\
D_{m+2}: \Gamma = \widetilde{D_{2m}}, a_1=2, a_2=m, a_3=m+1, f = x_1^{m+1} + x_1 x_2^2 + x_3^2, \\
E_6: \Gamma = \widetilde{A_4}, a_1=3, a_2=4, a_3=6,   f = x_1^4 + x_2^3 + x_3^2, \\
E_7: \Gamma = \widetilde{S_4}, a_1=4, a_2=6, a_3=9,  f = x_1^3 x_2 + x_2^3 + x_3^2, \\
E_8: \Gamma = \widetilde{A_5}, a_1=6, a_2=10, a_3=15, f = x_1^5 + x_2^3 + x_3^2; \label{e8desc}
\end{gather}
and in the case $d=0$ (elliptic), then $X$ is isomorphic to one of the following forms:
\begin{gather}
\widetilde {E_6}: a_1=a_2=a_3=1, f= x_1^3 + x_2^3 + x_3^3 + \lambda x_1x_2x_3, \\
\widetilde {E_7}: a_1=a_2=1, a_3=2, f = x_1^4 + x_2^4 + x_3^2 + \lambda x_1x_2x_3,\\
\widetilde {E_8}: a_1=1, a_2=2, a_3=3, f = x_1^6 + x_2^3 + x_3^2 + \lambda x_1x_2x_3.
\end{gather}
\end{example}
\begin{notation} For a possibly singular complex algebraic or analytic
  variety $X$, let $H^*_{\tpl}(X)$ denote the topological cohomology
  of $X$. 
  % When $X$ is a smooth algebraic variety, this is isomorphic
  % by Grothendieck's theorem \cite[Theorem 1']{Gr-dR} to the algebraic
  % de Rham (hyper)cohomology which we denote by $H^*(X)$.
\end{notation}
\begin{notation} For $s \in X$ an isolated singularity, let $\mu_s$ denote the Milnor number at $s$. Let $g_s$ denote the ``reduced genus'' of the singularity at $s$, which we define as
$g_s = \dim H^{\dim X-1}(Y, \caO_Y)$, for $\rho: \widetilde X \to X$ any resolution of singularities and $Y = \rho^{-1}(s)$ (this definition does not depend on the choice of resolution).
%$H^{\dim X}(\widetilde X)$
\end{notation}
By \cite{Mil-spch, Ham-ltekr}, % the Milnor number is such that,
if
$X_t$ is a smoothing of $X$, then for $B(s)$ a small ball about $s$,
$X_t \cap B(s)$ is homotopic to a bouquet of $\mu_s$ spheres of
dimension $\dim X$.  In the case $X$ is a hypersurface in $\bC^n$, the
Milnor number can be described as the codimension in $\caO(X)$ of the
ideal generated by the partial derivatives of $f$.
\begin{theorem} \label{t:hpdr-his} \cite[Theorem 2.4]{ES-ciis}
  If $X$ is a surface in $\bC^3$ with isolated singularities
  $s_1,\ldots,s_k$, then
  $\HP^{DR}_*(X) \cong H_{\tpl}^{2-*}(X) \oplus
  \bigoplus_{i=1}^k\bC^{\mu_{s_i}}$,
  placing $\bC^{\mu_{s_i}}$ in degree zero.
\end{theorem}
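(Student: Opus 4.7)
The key observation is that on the smooth symplectic locus $X_\reg := X \setminus \{s_1, \ldots, s_k\}$, we have $M(X)|_{X_\reg} \cong \Omega_{X_\reg}$ by Example \ref{ex:poisson}. Thus $M(X)$ differs from an extension of $\Omega_{X_\reg}$ only by a $\caD$-module supported at the singular points, and I would organize the computation around the distinguished triangle
\[
  i_* i^! M(X) \to M(X) \to j_* \Omega_{X_\reg} \xrightarrow{+1},
\]
where $i\colon \{s_1,\ldots,s_k\} \hookrightarrow X$ and $j\colon X_\reg \hookrightarrow X$ are the obvious immersions. Applying $\pi_*$ yields a long exact sequence relating $\HP^{DR}_*(X)$ to $H^{2-*}_{DR}(X_\reg)$ and local contributions at each $s_i$.

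The main step is a formal local computation of $M(X)$ near each $s_i$, which is legitimate since the singularities are isolated. Formally at $s_i$, the pushforward $i_\natural M(X)$ into $\widehat{\caD(\bC^3)}$ is the quotient by $f$ and the three Hamiltonian vector fields $\xi_{x_j} = f_{x_{j+2}}\partial_{x_{j+1}} - f_{x_{j+1}}\partial_{x_{j+2}}$ (indices cyclic mod $3$); a Koszul-type resolution organized by these four generators reduces the relevant cohomology to the Jacobian/Milnor algebra $\widehat{\caO}_{s_i}/(f_{x_1}, f_{x_2}, f_{x_3})$, which has dimension exactly $\mu_{s_i}$. One then expects the contribution of $i^! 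M(X)$ at $s_i$ to $\HP_0^{DR}(X)$ to be precisely $\bC^{\mu_{s_i}}$, with additional higher-degree delta-function contributions that will be absorbed topologically.

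To finish, I would match this long exact sequence with the topological long exact sequence of the pair $(X, X_\reg)$, whose defect by excision is $\bigoplus_i \widetilde{H}^{*-1}_\tpl(L_{s_i})$ for the links $L_{s_i}$. By the Milnor fibration theorem, $\widetilde{H}^*_\tpl(L_{s_i})$ is computed by the monodromy action on the Milnor fiber (a bouquet of $\mu_{s_i}$ 2-spheres); the higher-degree contributions from the Koszul step should cancel against the link cohomology, upgrading $H^{2-*}_\tpl(X_\reg)$ to $H^{2-*}_\tpl(X)$ and leaving only the residual $\bC^{\mu_{s_i}}$ summands in degree zero, as claimed.

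The hard part is the local $\caD$-module computation at each $s_i$: identifying $i^! M(X)$ there precisely, and separating the contributions that persist as independent Milnor-number summands in $\HP_0^{DR}$ from those that merely absorb the link-cohomology defect between $X_\reg$ and $X$. Since $M(X)$ need not be semisimple at singular points (as the introduction illustrates for cones over smooth plane curves), one cannot simply read off composition factors; one must track the extension structure carefully, exploiting the syzygies among $f, \xi_{x_1}, \xi_{x_2}, \xi_{x_3}$ that come from the Jacobian relations.
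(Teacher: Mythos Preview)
Your outline is coherent, but it diverges from the paper's proof in a way that leaves a real gap.

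The paper does not compute $i^! M(X)$. Instead it proves the structural Theorem \ref{t:mx-his}: there is a short exact sequence
\[
0 \to H^0 j_! \Omega_{X\setminus Z} \to M(X) \to \bigoplus_i \delta^{\mu_{s_i}} \to 0,
\]
and Theorem \ref{t:hpdr-his} follows by pushing this forward. The right-hand term is obtained by a \emph{local} computation: the maximal \emph{quotient} of $M(U_s)$ supported at $s$ is $\delta^{\mu_s}$, equivalently $\HP_0(\caO(U_s))\cong\bC^{\mu_s}$, and this is identified with the top cohomology of the torsion-free de Rham complex of $U_s$ via results of Greuel. The injectivity of $H^0 j_!\Omega_{X\setminus Z}\to M(X)$, by contrast, is proved by a \emph{global} Euler-characteristic argument using the smoothing $X_t=\{f=t\}$: one shows the family $(i_t)_\natural M(X_t)$ is torsion-free by bounding $\chi(\HP^{DR}_*(X))$ below by $\chi(H^{2-*}_\tpl(X_t))$ and then matching this against $\chi(\IH^*(X))$ plus the delta multiplicities. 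This forces the map to be injective and simultaneously yields flatness (Theorem \ref{t:mxt}).

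Your route via the triangle $i_*i^!M(X)\to M(X)\to j_*\Omega_{X_\reg}$ and the topological pair $(X,X_\reg)$ is dual to this, but the hard input it needs is a complete determination of $i^!M(X)$ at each $s_i$, i.e.\ of $\Ext^*(\delta_{s_i},M(X))$. Your proposed ``Koszul-type resolution'' on the four elements $f,\xi_{x_1},\xi_{x_2},\xi_{x_3}$ does not supply this: these do not form a regular sequence in $\caD(\bC^3)$ (there are nontrivial syzygies beyond the obvious ones), so the Koszul complex on them is not a free resolution of $i_\natural M(X)$, and its homology is not a priori the Jacobi ring. More seriously, your ``cancellation'' of link cohomology against higher pieces of $i^!M(X)$ is exactly the statement that $M(X)$ has no more delta-function composition factors than forced; since $M(X)$ is not self-dual and not semisimple in general (Remark \ref{r:mx-nonss-cone}), knowing $\Hom(M(X),\delta_s)=\bC^{\mu_s}$ does not tell you $\Hom(\delta_s,M(X))$ or the higher $\Ext$'s. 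The paper gets this control precisely from the smoothing argument, for which you have no substitute.
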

Theorem \ref{t:hpdr-his} follows from the following structure theorem for
$M(X)$ in this case. Let $Z \subseteq X$ be the singular locus and
$j: X\setminus Z \to X$ the open embedding.  Recall that, for $M$ a
holonomic right D-module on $X\setminus Z$, there are pushforward
complexes of D-modules $j_! M, j_* M$ on $X$ and a holonomic
 D-module $j_{!*} M$ on $X$ satisfying the adjunction properties
$\Hom(H^0j_! M, N) = \Hom(M, j^! N)$ and
$\Hom(N,H^0j_* M) = \Hom(j^! N,M)$ (for $j^! N = j^* N$ the restriction of $N$ to the open subset $U$); also $j_{!*} M$ is the minimal
extension, which means for example that $j_{!*} M$ is simple if and
only if $M$ is.  Since $X \setminus Z$ is a smooth symplectic variety,
$j^! M(X) = \Omega_{X \setminus Z}$, which implies that $M(X)$ has a
composition series one of whose composition factors is the
intersection cohomology D-module of $X$,
$\IC(X) = j_{!*} \Omega_{X \setminus Z}$, 
%of
%$\Omega_{X\setminus Z}$ to a D-module on $X$, 
and the others which are  D-modules supported on the
singular locus, and hence are delta-function D-modules since the
singular locus is finite. Let $M(X)_{\text{ind}}$ be the
indecomposable summand of $M(X)$ which has $\IC(X)$ as a composition
factor.

\begin{theorem}\label{t:mx-his} \cite[Theorem 2.7]{ES-ciis}
Let $X = \{f=0\} \subseteq \bC^3$ have singular locus $Z = \{s_1, \ldots, s_k\}$ (with the $s_i$ distinct). Then
there is a short exact sequence 
\[
0 \to H^0j_! \Omega_{X\setminus Z} \to M(X) \to \bigoplus_{i=1}^k \delta^{\mu_{s_i}} \to 0.
\]
\end{theorem}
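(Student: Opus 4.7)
The approach is to construct a canonical morphism $\phi: H^0 j_! \Omega_{X\setminus Z} \to M(X)$ by adjunction and then analyze it locally at each singular point. Since $X\setminus Z$ is smooth symplectic, Examples \ref{ex:tr} and \ref{ex:tr2} give $j^! M(X) = \Omega_{X\setminus Z}$, and the adjunction $\Hom(H^0 j_! \Omega_{X\setminus Z}, M(X)) \cong \Hom(\Omega_{X\setminus Z}, \Omega_{X\setminus Z})$ supplies $\phi$ from the identity. This $\phi$ is an isomorphism on $X\setminus Z$, so both its kernel and cokernel are supported on the finite set $Z$, and hence decompose as direct sums of delta-function D-modules at the $s_i$.

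The next step is to identify the multiplicity $n_i$ of $\delta_{s_i}$ in the cokernel with the Milnor number $\mu_{s_i}$ via a local computation. Working in coordinates on $V = \bC^3$, the right $\caD(V)$-module $i_\natural M(X)$ is the quotient of $\caD(V)$ by the left ideal generated by $f$ together with the three Hamiltonian vector fields $\xi_{x_j} = f_{x_{j+1}}\partial_{x_{j+2}} - f_{x_{j+2}}\partial_{x_{j+1}}$ (indices cyclic). The key observation is that the $\xi_{x_j}$, viewed as sections of $T_V = \caO_V^{\oplus 3}$, are precisely the three basic Koszul syzygies of the Jacobian ideal $J = (f_{x_1},f_{x_2},f_{x_3})$, satisfying the relation $\sum_j f_{x_j}\xi_{x_j} = 0$. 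A Koszul-type resolution argument should then show that the local contribution of $M(X)$ at $s_i$ beyond the image of $H^0 j_! \Omega_{X\setminus Z}$ is controlled by the Milnor algebra $\caO_{V,s_i}/J$, whose dimension is $\mu_{s_i}$ by Milnor's classical formula for hypersurface singularities.

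For injectivity of $\phi$, the kernel is a submodule of $H^0 j_! \Omega_{X\setminus Z}$ supported on $Z$. While the adjunction property forbids $H^0 j_!$ from having quotients supported on $Z$, submodules there are possible a priori. A composition-factor count at each $s_i$ should rule them out: the multiplicity of $\delta_{s_i}$ in $H^0 j_! \Omega_{X\setminus Z}$ is computable from the topology of the punctured local Milnor neighborhood (via the compactly supported de Rham cohomology interpretation of its pushforward), while an upper bound on the multiplicity of $\delta_{s_i}$ in $M(X)$ is pinned down by the local Koszul computation. The two match up to exactly the factor $\delta^{\mu_{s_i}}$ accounted for by the cokernel, forcing the kernel to vanish.

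The main obstacle is the local D-module computation at each singular point: extracting the Milnor algebra from the explicit right-ideal presentation of $M(X)$. This requires controlling the interaction between left multiplication by $f$ and differentiation, most cleanly by passing to the associated graded on $T^*V$, where the Koszul syzygies among $(f_{x_j})$ and the equation $f$ cut out the characteristic variety, and the Milnor algebra emerges at the fiber over the singular point. Once that local identification is established, the short exact sequence follows by assembling the local computations with the global adjunction-based definition of $\phi$.
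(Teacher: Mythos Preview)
Your setup is correct: the adjunction map $\phi: H^0 j_!\Omega_{X\setminus Z}\to M(X)$ is the right object, its cokernel is the maximal quotient of $M(X)$ supported on $Z$, and the problem splits into identifying that quotient as $\bigoplus_i\delta_{s_i}^{\mu_{s_i}}$ and proving $\ker\phi=0$. But both of your proposed computations have real gaps.

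For the cokernel, the observation that the $\xi_{x_j}$ are the Koszul syzygies of $(f_{x_1},f_{x_2},f_{x_3})$ is correct and suggestive, but it does not by itself identify the maximal $Z$-supported quotient with the Milnor algebra. Passing to the associated graded on $T^*V$ gives only the support (Lemma~\ref{l:support}), not the multiplicity along the cotangent fiber $T^*_{s_i}V$; and the surjection $\caO(T^*V)/(\text{symbols})\onto\gr i_\natural M(X)$ goes the wrong way for a lower bound. The paper instead proves $\HP_0(\caO(U_s))\cong\bC^{\mu_s}$ (equivalently, the maximal quotient at $s$ is $\delta^{\mu_s}$) by comparing $\HP_0$ with the cohomology of the de Rham complex of $U_s$ modulo torsion forms, invoking results of Greuel \cite{Gre-GMZ}. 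That comparison is where the Milnor number genuinely enters.

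The more serious gap is injectivity. Your proposed ``composition-factor count'' requires an \emph{upper} bound on the total $\delta_{s_i}$-multiplicity in $M(X)$, and you claim this is ``pinned down by the local Koszul computation''---but that computation does not give it. The paper's argument is genuinely different and is where the real content lies: one proves injectivity of $\phi$ \emph{simultaneously} with flatness of the family $M(X_t)$ over the smoothing (Theorem~\ref{t:mxt}). The torsion-free quotient of the family has constant Euler characteristic in $t$, which yields the inequality $\chi(\HP^{DR}_*(X))\geq\chi(H^{2-*}_{\tpl}(X_t))=\chi(H^{2-*}_{\tpl}(X))+\sum_s\mu_s$. Combined with $\chi(\HP^{DR}_*(X))=\chi(\IH^*(X))+m$ (where $m$ is the total $\delta$-multiplicity in $M(X)$) and the explicit formula $\chi(\IH^*(U_s))=1-\dim H^1_{\tpl}(U_s\setminus\{s\})$, this forces the $\delta$-submodule of $M(X)_{\text{ind}}$ to have length at least $\dim\Ext^1(\IC(X),\delta_s)=\dim H^1_{\tpl}(U_s\setminus\{s\})$, which is the maximum possible. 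Hence $\phi$ is injective and the inequality is an equality. Without the smoothing argument there is no visible mechanism to bound $m$ from above, so your outline as written does not close.
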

%where $\mu_0$ is the Milnor number of the singularity at the origin.
\begin{conjecture}\label{c:mx-his2} \cite[Conjecture 3.8]{ES-ciis}\footnote{For a proof that the conjecture above is the same as the statement from \cite{ES-ciis}, see 
Proposition 3.13
    of the arXiv version 1401.5042 of \cite{ES-ciis}, and apply
    Grothendieck--Serre duality to identify $H^{\dim Y}(Y,\caO_Y)$
    with global sections of the logarithmic canonical bundle on $Y$. See
also the second paragraph of the proof of \cite[Lemma 2.5]{BS-bfun}.}
  Under the same hypotheses as in Theorem \ref{t:mx-his}, we have a
  short exact sequence
\[
0 \to H^0j_! \Omega_{X \setminus Z} \to M(X)_{\text{ind}} \to \bigoplus_i \delta^{g_{s_i}} \to 0.
\]
\end{conjecture}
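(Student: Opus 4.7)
The plan is to start from Theorem \ref{t:mx-his}, which gives the short exact sequence
\[
0 \to H^0 j_! \Omega_{X\setminus Z} \to M(X) \to \bigoplus_i \delta^{\mu_{s_i}}_{s_i} \to 0,
\]
and to determine, at each singular point $s_i$, how many delta summands split off from $M(X)$ as direct summands. By Krull--Schmidt one can write $M(X) \cong M(X)_{\text{ind}} \oplus \bigoplus_i \delta^{a_i}_{s_i}$, and since $\IC(X)$ appears as a composition factor only of $M(X)_{\text{ind}}$, it suffices to show $a_i = \mu_{s_i} - g_{s_i}$ for every $i$. Interpreting the extension class as a linear map
\[
\eta_i \colon \bC^{\mu_{s_i}} \to \Ext^1_{\caD}(\delta_{s_i}, H^0 j_! \Omega_{X\setminus Z}),
\]
a standard argument with extensions of semisimple quotients gives $a_i = \mu_{s_i} - \rk(\eta_i)$, so the conjecture reduces to proving $\rk(\eta_i) = g_{s_i}$.

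Working in a formal neighborhood of $s_i$ (which suffices by the local nature of the construction of $M(X)$ via Definition \ref{d:mxg}), I would first analyze the target Ext group using the short exact sequence $0 \to K \to H^0 j_! \Omega_{X\setminus Z} \to \IC(X) \to 0$, where $K$ is a direct sum of delta modules supported on $Z$. By Riemann--Hilbert, $\Ext^1_{\caD}(\delta_{s_i}, \IC(X))$ is dual to a piece of the vanishing cohomology at $s_i$ and carries a mixed Hodge structure. For an isolated surface singularity the relevant Ext group has total dimension $\mu_{s_i}$ and contains a natural $g_{s_i}$-dimensional subspace coming from $H^1(Y_i, \caO_{Y_i})$ for $Y_i$ the exceptional divisor of any resolution; the aim is to show that $\eta_i$ factors through this subspace and lands surjectively onto it.

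To identify the image of $\eta_i$, I would use the explicit Poisson bracket $\{x_i,x_j\} = f_{x_k}$ (cyclic) to express the extension class via a Koszul-type complex involving the Jacobian ideal $\mathrm{Jac}(f)$. In the quasi-homogeneous case (Example \ref{ex:kl-ell}) the weighted grading decomposes all spaces into graded pieces and one can match $\eta_i$ with the $F^2$ part of the Hodge filtration on $H^2(F_{s_i})$ of the Milnor fiber, which by Steenbrink's calculation has dimension $g_{s_i}$. The cone over a smooth plane curve of degree $d$ serves as a sanity check: with $\mu = (d-1)^3$ and $g = (d-1)(d-2)/2$, the composition structure $\delta^{2g}$--$\IC(X)$--$\delta^g$ of $M(X)_{\text{ind}}$ described in the introduction matches exactly the structure predicted by this recipe. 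The general isolated-singularity case should then follow by a $\mu$-constant deformation argument, using constancy of $\mu_{s_i}$ and upper semicontinuity of $g_{s_i}$, together with a degeneration to the tangent cone.

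The main obstacle is the final identification of $\mathrm{image}(\eta_i)$ with the Hodge subspace $H^1(Y_i, \caO_{Y_i})$, since one side of the identification is defined via the Poisson formalism on $X$ and the other via a resolution $\rho\colon \widetilde{X} \to X$, and bridging them requires a comparison morphism that respects both structures. A potentially cleaner alternative would be to construct directly, from a resolution $\rho\colon \widetilde{X} \to X$, a natural surjection $M(X)_{\text{ind}} \onto \bigoplus_i (i_{s_i})_* H^1(Y_i, \caO_{Y_i})$ whose kernel equals $H^0 j_! \Omega_{X\setminus Z}$, thereby bypassing the Ext computation. However, this requires understanding how $M(X)$ interacts with non-symplectic resolutions, which is the main novel difficulty beyond the situation considered in Conjecture \ref{con:sympres}.
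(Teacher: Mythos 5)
First, note that the statement you are trying to prove is presented in the paper as a \emph{conjecture}; the paper establishes it only in the quasi-homogeneous case (Theorem \ref{t:mx-his2}, where all $g_{s_i} = g_0$). The general isolated-singularity case is open, so any complete proof would be new; it is worth keeping that in mind when assessing how much optimism is warranted about steps like the ``$\mu$-constant deformation argument'' at the end.

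Your reduction is sound: since $\Hom(H^0j_!\Omega_{X\setminus Z}, \delta_s) = 0$ by adjunction, the number of delta summands of $M(X)$ at $s_i$ is exactly $\mu_{s_i} - \rk(\eta_i)$, where $\eta_i$ is the restriction of the extension class from Theorem \ref{t:mx-his}. So far so good. However, your next step contains a factual error: you assert that the target Ext group has total dimension $\mu_{s_i}$. In fact, using the exact sequence $0 \to K \to H^0j_!\Omega \to \IC(X) \to 0$ and the vanishing $\Ext^*_{\DmodX}(\delta_s,\delta_s) = 0$ for $* \geq 1$ (Kashiwara's equivalence applied to the point $s$ in the ambient smooth variety makes any self-extension of $\delta_s$ trivial), one gets $\Ext^1(\delta_s, H^0j_!\Omega) \cong \Ext^1(\delta_s, \IC(X))$, which by the paper's own computation (Remark \ref{r:mx-nonss}) has dimension $\dim H^1_{\tpl}(U_s\setminus\{s\})$, the first Betti number of the link. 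For the cone over a degree-$d$ plane curve this is $2g = (d-1)(d-2)$, not $\mu = (d-1)^3$, so the dimensions are wildly different. Your sanity check on the cone therefore does not confirm the dimension claim, and the proposed Hodge-theoretic matching with $H^2$ of the Milnor fiber needs to be recalibrated: the target of $\eta_i$ lives inside the $2g$-dimensional $H^1$ of the link, not the $\mu$-dimensional vanishing cohomology.

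More importantly, your route is genuinely different from the paper's and its key step is left unaddressed. The paper's proof of the quasi-homogeneous case does not attempt to compute $\rk(\eta_i)$ via Riemann--Hilbert and Hodge theory at all. Instead it exploits the $\bC^\times$-action: $M(X)$ decomposes into generalized $T_{\Eu}$-eigenspaces $M(X)_m$, and the crucial observations are (i) $M(X)_{\text{ind}}$ sits inside the degree-$d$ piece $M(X)_d$, where $d$ is the degree of the Poisson bivector, because the symplectic volume form on $X\setminus\{0\}$ has weight $d$; (ii) the weight-$d$ subspace $\HP_0(\caO(X))_d = \caO(X)_d$ has dimension exactly $g_0$; and (iii) $M(X)_d$ is itself indecomposable, so $M(X)_d = M(X)_{\text{ind}}$. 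Step (iii) is the hard part and is proved by showing that $T_{\Eu}$ is central in $\End(M(X)_m)$, hence preserves any hypothetical $\delta_0$-summand $K$ of $M(X)_d$, and then explicitly constructing (Lemma \ref{l:mx-his2}) a Hamiltonian-invariant \emph{distribution} $\phi_v$ with $\phi_v\cdot(\Eu - d) = w(v(1))$ via a convergent integral over $X$ using a Hermitian pairing on $\caO(X)_d$; this forces $T_{\Eu}-d\Id$ to act nontrivially on $K$, a contradiction. None of this analytic machinery (or the grading input) appears in your proposal, and you have not indicated a substitute for it. Your alternative of constructing a direct surjection from $M(X)_{\text{ind}}$ via a (non-symplectic) resolution is an interesting idea, but as you note yourself it is precisely the hard bridge, and nothing in the paper suggests such a construction is available.

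In summary: the reduction to $\rk(\eta_i) = g_{s_i}$ is correct and worth keeping, but the stated dimension of the Ext group is wrong, the Hodge-theoretic identification is not carried out, and the $\mu$-constant deformation step would, if it worked, resolve an open conjecture whose general case the authors explicitly leave unproven. The paper's own argument in the quasi-homogeneous case is a genuinely different and more computational route through the weight grading and explicit distributions.
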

\begin{theorem}\label{t:mx-his2} \cite[Proposition 3.11]{ES-ciis} Conjecture \ref{c:mx-his2} is true if
  $f$ is quasi-homogeneous. 
\end{theorem}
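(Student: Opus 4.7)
The plan is to use quasi-homogeneity to put a $\bC^\times$-equivariant structure on all the objects involved, reducing the problem to a graded $\Ext$ computation, and then to identify the relevant piece with global sections of the logarithmic canonical bundle on the exceptional divisor of a resolution.

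By Theorem \ref{t:mx-his}, we have the short exact sequence
\[
0 \to H^0 j_! \Omega_{X \setminus Z} \to M(X) \to \delta^{\mu_s} \to 0,
\]
since $f$ has a unique isolated singularity at the origin $s$. Decompose $M(X) = M(X)_{\text{ind}} \oplus N$ with $N$ a direct sum of delta modules (containing no copy of $\IC(X)$). The task is to show that the image of $M(X)_{\text{ind}}$ in $\delta^{\mu_s}$ has dimension $g_s$. A delta summand of $\delta^{\mu_s}$ lifts to a direct summand of $M(X)$ precisely when its class in $\Ext^1(\delta, H^0 j_! \Omega_{X \setminus Z})$ vanishes, so I need to compute the rank of the connecting map
\[
\partial\colon \Hom(\delta^{\mu_s},\delta^{\mu_s}) \to \Ext^1(\delta^{\mu_s}, H^0 j_! \Omega_{X \setminus Z}).
\]

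Quasi-homogeneity makes the whole sequence $\bC^\times$-equivariant, so $\partial$ decomposes into a direct sum of weight-graded pieces. The quotient $\delta^{\mu_s}$ is naturally identified, up to a canonical shift determined by $d = |f| - \sum a_i$, with the weighted Milnor algebra $\caO(\bC^3)/(f, f_{x_1}, f_{x_2}, f_{x_3})$. The key step is to identify the image of $\partial$, weight-by-weight, with the dual (under the Grothendieck--Serre residue pairing recalled in the footnote) of $H^1(Y, \caO_Y) \cong \Gamma(Y, \omega_Y^{\log})^\vee$, where $Y$ is the exceptional fiber of a $\bC^\times$-equivariant resolution $\widetilde{X} \to X$; this space has dimension $g_s$ by definition of the reduced genus. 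The identification proceeds by writing $H^0 j_! \Omega_{X \setminus Z}$ explicitly in terms of meromorphic top forms on $X$ and matching Hamiltonian-invariant residues at $s$ with the logarithmic Hodge piece, using weighted-degree bookkeeping on monomials available in the quasi-homogeneous setting.

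The main obstacle is precisely this final identification of Hamiltonian-invariant residues with logarithmic canonical sections on $Y$: it is the step that genuinely uses quasi-homogeneity, via the explicit correspondence between monomials of appropriate weight in the Milnor algebra and weighted-homogeneous $2$-forms with logarithmic poles along $Y$. The failure of such an explicit filtration to exist for general $f$ is exactly why Conjecture \ref{c:mx-his2} remains open outside the quasi-homogeneous case.
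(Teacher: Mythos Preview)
Your idea to exploit the $\bC^\times$-action is exactly where the paper's argument begins, but the structure of the proof after that is quite different, and your sketch is missing the step that carries the real content.

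The paper decomposes $M(X)$ into generalized eigenspaces $M(X)_m$ of the Euler operator $T_{\Eu}$ and observes that $M(X)_{\text{ind}}$ sits entirely in the eigenvalue $m=d$ (the weight of the Poisson bivector). One then has
\[
0 \to H^0 j_! \Omega_{X\setminus\{0\}} \to M(X)_d \to \delta_0 \otimes \HP_0(\caO(X))_d \to 0,
\]
and since the Poisson bracket has degree $d$, $\HP_0(\caO(X))_d = \caO(X)_d$; this space is identified with $\Gamma(Y,\omega_Y^{\log})\cong H^{\dim X -1}(Y,\caO_Y)$ and hence has dimension $g_0$. So the entire problem reduces to showing that $M(X)_d$ is \emph{indecomposable}. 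That is the hard part, and it is \emph{not} an Ext or Hodge computation: the paper proves it analytically, by constructing for each putative delta summand an explicit Hamiltonian-invariant distribution $\phi_v$ on $\bC^3$ (an integral against $\omega\wedge\overline{Q}\,\overline{\omega}$) satisfying $\phi_v\cdot(\Eu-d)=w(v(1))$, which forces the nilpotent part of $T_{\Eu}$ to act nontrivially on the summand, a contradiction.

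Your proposal never establishes this non-splitting. Computing the target of your connecting map $\partial$ (or its graded pieces) only bounds how many deltas \emph{could} fail to split off; you still have to show that the particular extension class of $M(X)$ attains that bound. Your sentence ``matching Hamiltonian-invariant residues at $s$ with the logarithmic Hodge piece'' is precisely where this would have to happen, and as written it is an assertion, not an argument. Put differently: knowing $\dim\Ext^1$ in each weight tells you nothing about the rank of the specific class $[M(X)]$ inside it, and that rank is the whole theorem. The paper's distribution construction is exactly the missing ingredient that pins down this class.
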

\begin{remark} \label{r:mx-nonss} Theorem \ref{t:mx-his} shows that
  there are delta-function submodules of $M(X)$ which are not direct
  summands whenever $H^0 j_! \Omega_{X\setminus Z} \ncong \IC(X)$, the
  minimal extension of $\Omega_{X\setminus Z}$. Equivalently, this
  holds whenever $\Ext^1(\IC(X),\delta_s) \neq 0$ for some point
  $s \in Z$.  Since
  $\Ext^1(\IC(X),\delta_s) \cong \Ext^1(\delta_s, \IC(X))^*$ by
  Verdier duality, we actually see that $M(X)$ is not semisimple
  whenever possible, i.e., whenever there exists a non-semisimple
  extension of $\Omega_{X\setminus Z}$ to $X$.  One can compute that
  $\Ext^1(\IC(X),\delta_s) \cong H_{\tpl}^1(U_s \setminus \{s\})$
  where $U_s$ is some contractible neighborhood of $s$ (see \cite[Lemma 3.5]{BS-bfun}, similar to
  \cite[Lemma 4.3]{ESdm}), which exists by \cite{Gil-eavlc}, cf.~also
  \cite[2.10]{Mil-spch}. Here $U_s \setminus\{s\}$ is homotopic to the
  topological \emph{link} of $s$.  Thus we see that $M(X)$ is not
  semisimple whenever the link has nonzero first Betti number.
\end{remark}
\begin{remark} \label{r:mx-nonss-cone} Taking the special case where
  $X$ is a cone over a smooth curve $\Sigma$ in $\bP^2$ of degree $d$,
  recall that the reduced genus of the singularity is
  $g = \frac{(d-1)(d-2)}{2}$ (the same as the genus of $\Sigma$) and the Milnor number is $\mu=(d-1)^3$. In this
case, we see that $H_{\tpl}^1(X \setminus \{0\}) \cong H_{\tpl}^1(\Sigma)$ (since
  $X\setminus \{0\}$ is homotopic to a nontrivial $S^1$-bundle over
  $\Sigma$). Therefore by Remark \ref{r:mx-nonss} we obtain that
  $H^0 j_! \Omega_{X \setminus \{0\}}$ is an extension given by a
  short exact sequence of the form
\begin{equation}
0 \to \delta_0^{2g} \to H^0 j_! \Omega_{X \setminus \{0\}} \to \IC(X) \to 0.
\end{equation}
We obtain from Theorem \ref{t:mx-his2} that
$M(X)_{\text{ind}}$ has a filtration with subquotients
$\delta^{2g}, \IC(X)$, and $\delta^g$, in that order. In particular,
for $g > 0$, it is not semisimple, and moreover not self-dual (there is
twice the multiplicity of $\delta$ function D-modules on the
bottom as on the top). From Theorem \ref{t:mx-his} we see that $M(X) \cong M(X)_{\text{ind}} \oplus \delta_0^{\mu-g}$.
\end{remark}
\subsection{The family $M(X_t)$ of fiberwise D-modules on a smoothing}
The results can be reinterpreted in terms of a smoothing.  Namely, we
can consider all the level sets $X_t := \{f=t\} \subseteq \bC^3$ of
$f$ in $\bC^3$ as $t \in \bC$ varies, which for generic $t$ will be
smooth. It is then well known that, for generic $t$,
$\dim H_{\tpl}^2(X_t) = \dim H_{\tpl}^2(X) + \sum_{s \in Z} \mu_s$, where again
$Z \subseteq X$ is the singular locus which we assume is finite; this
is a consequence of the fact from \cite{Mil-spch, Ham-ltekr} that, for
every $s \in Z$ and $0 < |t| \ll 1$, the intersection of $X_t$ with a
small ball about $s \in \bC^3$ is homotopic to a bouquet of
$\mu_s$ $2$-spheres.  As a result, Theorem \ref{t:hpdr-his} implies
that $\dim \HP_0(\caO(X)) = \dim H_{\tpl}^2(X_t)$ for generic $t$.  Since
$X_t$ is a smooth symplectic surface,
$H_{\tpl}^2(X_t) \cong \HP_0(\caO(X_t))$ (by Example \ref{ex:poisson}). Thus
near $t=0$ the family $\HP_0(X_t)$ of vector spaces has constant
dimension, i.e., forms a vector bundle. 
We have proved:
\begin{corollary} \label{c:his-smooth} \cite[Corollary 1.4]{ES-ciis} Assume
  $X = \{f=0\} \subseteq \bC^3$ has isolated singularities. Then, the
  sheaf $\HP_0(\caO(X_t))$ on $\bC$ is a
  vector bundle near $t=0$ of rank
  $\dim H_{\tpl}^2(X) + \sum_{s \in Z} \mu_s = \dim \HP_0(\caO(X))$.  The
  generic fiber is $H_{\tpl}^2(X_t)$.
\end{corollary}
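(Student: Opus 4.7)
The plan is to combine the main dimension formula of Theorem~\ref{t:hpdr-his} with the Milnor--Hamm description of vanishing cycles in a smoothing, and then upgrade the resulting equality of fiber dimensions to a vector bundle statement via coherence of the associated family of D-modules.

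First, I would compute the two relevant dimensions separately. At $t=0$, Theorem~\ref{t:hpdr-his} (in degree $*=0$) gives
\[
\dim \HP_0(\caO(X)) \;=\; \dim H^2_{\tpl}(X) \;+\; \sum_{s\in Z}\mu_s.
\]
For $t\neq 0$ sufficiently small, the variety $X_t$ is smooth and carries the induced symplectic structure on an affine surface, so Example~\ref{ex:poisson} identifies $\HP_0(\caO(X_t)) \cong H^2_{\tpl}(X_t)$. By the Milnor fibration theorem \cite{Mil-spch, Ham-ltekr}, for $0<|t|\ll 1$ the intersection of $X_t$ with a small ball $B(s)$ around each $s\in Z$ is homotopic to a bouquet of $\mu_s$ two-spheres, while outside these balls $X_t$ is diffeomorphic to the corresponding part of $X$. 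A Mayer--Vietoris argument on $X_t = (X_t\setminus\bigcup B(s)) \cup \bigcup (X_t \cap B(s))$ then gives
\[
\dim H^2_{\tpl}(X_t) \;=\; \dim H^2_{\tpl}(X) \;+\; \sum_{s\in Z}\mu_s.
\]
Thus the fiber dimension of $\HP_0(\caO(X_t))$ is the same at $t=0$ and at generic nearby $t$.

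Next I would upgrade constancy of fiber dimension to a vector bundle statement. Let $\mathcal{X} = \{(x,t)\in\bC^3\times\bC : f(x)=t\}$ and $\pi:\mathcal{X}\to\bC$ the second projection. Since $f-t$ is a single equation cutting out a hypersurface in $\bC^3\times\bC$, $\mathcal{X}$ is a three-dimensional complete intersection and $\pi$ is flat. Construct the relative version $M(\mathcal{X}/\bC)$ of the canonical D-module along the fibers of $\pi$ (using the presheaf of vertical Hamiltonian vector fields); its (underived) pushforward $H^0\pi_* M(\mathcal{X}/\bC)$ is a coherent $\caO_{\bC}$-module whose fiber at $t$ is $H^0(\pi^t)_* M(X_t) = \HP_0(\caO(X_t))$, by Lemma~\ref{l:pi0mx} applied fiberwise together with a base-change statement. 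Now each $X_t$ has finitely many symplectic leaves (either the smooth leaf, or the smooth locus together with isolated singular points), so each fiberwise $M(X_t)$ is holonomic by Theorem~\ref{t:mx-hol}, and the pushforward is finite-dimensional at every point.

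Finally, combine the three inputs. The coherent sheaf $\mathcal{H}:=H^0\pi_* M(\mathcal{X}/\bC)$ has fiber dimensions which are upper semicontinuous in $t$. By the first step, $\dim \mathcal{H}_0 = \dim \mathcal{H}_t$ for generic $t$ near $0$; hence the fiber dimension is locally constant near $t=0$, and Nakayama's lemma shows $\mathcal{H}$ is locally free of the asserted rank near $t=0$. The main obstacle is the construction of the relative D-module and verification that its formation commutes with base change to the fiber; once this is established, semicontinuity and the numerical match handle the rest. The identification of the generic fiber with $H^2_{\tpl}(X_t)$ then follows from Example~\ref{ex:poisson}.
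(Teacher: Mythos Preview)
Your argument is correct and follows essentially the same route as the paper: compute $\dim \HP_0(\caO(X))$ via Theorem~\ref{t:hpdr-his}, identify $\HP_0(\caO(X_t))\cong H^2_{\tpl}(X_t)$ for smooth $X_t$ via Example~\ref{ex:poisson}, match dimensions using Milnor--Hamm, and conclude local freeness from constancy of fiber dimension. The paper treats the last step informally (``constant dimension, i.e., forms a vector bundle''), whereas you supply a justification via a relative D-module; this works, but note that the coherent sheaf in question can be presented directly as the $\bC[t]$-module $\caO(\bC^3)[t]/\bigl((f-t)+\{\caO(\bC^3),\caO(\bC^3)\}\bigr)$, whose fiber at $t_0$ is $\HP_0(\caO(X_{t_0}))$ by right exactness of $-\otimes_{\bC[t]}\bC[t]/(t-t_0)$, which sidesteps the base-change obstacle you flag.
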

The entire Poisson-de Rham homology similarly forms a graded vector
bundle \cite[Corollary 2.5]{ES-ciis}, since the dimension of
$H_{\tpl}^i(X_t)$ does not depend on $t$ for $i \in \{0,1\}$, and this equals
the dimension of $\HP^{DR}_{2-i}(X)$.

In fact, this is a shadow of a similar phenomenon for the D-modules
$M(X_t)$, which is central to the proof of the main results.  Note
that, for each $t \in \bC$, we can consider the D-module $M(X_t)$ on
$X_t$, which for $i_t: X_t \to \bC^3$ the embedding, is realized as
$(i_t)_\natural M(X_t)$, a right $\caD(\bC^3)$-module on $\bC^3$
supported on $X_t$.
\begin{theorem}\label{t:mxt}\cite[Theorem 2.8]{ES-ciis}
  The family on $\bC$ of right $\caD(\bC^3)$-modules with fiber
  $(i_t)_\natural M(X_t)$ at $t \in \bC$ is flat near $t=0$.  For generic
  $t$, the fiber of this family is the right D-module is isomorphic to
  $(i_t)_\natural \Omega_{X_t}$.% on $\bC^3$.
\end{theorem}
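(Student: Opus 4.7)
The plan is to realize $\{(i_t)_\natural M(X_t)\}_{t\in\bC}$ as the fibers of a single coherent right D-module on $\bC^3$ and to deduce flatness through a dimension comparison leveraging the preceding results.

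I would define
\[
\mathcal{M}\,:=\,\caD(\bC^3)\Big/\sum_{i=1}^3 \xi_{x_i}\caD(\bC^3),
\]
a right $\caD(\bC^3)$-module, where $\xi_{x_i}=\{x_i,-\}$ are the coordinate Hamiltonian vector fields (which restrict on each level set $X_t$ to generators of its Hamiltonian vector fields). The essential compatibility is Poisson-centrality of $f$: since $\{x_i,f\}=0$ for every $i$, we have $[\xi_{x_i},f]=\xi_{x_i}(f)=0$ in $\caD(\bC^3)$, so right multiplication by $f$ descends to an endomorphism of $\mathcal{M}$, giving it a $\bC[t]$-module structure with $t$ acting as $\cdot f$. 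By construction,
\[
\mathcal{M}/(f-t_0)\mathcal{M}\,=\,\caD(\bC^3)\Big/\bigl(\textstyle\sum_i\xi_{x_i}\caD+(f-t_0)\caD\bigr)\,=\,(i_{t_0})_\natural M(X_{t_0})
\]
for every $t_0\in\bC$. Since $X_0$ has only isolated singularities, $X_{t_0}$ is a smooth symplectic surface for all $t_0$ in some punctured neighborhood of $0$, and Example \ref{ex:tr} then yields $M(X_{t_0})\cong \Omega_{X_{t_0}}$, establishing the generic-fiber statement.

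The flatness assertion reduces to showing that right multiplication by $f-t_0$ is injective on $\mathcal{M}$ for all $t_0$ in a neighborhood of $0$. I would combine the dimension count of Corollary \ref{c:his-smooth}---which gives that
\[
t_0\,\longmapsto\,\dim H^0\pi_*\bigl((i_{t_0})_\natural M(X_{t_0})\bigr)\,=\,\dim\HP_0(\caO(X_{t_0}))
\]
is constant near $0$, of value $\dim H^2_{\tpl}(X_t)$ for generic $t$---with the explicit structure theorem for $M(X_0)$ (Theorems \ref{t:mx-his} and \ref{t:mx-his2}). Applying $H^0\pi_*$ to the sequence
\[
0\to\ker(\cdot(f-t_0))\to\mathcal{M}\xrightarrow{\cdot(f-t_0)}\mathcal{M}\to (i_{t_0})_\natural M(X_{t_0})\to 0,
\]
the constancy of the $H^0\pi_*$-dimension of the cokernel forces the kernel to have vanishing direct image to a point; then, using that this kernel is a D-submodule of $\mathcal{M}$ supported on $X_{t_0}$ and the explicit presentation of $(i_0)_\natural M(X_0)$ as an extension of the canonical D-module on $X\setminus Z$ by delta-function D-modules at the singularities, one matches composition factors to rule out any hidden junk at $t_0=0$.

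The main obstacle will be this matching step: upgrading the $H^0\pi_*$-dimension equality to a genuine vanishing of the kernel as a D-module, since direct image to a point need not detect all submodules. A viable alternative concrete route is to pass to the associated graded of $\mathcal{M}$ under the order filtration and analyze the characteristic variety directly. The symbols of the $\xi_{x_i}$ are the components of $\nabla f(x)\times p$, so $\gr\mathcal{M}$ is supported on the union of the 4-dimensional variety $\{p\parallel\nabla f\}$ (on which $f$ is non-constant) with the cotangent fibers above the finitely many singularities of $f$ (on which $f$ is identically $0$); the latter junk components are precisely what complicates direct $\gr$-flatness at $t=0$, and must be matched against the delta-function summands predicted by Theorem \ref{t:mx-his} to conclude flatness of $\mathcal{M}$ itself.
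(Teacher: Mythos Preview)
Your construction of $\mathcal{M}$ and the identification of its fibers with $(i_{t_0})_\natural M(X_{t_0})$ is correct, as is the generic-fiber statement via Example~\ref{ex:tr}. The problem lies in the flatness argument.

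First, there is a logical circularity: in the paper, Corollary~\ref{c:his-smooth} is deduced from Theorem~\ref{t:hpdr-his}, which follows from Theorem~\ref{t:mx-his}, and Theorem~\ref{t:mx-his} is proved \emph{simultaneously} with Theorem~\ref{t:mxt} in Section~8.3. So invoking Corollary~\ref{c:his-smooth} and Theorem~\ref{t:mx-his} as inputs reverses the paper's logic. Second, even if you grant those inputs, your $H^0\pi_*$ argument has the gap you yourself identify, and it is fatal: $H^0\pi_*$ is only right exact, so from the short exact sequence $0\to M(X_0)_{\tor}\to M(X_0)\to M(X_0)'\to 0$ and constancy of $\dim H^0\pi_* M(X_0)$ you can at best conclude that the image of $H^0\pi_* M(X_0)_{\tor}$ in $H^0\pi_* M(X_0)$ vanishes, not that $M(X_0)_{\tor}$ itself vanishes. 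Your characteristic-variety alternative does not resolve this either: non-flatness of $\gr\mathcal{M}$ over the junk components is exactly compatible with flatness of $\mathcal{M}$, and you give no mechanism to separate these.

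The paper's argument is genuinely different and avoids both issues. It uses the \emph{full} Euler characteristic $\chi(\pi_* M(X_t))$, not just $H^0\pi_*$. Since the torsion-free quotient $M(X_t)'$ forms a flat family, $\chi(\pi_* M(X_t)')$ is constant in $t$; and since $M(X_0)_{\tor}$ is a sum of $\delta$-modules, $\pi_* M(X_0)_{\tor}$ sits in degree zero with nonnegative dimension. This yields the inequality $\chi(\HP^{DR}_*(X))\ge \chi(H^{2-*}_{\tpl}(X_t))$. The paper then \emph{independently} computes $\chi(\IH^*(U_s))=1-\dim H^1_{\tpl}(U_s\setminus\{s\})$ and uses the identification $\dim\Ext^1(\IC(U_s),\delta_s)=\dim H^1_{\tpl}(U_s\setminus\{s\})$ to bound the number of $\delta$-submodules from above. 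These two bounds squeeze together to force equality, which simultaneously yields injectivity of $H^0 j_!\Omega_{X\setminus Z}\to M(X)$ (Theorem~\ref{t:mx-his}) and vanishing of the torsion (Theorem~\ref{t:mxt}). The Euler characteristic and the Ext/link-cohomology computation are the missing ingredients in your approach.
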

Applying the pushforward functor $\pi_*$, for $\pi: \bC^3 \to \pt$ the
projection to a point, Theorem \ref{t:mxt} recovers
Corollary \ref{c:his-smooth}.
\subsection{Sketch of proof of Theorems \ref{t:hpdr-his}, \ref{t:mx-his}, and \ref{t:mxt}}
Theorem \ref{t:hpdr-his} follows immediately from Theorem
\ref{t:mx-his}. For the latter, since $M(X)$ is locally defined, we
can restrict to a neighborhood $U_s$ of each singular point $s \in Z$,
and if we work in the analytic category, we can take the neighborhood
to be contractible (\cite{Gil-eavlc}, cf.~also \cite[2.10]{Mil-spch}).
Moreover, in this case we can show \cite[Corollary 5.9]{ES-dmlv}
(using results of G.-M.~Greuel \cite{Gre-GMZ}) that the maximal quotient of
$M(U_s)$ supported at the singularity is $\delta^{\mu_s}$, or
equivalently \cite[Theorem 5.11]{ES-dmlv} that
$\HP_0(\caO(U_s)) \cong \bC^{\mu_s}$. The main idea is to compare
$\HP_0(\caO(U_s))$ directly with the cohomology of the de Rham complex
of $U_s$ modulo those differential forms which are torsion (i.e.,
restrict to zero on $U_s \setminus \{s\}$).  This takes care of the
last term of the sequence in Theorem \ref{t:mx-his}.

Theorem \ref{t:mx-his} then reduces to the statement that the
canonical map $H^0j_! \Omega_{X\setminus Z} \to M(X)$ (obtained by
adjunction from
$\Omega_{X \setminus Z} \cong j^!M(X) = M(X)|_{X \setminus Z}$) is
injective. We then prove this and Theorem \ref{t:mxt} simultaneously.
The argument is as follows: in general, if $M(X_t)$ is not
torsion-free, then for each $t$ we can consider the torsion at $t$,
$M(X_t)_{\tor}$, such that $M(X_t)' := M(X_t)/M(X_t)_{\tor}$ is
torsion-free. Note that $M(X_t)_{\tor}$ is zero for generic
$t$. Applying the pushforward functor $\pi_*$ for $\pi: \bC^3 \to \pt$
the projection to a point, we can see that $\pi_* M(X_t)'$ is
quasi-isomorphic to a complex of finitely-generated and torsion-free
$\bC[t]$-modules, i.e., of free finitely-generated
$\bC[t]$-modules. Taking Euler characteristic, we conclude that
$\chi(\pi_* M(X_t)')$ is independent of $t$.  Since $M(X_t)'=M(X_t)$
for generic $t$, we conclude that
$\chi(\pi_* M(X)) = \chi(\pi_* M(X_t)) + \chi(\pi_* M(X)_{\tor})$.
Furthermore, $M(X)_{\tor}$ is supported on the singular locus, as
$M(X)|_{X\setminus Z} = \Omega_{X\setminus Z}$ is simple. Therefore
$M(X)_{\tor}$ is the direct sum of delta submodules, so
$\pi_* M(X)_{\tor}$ is a vector space concentrated in degree zero. We
conclude that
\begin{equation}\label{e:ciis-pf1}
\chi(\HP^{DR}_*(X)) \geq \chi(H_{\tpl}^{2-*}(X_t)) = \chi(H_{\tpl}^{2-*}(X))+\sum_{s
  \in Z} \mu_s.
\end{equation}
Next, $M(X)$ has finite length (as it is holonomic) and its
composition factors consist of a single copy of
$\IC(X) := j_{!*} \Omega_{X\setminus Z}$, for $j: X\setminus Z \to X$
the open embedding, together with some delta-function D-modules
supported on $Z$.  Therefore we also obtain, for
$\IH^*(X) := \pi_* \IC(X)$ the intersection cohomology of $X$,
\begin{equation}\label{e:ciis-pf2}
\chi(\HP^{DR}_*(X)) = \chi(\IH^*(X)) + m,
\end{equation}
where $m$ is the number of delta-function D-modules appearing as composition factors of $M(X)$. Putting \eqref{e:ciis-pf1} and \eqref{e:ciis-pf2} together, we get
\begin{equation}
\chi(\IH^*(X)) + m \geq \chi(H_{\tpl}^{2-*}(X))+\sum_{s
  \in Z} \mu_s.
\end{equation}
Now we restrict again to $X=U_s$, a contractible neighborhood of $s$.
As we saw above, in this case $\mu_s$ is the length of the maximal
quotient of $M(U_s)$ supported at $s$. Let $K$ be the 
% sum of all
% submodules of $M(X)$ isomorphic to $\delta_s$ which are not direct
% summands, i.e., the
maximal submodule of $M(X)_{\text{ind}}$ supported at $s$. Let
$m'$ be the length of $K$ (so $K \cong \delta_s^{m'}$).  Then we have
an exact sequence $0 \to K \to M(X)|_{U_s} \to \delta^{\mu_s} \to 0$, and
hence $m=\mu_s+m'$.
% where $m'$ is the length of the maximal submodule of $M(X)$
% supported at $s$ which does not contain a direct summand of $M(X)$.
Since $U_s$ is contractible, we obtain from all of this:
\begin{equation}\label{e:ciis-pf3}
\chi(\IH^*(U_s)) + m' \geq 1.
\end{equation}
Finally we explicitly compute $\chi(\IH^*(U_s))$ \cite[Proposition
5.9]{ES-dmlv} using the classical formula \cite[\S 6.1]{GM-iht},
obtaining $\chi(\IH^*(U_s)) = 1 - \dim H_{\tpl}^1(U_s \setminus \{s\})$. We
conclude $m' \geq \dim H_{\tpl}^1(U_s \setminus \{s\})$.  On the other hand,
we compute in general that
$\dim H_{\tpl}^1(U_s \setminus \{s\}) = \dim \Ext^1(\IC(U_s), \delta_s)$, the
maximum possible size of a submodule of $M(X)$ supported at $s$ which does not
contain a direct summand of $M(X)$.  In other words, this is one less than the
length of $H^0j_! \Omega_{X\setminus Z}$ itself. Therefore the map
$H^0 j_! \Omega_{X\setminus Z} \to M(X)$ is injective, and moreover
\eqref{e:ciis-pf3} is an equality.  The former finishes the proof of
Theorem \ref{t:mx-his}, and the latter proves Theorem \ref{t:mxt}.
 
\subsection{Sketch of proof of Theorem \ref{t:mx-his2}}\label{ss:mx-his2-pf}
Here, following \cite[\S 6]{ES-ciis}, we sketch the proof of Theorem
\ref{t:mx-his2} under the additional assumption that $f$ is actually
homogeneous, i.e., $a_i=|x_i|=1$ for all $i$.  The general case does
not significantly change the details, and this makes things a bit more
concrete and easier to follow. 
%Let $n := 3$.

Let $\Eu = \sum_i x_i \partial_i$ be the Euler vector field.  The main
idea is that there is a canonical endomorphism
$T_{\Eu} \in \End_{\caD(\bC^3)}(i_\natural M(X))$ which descends from the
endomorphism $\Phi \mapsto \Eu \cdot \Phi$ of left multiplication on
$\caD(\bC^3)$.  (Actually, $M(X)$ is weakly equivariant, which means
that $i_\natural M(X)$ is a graded right $D(\bC^3)$-module, such that for
homogeneous $\Phi \in i_\natural M(X)$,
$|\Phi| \Phi = T_{\Eu}(\Phi)-\Phi \cdot \Eu$; see Section
\ref{s:whc}.)  Let $T_{\Eu}$ also denote the corresponding endomorphism
of $M(X)$. Then $M(X)$ decomposes
as a direct sum of the generalized eigenspaces of $T_{\Eu}$; call them
$M(X)_m$.

Next, the degree in which $M(X)_{\text{ind}}$ appears is the degree
$d$ of the Poisson bivector $\sigma$, since the action of ${\Eu}$ by
right multiplication on the symplectic volume form of
$X\setminus\{0\}$ is multiplication by $d$, and the symplectic volume
form is the image of the generator $1$ under the canonical quotient
$\caD_{X\setminus\{0\}} \onto \Omega_{X\setminus\{0\}}$ (as it is
invariant under Hamiltonian flow).  So $M(X)_d$ is a direct sum of
$M(X)_{\text{ind}}$ and some $\delta$-function D-modules.  By
Theorem \ref{t:mx-his}, we have an exact sequence
\begin{equation}\label{e:mx-his2-es-pf}
0 \to H^0 j_! \Omega_{X \setminus \{0\}} \to M(X)_d \to \delta_0 \otimes \HP_0(\caO(X))_d \to 0.
\end{equation}
Since the Poisson bracket has degree $d$ and the only elements of
weight zero are constants which are central, $\{\caO(X),\caO(X)\}$ is
spanned by elements of weight $> d$, and hence
$\HP_0(\caO(X))_d = \caO(X)_d$.  In \cite[Proposition 3.11]{ES-ciis}
this space is shown to have dimension $g_0$ (using the identification
of $H^{\dim X - 1}(D,\caO_D)$ with the vector space appearing in
\cite[Conjecture 3.8]{ES-ciis} as we remarked in the footnote to
Conjecture \ref{c:mx-his2}).
%
% shows that the dimension of this space is indeed $g_0$.
 Therefore Theorem \ref{t:mx-his2} reduces to the statement that $M(X)_d$
itself is indecomposable, hence $M(X)_d = M(X)_{\text{ind}}$.
%In other words, we have to show that
%there is no nonzero submodule of the last term in
%\eqref{e:mx-his2-es-pf} which splits as a direct summand.

To show this, we first claim that for each $m \in \bZ$, $T_{\Eu}$ is actually
central in $\End(M(X)_m)$. Indeed, every endomorphism of $M(X)$
descends from left multiplication
$T_{\Psi}: \Phi \mapsto \Psi \cdot \Phi$ by some $\Psi \in
\caD(\bC^3)$.
Since $[T_{\Eu},T_{\Psi}] = T_{[\Eu,\Psi]}$ and $[\Eu,-]$ is a semisimple
operator on $\caD(\bC^3)$, it follows that $[T_{\Eu},-]$ is a semisimple
operator on $\End(M(X))$.  On each generalized eigenspace $M(X)_m$ we
therefore have that $[T_{\Eu},\End(M(X)_m)]=0$.  

As a result, $T_{\Eu}$ preserves every direct summand of $M(X)_m$ for
every $m$. For a contradiction, suppose that
there is a direct summand $K$ of $M(X)_d$ supported at the origin. We can assume
$K \cong \delta_0$.  Now  $T':=T_{\Eu}-d\Id$ is a nilpotent
endomorphism of $M(X)_d$ which preserves
$K$, hence $T'|_K=0$. 
%As $T'$ also preserves a complement to $K$, we obtain
%that $K \not \subseteq \im(T')$.

% Therefore
% $j^*(T') = 0 \in \End(j^*M(X)_{\text{ind}})
% = \End(\Omega_{X\setminus\{0\}})=\bC$.
% As a result $T'$ must be a composition
% $M(X)_{\text{ind}} \onto K' \into M(X)_{\text{ind}}$.  Therefore the
% image of $T'$ has length at most equal to the length of the maximal
% quotient of $M(X)_{\text{ind}}$, which by \eqref{e:mx-his2-es-pf} and
% the assumption is at most $g_0 - \text{len}(K)$.  To achieve a
% contradiction and complete the proof, we therefore only need to show that the image of $T'$
% has length at least $g_0$.
To obtain a contradiction, let $v: M(X)_d \to K \cong \delta_0$ be the projection. Let $N$ be the space of smooth distributions on $\bC^3$ and $w: \delta_0 \to N$ the inclusion. The main step in the proof is the following (which is a slightly weaker version of \cite[Lemma 6.1]{ES-ciis}):
\begin{lemma} \label{l:mx-his2} There is a Hamiltonian-invariant distribution $\phi_v \in N$ such that $\phi_v \cdot (\Eu-d) = w(v(1))$.
\end{lemma}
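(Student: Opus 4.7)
The plan is to construct $\phi_v$ via meromorphic continuation of a Hamiltonian-invariant family of distributions, exploiting that $f$ is a Casimir of the Poisson bracket on $\bC^3$.

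First I would verify the consistency of the equation by checking $w(v(1))\cdot(\Eu-d)=0$. Since $K\cong \delta_0$ is a simple direct summand of $M(X)_d$, the nilpotent operator $T_\Eu - d$ on $M(X)_d$ must vanish on $K$, so $T_\Eu$ acts on $K$ as the scalar $d$. As $v:M(X)\to K$ is grading-preserving, $v(1)$ has $\caD(\bC^3)$-grading zero; the weak equivariance relation $|v(1)|\,v(1) = T_\Eu(v(1)) - v(1)\cdot \Eu$ then gives $v(1)\cdot(\Eu-d)=0$, whence $w(v(1))\cdot(\Eu-d)=0$ in $N$. Moreover, because $|v(1)|=0$ and the generator of $K\subset M(X)_d$ has grading $d$ (as $\delta_0\cdot\Eu=0$ forces $|\delta_0|=d$), we can write $v(1)=\delta_0\cdot P$ for a differential operator $P$ of $\Eu$-weight $-d$; correspondingly $w(v(1))=\delta\cdot P$ with $\delta\in N$ the delta distribution at the origin.

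The main construction uses the family $F_\lambda := |f|^{2\lambda}$, which is a Hamiltonian-invariant distribution for $\operatorname{Re}(\lambda)\gg 0$ and extends meromorphically in $\lambda$ (Hamiltonian invariance following from the fact that $f$ is a Casimir of the bracket $\{x_i,x_j\}=\epsilon_{ijk}f_{x_k}$ on $\bC^3$, so every function of $f,\bar f$ is annihilated by every Hamiltonian vector field). Using $\Eu(f)=mf$ and $d=m-3$, a direct computation gives
\[
F_\lambda\cdot(\Eu-d)\;=\;-m(\lambda+1)\,F_\lambda.
\]
Expanding $F_\lambda = \sum_k A_k(\lambda+1)^k$ near a pole $\lambda=-1$ and matching coefficients yields the chain relation $A_k\cdot(\Eu-d) = -m\,A_{k-1}$, so that the finite part $A_0$ provides a primitive of the residue $A_{-1}$ under right multiplication by $\Eu-d$, namely $(-A_0/m)\cdot(\Eu-d)=A_{-1}$.

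To match the specific distribution $w(v(1))=\delta\cdot P$ (rather than $\delta_X$, which is essentially the residue $A_{-1}$ for generic simple poles), I would work with a modified family: either $F_\lambda\cdot P$, or $|f|^{2\lambda}$-type families supported on the Milnor fiber of $f$, or higher-order poles at $\lambda=-1-k/m$ controlled by the Bernstein--Sato polynomial $b_f(s)$. In all cases, the same Laurent-expansion argument produces a Hamiltonian-invariant distribution $\phi_v$ (obtained as a suitable Laurent coefficient, possibly acted on by further $\caD(\bC^3)$-operators) with $\phi_v\cdot(\Eu-d) = w(v(1))$.

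The main obstacle is precisely this matching step: the residues of $|f|^{2\lambda}$ at $\lambda=-1$ are generically proportional to $\delta_X$ (integration along the smooth locus of $X$), not to the $\delta_0$-supported distribution $\delta\cdot P$ we need. Overcoming this requires pushing to the correct higher pole $\lambda_0=-1-k_0/m$ in the spectrum of $f$, where the residue acquires components supported at the isolated singularity at the origin, and showing that the precise $\delta_0$-components produced there span all $\delta\cdot P$ with $P$ of weight $-d$. This is where the direct-summand hypothesis enters essentially: it guarantees that $w(v(1))$ lies in the algebraic span of distributions arising from these residues, because only those $\delta_0$-supported Hamiltonian-invariant distributions whose classes appear as quotients of $M(X)_d$ can be realized as $w(v(1))$ for some retraction $v$.
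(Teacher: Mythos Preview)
Your approach via meromorphic continuation of $|f|^{2\lambda}$ is a natural idea, and the computation $F_\lambda\cdot(\Eu-d)=-m(\lambda+1)F_\lambda$ is correct.  However, the proposal has a genuine gap precisely where you flag it: at $\lambda=-1$ the Laurent relation $A_0\cdot(\Eu-d)=-mA_{-1}$ produces a primitive of $A_{-1}$, which is (up to a constant) the current of integration $\delta_X$ along the smooth locus, not the $\delta_0$-supported distribution $w(v(1))$ that you actually need.  Your two proposed fixes do not close this gap.  First, $F_\lambda\cdot P$ is no longer Hamiltonian-invariant unless $P$ commutes with all Hamiltonian vector fields, so the resulting $\phi_v$ would fail the hypothesis of the lemma.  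Second, at a pole $\lambda_0\neq -1$ the functional equation becomes $A_k\cdot(\Eu-d)=-m(\lambda_0+1)A_k-mA_{k-1}$, which no longer expresses $A_{k-1}$ as $(\text{something})\cdot(\Eu-d)$.  Finally, invoking the direct-summand hypothesis to force $w(v(1))$ into the span of these residues is circular in spirit and in any case is left as an assertion rather than an argument.

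The paper's proof is entirely different and more direct.  It identifies $v$ with an element of $\HP_0(\caO(X))_d^*$ via the exact sequence $0\to H^0 j_!\Omega_{X\setminus\{0\}}\to M(X)_d\to \delta_0\otimes\HP_0(\caO(X))_d\to 0$, takes $Q\in\HP_0(\caO(X))_d=\caO(X)_d$ dual to $v$ under the Hermitian pairing $\langle P,Q\rangle=\int_{S^5\cap X}P\overline{Q}\,\omega\wedge\overline{\omega}/dr$, and then defines
\[
\phi_v(\alpha)=-2\int_X \alpha\,\omega\wedge\overline{Q}\,\overline{\omega}
\]
for $\alpha$ vanishing to order $>d$ at the origin (extended arbitrarily otherwise).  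Because every Hamiltonian vector field $\xi_g$ has weight $>d$, this is automatically Hamiltonian-invariant.  The identity $\phi_v\cdot(\Eu-d)=w(v(1))$ is then checked by a direct one-variable integral, using a radial test function $H(x)=h(|x|^2)$.  Note in particular that the paper's proof works for \emph{any} quotient $v:M(X)_d\to\delta_0$, without using the direct-summand hypothesis; that hypothesis is only invoked afterwards to derive the contradiction $T'|_K\neq 0$.
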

Equivalently, there is a solution $\varphi \in \Hom(M(X)_d, N)$, with
$\varphi(1)=\phi_v$, such that $\varphi \circ T' = w \circ v$. Since
$w \circ v|_{K} \neq 0$, it follows that $T'|_K \neq 0$, which is a
contradiction.

To prove Lemma \ref{l:mx-his2}, we construct $\phi_v$ explicitly. By \eqref{e:mx-his2-es-pf} we can associate to $v$ an element of $\HP_0(\caO(X))_d^*$.
Let $Q \in \HP_0(\caO(X))_d$ be the dual of $v$ under the Hermitian pairing on $\HP_0(\caO(X))_d$:
\[
\langle P,Q \rangle := \int_{S^{5} \cap X}P\overline{Q} \omega \wedge \overline{\omega}/dr,
\]
where $\omega$ is the rational volume form on $X$ (the inverse of the Poisson bivector), and $r$ is the radial coordinate function.  Then we partially define $\phi_v$ by:
\[
\phi_v(\alpha)=-2\int_X \alpha \omega \wedge \overline{Q} \overline{\omega}.
\]
This converges when $\alpha$ vanishes to order greater than $d$ \cite[Lemma 6.2]{ES-ciis}, and we can extend it arbitrarily to a linear functional on all of $C_c^{\infty}(\bC^3)$. The result is annihilated by all Hamiltonian vector fields which vanish to order greater than $d$, but since the weight of $\sigma$ is $d$, this includes all Hamiltonian vector fields.  Thus $\phi_v \in \Hom(M(X)_d,N)$.

We claim that $\phi_v \cdot (\Eu-d) = w(v(1))$, which completes the proof. 
By definition of $Q \in \HP_0(\caO(X))_d = \caO(X)_d$, the
RHS has the property 
$w(v(1))(PH) =  \langle Q,P\rangle H(0)$ for all $P \in \caO(X)_d$ and $H \in C_c^\infty(\bC^n)$.  We have to show the same for the LHS.  It suffices to show this for a single choice of $H$, and the identity therefore reduces to an explicit computation: choosing  $H(x)=h(|x|^2)$ for $h$ a smooth real function with $h(0) \neq 0$, we obtain
\[
(\phi_v \cdot (\Eu-d))(PH)=-2\int_X P\overline{Q} \Eu(H) \omega \wedge \overline{\omega} = -2\int_0^\infty dr \cdot r \cdot h'(r^2) \cdot \langle P,Q \rangle = \langle P,Q \rangle h(0) = \langle P,Q \rangle H(0).
\]
The first equality uses that we can choose the extension of $\phi_v$
from functions vanishing to order $> d$ at $0$ to all functions in
such a way that we project away from a space of functions supported in
an arbitrarily small neighborhood of zero (and that
$\phi_v \cdot (\Eu-d)$ does not depend on the choice of extension).  This completes the sketch.

%$\theta: \bC^\times \to \Aut(\caO(\bC^3))$,
%$\theta(\lambda)(x_i) = \lambda^{a_i} x_i$, for $a_i = |x_i|$.  This
%means that, for $\Eu := \sum_i a_i \partial_i$ the Euler vector field,

\subsection{Generalization to locally complete intersections and higher dimension}\label{ss:gen-lci-hd}
Finally, we explain how to generalize from surfaces in $\bC^3$ to general
locally complete intersections of arbitrary dimension.

Suppose that $X$ is a complete intersection surface in $\bC^n$, i.e.,
$X = \{f_1=\cdots=f_{n-2}=0\}$.  Then $X$ has a Poisson
structure, called the Jacobian Poisson structure, given by the
skew-symmetric biderivation
$\sigma = i_{\partial_1 \wedge \cdots \wedge \partial_n} (df_1 \wedge
\cdots \wedge df_{n-2})|_X$,
i.e.,
$\sigma = \sum_{i<j} \sigma_{i,j} (\partial_i \wedge \partial_j)|_{X}$
for
$\sigma_{i,j} =
(-1)^{i+j-1}\frac{\partial(f_1,\ldots,f_{n-2})}{\partial(x_1,\ldots,\widehat{x_i},\ldots,\widehat{x_j},\ldots,x_n)}$
the Jacobian determinant (omitting $x_i$ and $x_j$ from the
denominator). Explicitly, 
\[
\{g,h\} = \frac{\partial(f_1,\ldots,f_{n-2},g,h)}{\partial(x_1,\ldots,x_n)}.
%\sum_{i < j} (-1)^{i+j+1}
%\frac{\partial(f_1,\ldots,f_{n-2})}{\partial(x_1,\ldots,\widehat{x_i}, \ldots, %\widehat{x_j}, \ldots, x_n)} (g_{x_i} h_{x_j} - g_{x_j} h_{x_i}).
\]

With this modification all of the preceding results generalize without
change, except that now the family $X_t$ should be defined by picking
a generic line $\bC \cdot (c_1,\ldots,c_{n-2}) \subseteq \bC^{n-2}$,
and setting $X_t = \{f_i=c_i\}_{i=1}^n \subseteq \bC^n$, which will be
smooth for generic $t$. The quasi-homogeneous condition is replaced by
the condition that \emph{all} of the $f_i$ be quasi-homogeneous with
respect to fixed weights $a_i = |x_i|$ (but the degrees of the $f_i$
can be different).

Next, we explain how to generalize to varieties of higher
dimension. If $X=\{f_1=\cdots=f_{n-k}=0\} \subseteq \bC^n$ is a
complete intersection of dimension $k \geq 2$, then $X$ is equipped with a
canonical skew-symmetric multiderivation
$\Xi_X: \caO(X)^{\otimes \dim X} \to \caO(X)$,
\[
\Xi_X := i_{\partial_1 \wedge \cdots \wedge \partial_n}(df_1 \wedge
\cdots \wedge df_{n-k}) |_{X}.
\]

Given a $(k-2)$-form $\alpha \in \Omega^{k-2}(X)$, we can associate a
vector field, $\xi_\alpha:=i_{\Xi_X}(d\alpha)$, which we call the
\emph{Hamiltonian vector field of $\alpha$}.  Let $\mfg$ be the set of
all Hamiltonian vector fields on $X$. We can check that $\mfg$ is a
Lie algebra, using the formula
$[\xi_\alpha, \xi_\beta] = \xi_{L_{\xi_\alpha} \beta}$ \cite[\S
3.4]{ES-dmlv}, which one can check in local coordinates.
\begin{remark}\label{r:ls-same-g}
  The vector fields $\xi_\alpha$ actually make sense as vector fields
  on all of $\bC^n$: since
  $\widetilde{\alpha} := df_1 \wedge \cdots \wedge df_{n-k} \wedge
  \alpha$
  is a well-defined $(n-1)$-form on $\bC^n$, we obtain
  $\widetilde{\xi_\alpha} := i_{\partial_1 \wedge \cdots
    \wedge \partial_n}(df_1 \wedge \cdots \wedge df_{n-k} \wedge
  \alpha)$,
  a well-defined vector field on $\bC^n$, which is Hamiltonian on each
  level set $X_{c_1,\ldots,c_{n-k}} = \{f_i=c_i\}_{i=1}^{n-k}$.  Thus
 the Lie algebra of Hamiltonian vector fields on each
  $X_{c_1,\ldots,c_{n-k}}$ is the same Lie algebra $\mfg$.
\end{remark}

Thus, we can replace $\HP_0(\caO(X))$ by
$(\caO(X))_{\mfg}=\HP_0^{\mfg-DR}(X)$ and similarly for $X_t$ (where
now we define $X_t$ using a generic line
$\bC \cdot (c_1,\ldots,c_{n-k}) \subseteq \bC^{n-k}$).  Similarly we
can replace $M(X)$ and $M(X_t)$ by $M(X,\mfg)$ and $M(X_t,\mfg)$
(in view of Remark \ref{r:ls-same-g}, we have the explicit description
% are described explicitly as
$(i_t)_\natural M(X_t,\mfg)=(\mfg \caD_{\bC^n} + I_{X_t}
\caD_{\bC^n})\setminus \caD_{\bC^n}$.)
With this change, all of the results and proofs continue to go through
in the same manner.

Finally, since the D-module $M(X)$ is defined locally, we actually
don't need to require that $X$ globally be a complete intersection,
only that it locally admits this structure, provided that there is a
global skew-symmetric multiderivation
$\Xi_X: \caO_X^{\otimes \dim X} \to \caO_X$ which is nonvanishing on
the smooth locus. We do not require $X$ to be affine.  The notion of
isolated singularities still makes sense. We can replace the
quasi-homogeneous condition by the condition that, at every point
$x \in X$, the formal neighborhood $\hat X_x$ of the point is
isomorphic to a complete intersection in $\hat \bC^n_0$ defined by
quasi-homogeneous functions (with respect fixed weights that can
depend on the point $x$ chosen), i.e., that $X$ be formally locally
conical.

\subsection{Relationship to the Bernstein--Sato polynomial}\label{ss:RBSp}
In \cite{BS-bfun}, following a suggestion of the first author, the
second author and T.~Bitoun relate these results to the theory of the
Bernstein--Sato polynomial.  Namely, the latter involves the study, for
$f \in \bC[x_1,\ldots,x_n]=\caO(\bC^n)$ and $\lambda \in \bC$, of the left
$\caD(\bC^n)$-module $\caD(\bC^n) \cdot f^\lambda$. Let us recall
the definition. Let $U := \bC^n \setminus \{f=0\}$, so that
$\caD(U) = \caD(\bC^n)[f^{-1}]$ and $\caO(U) = \caO(\bC^n)[f^{-1}]$. Let
$\caD(U) \cdot f^\lambda$ be the $\caD(U)$-module which
as a $\caO(U)$-module is the trivial line bundle on $U$ together with a nonvanishing section denoted $f^\lambda$, and the $\caD(U)$ action is defined
by the formula $\xi \cdot f^\lambda = \lambda \xi(f) f^{-1} \cdot f^{\lambda}$ for every derivation $\xi \in \Vect(U)$. In other words,
$\caD(U) \cdot f^\lambda = \caD(\bC^n)[f^{-1}]/\caD(\bC^n)[f^{-1}] \cdot (\partial_i -
\lambda f^{-1} f_{x_i} \mid 1 \leq i \leq n)$ with generator denoted by $f^\lambda$.
Let $\caD(\bC^n) \cdot f^\lambda$ be the $\caD(\bC^n)$-submodule of $\caD(U) \cdot f^\lambda$ generated by $f^\lambda$. We similarly define
the
$\caD(\bC^n)[s]$-module $\caD(\bC^n)[s] \cdot f^s$.
%In the case $n=3$ (or general $n$ using \S \ref{ss:gen-lci-hd} below),
For every right $\caD(\bC^n)$ module, let
$M^{\ell} := M \otimes_{\caO(\bC^n)} \Omega_{\bC^n}^{-1}$ be the
associated left $\caD(\bC^n)$-module. For $X = \{f=0\}$, one has the
following canonical surjection of left $\caD(\bC^n)$-modules:
\begin{equation}\label{e:bfun-map-l}
i_\natural M(X,\mfg)^\ell = i_\natural M(X,\mfg) \otimes_{\caO(\bC^n)} \Omega_{\bC^n}^{-1} \onto  \caD(\bC^n) f^\lambda / \caD(\bC^n) f^{\lambda+1}, \quad 1 \otimes (\partial_1 \wedge \cdots \wedge \partial_n) \mapsto f^\lambda,
\end{equation}
as well as a canonical map
\begin{equation}\label{e:bfun-map-s}
i_\natural M(X,\mfg)^\ell \to \caD(\bC^n)[s] f^s / \caD(\bC^n)[s] f^{s+1},
\end{equation}
which is surjective for $f$ quasi-homogeneous (since then for $\Eu$ the
Euler vector field, we have $|f|^{-1} \Eu \cdot f^s = sf^s$).
Using the maps \eqref{e:bfun-map-l} and \eqref{e:bfun-map-s}, we prove
the following. Let $Z \subseteq X$ continue to be the singular locus,
which we assume to be finite. For each $s \in Z$, let $U_s$ be a contractible neighborhood of $s$. Define
\[
K := \ker(H^0 j_!\Omega_{X\setminus Z} \onto j_{!*} \Omega_{X\setminus
  Z}) \cong \bigoplus_{s \in Z} \Ext^1(j_{!*} \Omega_{X\setminus Z},
\delta_s)^* \otimes \delta_s \cong \bigoplus_{s \in Z} H_{\tpl}^{n -2}
(U_s \setminus \{s\})^* \otimes \delta_s
\]
(for the last isomorphism see \cite[Lemma 3.5]{BS-bfun},
%is proved as in \cite[Lemma 4.3]{ESdm},
cf.~Example \ref{r:mx-nonss}). Then:
\begin{theorem}\cite{BS-bfun}  Suppose that $n \geq 3$ and
$\{f=0\}$ has isolated singularities. Then 
  \eqref{e:bfun-map-l} produces an isomorphism
  $(i_\natural M(X,\mfg)_{\text{ind}}/K)^\ell \cong
  \caD(\bC^n) f^{-1} / \caO(\bC^n)$.
  If moreover $f$ is quasi-homogeneous, \eqref{e:bfun-map-s} produces an
  isomorphism
  $i_\natural M(X,\mfg)^\ell \cong \caD(\bC^n)[s] f^s / \caD(\bC^n)[s] f^{s+1}$.
\end{theorem}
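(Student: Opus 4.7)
The plan is to prove both isomorphisms by combining surjectivity, immediate from the choice of canonical generators, with an injectivity argument that matches composition factor multiplicities—drawing on Theorems \ref{t:mx-his} and \ref{t:mx-his2} for the source, and on a local analytic computation (following \cite{BS-bfun}) for the target.

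For part (a), I would first observe that the canonical generator $1\otimes(\partial_1\wedge\cdots\wedge\partial_n)$ of $i_\natural M(X,\mfg)^\ell$ maps to $f^{-1}$, which generates the target over $\caD(\bC^n)$, and that this generator lies in the indecomposable summand $i_\natural M(X,\mfg)_{\text{ind}}^\ell$ (the complementary delta summands supported on $Z$ cannot contain it).  On $\bC^n\setminus Z$, both sides coincide canonically with the left D-module pushforward of $\caO_{X\setminus Z}$, so the kernel of the induced surjection from $M(X,\mfg)_{\text{ind}}^\ell$ is a sum of delta modules at points of $Z$.  To see that $K$ is contained in this kernel, note that $K$ consists, by its definition as the kernel of $H^0 j_!\Omega_{X\setminus Z} \twoheadrightarrow \IC(X)$, of delta submodules paired by Verdier duality with $\bigoplus_i\Ext^1(\IC(X),\delta_{s_i})^*$; by an adjunction argument, a nonzero image of $K$ in $\caD(\bC^n)f^{-1}/\caO(\bC^n)$ would produce delta submodules there which a local length computation rules out.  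Finally, Theorem \ref{t:mx-his2} tells us $M(X,\mfg)_{\text{ind}}/K$ has length $1+\sum_i g_{s_i}$ (one copy of $\IC(X)^\ell$ plus $\bigoplus_i\delta_{s_i}^{g_{s_i}}$), and the local analytic computation of \cite{BS-bfun} shows the same for $\caD(\bC^n)f^{-1}/\caO(\bC^n)$—identifying the local socle multiplicities at each $s_i$ with $g_{s_i}=\dim H^{n-2}(Y_{s_i},\caO_{Y_{s_i}})$, the logarithmic canonical sections.  Surjectivity plus equal length forces the induced map $(i_\natural M(X,\mfg)_{\text{ind}}/K)^\ell\to\caD(\bC^n)f^{-1}/\caO(\bC^n)$ to be an isomorphism.

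For part (b) in the quasi-homogeneous case, the plan is to bootstrap from part (a) using the weak $\bC^\times$-equivariance of $M(X,\mfg)$ recalled in Section \ref{ss:mx-his2-pf}.  Surjectivity is already noted in the excerpt via $|f|^{-1}\Eu\cdot f^s=sf^s$.  Both sides decompose by Euler weight, and the map is compatible with these decompositions once $s$ is identified with $|f|^{-1}T_{\Eu}$ on the source.  On each Euler weight the map reduces to a shifted analogue of \eqref{e:bfun-map-l}; an argument parallel to part (a) then yields an isomorphism piece by piece.  Remarkably, no quotient by $K$ is needed in the $s$-version, because the delta modules comprising $K$, together with the extra delta summands of $M(X,\mfg)$ outside $M(X,\mfg)_{\text{ind}}$, get absorbed into the $\bC[s]$-structure of the target: they match exactly the delta-function composition factors appearing at various integer $s$-specializations, which the $s$-family keeps track of simultaneously.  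The main obstacle in this whole approach is the local analytic matching for the multiplicities $g_{s_i}$, which is strictly finer than the purely topological estimate $\dim H_{\tpl}^1(U_{s_i}\setminus\{s_i\})$ used in Remark \ref{r:mx-nonss}; without the logarithmic-canonical bundle identification from \cite{BS-bfun}, one obtains only an inequality rather than the needed equality.
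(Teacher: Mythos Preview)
Your length-counting strategy for part (a) has a genuine circularity problem. You invoke Theorem \ref{t:mx-his2} to compute the length of $M(X,\mfg)_{\text{ind}}/K$ as $1+\sum_i g_{s_i}$, but Theorem \ref{t:mx-his2} only establishes Conjecture \ref{c:mx-his2} in the quasi-homogeneous case, whereas part (a) is asserted for arbitrary $f$ with isolated singularities. Worse, you then claim that ``the local analytic computation of \cite{BS-bfun}'' shows the target $\caD(\bC^n)f^{-1}/\caO(\bC^n)$ also has length $1+\sum_i g_{s_i}$. But look at the Corollary immediately following the theorem in the paper: it says precisely that this length statement for the target is \emph{equivalent} to Conjecture \ref{c:mx-his2}, which remains open in general. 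So the ``computation'' you are appealing to would already settle the open conjecture; it does not exist in the generality you need. Your argument therefore assumes both sides of an open equivalence to prove the theorem that is meant to establish that equivalence.

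The correct approach (and the one the cited paper takes) must produce the isomorphism \emph{without} knowing either length in advance: one shows directly that the kernel of the surjection $i_\natural M(X,\mfg)^\ell \twoheadrightarrow \caD(\bC^n)f^{-1}/\caO(\bC^n)$ is exactly the complementary delta summands together with $K$, by identifying which delta-function classes in the source actually survive in the target. This uses the structure of $\caD(\bC^n)f^{-1}/\caO(\bC^n)$ as a D-module (e.g., its maximal sub- and quotient modules supported at the singular points, computed via local cohomology and the Milnor fibration), not a global length count. For part (b), your sketch that the delta modules ``get absorbed into the $\bC[s]$-structure'' is too vague to assess; the actual argument again matches specific submodules, now tracking the $s$-action, rather than relying on a weight-by-weight reduction to part (a). Note also that the survey paper itself does not prove this theorem---it is quoted from \cite{BS-bfun}---so there is no in-paper proof to compare against, but the logical structure of the surrounding Corollary already rules out your approach.
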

\begin{corollary} Conjecture \ref{c:mx-his2} is equivalent to the
  statement: if $\{f=0\}$ is irreducible with
isolated singularities, then the length of the left $\caD(\bC^n)$-module
  $\caD(\bC^{n}) \cdot f^{-1} / \caO(\bC^n)$ is one more than the sum of
  the reduced genera of the singularities.
\end{corollary}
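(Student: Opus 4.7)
The plan is to translate the Corollary into a statement about composition series lengths via the preceding theorem, reducing it to a per-singularity count. First, I would invoke the preceding theorem to identify
\begin{equation*}
(i_\natural M(X,\mfg)_{\text{ind}}/K)^\ell \cong \caD(\bC^n) f^{-1}/\caO(\bC^n)
\end{equation*}
as left $\caD(\bC^n)$-modules, so in particular their lengths coincide. Combining this with the defining sequence $0 \to K \to H^0 j_! \Omega_{X\setminus Z} \to \IC(X) \to 0$ (in which $\IC(X)$ is simple since $X$ is irreducible, and both $K$ and $M(X)_{\text{ind}}/H^0 j_! \Omega_{X\setminus Z}$ are supported on the finite set $Z$ by Theorem \ref{t:mx-his}), I deduce
\begin{equation*}
\mathrm{length}\bigl(\caD(\bC^n) f^{-1}/\caO(\bC^n)\bigr) = 1 + \mathrm{length}\bigl(M(X)_{\text{ind}}/H^0 j_! \Omega_{X\setminus Z}\bigr).
\end{equation*}
Since the right-hand quotient is supported on $Z=\{s_1,\ldots,s_k\}$, it decomposes as $\bigoplus_i \delta_{s_i}^{n_{s_i}}$ for certain integers $n_{s_i}\ge 0$. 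Thus the length formula in the Corollary is equivalent to $\sum_i n_{s_i}=\sum_i g_{s_i}$, while Conjecture \ref{c:mx-his2} is the finer per-singularity statement $n_{s_i}=g_{s_i}$; this already settles the forward direction.

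For the converse, I would argue per singular point using locality. Both $n_s$ and $g_s$ depend only on the analytic germ $(X,s)$: by definition for $g_s$, and for $n_s$ because $M(X,\mfg)$ is locally defined and the $\delta_s$-multiplicity in $M(X)_{\text{ind}}/H^0 j_! \Omega_{X\setminus Z}$ is determined in any small analytic or \'etale neighborhood of $s$. Then, given any isolated-singularity germ $(X_0,0)$, I would produce an irreducible $f'\in\bC[x_1,\ldots,x_n]$ whose zero locus has a unique singular point with germ analytically isomorphic to $(X_0,0)$. Applying the hypothesized length formula to $\{f'=0\}$ forces $n_0(f')=g_0(f')$, and since $(X_0,0)$ was arbitrary, this recovers $n_s=g_s$ at every isolated singular point of every irreducible $\{f=0\}$, which is Conjecture \ref{c:mx-his2}.

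The main obstacle lies in the converse step, in two respects. First, one must rigorously verify that $n_s$ is a genuine invariant of the analytic germ $(X,s)$; this should reduce to the local construction of $M(X,\mfg)$ together with standard D-module facts, but requires care because $M(X,\mfg)$ is defined via an ambient smooth embedding together with a presheaf of Hamiltonian vector fields, so one must compare these local presentations across germ isomorphisms. Second, one must construct the irreducible single-singularity hypersurface $\{f'=0\}$ with prescribed germ at the chosen point; a natural strategy is to start from a polynomial representative of the germ and add a sufficiently generic high-degree term supported away from the origin, then use Bertini-type arguments to guarantee both irreducibility and that no new singularities are introduced outside the chosen point.
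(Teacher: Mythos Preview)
Your argument is correct. The paper records the corollary without proof (deferring to \cite{BS-bfun}), so there is no written argument here to compare against, but your approach is the natural one.

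The forward direction is exactly right; note also that $H^0 j_! \Omega_{X\setminus Z}$ lands entirely in $M(X)_{\text{ind}}$, since by the adjunction recalled just before Theorem~\ref{t:mx-his} one has $\Hom(H^0 j_! \Omega_{X\setminus Z}, N)=0$ for any $N$ supported on $Z$. This justifies your length identity $1+\sum_i n_{s_i}$ for $\caD(\bC^n)f^{-1}/\caO(\bC^n)$.

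The two obstacles you flag in the converse are both routine:
\begin{itemize}
\item \emph{Germ invariance of $n_s$.} The paper already works in contractible analytic neighborhoods $U_s$ throughout the proofs of Theorems~\ref{t:mx-his} and~\ref{t:mxt}, and the construction of $M(X,\mfg)$---hence of $M(X)_{\text{ind}}$, of $H^0 j_! \Omega$, and of the $\delta_s$-multiplicity $n_s$---is intrinsic: Hamiltonian vector fields, $\IC$, $j_!$, and composition multiplicities all transfer functorially under an analytic isomorphism of the germ $(X,s)$.
\item \emph{Single-singularity realization.} Take a polynomial representative $f_0$ of the germ (finite determinacy of isolated hypersurface singularities). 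For a generic polynomial $g$ vanishing at $0$ to order above the determinacy degree, $f':=f_0+g$ has the same germ at $0$, and Bertini applied on $\bC^n\setminus\{0\}$ makes $\{f'=0\}$ smooth away from $0$. Since $n\ge 3$, a hypersurface with only isolated singularities is automatically irreducible: two distinct components would meet in dimension at least $n-2\ge 1$, contradicting isolatedness of the singular locus.
\end{itemize}
With these filled in, your proof is complete.
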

This formula was motivated by a similar phenomenon discovered by
T.~Bitoun in characteristic $p > 0$: Let $f$ be a polynomial with
rational coefficients in $n \geq 3$ variables which is absolutely
irreducible, i.e., it remains irreducible over $\bC$, and assume that
$\{f=0\}$
has a unique singular point. For a field $F$ let $\bA_F^n$ be the affine
space over the field $F$ and $\caD(\bA_F^n)$ be the ring of Grothendieck
differential operators on $\bA_F^n$.  Then for $p$ sufficiently large,
we can consider the local cohomology $\caD(\bA_{\bF_p}^n)$-module
$\caO(\bA_{\bF_p}^n)[f^{-1}] / \caO(\bA_{\bF_p}^n)$, which turns out
to be generated by $f^{-1}$ (hence fully analogous to
$\caD(\bC^n) \cdot f^{-1} / \caO(\bC^n)$ in characteristic zero).  In
\cite{bitoun2017length}, it is proved that, for sufficiently large $p$
the length of this $\caD(\bA_{\bF_p}^n)$-module is one more than the ``$p$-genus''
of the singularity, which is defined as the dimension of the ``stable
part'' of $H^{n-2}(Y_{\overline p}, \caO_{Y_{\overline p}})$, for
$Y_{\overline p}$ the exceptional fiber of a resolution of
singularities of
$X_{\overline p}:=\{f=0\} \subseteq \bA_{\overline{\bF_p}}$, where the
stable part refers to the intersection of the images of all powers of the Frobenius
morphism.  See \cite{bitoun2017length,BS-bfun} for details.
%, which is expected to equal
%the genus over $\bC$ for infinitely many primes.

The above results also have a direct application to the Bernstein--Sato
polynomial.  Recall that the Bernstein--Sato polynomial is defined as
the annihilator in $\bC[s]$ of
$\caD(\bC^n)[s] f^s / \caD(\bC^n)[s] f^{s+1}$. Using the surjections
$\caD(\bC^n)[s] f^s \onto \caD(\bC^n) \cdot f^\lambda$, it follows
that, if
$\caD(\bC^n) \cdot f^\lambda \neq \caD(\bC^n) \cdot f^{\lambda+1}$,
then $\lambda$ is a root of this polynomial. The converse question is
interesting and little is known aside from \cite[Proposition
6.2]{Kas-BfhsRrBf}, which says that the converse holds if additionally
$\lambda-m$ is not a root for any positive integer $m$. The results
above imply a converse in the quasi-homogeneous case:
\begin{corollary} \cite{Sai-dmgrphf,BS-bfun} For $f$ quasi-homogeneous
  with an isolated singularity and $\lambda \in \bC$, if
  $\lambda$ is a root of the Bernstein--Sato polynomial, then
$\caD(\bC^n) \cdot f^\lambda \neq \caD(\bC^n) \cdot f^{\lambda+1}$.
\end{corollary}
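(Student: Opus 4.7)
The plan is to combine the isomorphism
\[
i_\natural M(X,\mfg)^\ell \;\cong\; L \;:=\; \caD(\bC^n)[s] f^s / \caD(\bC^n)[s] f^{s+1}
\]
from the preceding theorem (valid for $n \geq 3$, which is the substantive case; $n \leq 2$ is classical) with two standard inputs: holonomicity of $M(X,\mfg)$ and Fitting's lemma. Since $\{f=0\}$ has an isolated singularity, $X$ is the union of the smooth locus and finitely many points, hence has finitely many $\mfg$-leaves. By Theorem \ref{t:mx-hol}, $M(X,\mfg)$ is holonomic, so $L$ has finite length as a left $\caD(\bC^n)$-module.

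First, I would record the tautological identification
\[
\caD(\bC^n) \cdot f^\lambda / \caD(\bC^n) \cdot f^{\lambda+1} \;\cong\; L/(s-\lambda)L,
\]
which follows because $\caD(\bC^n) \cdot f^\lambda$ is by construction the specialization of $\caD(\bC^n)[s] f^s$ at $s = \lambda$, i.e., $\caD(\bC^n)[s]f^s / (s-\lambda)\caD(\bC^n)[s] f^s$, and likewise for $f^{\lambda+1}$. Thus the corollary reduces to showing that $L/(s-\lambda) L \neq 0$ whenever $b_f(\lambda) = 0$.

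Now $s$ acts on $L$ by $\caD(\bC^n)$-linear endomorphisms (being central in $\caD(\bC^n)[s]$), and its minimal polynomial in $\End_{\caD(\bC^n)}(L)$ is exactly $b_f(s)$, by definition of the Bernstein--Sato polynomial. The key tool is Fitting's lemma applied to the endomorphism $s - \lambda$ of the finite-length module $L$: one obtains a decomposition $L = \ker(s-\lambda)^N \oplus (s-\lambda)^N L$ for $N \gg 0$, whence $s - \lambda$ is injective on $L$ if and only if it is surjective. If $\lambda$ is a root of $b_f$, then $\lambda$ is an eigenvalue of $s$, so $\ker(s-\lambda) \neq 0$; hence $s - \lambda$ is neither injective nor surjective, and therefore $L/(s-\lambda) L \neq 0$, as required.

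The only substantive input is the isomorphism $i_\natural M(X,\mfg)^\ell \cong L$ from the preceding theorem --- once that is granted, the argument reduces to the elementary identification $L/(s-\lambda) L \cong \caD(\bC^n) f^\lambda/\caD(\bC^n) f^{\lambda+1}$ together with Fitting's lemma on the finite-length holonomic module $L$. The main obstacle has already been dispatched in the quoted theorem; no further serious difficulty remains.
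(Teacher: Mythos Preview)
Your argument has a genuine gap in the ``tautological identification'' $L/(s-\lambda)L \cong \caD(\bC^n) f^\lambda / \caD(\bC^n) f^{\lambda+1}$. The module $\caD(\bC^n) f^\lambda$ is \emph{not} the abstract specialization $\caD(\bC^n)[s]f^s/(s-\lambda)\caD(\bC^n)[s]f^s$; it is by definition the \emph{image} of $\caD(\bC^n)[s]f^s$ inside $\caO(U)[s]/(s-\lambda)\caO(U)[s]=\caO(U)$. In general one only has a surjection
\[
\caD(\bC^n)[s]f^s/(s-\lambda)\caD(\bC^n)[s]f^s \;\onto\; \caD(\bC^n)f^\lambda,
\]
whose kernel is $(\caD(\bC^n)[s]f^s \cap (s-\lambda)\caO(U)[s])/(s-\lambda)\caD(\bC^n)[s]f^s$, and this kernel need not vanish (it vanishes precisely when no $\lambda+m$, $m\geq 1$, is a root of $b_f$; this is Kashiwara's criterion alluded to just before the corollary). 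Consequently your map $L/(s-\lambda)L \to \caD f^\lambda/\caD f^{\lambda+1}$ is only a surjection in general, not an isomorphism.

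This is not a minor technicality: it is exactly where quasi-homogeneity must enter. The module $L=\caD(\bC^n)[s]f^s/\caD(\bC^n)[s]f^{s+1}$ is \emph{always} holonomic (hence of finite length), for arbitrary $f$, since it is annihilated by $b_f(s)$ and is built from holonomic specializations. So if your identification held in general, your Fitting argument would prove the corollary for every $f$ with an isolated singularity, with no quasi-homogeneity assumption. But the paper immediately cites Saito's counterexample showing the statement fails without quasi-homogeneity. Hence the identification itself must fail in that example, and your proof as written proves too much.

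What is missing is an argument, using the Euler vector field $\Eu$ and the relation $|f|^{-1}\Eu\cdot f^s=sf^s$, that in the quasi-homogeneous case the specialization surjection is actually an isomorphism (equivalently, that the action of $s$ on $L$ is semisimple, so that $L$ decomposes as a direct sum of its $s$-eigenspaces and the eigenspace at $\lambda$ maps isomorphically onto $\caD f^\lambda/\caD f^{\lambda+1}$). This is the substance of the corollary and is where the references \cite{Sai-dmgrphf,BS-bfun} do the real work; once that is established, your Fitting step is indeed the right way to finish.
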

M.~Saito has also demonstrated in \cite[Example 4.2]{Sai-dmgrphf} a
counterexample to this assertion when the quasi-homogeneity assumption
is removed.

\section{Weights on homology of cones}\label{s:whc}

We continue to work over $\bk=\bC$.
In Section \ref{ss:mx-his2-pf}, we used that if $\caO(X)$ is graded
(i.e., $X$ admits a $\bC^\times$-action) and Poisson with a
homogeneous Poisson bracket (which was the case there for $X$ a
quasi-homogeneous surface in $\bC^3$), then $\HP_0(\caO(X))$ is also
nonnegatively graded.  In this section we explain how in fact
$\HP_*^{DR}(X)$ is bigraded, and consider its bigraded
Hilbert-Poincar\'e polynomial (i.e., a Laurent polynomial in two
variables). We give several examples (without proofs) that demonstrate
that one recovers in this way many special polynomials from
combinatorics.

\subsection{Weakly equivariant D-modules and bigrading on Poisson-de Rham homology}
As noted in Section \ref{ss:mx-his2-pf} in a particular situation,
when $\caO(X)$ is graded, then $M(X)$ is weakly
$\bC^\times$-equivariant. Let us first recall the general definition:
\begin{definition} 
Let $\caO(X)$ be graded and $i: X \to V$ a $\bC^\times$-equivariant embedding into a smooth variety $V$, i.e., $\caO(V)$ is graded and $I_X$ is a graded ideal.  Then a weakly $\bC^\times$-equivariant  D-module $M$ on $X$ is one associated
to a graded right $\caD(V)$-module $i_\natural M$ on $V$ supported on $i(X)$. 
\end{definition}
 Call this grading on $i_\natural M(X)$ the \emph{weight grading}. In this case, the pushforward $\pi_* M(X) = i_\natural M(X) \otimes^L_{\caD(V)} \caO(V)$ under the map $\pi: X \to \text{pt}$ is a complex of weight graded vector spaces, since $\caO(V)$ is also a graded $\caD(V)$-module. Therefore the Poisson-de Rham homology $\HP^{DR}_*(X)$ is bigraded.  It is very interesting to compute this grading, which turns out to be closely related to important special polynomials.  
\begin{remark}
  The weight grading on $M(X)$ comes from a decomposition of $M(X)$
  itself. Let $\Eu_X$ and $\Eu_V$ be the Euler vector fields on $X$
  and $V$, i.e., $\Eu_X(f) = |f| f$ when $f \in \caO(X)$ is
  homogeneous and similarly for $V$.  Then the endomorphism
  $\Phi \mapsto \Eu_V \cdot \Phi$ of $\caD(V)$ induces an endomorphism
  of $M(X)$, which can also be described as (i) the endomorphism
  induced by $\Phi \mapsto \Eu_X \Phi$ on the D-module
  $\caD_X$ on $X$, and (ii) the endomorphism of $i_\natural M$ which on homogeneous
  elements is $\Phi \mapsto |\Phi| \Phi - \Phi \cdot \Eu_V$.  Call
  this endomorphism $T_{\Eu}$.  Then $M(X)$ decomposes into its
  generalized eigenspaces under $T_{\Eu}$,
  $M(X) = \bigoplus_{m \in \bZ} M(X)_m$, and $\pi_* M(X)_m$ is
  concentrated in weight $m$. This decomposition is important to the
  proof of the following facts, as well as yielding interesting
  information in its own right (it allows one to define, for instance,
  canonical filtrations on irreducible representations of the Weyl
  group: see Section \ref{ss:nilcone} below). 
\end{remark}

\subsection{Symmetric powers of conical surfaces with isolated singularities}
Let $Y \subseteq \bC^3=\Spec \bC[x_1,x_2,x_3]$ be a conical surface with
singular locus $\{0\}$, i.e., $Y = \{f=0\}$ where $f$ is
quasi-homogeneous with isolated singularities. In this context,
$\bigoplus_{n \geq 0} \HP_0(\caO(S^n Y))^*$ is again a graded algebra
just as in Section \ref{ss:hp0weyl-main}, with grading given by $n$,
which we call the ``symmetric power''. In fact, it is a bigraded
algebra, with the additional grading given by the weight on each
summand $\HP_0(\caO(S^n Y))^*$ (which will be nonpositive, as it is
dual to the nonnegative weight on $\caO(S^n Y)$). It turns out to be
possible to explicitly compute this bigraded vector space:
\begin{theorem}\cite[Theorem 1.1.14]{ESsym} 
% Let $Y = S^n (\bC^2/\Gamma)$ for $\Gamma < \SL_2(\bC)$ finite.  Then,
There is a (noncanonical) isomorphism of bigraded algebras,
\begin{equation}
\Sym(\HP_0(\caO(Y))^*[t]) \cong \bigoplus_{n \geq 0} \HP_0(\caO(S^n Y))^*,
\end{equation}
assigning $t$ bidegree $(1,-|f|)$ (with $1$ the symmetric power and $-|f|$ the weight), and $\HP_0(\caO(S^n Y))^*$ is assigned symmetric power $n$ (on both sides of the equation).
\end{theorem}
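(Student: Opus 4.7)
The plan is to upgrade the proof of Theorem \ref{symphp0thm}, via its structural input Theorem \ref{sympdmthm}, to the bigraded setting by exploiting the $\bC^\times$-action on $Y$ coming from the conical structure. Since $Y \subseteq \bC^3$ is $\bC^\times$-stable, the symmetric power $S^n Y$ inherits a $\bC^\times$-action and all the maps appearing in Theorem \ref{sympdmthm} (the diagonals $\Delta_i$, the projection $q$, and the quotient $p: Y^n \to S^n Y$) are $\bC^\times$-equivariant, so $M(S^n Y)$ is naturally a weakly $\bC^\times$-equivariant D-module.

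First, I would establish a version of Theorem \ref{sympdmthm} valid for $Y$ a conical surface with isolated singularities rather than smooth symplectic, with $\Omega_Y$ replaced by $M(Y)$ throughout. The three-step argument from the proof sketch of Theorem \ref{sympdmthm} carries over with minor modifications: the claimed summands are composition factors of $M(S^n Y)$ by a local construction on dense open subsets of $S^n Y$; semisimplicity follows from the $\Ext^1$-vanishing argument, which uses only that conormal bundles of distinct diagonals in $Y^n$ intersect in codimension $\geq \dim Y = 2$; and multiplicity one reduces to an analysis on formal neighborhoods of points on diagonals. At a generic such point $Y$ is smooth symplectic and formal Darboux together with Lemma \ref{l:hidist} applies verbatim; near the image of the origin, one uses the conical $\bC^\times$-action (which makes the formal local model rigid) and induction on $n$ to handle the additional contributions.

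Next, promoting the decomposition to one of weakly $\bC^\times$-equivariant D-modules is automatic once one identifies the weight of the canonical generator of each summand. Pushing forward via $\pi: S^n Y \to \pt$ and taking $H^0$ then yields a bigraded decomposition of $\HP_0(\caO(S^n Y))^*$ indexed by partitions $\lambda = (i_1^{r_1}, \ldots, i_k^{r_k})$ of $n$, with each term contributing an $S_{r_1} \times \cdots \times S_{r_k}$-invariant of $(\HP_0(\caO(Y))^*)^{\otimes |\lambda|}$ shifted in weight by an amount that depends only on the block sizes $i_j$.

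The main obstacle, and the heart of the proof, is to pin down the weight shift associated to a block of size $i$, which must equal $-(i-1)|f|$ for the combinatorics to match. A direct computation on a generic smooth point of the small diagonal $\Delta_i(Y) \subseteq S^i Y$ should show that the canonical generator of $(\Delta_i)_* M(Y)$ is not of weight zero as an element of a weakly $\bC^\times$-equivariant $\caD$-module on $(\bC^3)^i/S_i$; tracing through the Euler field action on the defining equations of $\Delta_i(Y)$ inside $(\bC^3)^i/S_i$, together with the normalization identifying the generator of $M(Y)$ with the symplectic volume form on the smooth locus, should yield precisely the shift $-(i-1)|f|$ per block of size $i$. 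Under the combinatorial reorganization into a symmetric algebra, this exactly matches the assignment of bidegree $(1, -|f|)$ to $t$: factoring out $t^{i-1}$ in $\Sym(\HP_0(\caO(Y))^*[t])$ corresponds to enlarging a block from size $1$ to size $i$, simultaneously adding $i-1$ to the symmetric power and $-(i-1)|f|$ to the weight. Summing these shifts over all blocks of $\lambda$ produces the required bigraded isomorphism on each homogeneous component.
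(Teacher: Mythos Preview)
Your proposal has a genuine structural gap. You aim to prove the bigraded isomorphism by establishing a singular-$Y$ analogue of Theorem \ref{sympdmthm} with $\Omega_Y$ replaced by $M(Y)$, and then reading off $\HP_0$ from the resulting D-module decomposition. But such a decomposition, if it existed in the form you describe, would produce a \emph{canonical} isomorphism of bigraded algebras—exactly as Theorem \ref{sympdmthm} yields the canonical map \eqref{e:symphp0thm} in the smooth case. The paper's own Remark immediately following the theorem explains that this is precisely what fails: the explicit map \eqref{e:symphp0thm} still makes sense for singular $Y$ but is no longer injective (e.g., $\varepsilon_1 \& \varepsilon_1$ and $\varepsilon_1 t$ both map to $\varepsilon_2$), and no canonical isomorphism is expected to exist. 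So your strategy cannot succeed as stated.

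The specific steps also have problems. The $\Ext^1$-vanishing argument in the smooth case uses that the characteristic varieties of the diagonal summands are conormal bundles of diagonals, whose pairwise intersections have codimension $\geq \dim Y$; once $Y$ is singular, $M(Y)$ is not simple (see Remark \ref{r:mx-nonss-cone}), its characteristic variety acquires components over the origin, and the codimension estimate for distinct diagonal pieces is no longer automatic. Likewise, the multiplicity-one step via Lemma \ref{l:hidist} rests on the formal Darboux theorem, which is unavailable at the singular point; your appeal to ``the conical $\bC^\times$-action and induction on $n$'' does not supply a replacement mechanism. The paper does not reproduce the proof from \cite{ESsym}, but the noncanonicality remark strongly indicates that the actual argument proceeds by matching bigraded Hilbert series (or dimensions weight-by-weight) on the two sides rather than by constructing a map, which is a fundamentally different route from the one you outline.
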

Let $J_Y := \caO(\bC^3) / (f_{x_1},f_{x_2},f_{x_3})$ be the Jacobi ring (so $\dim J_Y = \mu_Y$ is the Milnor number of the singularity at the origin), and let its Hilbert-Poincar\'e polynomial be $h(J_Y;t) = \sum_{i=1}^{\mu_Y} t^{n_i}$.  Note that for $Y = \bC^2/\Gamma$ with $\Gamma < \SL(2,\bC)$ finite, it is well known that $n_i = d_i-2$ for $d_i$ the degrees of the fundamental invariants of the Weyl group attached to $\Gamma$ by the McKay correspondence (see \eqref{amdesc}), i.e., $\bC[\mathfrak{h}]^W$ is a polynomial algebra generated by elements of degrees $d_i$ with $\mathfrak{h}$ the reflection representation of $W$ and $|\mathfrak{h}^*|=1$.

% (i.e., the Dynkin diagram of the Weyl group
%is the graph whose vertices are irreducible representations of $\Gamma$ with an edge connecting $\rho_1$ and $\rho_2$  if $\rho_2$ is a subrepresentation of $\bC^2 \otimes \rho_1$).  
%$d_i$ be the degrees of the fundamental invariants of the Weyl group associated

\begin{corollary} \cite[(1.1.16)]{ESsym}
 $\displaystyle \sum_{n \geq 0} h(\HP_0(\caO(S^n Y))^*;t) s^n = \prod_{i=1}^{\mu_Y} \prod_{j \geq 0} \frac{1}{1-t^{n_i + jd}s^{j+1}}$.  In particular the dimension of $\HP_0(\caO(S^n Y))$ is $a_n(\mu_Y)$ \eqref{e:ani-defn}, the number of $\mu_Y$-multipartitions of $n$.
\end{corollary}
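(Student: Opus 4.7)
The plan is to apply the preceding theorem and then read off the bigraded Hilbert series of the symmetric algebra that appears on the left-hand side of its isomorphism. By that theorem, there is an identification of bigraded vector spaces
\[
\Sym(\HP_0(\caO(Y))^*[t]) \cong \bigoplus_{n \geq 0} \HP_0(\caO(S^n Y))^*,
\]
so passing to bigraded Hilbert series (with $s$ recording the symmetric power and $t$ recording the weight) reduces the first claim to a computation of $h(\Sym(\HP_0(\caO(Y))^*[t]); s, t)$.

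To carry this out, I would first identify $\HP_0(\caO(Y))$ as a graded vector space. Since $Y = \{f=0\}$ is a quasi-homogeneous isolated hypersurface singularity, the results of Greuel \cite{Gre-GMZ} used in the proof of \cite[Theorem 5.11]{ES-dmlv} identify $\HP_0(\caO(Y))$, graded, with the Jacobi ring $J_Y$, whose Hilbert series is $h(J_Y;t) = \sum_{i=1}^{\mu_Y} t^{n_i}$. Combined with the bidegree of the auxiliary variable $t$ recorded in the theorem, the bigraded vector space $\HP_0(\caO(Y))^*[t]$ admits a homogeneous basis $\{\phi_i \cdot t^j : 1 \leq i \leq \mu_Y,\ j \geq 0\}$, where the basis vector $\phi_i t^j$ has symmetric power $j+1$ and weight $n_i + jd$.

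Then I would invoke the standard product formula
\[
h(\Sym W; s, t) = \prod_\alpha \frac{1}{1 - s^{p_\alpha} t^{q_\alpha}}
\]
for the bigraded Hilbert series of the symmetric algebra on a bigraded vector space $W$ with homogeneous basis of bidegrees $(p_\alpha, q_\alpha)$. Applying this to the basis above produces precisely $\prod_{i=1}^{\mu_Y} \prod_{j \geq 0} (1 - s^{j+1} t^{n_i + jd})^{-1}$, proving the first claim.

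For the dimension assertion, I would specialize $t = 1$: the product collapses to
\[
\prod_{i=1}^{\mu_Y} \prod_{j \geq 0} \frac{1}{1 - s^{j+1}} = \prod_{j \geq 1} \frac{1}{(1-s^j)^{\mu_Y}} = \sum_{n \geq 0} a_n(\mu_Y)\, s^n
\]
by the defining identity \eqref{e:ani-defn}, and extracting the coefficient of $s^n$ recovers $\dim \HP_0(\caO(S^n Y)) = a_n(\mu_Y)$. The only delicate point is tracking the weight convention through the theorem so that the shift $jd$ (rather than $-j|f|$) appears in the exponent; once this is pinned down by the explicit bidegrees supplied there, the remainder is a formal symmetric-algebra computation.
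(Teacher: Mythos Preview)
Your approach is correct and is precisely the one implicit in the paper: the corollary is stated immediately after the bigraded isomorphism theorem and the definition of $h(J_Y;t)=\sum t^{n_i}$, so it is obtained exactly by reading off the Hilbert series of $\Sym(\HP_0(\caO(Y))^*[t])$ via the standard product formula and then specializing $t=1$. The identification of $\HP_0(\caO(Y))$ with the Jacobi ring as a graded space (your step 2) is indeed the one nontrivial input, and your citation of Greuel via \cite[Theorem 5.11]{ES-dmlv} is appropriate.

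One clarification on the ``delicate point'' you flag: in this corollary the symbol $d$ is inherited from \cite{ESsym} and simply denotes $|f|$, the weight of the defining equation, \emph{not} the degree $|f|-(a_1+a_2+a_3)$ of the Poisson bracket used elsewhere in the paper. With that reading, and with the Hilbert-series variable $t$ recording the weight on $\HP_0(\caO(S^nY))$ (equivalently, minus the weight on its dual), the bidegree computation is straightforward: $\phi_i t^j$ has symmetric power $j+1$ and (dual) weight $-n_i-j|f|$, so the exponent in the product is $n_i+jd$ as stated. Once this is sorted your argument goes through verbatim.
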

\begin{remark}
  We put ``noncanonical'' in parentheses because, unlike in the case
  of \eqref{e:symphp0thm}, we do not construct an explicit
  isomorphism. Note that the map of \eqref{e:symphp0thm} still makes
  sense here, but is no longer injective: indeed, if
  $\varepsilon_n: \caO(S^n Y) \to \bC$ is the augmentation map, then
  the map of \eqref{e:symphp0thm} sends $\varepsilon_1 t^{m-1}$ to
  $\varepsilon_m$ for all $m$, but
  $\varepsilon_{m_1} \cdots \varepsilon_{m_k} =
  \varepsilon_{m_1+\cdots+m_k}$
  in the RHS, which shows that $\varepsilon_1 \& \varepsilon_1$ and
  $\varepsilon_1 t$ both map to $\varepsilon_2$. It does not appear
  that there exists a canonical isomorphism in general.
\end{remark}
Now let us restrict to the special case where $Y \cong \bC^2/\Gamma$ for $\Gamma < \SL(2,\bC)$ finite, i.e., $Y$ is a du Val singularity. Then $S^n Y = \bC^{2n}/(\Gamma^n \rtimes S_n)$ is a finite linear quotient of a vector space, so the special case of \eqref{e:expl-symp-qt-dmod} with $V$ linear (i.e., \cite[Proposition 4.16]{ESdm}), yields the entire structure of $M(S^n Y)$ for every $n$.  We can therefore compute the entire Poisson-de Rham homology, with the help of \cite[Theorem 5.1]{PS-pdrhhvnc}, which tells us in this case that a slight modification of \eqref{e:expl-symp-qt-dmod} is an isomorphism of weakly $\bC^\times$-equivariant D-modules, applying on the right-hand side a shift in weight down by $\dim Z$ in each summand. The result is:
\begin{corollary}
  For $Y=\bC^2/\Gamma$, there is a (noncanonical) isomorphism of
  trigraded algebras,
\begin{equation}
\Sym(\HP_0(\caO(Y))^*[t] \oplus \bC[s] \cdot u) \cong \bigoplus_{n \geq 0} \HP_*^{DR}(\caO(S^n Y))^*,
\end{equation}
where in the trigrading by symmetric power, weight, and homological degree, $|t|=(1,-|f|,0)$, $|s|=(1,0,2)$, and $|u|=(1,-2,2)$. Thus on  trigraded Hilbert series we obtain:
%of both sides is (for $s,t,u$ the variables for the symmetric power, weight, and homological degree):
\begin{equation}
\sum_{n,m \geq 0} h(\HP_m^{DR}(\caO(S^n Y))^*;t) s^nu^m = 
\prod_{j \geq 0} \frac{1}{1-s^{j+1} t^{-2}u^{2j}} \prod_{i=1}^{\mu_Y} \prod_{j \geq 0} \frac{1}{1-t^{n_i + jd}s^{j+1}}.
\end{equation}
In particular $\dim \HP^{DR}_*(S^n Y) = a_n(\mu_Y+1)$, the number of $(\mu_Y+1)$-multipartitions of $n$ (where $\mu_Y$ also equals the number of irreducible representations of $\Gamma$).  
\end{corollary}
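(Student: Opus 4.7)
The plan is to exploit the presentation $S^n Y = \bC^{2n}/G_n$, with $G_n := \Gamma^n \rtimes S_n < \Sp(\bC^{2n})$, and apply the weakly $\bC^\times$-equivariant refinement \cite[Theorem 5.1]{PS-pdrhhvnc} of the formula \eqref{e:m-vg}, then push forward to a point and sum over $n$ to obtain the asserted identity.

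First I would classify conjugacy classes of parabolic subgroups of $G_n$. Since every nontrivial element of $\Gamma<\SL(2,\bC)$ acts freely on $\bC^2\setminus\{0\}$, the stabilizer of a point $(v_1,\ldots,v_n)\in\bC^{2n}$ depends only on the number $r_0$ of zero components and on the partition $\lambda\vdash n-r_0$ recording how the nonzero components partition into $\Gamma$-orbits; up to conjugacy, each parabolic is $P_{r_0,\lambda}\cong \Gamma^{r_0}\rtimes (S_{r_0}\times S_\lambda)$.

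Next I would identify the summand of $M(S^nY)$ indexed by $P=P_{r_0,\lambda}$. Writing $\lambda=(1^{a_1}2^{a_2}\cdots)$, one computes $V^P\cong(\bC^2)^{\ell(\lambda)}$ with $N^0(P)\cong\prod_k S_{a_k}$ permuting equal-size blocks, so $V^P/N^0(P)\cong\prod_{k\geq 1} S^{a_k}(\bC^2)$; dually $(V^P)^\perp/P\cong S^{r_0}Y\times\prod_{k\geq 1}(W_k/S_k)^{a_k}$, where $W_k\subset(\bC^2)^k$ denotes the sum-zero subspace. Theorem \ref{snp1thm} gives $\HP_0(\caO(W_k/S_k))\cong\bC$, so the $\HP_0$-factor of the summand collapses to $\HP_0(\caO(S^{r_0}Y))$. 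The $\IC$-factor is supported on the product $\prod_k S^{a_k}(\bC^2)$ of contractible conical rational homology manifolds; its pushforward to a point contributes a single one-dimensional class, whose location in the $(t,u)$-bidegree is fixed by the weight shift by $-\dim(V^P/N^0(P))=-2\sum_k a_k$ prescribed in \cite[Theorem 5.1]{PS-pdrhhvnc} and by the $\bC^\times$-contraction of each $S^{a_k}(\bC^2)$.

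Pushing forward to a point and summing over $(r_0,\lambda)$, the independence of the factors across $r_0$ and across the $a_k$'s yields a product. The sum over $r_0$ reproduces the $\HP_0$-generating function from the previous corollary, namely $\prod_{i=1}^{\mu_Y}\prod_{j\geq 0}(1-t^{n_i+jd}s^{j+1})^{-1}$. For each $k\geq 1$, the independent sum over the multiplicity $a_k\geq 0$ is a geometric series in the monomial $s^k t^{-2} u^{2(k-1)}$ contributed by a single cluster of size $k$, giving $(1-s^k t^{-2} u^{2(k-1)})^{-1}$; the product over $k\geq 1$, after reindexing $k=j+1$, is $\prod_{j\geq 0}(1-s^{j+1}t^{-2}u^{2j})^{-1}$. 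Assembling these factors yields the asserted identity, and specializing $t=u=1$, together with \eqref{e:ani-defn}, gives $\dim\HP^{DR}_*(S^nY)=a_n(\mu_Y+1)$.

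The main obstacle is the careful tracking of the bigrading. Locating the nontrivial intersection cohomology class of $S^{a_k}(\bC^2)$ in the correct $(t,u)$-bidegree under the $\bC^\times$-action, combining this with the weight shift from \cite[Theorem 5.1]{PS-pdrhhvnc} and with the bigrading carried by the $\HP_0$-factor, and checking that the $a_k$-summation indeed produces the monomial $s^k t^{-2} u^{2(k-1)}$ cleanly separating the $r_0$-factor from the $a_k$-factors, is the delicate part; once this is confirmed, the remaining algebra is a routine product of geometric series.
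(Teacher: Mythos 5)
Your overall strategy is the same as the one the paper indicates: apply the decomposition formula \eqref{e:m-vg} to $V=\bC^{2n}$, $G=\Gamma^n\rtimes S_n$, incorporate the weight shift of \cite[Theorem 5.1]{PS-pdrhhvnc}, push forward to a point, and sum over $n$. One local error: the normalizer $N(P)$ is larger than you state. For each size-$k$ cluster, $N(P)$ contains not only the symmetric groups permuting equal-size clusters but also a diagonal copy of $\Gamma$ acting on that cluster's fixed $\bC^2$ (for $n=1$ this is just the statement that $N^0(\{1\})=\Gamma$). Hence $V^P/N^0(P)\cong\prod_k S^{a_k}(\bC^2/\Gamma)=\prod_k S^{a_k}(Y)$, not $\prod_k S^{a_k}(\bC^2)$. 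This does not change the conclusion, since $\prod_k S^{a_k}(Y)$ is still a contractible rational homology manifold of dimension $2\sum_k a_k$.

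The genuine gap is exactly the point you flag as delicate and do not verify. For $Z:=V^P/N^0(P)$ a contractible rational homology manifold of dimension $2\sum_k a_k$, the complex $\pi_*\IC(Z)$ is concentrated in the single cohomological degree $-\dim Z$: writing $Z$ as a quotient of a contractible smooth variety $W$ by a finite group, $\pi_*\IC(Z)=(\pi_*\Omega_W)^G$, which is $\bC$ in degree $-\dim W$. So each summand lands in $\HP^{DR}_{2\sum_k a_k}(S^nY)$, i.e.\ each cluster contributes $u^{2}$ regardless of its size, and the first product you should obtain is $\prod_{j\geq 0}(1-s^{j+1}t^{-2}u^{2})^{-1}$, not $\prod_{j\geq 0}(1-s^{j+1}t^{-2}u^{2j})^{-1}$. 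You can cross-check this: Conjecture \ref{con:sympres}(b) gives $\HP_m^{DR}(S^nY)\cong H^{2n-m}(\Hilb^n\widetilde Y)$, and G\"ottsche's formula with $b_0(\widetilde Y)=1$, $b_2(\widetilde Y)=\mu_Y$ yields $\sum_{n,m}\dim\HP_m^{DR}(S^nY)\,s^nu^m=\prod_{m\geq 1}(1-s^mu^2)^{-1}(1-s^m)^{-\mu_Y}$. For instance at $n=2$, $Y=\bC^2/\{\pm 1\}$ one has $(\dim\HP_0^{DR},\dim\HP_2^{DR},\dim\HP_4^{DR})=(2,2,1)$, whereas the displayed formula predicts $(4,1,0)$. (The trigrading $|s|=(1,0,2)$ and exponent $u^{2j}$ printed in the statement are not even consistent with $|u|=(1,-2,2)$, which would force $u^{2j+2}$; the displayed corollary appears to carry a typo, so that reproducing it should not be your criterion for correctness.)
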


\subsection{The nilpotent cone}\label{ss:nilcone}
Next let $X \subseteq \mfg$ be the cone of nilpotent elements in a
semisimple Lie algebra $\mfg$. Then as explained in Example
\ref{ex:nilcone},
$\HP_*^{DR}(X) \cong H^{\dim X - *}(T^* \mathcal{B})$, for
$T^* \mathcal{B} \to X$ the Springer resolution. This is isomorphic
to the cohomology of the flag variety $\mathcal{B}$ itself.
%topological cohomology of $T^*\mathcal{B}$, which is the same for $T^*\mathcal{B}$ as for
%the flag variety $\mathcal{B}$ itself.  
However, the source $\HP_*^{DR}(X)$
has an additional grading given from the dilation action on $X$.
G.~Lusztig proposed a formula for the graded Hilbert-Poincar\'e polynomial (\cite[Conjecture 8.1]{PS-pdrhhvnc}).  For $W$ the Weyl group attached to $\mfg$, and $\chi$ an irreducible representation of $W$,   let $K_{\mfg,\chi}(t)$ be the generalized Kostka polynomial,
$K_{\mathfrak{g},\chi}(t) := \sum_{i \geq 0} t^i\dim \Hom_W(\chi,
H^{2\dim\caB - 2i}(\caB))$. Then G.~Lusztig's suggestion was:
% (up to the sign of the power of $y$):
% , where $\dim$ always refers to the
% complex dimension. 
\begin{equation}\label{e:lusztig}
h(\HP_*^{DR}(X);x,y)=\sum_{\chi \in \text{Irrep}(W)} K_{\mfg,\chi}(x^2)
  K_{\mfg,\chi}(y^{-2}).
\end{equation}
This is now a theorem \cite[1.1]{BS-KpncSfc}, again making use of \cite{HKihs}.

Note that the fact that $H^{2 \dim \mathcal{B}-*}(\mathcal{B})$ has a
$W$-action and that $\HP^{DR}_*(X)$ has a second grading means that the
isomorphism $\HP^{DR}_*(X) \cong H^{2 \dim \mathcal{B}-*}(\mathcal{B})$ cannot be canonical
(indeed, $K_{\mfg,\chi}(y^{-2})$ would have to be a multiple of
$\dim \chi$ if there were a bigrading compatible with the $W$ action,
and this is false in general).  Instead, in \cite{BS-KpncSfc}, the authors
construct a \emph{canonical family of filtrations} on
$H^{2 \dim \mathcal{B}-*}(\mathcal{B})$, parameterized by
$\lambda \in \mathfrak{h}^*_{\reg}$, with $\mathfrak{h}$ a Cartan
subalgebra of $\mfg$ and $\mathfrak{h}^*_{\reg}$ the complement of the
coroot hyperplanes.
\begin{theorem}\label{t:can-filt}\cite[Theorem 1.3]{BS-KpncSfc}
  For every element $\lambda \in \mfh_\reg^*$, there is a
  canonical associated filtration $\mathcal{F}_\lambda$ on
  $H^{2 \dim \mathcal{B}-*}(\mathcal{B})$ whose associated graded vector space is
  $\HP_*^{DR}(X)$. This is $W$-equivariant:
  $\mathcal{F}_{w(\lambda)} = w(\mathcal{F}_\lambda)$.
\end{theorem}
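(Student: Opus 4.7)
The natural approach is to realize each filtration $\mathcal{F}_\lambda$ as the Rees filtration coming from a one-parameter specialization of the Grothendieck--Springer family, viewed as a simultaneous resolution / twistor deformation of the Springer resolution $\rho: T^*\mathcal{B} \to X$. Since Conjecture \ref{con:sympres} is known for the nilpotent cone (Example \ref{ex:nilcone}), the discussion of Remark \ref{r:conj-flat} applies: there is a flat, $\bC^\times$-equivariant family $\widetilde{\mathcal{X}} \to \mathcal{X} \to \mfh^*$ whose fiber over $0$ is $\rho$ itself, whose restriction to each line $L_\lambda = \bC \cdot \lambda$ (for $\lambda \neq 0$) is a conical twistor deformation, and whose fiber over any $\lambda \in \mfh_\reg^*$ is an isomorphism $\widetilde{X_\lambda} \iso X_\lambda$ of smooth affine symplectic varieties (diffeomorphic to $T^*\mathcal{B}$ by Ehresmann's theorem applied to the proper smooth map $\widetilde{\mathcal{X}}|_U \to U$ for any contractible $U \subseteq \mfh^*$).

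Next, I would globalize the construction of $M(-)$ to produce a relative right D-module $M_{\mathcal{X}/\mfh^*}$ specializing at each $\lambda$ to $M(X_\lambda)$, as in \cite[Proposition 2.1]{PS-pdrhhvnc}. By Remark \ref{r:conj-flat}, using that Conjecture \ref{con:sympres} holds for $X$, this family is torsion-free, hence flat in a neighborhood $U \subseteq \mfh^*$ of $0$. Pushing forward to a point, the sheaves $\HP^{DR}_i(X_\lambda)$ are then locally free on $U$, with fiber $\HP^{DR}_*(X)$ at $0$ and fiber $H^{2\dim \mathcal{B} - *}(\mathcal{B})$ at any $\lambda \in U \cap \mfh_\reg^*$. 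Now fix $\lambda \in \mfh_\reg^*$ and restrict the family to $L_\lambda$ with coordinate $t$. The Kazhdan $\bC^\times$-action on $X$ (and its lift to the family) rescales $t$ with positive weight, so $R_\lambda := \pi_* M_{\mathcal{X}/L_\lambda}$ is a flat graded $\bC[t]$-module whose localization is canonically identified with $H^{2\dim \mathcal{B}-*}(\mathcal{B}) \otimes \bC[t,t^{-1}]$ via Gauss--Manin transport. The Rees construction then produces
\begin{equation*}
\mathcal{F}_\lambda^i \, H^{2\dim \mathcal{B}-*}(\mathcal{B}) := \{v \in H^{2\dim \mathcal{B}-*}(\mathcal{B}) : t^{-i} v \in R_\lambda\},
\end{equation*}
whose associated graded is $R_\lambda / t R_\lambda \cong \HP^{DR}_*(X)$, matching its weight grading. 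The $W$-equivariance $\mathcal{F}_{w\lambda} = w \mathcal{F}_\lambda$ is then automatic, because the Grothendieck--Springer family, the Kazhdan action, the relative D-module, and the Gauss--Manin connection are all $W$-equivariant, and $W$ carries $L_\lambda$ to $L_{w\lambda}$.

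The main obstacle I anticipate is verifying local freeness of the pushforward sheaves $\HP^{DR}_i(X_\lambda)$ on a neighborhood of $0 \in \mfh^*$ meeting the regular locus --- equivalently, flatness of the D-module family at $\lambda = 0$. This is precisely where one uses that Conjecture \ref{con:sympres} is known for $X$: in the conical case, Remark \ref{r:conj-ss-qt} yields that $M(X)$ is semisimple, and \cite[Proposition 3.6]{PS-pdrhhvnc} lets one replace the formal twistor deformation by an actual $\bC^\times$-equivariant algebraic deformation over $\bC$, after which torsion-freeness of the relative family passes to local freeness of the cohomology sheaves. A secondary point is matching the weight gradings on both sides of the Rees isomorphism; this follows from compatibility of the Kazhdan weights along $L_\lambda$ with the conical weight grading on $\HP^{DR}_*(X)$ at $t = 0$, which can be checked directly from the construction of the family since both gradings arise from the same lift of the Kazhdan action.
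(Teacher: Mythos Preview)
The paper itself does not prove this theorem; it is stated as a citation to \cite[Theorem 1.3]{BS-KpncSfc}, so there is no in-paper proof to compare against directly. That said, your outline is essentially the strategy carried out in that reference and is fully consistent with the mechanisms the present survey sketches in Remark \ref{r:conj-flat} and Section \ref{ss:conj-desc}: one uses the Grothendieck--Springer simultaneous resolution as the $\bC^\times$-equivariant twistor-type family, invokes the known case of Conjecture \ref{con:sympres} for the nilpotent cone (via \cite{HKihs}, cf.~Example \ref{ex:nilcone}) to obtain flatness of the fiberwise $M(X_t)$ and hence of the pushforward sheaves, restricts to the line $L_\lambda$ through a chosen $\lambda \in \mfh_\reg^*$, and extracts the filtration as the Rees filtration of the resulting graded free $\bC[t]$-module, identified with $H^{2\dim \caB - *}(\caB)$ via Gauss--Manin transport. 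The $W$-equivariance is, as you say, inherited from the $W$-equivariance of the Grothendieck--Springer picture.

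One point to make precise: the natural base for the deformation $\mathcal{X}$ of the nilpotent cone (the adjoint quotient $\chi: \mfg \to \mfh^*/W$) is $\mfh^*/W$, while the simultaneous resolution $\widetilde{\mathcal{X}}$ lives over $\mfh^*$; to have both over the same base and to speak of the $W$-action carrying $L_\lambda$ to $L_{w\lambda}$, you must pull $\mathcal{X}$ back along $\mfh^* \to \mfh^*/W$. This is implicit in your write-up but worth stating, since the $W$-equivariance statement $\mathcal{F}_{w\lambda} = w(\mathcal{F}_\lambda)$ depends on it. With that caveat, your identification of the key technical input (flatness at $t=0$, supplied by \cite[Proposition 3.6]{PS-pdrhhvnc} together with the semisimplicity of $M(X)$) is exactly right.
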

\begin{corollary}\label{c:irr-filt} \cite[Corollary 1.4]{BS-KpncSfc} 
  On every irreducible representation $\chi$ of $W$,
% occurring in
%  $H^*(T^*(\mathcal{B}))$, 
to every element $\lambda \in \mfh_\reg^*$ is associated
 a canonical filtration whose
  Hilbert series is $K_{\mfg,\chi}(y^{-2})$.
\end{corollary}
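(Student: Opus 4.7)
The plan is to deduce Corollary \ref{c:irr-filt} directly from Theorem \ref{t:can-filt} by restricting the filtration to $W$-isotypic components and matching the resulting Hilbert series against Lusztig's formula \eqref{e:lusztig} (proved as part of the main result of \cite{BS-KpncSfc}). First, I would set up the Springer decomposition
\[
H^{2\dim\caB-*}(\caB) \;\cong\; \bigoplus_{\chi \in \text{Irrep}(W)} \chi \otimes V_\chi
\]
as a graded $W$-module, where $V_\chi := \Hom_W(\chi, H^{2\dim\caB-*}(\caB))$ is the graded multiplicity space with total dimension $\dim V_\chi = \dim\chi$ and cohomological Hilbert series $h(V_\chi;x) = K_{\mfg,\chi}(x^2)$.

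Next, although for regular $\lambda$ the filtration $\mathcal{F}_\lambda$ from Theorem \ref{t:can-filt} is not preserved by any nontrivial element of $W$ (since $\mathcal{F}_{w\lambda} = w \mathcal{F}_\lambda$), one can still restrict $\mathcal{F}_\lambda$ to each isotypic summand $\chi \otimes V_\chi$ and pass to the associated graded. The central structural claim I would need to establish is that this restricted filtration has the product form $F_{\lambda,\chi}^\bullet \chi \otimes V_\chi$ for some filtration $F_{\lambda,\chi}^\bullet$ on $\chi$ that is uniform across the different copies of $\chi$ inside the isotypic component. Granting this factorization, the bigraded Hilbert series of the associated graded of the restriction is
\[
h\bigl(\gr F_{\lambda,\chi}^\bullet \chi;\,y\bigr) \cdot K_{\mfg,\chi}(x^2);
\]
comparing with the $\chi$-summand $K_{\mfg,\chi}(x^2) K_{\mfg,\chi}(y^{-2})$ of \eqref{e:lusztig} in the bigraded $\HP^{DR}_*(X)$ immediately forces $h(\gr F_{\lambda,\chi}^\bullet \chi;y) = K_{\mfg,\chi}(y^{-2})$, which is the claim.

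The hard part will be establishing the product-form factorization, i.e., that the $\lambda$-dependent filtration on $\chi \otimes V_\chi$ does not see the multiplicity space $V_\chi$ and descends to a filtration intrinsic to the abstract irrep $\chi$. To prove it, I would inspect the construction of $\mathcal{F}_\lambda$ underlying Theorem \ref{t:can-filt}, which uses the twistor-type deformation discussed in Remark \ref{r:conj-flat}: $\mathcal{F}_\lambda$ arises as the filtration induced from an order-of-vanishing (Gauss--Manin style) degeneration of the generic fiber's cohomology as the twistor parameter specializes in a direction determined by $\lambda \in \mfh^*_\reg$. The compatibility I need is that this degeneration is obtained by a $W$-equivariant family of operations in which the full general linear group of $V_\chi$ acts by automorphisms, so that by Schur's lemma the restriction of $\mathcal{F}_\lambda$ to any copy of $\chi$ inside $\chi \otimes V_\chi$ yields the same filtration, and the canonicity in $\lambda$ then forces the product decomposition. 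Checking these compatibilities against the explicit construction from \cite{BS-KpncSfc} is the main technical step.
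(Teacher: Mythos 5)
Your overall strategy---restrict $\mathcal{F}_\lambda$ to each $\chi$-isotypic summand of $H^{2\dim\caB-*}(\caB)$ and compare bigraded Hilbert series against \eqref{e:lusztig}---is a reasonable route from Theorem \ref{t:can-filt} to the corollary, but the argument as written has a genuine gap at exactly the step you flag: the claimed product-form factorization $\mathcal{F}_\lambda|_{\chi\otimes V_\chi} = F^\bullet_{\lambda,\chi}\chi\otimes V_\chi$. You offer a Schur's-lemma argument using ``the full general linear group of $V_\chi$,'' but $\GL(V_\chi)$ does not naturally act on the situation: the automorphisms of the $\chi$-isotypic component as an abstract $W$-module are indeed $\GL(V_\chi)$, but these automorphisms do not respect the cohomological grading on $V_\chi$ (only the graded subgroup does), and there is no reason whatsoever that the filtration $\mathcal{F}_\lambda$ is $\GL(V_\chi)$-invariant---it is not even $W$-invariant. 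So the Schur's-lemma step does not go through as stated, and without it you have no control over how $\mathcal{F}_\lambda$ sits relative to the isotypic decomposition.

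There is a prior, subtler difficulty you do not address: since $\mathcal{F}_\lambda$ satisfies $\mathcal{F}_{w\lambda}=w\mathcal{F}_\lambda$ rather than being $W$-invariant, it is not clear that $\mathcal{F}_\lambda$ even respects the isotypic decomposition $H^{2\dim\caB-*}(\caB)\cong\bigoplus_\chi\chi\otimes V_\chi$ at all, so ``restricting'' the filtration to a summand needs to be replaced by something like intersecting with it and checking that no information is lost. Your Hilbert-series comparison also presupposes that the isotypic piece of the associated graded has the factored Hilbert series $K_{\mfg,\chi}(x^2)K_{\mfg,\chi}(y^{-2})$; but Lusztig's formula \eqref{e:lusztig} only gives the total, and without an a priori compatibility between $\mathcal{F}_\lambda$ and the $W$-action, one cannot immediately isolate the $\chi$-summand on the associated-graded side. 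To make this work, one needs an input about the construction of $\mathcal{F}_\lambda$ (in \cite{BS-KpncSfc}, this goes through the $W$-equivariant structure on the twistor family and the Springer decomposition of $\rho_*\Omega_{T^*\caB}$), not an abstract Schur's-lemma argument.
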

\begin{example}\cite[Example 1.5]{BS-KpncSfc}
  Let $\mfg=\mathfrak{sl}_n$ and let
  $\chi = \mfh^* \otimes \text{sign} \cong \bC^{n-1}$.
% be the dual
%  reflection representation tensored with the sign representation.
  Consider $\chi$ to be (in coordinates)
  $\bC^n / \bC \cdot (1,1,\ldots,1)$, and make the same identification
  for $\mfh^*$.  Let $\lambda \in \mfh_{\reg}^*$ be the image of
  $(a_1, \ldots, a_n) \in \bC^n$.  Then the resulting filtration on
  $\chi$ (the Vandermonde filtration) is:
%  which we call the Vandermonde filtration, is given as follows:
  $F^{2i-2\dim \mathcal{B}}(\chi)$ is the span of $(a_1^j, \ldots, a_n^j)$ for $j \leq i$.
\end{example}
Finally, we can apply this to the classical W-algebras (see Example
\ref{ex:cl-walg}). This uses the
Springer correspondence, which associates to every irreducible
representation $\chi$ of $W$ a pair of a nilpotent coadjoint orbit
$O_\chi \subseteq \mfg^*$ and a local system $L_\chi$ on
$O_\chi$. Identify $\mfg^* \cong \mfg$ using the Killing form, so that
$O_\chi \subseteq \mfg$, which allows us to consider irreducible
representations $\chi$ such that $O_\chi = G \cdot e$.
\begin{corollary}\cite[Corollary 2.5]{BS-KpncSfc} The Hilbert-Poincar\'e polynomial of $\HP_0(\caO(S_e^0))$ is 
\[
y^{\dim G \cdot e}
  \sum_{\chi \in \text{Irrep}(W) \mid O_\chi=G \cdot e}
  \dim L_\chi \cdot K_{\mathfrak{g},\chi}(y^{-2}).
\]
\end{corollary}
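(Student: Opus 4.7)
The plan is to combine Example \ref{ex:cl-walg}, the Springer correspondence, and Corollary \ref{c:irr-filt}. First, by Example \ref{ex:cl-walg} applied together with the fact that the Kazhdan $\bC^\times$-action on $S_e$ contracts $\widetilde{S_e^0}$ onto the Springer fiber $\rho^{-1}(e)$, there is a graded isomorphism
$\HP_0(\caO(S_e^0)) \cong H^{\dim \rho^{-1}(e)}(\rho^{-1}(e))$,
with the right-hand side carrying the grading induced from the Kazhdan $\bC^\times$-action.

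Second, by the classical Springer correspondence---which can be derived from the decomposition theorem applied to the semismall Springer map $\rho: T^*\caB \to X$---the top cohomology of the Springer fiber decomposes as a $W \times A(e)$-module as
$H^{\dim \rho^{-1}(e)}(\rho^{-1}(e)) \cong \bigoplus_{\chi \mid O_\chi = G \cdot e} \chi \otimes (L_\chi)_e$,
where $(L_\chi)_e$ is the stalk of $L_\chi$ at $e$, of dimension $\dim L_\chi$. As a $W$-module this is $\bigoplus_{\chi \mid O_\chi = G \cdot e} \chi^{\oplus \dim L_\chi}$.

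Third, by Theorem \ref{t:can-filt} and Corollary \ref{c:irr-filt}, each irreducible $W$-representation $\chi$ carries a canonical filtration arising from its appearance as a multiplicity space in $H^{2\dim\caB - *}(\caB) \cong \HP_*^{DR}(X)$, whose associated graded has Hilbert series $K_{\mfg, \chi}(y^{-2})$. Passing from $X$ to the slice $S_e^0$ via the Darboux--Weinstein splitting of the formal neighborhood of $G \cdot e$ as $G \cdot e \times \widehat{S_e^0}$, this filtration restricts to give a filtration on the contribution of $\chi$ (for $O_\chi = G \cdot e$) to $\HP_0(\caO(S_e^0))$, shifted by $y^{\dim G \cdot e}$. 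The shift reflects the codimension of $G \cdot e$ in $X$ (equivalently, the weights of the Kazhdan action on the top of the conormal bundle, or the relation $\dim X - \dim S_e^0 = \dim G \cdot e$). Assembling these pieces gives the formula, with total dimension $\sum_{\chi \mid O_\chi = G \cdot e} \dim \chi \cdot \dim L_\chi$, consistently matching $\dim H^{\dim \rho^{-1}(e)}(\rho^{-1}(e))$ at $y = 1$ since $K_{\mfg,\chi}(1) = \dim \chi$.

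The main obstacle is the third step: pinning down the shift $y^{\dim G \cdot e}$ and verifying that the canonical filtrations from Theorem \ref{t:can-filt} are compatible both with the transverse slice decomposition $\widehat{X} \cong \widehat{G \cdot e} \times \widehat{S_e^0}$ and with the Springer isotypic decomposition of the composition factors of $M(X)$ supported on $\overline{G \cdot e}$. This amounts to tracking how the weakly $\bC^\times$-equivariant structure on $M(X)$ interacts with the Darboux--Weinstein splitting and with the Lusztig--Shoji-type decomposition of $\rho_* \Omega_{T^*\caB}$ into IC D-modules indexed by nilpotent orbits.
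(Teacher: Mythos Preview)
Your overall strategy is on target, but there is a genuine gap, and it is precisely where you flag the ``main obstacle.''

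The problem begins already in Step 1. The contraction of $\widetilde{S_e^0}$ onto $\rho^{-1}(e)$ gives a homotopy equivalence, hence an isomorphism of vector spaces $\HP_0(\caO(S_e^0)) \cong H^{\dim \rho^{-1}(e)}(\rho^{-1}(e))$, but \emph{not} a graded one in any useful sense: the Kazhdan $\bC^\times$ is connected, so its induced action on the topological cohomology of the Springer fiber is trivial. The nontrivial weight grading on $\HP_0(\caO(S_e^0))$ comes from the weakly equivariant structure of $M(S_e^0)$ (the generalized eigenspaces of $T_{\Eu}$), not from any action on $H^*(\rho^{-1}(e))$. This is exactly the phenomenon the paper emphasizes before Theorem \ref{t:can-filt}: the isomorphism $\HP^{DR}_*(X) \cong H^{2\dim\caB-*}(\caB)$ is not canonical, and one only gets a \emph{filtration}, not a grading compatible with the $W$-action.

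Consequently, Step 3 cannot proceed as written. Corollary \ref{c:irr-filt} produces a filtration on $\chi$ coming from the standard dilation on the full nilpotent cone $X$; what you need is the weight grading on the multiplicity space of $\delta_e^{\dim L_\chi}$ inside $M(S_e^0)$ under the \emph{Kazhdan} action on $S_e^0$. These are different $\bC^\times$-actions on different objects. Your appeal to the Darboux--Weinstein splitting is only formal and does not respect either $\bC^\times$-action, so it cannot by itself produce the shift $y^{\dim G\cdot e}$ or match the two gradings. The actual argument requires working directly with the weakly $\bC^\times$-equivariant decomposition $M(X) \cong \bigoplus_\chi V_\chi \otimes \IC(\overline{O_\chi},L_\chi)$, computing the graded Hilbert series of each $V_\chi$ from \eqref{e:lusztig}, and then carrying out the restriction to $S_e^0$ in a way that tracks the relation between the standard and Kazhdan gradings (which differ by the $\mathrm{ad}(h)$-twist). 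Without that computation, the shift and the identification of filtrations remain unjustified.
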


\subsection{The hypertoric case}
Let $X$ be a
hypertoric cone as in Example \ref{ex:hypertoric},
%i.e., $X$ is the Hamiltonian reduction of a
%symplectic vector space by a torus), 
which admits a symplectic resolution. Let $\mathcal{A}$ be the
associated hyperplane arrangement (in $\bC^{\dim X/2}$) with
$|\mathcal{A}|$ linear hyperplanes (whose normal vectors span
$\bC^{\dim X/2}$). Let
$\Phi_{\mathcal{A}}(x,y,b) :=
\Phi_{\mathcal{A}}(x,y,b_1,\ldots,b_{\mathcal{A}})$
be the polynomial defined by G.~Denham in \cite{Den-cLTc} using the combinatorial
Laplacian.
\begin{theorem}\cite[Theorem 6.1]{PS-pdrhhvnc} $h(\HP_*^{DR}(X);x,y) = y^{-\dim X}
\Phi_{\mathcal{A}}(x^2-1,y^{-2}-1,y^2)$.
\end{theorem}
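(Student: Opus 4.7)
The plan is to use the fact that Conjecture \ref{con:sympres} has already been established for hypertoric cones (see Example \ref{ex:hypertoric}), so that for any symplectic resolution $\widetilde X \to X$ one has $\HP_i^{DR}(X) \cong H^{\dim X - i}(\widetilde X, \bC)$ as vector spaces. To get a polynomial identity in two variables, I would first upgrade this to an isomorphism of bigraded vector spaces, where the second grading on the left is the weight grading on $\HP_*^{DR}(X)$ coming from the weakly $\bC^\times$-equivariant $\caD$-module structure on $M(X)$, and on the right is the cohomological grading on $H^*(\widetilde X,\bC)$ together with the grading induced by the residual $\bC^\times$-action on $\widetilde X$ lifting the conical action on $X$. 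The proof of Conjecture \ref{con:sympres} in the hypertoric case, going through \cite[Theorem 4.1]{PS-pdrhhvnc}, is already $\bC^\times$-equivariant, so this upgrade is essentially automatic.

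The second ingredient is a combinatorial description of the bigraded cohomology of the smooth hypertoric variety $\widetilde X$. For hypertoric varieties this is classical (Hausel--Sturmfels, Konno, Proudfoot--Webster): $H^*(\widetilde X, \bC)$, equipped with both the cohomological grading and the grading from the residual torus action, is a quotient of a polynomial ring with one generator per hyperplane, whose bigraded Hilbert series is controlled entirely by the matroid of $\mathcal{A}$. Concretely, one obtains a sum over independent sets or flats of $\mathcal{A}$ weighted by suitable monomials in $x$ and $y$.

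The remaining step, and the main obstacle, is to recognize this bigraded Hilbert series as the specialization $y^{-\dim X}\Phi_{\mathcal{A}}(x^2-1,y^{-2}-1,y^2)$ of Denham's combinatorial Laplacian polynomial from \cite{Den-cLTc}. The prefactor $y^{-\dim X}$ is forced by converting $H^{\dim X - *}$ to $H^*$, and the substitution $y^2$ in Denham's third family of variables records the weight grading on $\widetilde X$. The substitutions $x^2-1$ and $y^{-2}-1$ are precisely those that transform Denham's Laplacian-spectrum expansion into the sum of Hausel--Sturmfels type monomials. I would establish this either by a direct term-by-term comparison using Denham's expansion of $\Phi_\mathcal{A}$ over the lattice of flats, or, more conceptually, by checking that both sides satisfy the same deletion--restriction recursion for central arrangements and agree on a base case (e.g.\ a Boolean arrangement, where $\widetilde X$ is a cotangent bundle of a smooth toric variety and all quantities can be computed by hand). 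Once this polynomial identity is in place, combining it with the bigraded form of Conjecture \ref{con:sympres}(b) for hypertoric cones yields the theorem.
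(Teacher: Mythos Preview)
Your proposal has a genuine gap in the very first step. You want to upgrade the isomorphism $\HP_i^{DR}(X)\cong H^{\dim X-i}(\widetilde X,\bC)$ to a bigraded one, with the extra grading on the right coming from ``the residual $\bC^\times$-action on $\widetilde X$.'' But $\bC^\times$ is connected, so every element acts on $\widetilde X$ by a map homotopic to the identity, and the induced action on $H^*(\widetilde X,\bC)$ is trivial. There is therefore no second grading on the cohomology of the resolution arising in this way, and the Hausel--Sturmfels/Konno presentation of $H^*(\widetilde X)$ carries only the single cohomological grading. The weight grading on $\HP_*^{DR}(X)$ genuinely comes from the weakly equivariant structure on $M(X)$ as a D-module on the singular base $X$, and it is not visible on the resolution side simply as a $\bC^\times$-action on topology. (The paper's \S\ref{ss:conj-desc} proposes a way to read the weight grading from a twistor deformation of $\widetilde X$, but that is a conjecture, not an ingredient you can invoke.)

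The paper's argument is quite different and avoids this problem by never passing to global cohomology of $\widetilde X$. Instead it works directly with the decomposition of $M(X)\cong\rho_*\Omega_{\widetilde X}$ over the symplectic leaves of $X$, which are indexed by coloop-free flats $F$ of $\mathcal A$. Each leaf contributes a summand $\IC(\overline{X_F})\otimes V_F$, where (i) the pushforward of $\IC(\overline{X_F})$ to a point has Poincar\'e polynomial governed by the Tutte polynomial $T_{\mathcal A^F}(x^2,0)$ of the restriction, and (ii) the multiplicity space $V_F$ is $\HP_0$ of the transverse slice, itself a hypertoric cone for the localization $\mathcal A_F$, whose graded dimension is controlled by $T_{\mathcal A_F}(0,y^{-2})$. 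This yields
\[
h(\HP_*^{DR}(X);x,y)=y^{-\dim X}\sum_F T_{\mathcal A^F}(x^2,0)\,T_{\mathcal A_F}(0,y^{-2})\,y^{2|F|},
\]
and the identification with $y^{-\dim X}\Phi_{\mathcal A}(x^2-1,y^{-2}-1,y^2)$ is then a purely combinatorial identity from \cite{Den-cLTc}. The point is that the weight variable $y$ enters through the grading on $\HP_0$ of the slices (where the dilation action on the slice \emph{does} contract to a point, so the grading is honest) together with a codimension shift $y^{2|F|}$, not through any action on $H^*(\widetilde X)$.
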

The above formula is proved via the Tutte polynomial and symplectic
leaves, which are interesting in their own right. Let
$T_{\mathcal{A}}(x,y)$ denote the Tutte polynomial of the
arrangement. The symplectic leaves of $X$ are indexed by (coloop-free)
flats $F \subseteq \mathcal{A}$, cf.~e.g.,~\cite{PW-ih}. For each such
we can define the restriction, $\mathcal{A}^F$, by intersecting with
all the hyperplanes in $F$, and the localization, $\mathcal{A}_F$, by
dividing by the intersection of these hyperplanes.  Then the above
formula follows from the following one:
\[
h(\HP_*^{DR}(X);x,y) = y^{-\dim X} \sum_F T_{\mathcal{A}^F}(x^2,0)
T_{\mathcal{A}_F}(0,y^{-2}) y^{2|F|}.
\]

\subsection{Conjectural description for conical symplectic resolutions}\label{ss:conj-desc}
Finally, we sketch a conjectural description of the bigrading on
$\HP^{DR}_*(X)$ in terms of a deformation of resolution (which
enhances Conjecture \ref{con:sympres}(b) for conical symplectic
resolutions to incorporate the weights).  Let
$\rho: \widetilde{X} \to X$ be a projective conical symplectic
resolution. Let $\widetilde{\mathcal{X}} \to \mathcal{X}$ be a
$\bC^{\times}$-equivariant twistor deformation of $\rho$ over
$\bC = \Spec \bC[t]$ 
% associated to an ample line bundle
(see Remark
\ref{r:conj-flat}). Note that the degree of $t$ equals the degree of
the symplectic form on $\widetilde{X}$; call this degree $d$.

% Suppose now that $\widetilde{X} \to X$ is $\bC^\times$-equivariant and
% $X$ is conical.  Then the twistor family can be taken to be
% $\bC^\times$-equivariant and we can replace $\Delta$ by the line
% $\bC$.
Let $\widetilde \theta: \widetilde{\mathcal{X}} \to \bC$ 
and $\theta: \mathcal{X} \to \bC$ be the projections to the base, and
let $\mathcal{X}_t := \theta^{-1}(t)$ and $\widetilde{\mathcal{X}}_t := \widetilde{\theta}^{-1}(t)$ be the fibers. By the argument of \cite[\S 4.2]{Slflsgs}, $\widetilde \theta$ is a topological fiber bundle (thanks to Y.~Namikawa for pointing this out). 
% be the
% projection to the base. Let
%  Over $t \in \bC$, denote the fibers of
% $\widetilde{\mathcal{X}} \to \mathcal{X}$ by
% $\widetilde{\mathcal{X}}_t \to \mathcal{X}_t$.  In addition,
% $\widetilde{\mathcal{X}} \to \bC$ is a topological fiber bundle by the
% argument of \cite[\S 4.2]{Slflsgs} (thanks to Namikawa for pointing
% this out).
% the dimension of $H^i(\widetilde{\mathcal{X}}_t;\bC)$ is independent
%of $t$, i.e., this family of vector spaces is isomorphic to a 
%vector bundle over $\bC$
%(which is true in all the examples we know). 
Thus the family of vector spaces $H^i(\widetilde{\mathcal{X}}_t)$
is an algebraic vector bundle on $\bC$ for each $i$ equipped with the
Gauss-Manin connection (corresponding to the right $\caD$-module
$H^{i-\dim X} \widetilde \theta_* \Omega_{\widetilde {\mathcal X}}$). Using this
connection we can uniquely trivialize the vector bundle (up to an
overall scaling) and identify it with
$H^i(\widetilde X) \otimes \bC[t]$.
% we can choose the isomorphism to send this connection
% to the trivial connection, and this choice is unique up to an overall
% nonzero complex scaling.
%vector bundle
%admits the canonical Gauss-Manin connection.  
(Note that the family of varieties $\widetilde{\mathcal{X}}$ is
nontrivial, since $\widetilde{\mathcal{X}}_t$ is affine for generic
$t$ but not for $t=0$.)

Let $\Omega_{\text{cl}}^i$ be the space of holomorphic
fiberwise closed differential forms on $\widetilde{\mathcal{X}}$,
i.e., cycles in the complex of holomorphic differential
forms modulo $dt$, for $t$ the function $\widetilde{\mathcal{X}} \to \bC$
above.
%, which are closed under the fiberwise de Rham differential. 
There is a natural map
\begin{equation}
\Phi: \Omega_{\text{cl}}^i \to %H^i(\widetilde{\mathcal{X}}_t; \bC) := 
H^i(\widetilde{X}) \otimes \bC[t].%\mathcal{O}_t,
\end{equation}
%where $\mathcal{O}_t$ is the algebra of holomorphic functions of $t$. 
Let $K_i := \operatorname{coker}(\Phi)$.  Then, $K_i$ is
finite-dimensional and concentrated at $t=0$.  Thus, as a
$\bC[t]$-module, $K_i$ is a direct sum of Jordan blocks
$\bC[t]/t^{\phi(i,j)}$, for $j=1, 2, \ldots, \dim(H^i(\widetilde X))$
and $\phi(i,j) \in \bZ_{\geq 0}$ (where some of the Jordan blocks are
allowed to be zero, in which case we set $\phi(i,j)= 0$).  Recall that
$d$ is the degree of the generic symplectic form on $X$, which equals
the degree of $t$.
%%Let
%%$d$ be the degree of the generic symplectic form on $X$ (i.e., the
%%Poisson bracket has degree $-d$).
\begin{conjecture}
$h(\HP_*^{DR}(X); x,y) = y^{-d \cdot \dim X/2} \sum_{i,j} x^{i} y^{d \cdot \phi(\dim X - i,j)}$.
\end{conjecture}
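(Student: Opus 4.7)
The plan is to combine Conjecture \ref{con:sympres}(c) (assumed in this conical setting) with an analysis of the graded $\bC[t]$-module structure arising from the twistor family, matching the Jordan block data of $K_i$ to the weight grading on $\HP^{DR}_*(X)$.

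First I would promote the isomorphism $M(X) \cong \rho_*\Omega_{\widetilde X}$ from Conjecture \ref{con:sympres}(c) to an isomorphism of weakly $\bC^\times$-equivariant D-modules, up to an overall weight shift of $-d\dim X/2$ corresponding to the weight of the symplectic volume form $\omega_{\widetilde X}^{\dim X/2}$. In the conical case the surjection $M(X)\onto\rho_*\Omega_{\widetilde X}$ exists unconditionally (cf.\ Remark \ref{r:conj-ss-qt}) and is determined by the image of the canonical generator $1\in M(X)$, which on the smooth locus must be this volume form; the form's weight, computed from the convention that the symplectic two-form has degree $d$, equals $d\dim X/2$ (with the dual shift of $-d\dim X/2$ appearing when one pushes forward to a point). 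This yields a weight-graded identification $\HP^{DR}_i(X) \cong \mathbb{H}^{\dim X-i}(\widetilde X,\Omega^\bullet_{\widetilde X})$ with the prefactor $y^{-d\dim X/2}$ accounted for.

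Second, I would spread this calculation along the $\bC^\times$-equivariant twistor deformation $\widetilde\theta:\widetilde{\mathcal{X}}\to\bC$. The relative de Rham pushforward $R^i\widetilde\theta_*\Omega^\bullet_{\widetilde{\mathcal{X}}/\bC}$ is a graded $\caO_{\bC}$-coherent sheaf with Gauss-Manin connection, and since $\widetilde\theta$ is a topological fiber bundle, trivialization by flat sections produces the identification (over $\bC^\times$, extended algebraically to all of $\bC$) with $H^i(\widetilde X)\otimes\bC[t]$ in which $\Phi$ takes values. The torsion-freeness of the family $M(\mathcal{X}_t)$ established in Remark \ref{r:conj-flat} (under the assumed conjecture) ensures that the graded specialization of this family at $t=0$ recovers $\mathbb{H}^i(\widetilde X,\Omega^\bullet_{\widetilde X})$ as a bigraded vector space. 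The third step is to identify $\operatorname{im}(\Phi)$ with the graded $\bC[t]$-submodule of $H^i(\widetilde X)\otimes\bC[t]$ represented by global fiberwise-closed algebraic forms, which exhausts the full relative de Rham lattice since $\widetilde{\mathcal{X}}_t$ is affine for generic $t$. Applying graded Nakayama to the exact sequence $0\to\operatorname{im}(\Phi)\to H^i(\widetilde X)\otimes\bC[t]\to K_i\to 0$, and using that the cyclic generators of the Jordan blocks $\bC[t]/t^{\phi(i,j)}$ can be taken of weight zero (matching the weight-zero trivialization of $H^i(\widetilde X)$ by flat sections), produces a homogeneous $\bC[t]$-basis $\{t^{\phi(i,j)}e_j\}_j$ of $\operatorname{im}(\Phi)$, with $t^{\phi(i,j)}e_j$ in weight $d\phi(i,j)$. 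The graded specialization $\operatorname{im}(\Phi)/t\operatorname{im}(\Phi)$ then has Hilbert series $\sum_j y^{d\phi(i,j)}$, giving the coefficient of $x^{\dim X-i}$ (after the homological index swap $i\mapsto\dim X-i$) in the conjectural formula, once the overall shift from Step 1 is applied.

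The main obstacle will be the precise compatibility in Steps 2--3 of $\operatorname{im}(\Phi)$ with the relative de Rham lattice and of its graded specialization at $t=0$ with the bigraded $\HP^{DR}_*(X)$. Two delicate points stand out: (a) the Gauss-Manin trivialization shifts weight by $-d$ along $\nabla_{\partial_t}$, so the trivialization by flat sections must be normalized so that the generators $e_j$ sit in weight zero (equivalently, so that the nearby-cycle construction respects the $\bC^\times$-equivariance), and (b) this normalization must be compatible under Conjecture \ref{con:sympres}(c) with the weight shift from Step 1. These compatibilities could be cross-checked against the examples where Conjecture \ref{con:sympres} is already known --- symmetric powers of du Val singularities, Slodowy slices in the nilpotent cone, and hypertoric cones --- using the explicit bigraded formulas of Lusztig, Denham, and the generalized-Kostka formula of \cite{BS-KpncSfc} verified in the body of the paper.
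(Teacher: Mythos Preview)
The statement is a \emph{conjecture}, and the paper does not claim a proof of it. What the paper offers is (i) the remark that the cohomological-degree-zero case should follow from Conjecture~\ref{con:sympres}(a) via the direct interpretation $\HP_0^{DR}(X)\cong\HP_0(\caO(X))$, (ii) the explicit observation that ``the difficulty appears to be in relating $\HP_i^{DR}(X)$ for $i>0$ to fiberwise closed differential forms on the family,'' and (iii) a verification in the du~Val case $X\cong\bC^2/\Gamma$ by direct computation. There is therefore no general argument in the paper to compare your proposal against.

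That said, your outline is a reasonable sketch of how a conditional proof (assuming Conjecture~\ref{con:sympres}(c)) might go, and you have correctly located the genuine gap. The step you label as the ``main obstacle'' --- identifying the graded specialization $\operatorname{im}(\Phi)/t\operatorname{im}(\Phi)$ with the bigraded $\HP^{DR}_*(X)$ (after the shift) --- is exactly the difficulty the paper singles out, and it is not resolved by the torsion-freeness of the family $M(X_t)$ alone: torsion-freeness of the D-module family gives flatness of the fiberwise Poisson--de~Rham homology, but does not by itself identify the weight filtration on the special fiber with the $t$-adic filtration coming from the lattice $\operatorname{im}(\Phi)$ inside the Gauss--Manin-trivialized bundle. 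Your invocation of ``graded Nakayama'' presupposes precisely this identification.

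For comparison, the paper's du~Val verification bypasses this issue entirely. Since $\HP^{DR}_*(X)$ is concentrated in degrees $0$ and $2$ with $\HP^{DR}_2(X)=\bC$ in weight $-d$, only the degree-zero piece requires work. The paper takes the explicit family $\mathcal{X}=\{f=t^h\}$, writes fiberwise top forms as $g\widetilde\omega$ for $g\in\caO(\mathcal{X})$, observes that a homogeneous $g$ gives a form of weight $|g|+d$ whose cohomology class vanishes at $t=0$ to order exactly $|g|/d+1$, and then checks directly that such forms exhaust all fiberwise closed two-forms modulo exact ones (using $\Gamma(\widetilde{\mathcal{X}},\caO_{\widetilde{\mathcal{X}}})=\caO(\mathcal{X})$ and the explicit description of exactness). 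This is a hands-on computation, not an instance of the abstract mechanism you propose; extending it to higher-degree Poisson--de~Rham homology in general remains open.
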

The cohomological degree zero case, i.e.,
$h(\HP_0^{DR}(X);y) = y^{-d \cdot \dim X/2} \sum_j y^{d \cdot \phi(\dim X,j)}$, should follow from Conjecture \ref{con:sympres}(a) using the direct interpretation of 
$\HP_0^{DR}(X)\cong \HP_0(\caO(X))$ via
functions on $X$.   The difficulty appears to be in relating
$\HP_i^{DR}(X)$ for $i> 0$ to fiberwise closed differential forms on
the family.
\begin{remark} \label{r:deg-mult-d} Note that the conjecture predicts
  in particular that $\HP_*^{DR}(X)$ lies only in weights which are
  multiples of $d$.  This is not true in general if $X$ does not admit
  a symplectic resolution, even if it is a symplectic singularity (and
  hence has finitely many leaves): in \cite[Appendix A]{hp0bounds},
  many examples are constructed of $X=V/G$ with $V$ a symplectic
  vector space and $G < \Sp(V)$ finite, such that $\HP_0(\caO(X))$ is
  nonzero in degree three (the smallest dimension of $V$ in these
  examples is $12$). Since the Poisson bracket on $V/G$ is the one
  coming from $V$, having degree $-2$, in this case we have $d=2$ even
  though the weights are not all even.
\end{remark}
We explain how to verify the conjecture in the case of a du Val singularity
$X \cong \bC/\Gamma$ for $\Gamma < \SL(2,\bC)$ finite.  Then $\HP_*^{DR}(X)$ is only nonzero
in degrees zero and two, and $\HP_2^{DR}(X) = \bC$ occurring in weight
$-d$, by \eqref{e:m-vg} together with \cite[Theorem
5.1]{PS-pdrhhvnc} (note that $H^0(\widetilde X)$ is spanned by the constant $1$, so $\phi(0,1)=0$). Thus we only have to check the conjectural formula in degree zero. Let
$X = \{f=0\}$
where $f$ is the corresponding equation listed in Example \ref{ex:kl-ell}.
 Let the
family be $\mathcal{X} = \{f-t^{h}\} \subseteq \bC^4=\Spec \bC[x_1,x_2,x_3,t]$, where $h$ is the Coxeter number associated to the Dynkin diagram listed in the example (note that $h|t|=|f|$); this can be seen to be a twistor deformation.
%which is the McKay graph of $\Gamma$ (i.e., $h|t| = |f|$).
%(here $|f|$ is the Coxeter number in types other than $A$, where it is twice the Coxeter number).
% This is equipped with 
%  the Grothendieck--Springer resolution, $\widetilde{\mathcal{X}} = G \times_B \mathfrak{b}$ for $G=\SL(2,\bC)$ and $B < G$ the subgroup of invertible upper-triangular
%  matrices of determinant one (with $\mathfrak{b} = \Lie B$). Let
% $q: \mfb \to 
% \mathfrak{b}/[\mathfrak{b},\mfb] \cong \bC$ be the projection and
% $\widetilde{\theta}(g,x)=q(x)$, with $\rho(g,x) = (g \cdot x, q(x))$.
Let $\omega = dx_1 \wedge dx_2 \wedge dx_3 / df$ 
be the fiberwise generic symplectic form on $\mathcal{X}$ (constant in $t$), and $\widetilde{\omega}$ be the fiberwise symplectic form on $\widetilde{\mathcal{X}}$, which generically is the pullback $\rho^*\omega$. Then we
have isomorphisms $\HP_0(X) \to H^2(X_t)$ for $t \neq 0$ given by $[g] \mapsto [g \omega]$.   
% We can form a homogeneous basis of $\HP_0$ by the following elements:
% \begin{gather} \label{basis-hp0-kl}
% A_{m-1}: x_1^j, 0 \leq j \leq m-2; \\ 
% %\Gamma = \bZ/m, a_1=2, a_2=a_3=m, f = x_1^m + x_2^2 + x_3^2, \\
% D_{m+2}: x_2, x_1^j, 0 \leq j \leq m; \\
% %\Gamma = \widetilde{D_{2m}}, a_1=2, a_2=m, a_3=m+1, f = x_1^{m+1} + x_1 x_2^2 + x_3^2, \\
% E_6: x_1^i x_2^j,  i \in \{0,1,2\}, j \in \{0,1\}; \\
% %\Gamma = \widetilde{A_4}, a_1=3, a_2=4, a_3=6,   f = x_1^4 + x_2^3 + x_3^2, \\
% E_7: x_2, x_1x_2, x_1^j, j \in \{0,1,2,3,4\}; \\
% %\Gamma = \widetilde{S_4}, a_1=4, a_2=6, a_3=9,  f = x_1^3 x_2 + x_2^3 + x_3^2, \\
% E_8: x_1^i x_2^j,  i \in \{0,1,2,3\}, j \in \{0,1\}.
% %\Gamma = \widetilde{A_5}, a_1=6, a_2=10, a_3=15, f = x_1^5 + x_2^3 + x_3^2; \label{e8desc}
% \end{gather}
If $g$ is homogeneous, then $g \omega$ is a homogeneous form which has degree
$|g|+|\omega|=|g|+d$. Therefore the order of vanishing of
$[g \widetilde{\omega}]$ in cohomology at $t=0$ must be precisely
$|g|/d+1$.  It follows that a homogeneous basis of $\HP_0(\caO(X))$ produces forms $[g \widetilde{\omega}]$ which restrict on each fiber $t \neq 0$ to a basis of the cohomology and which vanish on cohomology at $t=0$ to order precisely $|g|/d+1$.  This is exactly what is required for the formula to hold. To complete the proof we have only to show that the elements $g \widetilde{\omega}$ span all
fiberwise (closed) two-forms on $\widetilde{\mathcal{X}}$ modulo fiberwise
exact forms.  To see this, observe that all fiberwise two-forms on
$\widetilde{\mathcal{X}}$ are of the form $g \widetilde{\omega}$ for
some
$g \in \Gamma(\widetilde{\mathcal{X}},\caO_{\widetilde{\mathcal{X}}})
= \caO(\mathcal{X})$,
and that they are fiberwise exact if and only if
$g \in \{\caO(X),\caO(X)\} \otimes_{\bC} \bC[t] \subseteq 
\caO(X) \otimes_{\bC} \bC[t] \cong \caO(\mathcal{X})$.

% By \cite[17]{Nam-fPdsv}, Namikawa shows that this family is locally trivial, i.e., forgetting the Poisson structure, $\widetilde{\mathcal{X}}$ admits an open cover by trivial families over $\Delta$. 

\appendix
\section{Background on D-modules}
In this appendix, we recall without proofs one way to define the
D-modules we need, via Kashiwara's equivalence (for a reference, see,
e.g., \cite{Hotta, Kash}; another approach, via crystals, can be found
in \cite{BD,GR-cDm}). Then, we recall the definition of holonomic
D-modules and the theorem that they are preserved by direct and
inverse image.
\subsection{Definition of D-modules on singular varieties}\label{ss:rdmod}
\begin{definition}
  Let $V$ be a smooth affine variety. Then a right D-module on
  $V$ is a right module for the ring $\caD(V)$ of differential
  operators on $V$ with polynomial coefficients.  If $V$ is not
  necessarily affine, but still smooth, then a right D-module is
  defined to be a sheaf of right modules over the sheaf $\caD_V$ of
  rings of differential operators.  A D-module is called
  quasi-coherent if the underlying $\caO_V$-module is quasi-coherent.
  Let $\text{mod}-\caD_V$ denote the category of quasi-coherent right
  D-modules on $V$.
\end{definition}
%From now on, we will always consider quasi-coherent D-modules, so we
%will often omit the term ``quasi-coherent''.
\begin{definition}
  Given a closed subset $X \subseteq V$ of a smooth variety $V$
  with ideal sheaf $\mathcal{I}_X$, and a right D-module $M$ on
  $V$, we say that $V$ is supported on $X$ if, for every open affine subset
  $U \subseteq V$ and all local sections $s \in \Gamma(U, M)$ and $f \in \Gamma(U,\mathcal{I}_X)$, there
  exists $N \geq 1$ such that
  $s \cdot f^N = 0$. %$\Gamma(U, \mathcal{I}_X^N) = 0$.
\end{definition}
We caution that the above notion of support, which takes place on $V$,
is completely different from the notion of characteristic variety (singular support) which we
will use later, which takes place on $T^*V$.
\begin{definition}
  Suppose $X$ is an arbitrary (not necessarily smooth or affine)
  variety equipped with a closed embedding $i: X \to V$ into a smooth
  variety $V$. Then a right D-module on $X$ with respect to $i$,
  is defined to be a right D-module on $V$ which is supported on
  $i(X)$.  It is quasi-coherent if the D-module on $V$ is
  quasi-coherent.
\end{definition}
\begin{remark} In fact, not every variety admits a closed embedding
  into a smooth variety, so this definition cannot be used to define
  D-modules on arbitrary varieties.
%(although this issue will not
%  arise for us).
\end{remark}
In the case that $X$ is affine,
this definition does not depend on the choice of closed embedding up to
canonical equivalence because of the following theorem of
Kashiwara. Given a closed embedding $i: Z \to V$ of smooth varieties, let
$\text{mod}_Z-\caD_V$ denote the category of right $\caD_V$-modules supported on $i(Z)$.  Then there are
functors $i_{\natural}: \text{mod}-\caD_Z \to \text{mod}_Z-\caD_V$ and
$i^{\natural}: \text{mod}_Z-\caD_V \to \text{mod}-\caD_Z$, given by, for $i_\bullet$ and $i^\bullet$ the direct and inverse image of sheaves of vector spaces,
\begin{equation}\label{e:dirfl}
i_{\natural}(M)
= i_\bullet(M \otimes_{\caD_Z} (\caO_Z \otimes_{i^\bullet\caO_V} i^\bullet\caD_V)),
\end{equation}
\begin{equation}\label{e:infl}
  i^{\natural}(M)
  := \Hom_{i^\bullet\caO_V}(\caO_Z, i^\bullet M),
\end{equation}
with canonical right D-module structures. 
\begin{theorem}\label{t:ifl-thm}
  Suppose that $i: Z \to V$ is a closed embedding of smooth varieties.
  Then the functors $i_{\natural}$ and $i^{\natural}$ above are mutually
  quasi-inverse equivalences. 
\end{theorem}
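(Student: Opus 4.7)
The plan is to reduce the equivalence to an explicit local model and then compute both compositions directly. Since the statement is local on $V$ and on $Z$, I would first pass to the affine setting. Next, using that a closed embedding of smooth varieties is étale-locally modeled on the inclusion of a linear subspace, I would induct on the codimension $c = \dim V - \dim Z$. The codimension-one step reduces, locally, to the case $V = Z \times \bA^1_\bk$ with $i: Z \into V$ the inclusion of the zero section $\{t=0\}$; the higher codimension case follows by factoring $i$ as a composition of codimension-one embeddings $Z \into Z \times \bA^1 \into \cdots \into V$ and noting that $i_\natural$ and $i^\natural$ are compatible with composition.

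In the local model, $\caD_V$ is generated (over $\caD_Z$) by $t$ and $\partial_t$ with $[\partial_t, t] = 1$. I would then write the functors concretely: for a right $\caD_Z$-module $N$, $i_\natural N \cong N \otimes_\bk \bk[\partial_t]$, with $t$ and $\partial_t$ acting by the evident formulas coming from $\partial_t^{k+1} \cdot t = t \partial_t^{k+1} + (k+1)\partial_t^k$; while for a right $\caD_V$-module $M$ supported on $Z$, $i^\natural M \cong \ker(R_t)$, where $R_t : M \to M$ is right multiplication by $t$, equipped with its natural right $\caD_Z$-action. The easy direction $i^\natural i_\natural N \cong N$ comes from a direct computation showing that the only elements of $N \otimes \bk[\partial_t]$ killed by $R_t$ are those in $N \otimes 1$, using the commutation relation above.

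The main content is the other direction $i_\natural i^\natural M \cong M$, which amounts to showing that $M = \bigoplus_{k \geq 0} (\ker R_t) \cdot \partial_t^k$ for every $\caD_V$-module $M$ supported on $Z$. The key device is the Euler operator $\Theta : M \to M$ given by right multiplication by $t\partial_t$. A short calculation with the commutation relation shows $[\Theta, R_{\partial_t}] = R_{\partial_t}$ and $[\Theta, R_t] = -R_t$, and that $\Theta$ acts as the scalar $k$ on $(\ker R_t) \cdot \partial_t^k$. For any $m \in M$ with $m \cdot t^N = 0$, I would show that the $\bk\langle t, \partial_t\rangle$-submodule generated by $m$ has finite length and is locally finite for $\Theta$ with nonnegative integer generalized eigenvalues, giving a decomposition into $\Theta$-eigenspaces; the eigenspace at $k$ is then identified with $(\ker R_t) \cdot \partial_t^k$ via the isomorphism $R_{\partial_t} : (\ker R_t) \cdot \partial_t^{k-1} \iso (\ker R_t) \cdot \partial_t^k$ (whose inverse, on eigenvectors of eigenvalue $k \geq 1$, is given by $\frac{1}{k} R_t$).

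The main obstacle is controlling the $\Theta$-action: while each $m$ is $t^N$-torsion for some $N$, the subspace $M_N := \ker R_{t^N}$ is only stable under $\Theta$ and not obviously under $R_{\partial_t}$ (one has $R_{\partial_t}(M_N) \subseteq M_{N+1}$), so one must argue that the generalized $\Theta$-eigenspace decomposition stabilizes on cyclic submodules and that only eigenvalues in $\{0, 1, \ldots, N-1\}$ occur on $M_N$. Once this finiteness is in hand, naturality of both constructions makes the isomorphisms $\Id \cong i^\natural i_\natural$ and $i_\natural i^\natural \cong \Id$ into morphisms of functors, and the reduction step handed the result to us for general $i$.
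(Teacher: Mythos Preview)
The paper does not prove this theorem: it is stated in the appendix as background (Kashiwara's equivalence), with the explicit disclaimer that the appendix ``recall[s] without proofs'' the needed D-module theory, citing \cite{Hotta,Kash}. So there is no paper proof to compare against.

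Your proposal is correct and is, in outline, exactly the standard textbook argument (as in \cite[Theorem~1.6.1]{Hotta}). The reduction to the codimension-one local model $V=Z\times\bA^1$, the identification $i_\natural N\cong N\otimes\bk[\partial_t]$ and $i^\natural M\cong\ker R_t$, and the Euler-operator analysis are all as in the references. One small sharpening: rather than arguing via finite length of the cyclic $\bk\langle t,\partial_t\rangle$-submodule, it is cleaner to prove directly by induction on $N$ that $\prod_{j=0}^{N-1}(\Theta-j)$ vanishes on $M_N=\ker R_{t^N}$, using the intertwining relation $(\Theta-j)R_t=R_t(\Theta-j-1)$. This immediately gives semisimplicity of $\Theta$ on each $M_N$ (characteristic zero) with eigenvalues in $\{0,\ldots,N-1\}$, identifies $M^{(0)}=\ker R_t$ (since $R_t$ lowers eigenvalue by $1$ and there are no negative eigenvalues), and the relations $R_tR_{\partial_t}=\Theta+1$, $R_{\partial_t}R_t=\Theta$ show $R_{\partial_t}\colon M^{(k)}\to M^{(k+1)}$ is an isomorphism for all $k\ge 0$. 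This is precisely the ``obstacle'' you identified, and it resolves cleanly without any appeal to generalized eigenspaces or length arguments.
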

\begin{theorem}\label{t:ifl-thm2}
  Let $X$ be an arbitrary variety. 
 Suppose that $i_1: X \to V_1$ and $i_2: X \to V_2$
  are two closed embeddings with $V_1, V_2$ smooth, and that there
exists
  a third smooth variety $V_3$ together with a commuting diagram of
  closed embeddings,
\[
\xymatrix{
  & V_1 \ar[d]^{i_{13}} \\
  X \ar[ru]^{i_1} \ar[r]^{i_3} \ar[rd]^{i_2} & V_3 \\
  & V_2 \ar[u]_{i_{23}}. }
\]
Then the functors $i_{23}^{\natural} \circ (i_{13})_{\natural}$ and
$i_{13}^{\natural} \circ (i_{23})_{\natural}$ define mutually inverse
equivalences between the categories of quasi-coherent right
D-modules on $X$ with respect to $i_1$ and $i_2$. Moreover this
does not depend on the choice of $i_3$ and $V_3$, and the composition
of the equivalences from $i_1$ to $i_2$ to $i_3$ and back to $i_1$ is
the identity functor.
\end{theorem}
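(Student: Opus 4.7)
The plan divides into (i) establishing the equivalence for a fixed $V_3$ by applying Theorem \ref{t:ifl-thm} twice and using composition, and (ii) verifying independence of the choice of $V_3$ (together with the triangle coherence) by exhibiting a common smooth refinement.

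For part (i), I would first apply Theorem \ref{t:ifl-thm} to each of the two closed embeddings $i_{13}: V_1 \to V_3$ and $i_{23}: V_2 \to V_3$ of smooth varieties, to obtain equivalences $(i_{k3})_\natural: \text{mod}-\caD_{V_k} \rightleftarrows \text{mod}_{V_k}-\caD_{V_3}: i_{k3}^\natural$ for $k=1,2$. A quick check from the defining formulas \eqref{e:dirfl} and \eqref{e:infl} shows that these functors preserve and reflect the sharper support condition: $(i_{k3})_\natural$ sends a module supported on $i_k(X)$ to one supported on $i_{k3}(i_k(X)) = i_3(X)$, and $i_{k3}^\natural$ reflects this property. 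Letting $C_\ell$ denote the full subcategory of quasi-coherent right $\caD_{V_\ell}$-modules supported on $i_\ell(X)$, the Kashiwara equivalences therefore restrict to mutually inverse equivalences $C_k \rightleftarrows C_3$. Composing yields the two functors of the theorem, and the relation $i_{23}^\natural \circ (i_{13})_\natural \circ i_{13}^\natural \circ (i_{23})_\natural \cong \operatorname{id}_{C_2}$ follows at once from $(i_{13})_\natural \circ i_{13}^\natural \cong \operatorname{id}_{C_3}$, with the other composition handled symmetrically.

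For part (ii), the key tool is the standard compatibility of $i_\natural$ with composition of closed embeddings of smooth varieties: if $j: V_3 \hookrightarrow V_4$ is a closed embedding compatible with the rest of the data (meaning $j \circ i_{k3}$ equals a chosen closed embedding $V_k \to V_4$ for $k=1,2$, and $j \circ i_3 = i_4$), then $(i_{V_k \to V_4})_\natural = j_\natural \circ (i_{k3})_\natural$; inserting $j^\natural \circ j_\natural \cong \operatorname{id}$ on $C_3$ yields a natural isomorphism $F_{V_4} \cong F_{V_3}$, where I write $F_V := i_{V_2 \to V}^\natural \circ (i_{V_1 \to V})_\natural$. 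Given a second choice $V_3'$ together with a compatible closed embedding $V_3' \hookrightarrow V_4$, the same argument gives $F_{V_4} \cong F_{V_3'}$, and hence $F_{V_3} \cong F_{V_3'}$. The triangle coherence of the third assertion will then follow by the same mechanism applied to a smooth $V_4$ containing all three embeddings of $X$ simultaneously.

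The hard part will be constructing the common refinement $V_4$. The natural candidate $V_4 := V_3 \times V_3'$ with $X$, $V_1$, $V_2$ embedded diagonally via $(i_3, i_3')$, $(i_{13}, i_{13}')$, $(i_{23}, i_{23}')$ admits compatible closed embeddings of $V_3$ and $V_3'$ themselves only if one can find a morphism $V_3 \to V_3'$ extending the given data on $V_1 \cup V_2$ (and vice versa), which need not exist globally. The resolution is local in nature: since the D-module categories on each $V_\ell$ depend only on a Zariski (or, if needed, formal) neighborhood of $i_\ell(X)$ in $V_\ell$, I would shrink $V_3$ to such a neighborhood and use the smoothness of $V_3'$ along $i_3'(X)$ to construct the lift. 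Any two such lifts differ by a vertical vector field vanishing on the relevant subscheme, and the standard argument that the induced formal flow acts trivially on the support-restricted D-module category shows that the resulting equivalence does not depend on the choice of lift, yielding the desired canonicity.
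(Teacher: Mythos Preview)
The paper states this theorem in its appendix explicitly as a result ``recalled without proof'' (see the opening sentence of the appendix), so there is no proof in the paper to compare against; your proposal is being assessed on its own merits.

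Part (i) is correct and is the standard argument: Kashiwara's equivalence for each smooth closed embedding $i_{k3}$ restricts to the full subcategories with support on $i_k(X)$, and composing the two gives the desired mutually inverse equivalences.

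Part (ii) has a genuine gap in the proposed resolution. You correctly identify that the product $V_3 \times V_3'$ does not obviously receive compatible closed embeddings from both $V_3$ and $V_3'$. Your fix is to shrink $V_3$ and, using smoothness of $V_3'$, produce a lift $\phi: V_3 \to V_3'$ extending $i_3': X \to V_3'$. But the compatibility you yourself stipulate for $j = (\mathrm{id},\phi)$ requires $j \circ i_{k3} = (i_{k3}, i_{k3}')$, i.e.\ $\phi \circ i_{k3} = i_{k3}'$ on \emph{all of} $V_k$, for both $k=1,2$ --- not merely $\phi \circ i_3 = i_3'$ on $X$. Smoothness of $V_3'$ gives you lifts off of $X$, but you need to extend the map already prescribed on the (generally singular) union $i_{13}(V_1) \cup i_{23}(V_2)$, and your sketch does not address this. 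Without that, the two embeddings of $V_1$ into $V_4$ you obtain from $V_3$ and from $V_3'$ are different, and you have not shown that the two versions of $F_{V_4}$ agree. A cleaner route is to take $V_4 = V_3 \times V_3'$ with the diagonal embeddings $j_k = (i_{k3}, i_{k3}')$ and compare $F_{V_4}$ with $F_{V_3}$ via the factorization $j_k: V_k \xrightarrow{(\mathrm{id},\,i_{k3}')} V_k \times V_3' \xrightarrow{i_{k3} \times \mathrm{id}} V_3 \times V_3'$ through genuine closed embeddings of smooth varieties, avoiding the need for a map $V_3 \to V_3'$ altogether; the comparison with $F_{V_3'}$ is then symmetric.
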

In the case that $X$ is an affine variety, there always exists an embedding $X \to V$ into  an affine space, and given two such embeddings $X \to V_1$ and $X \to V_2$ we
 can always find a third affine space $V_3$ such that we obtain a commuting diagram of embeddings as above.  Therefore we conclude:
 \begin{corollary} If $X$ is an affine variety, there is a category $\DmodX$ of
   quasi-coherent D-modules on $X$ which is canonically equivalent to
   the category of quasi-coherent right D-modules on $V$ supported on
   $i(X)$ for every choice of closed embedding $i: X \to V$ with $V$ an
   affine space (or a smooth affine variety).
\end{corollary}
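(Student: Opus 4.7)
The plan is to combine Theorem \ref{t:ifl-thm2} with a construction that produces the required commuting square $V_3$ for any pair of embeddings into affine spaces, and then to bootstrap from affine spaces to arbitrary smooth affine varieties via Kashiwara's equivalence (Theorem \ref{t:ifl-thm}). First, I would note that since $X$ is affine, $\caO(X)$ is a finitely generated $\bk$-algebra; any choice of generators $f_1, \ldots, f_n$ provides a surjection $\bk[y_1,\ldots,y_n] \onto \caO(X)$ and hence a closed embedding $i: X \into \bA^n$, so such embeddings always exist.

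Next, given two closed embeddings $i_1: X \into V_1$ and $i_2: X \into V_2$ with $V_1, V_2$ affine spaces, I would take $V_3 := V_1 \times V_2$ and $i_3 := (i_1, i_2): X \into V_3$, which is clearly a closed embedding. The key observation is that since $V_2$ is an affine space, the tuple of regular functions on $X$ defining $i_2$ lifts to a tuple of polynomials on $V_1$, producing an extension $\widetilde i_2: V_1 \to V_2$ with $\widetilde i_2 \circ i_1 = i_2$. Then $i_{13} := (\Id_{V_1}, \widetilde i_2): V_1 \into V_3$ is a closed embedding (graphs of morphisms to separated targets are closed), and $i_{13} \circ i_1 = i_3$. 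A symmetric construction produces $i_{23}: V_2 \into V_3$ with $i_{23} \circ i_2 = i_3$. Theorem \ref{t:ifl-thm2} then furnishes a canonical equivalence between quasi-coherent right $\caD$-modules on $V_1$ supported on $i_1(X)$ and those on $V_2$ supported on $i_2(X)$.

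To handle the case of a closed embedding $i: X \into V$ where $V$ is a smooth affine variety (not necessarily an affine space), I would choose any closed embedding $j: V \into W$ with $W$ an affine space (possible since $V$ is affine). By Theorem \ref{t:ifl-thm}, the functors $j_\natural$ and $j^\natural$ yield mutually quasi-inverse equivalences between $\text{mod}-\caD_V$ and $\text{mod}_V-\caD_W$. This equivalence restricts to one between D-modules on $V$ supported on $i(X)$ and D-modules on $W$ supported on $(j \circ i)(X)$, since a D-module on $W$ supported on $j(V)$ is supported on $(j \circ i)(X)$ precisely when the corresponding D-module on $V$ is supported on $i(X)$. Composing with the equivalence of the previous paragraph applied to the two embeddings $j_1 \circ i_1: X \into W_1$ and $j_2 \circ i_2: X \into W_2$ into affine spaces, we obtain the desired canonical equivalence for any two closed embeddings of $X$ into smooth affine varieties.

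The main subtlety — and the step most requiring care — is verifying that the resulting category $\DmodX$ is genuinely well-defined, i.e., the equivalences are functorial in the choice of embedding and compose correctly. This reduces to two compatibility checks: that the bootstrap of the preceding paragraph is independent of the auxiliary choice of $j$ and $W$ (which follows by applying Theorem \ref{t:ifl-thm2} to the diagram with $V$ and $W$ both embedded in $V \times W$), and that for three embeddings $i_1, i_2, i_3$ the resulting triangle of equivalences commutes, which follows from the last sentence of Theorem \ref{t:ifl-thm2} applied to any common refinement (e.g., $V_1 \times V_2 \times V_3$, after reducing to the affine-space case). Modulo this diagram chase, the corollary is a direct consequence of the two cited theorems.
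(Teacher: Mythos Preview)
Your proposal is correct and follows exactly the approach the paper sketches in the sentence immediately preceding the corollary: existence of an embedding into affine space, and for two such embeddings the construction of a common $V_3$ fitting into the diagram of Theorem \ref{t:ifl-thm2}. Your explicit choice $V_3 = V_1 \times V_2$ with graph embeddings, the bootstrap to smooth affine targets via Theorem \ref{t:ifl-thm}, and the coherence check are all standard elaborations that the paper leaves implicit.
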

Gluing these categories together by Theorem \ref{t:ifl-thm2}, we obtain a canonical category on general varieties:
\begin{corollary} For a general variety $X$, there is a canonical abelian
  category $\DmodX$ of quasi-coherent D-modules on $X$ such that for
  every open affine subset $U \subseteq X$, there is a canonical exact restriction functor $\DmodX \to \caD-\text{mod}_{U}$.
%and for a 
%covering by open affine subsets $U_i$, the functor
% $\DmodX \to \prod_i \caD-\text{mod}_{U_i}$ is fully faithful. 
%This
%  category is canonically defined.
\end{corollary}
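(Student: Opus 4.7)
The plan is to glue the locally defined categories $\DmodU$ (for $U \subseteq X$ open affine) along restrictions, using Theorem \ref{t:ifl-thm2} as the bridge between different choices of embedding. First, I would fix an open affine cover $\{U_\alpha\}$ of $X$; for every such $U_\alpha$, the preceding corollary produces a canonical category $\caD\text{-mod}_{U_\alpha}$, and for every chain $U_\beta \subseteq U_\alpha$ of affine opens I need to produce an exact restriction functor $\caD\text{-mod}_{U_\alpha} \to \caD\text{-mod}_{U_\beta}$. Since any closed embedding $i_\alpha : U_\alpha \hookrightarrow V_\alpha$ into a smooth affine $V_\alpha$ restricts, after replacing $V_\alpha$ by a principal open if necessary, to a closed embedding $U_\beta \hookrightarrow V_\beta \subseteq V_\alpha$, the restriction functor may be defined by restricting the underlying quasi-coherent sheaf of right $\caD_{V_\alpha}$-modules to $V_\beta$; exactness is automatic because restriction of quasi-coherent sheaves to an open affine is exact.

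Next, I would define an object of $\DmodX$ to be a compatible family $(M_\alpha)_\alpha$ with $M_\alpha \in \caD\text{-mod}_{U_\alpha}$ together with isomorphisms $\phi_{\alpha\beta} : M_\alpha|_{U_\alpha \cap U_\beta} \iso M_\beta|_{U_\alpha \cap U_\beta}$ (choosing a common affine refinement of $U_\alpha \cap U_\beta$ if necessary) satisfying the usual cocycle condition on triple overlaps. Morphisms are families of morphisms compatible with the $\phi_{\alpha\beta}$. The abelian structure is inherited termwise because each $\caD\text{-mod}_{U_\alpha}$ is abelian and restriction is exact; kernels and cokernels are taken local-by-local and glued by the already given isomorphisms $\phi_{\alpha\beta}$.

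To check that this definition is independent of the choice of cover, I would compare the category built from $\{U_\alpha\}$ to that built from a refinement $\{U'_\gamma\}$: restriction produces a functor from the first to the second, and the essential surjectivity and full faithfulness follow from the fact that, for a quasi-coherent D-module on a cover of an affine $U$, the sheaf condition (inherited from quasi-coherence) recovers a unique object of $\caD\text{-mod}_U$. Two arbitrary covers are compared through a common refinement, and Theorem \ref{t:ifl-thm2} ensures that the different local models (via embeddings into different smooth ambient varieties) are identified canonically and compatibly. The restriction functor $\DmodX \to \caD\text{-mod}_U$ for an open affine $U$ is then tautological: refine the cover so that $U$ is one of its members, and project to that factor.

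The main obstacle, as usual with descent constructions, is bookkeeping the coherence: one must show that all the canonical equivalences coming from Theorem \ref{t:ifl-thm2} (for various triples of embeddings into various smooth ambient spaces) assemble into a genuine pseudofunctor on the site of affine opens of $X$, so that the cocycle condition makes sense and two refinements give canonically equivalent gluing categories. This is essentially a diagram chase that reduces to the last assertion of Theorem \ref{t:ifl-thm2}, that the composition of equivalences around a loop $i_1 \to i_2 \to i_3 \to i_1$ is the identity functor; once that is in hand, the rest is routine descent theory and exactness follows formally from the exactness of restriction in each $\caD\text{-mod}_{U_\alpha}$.
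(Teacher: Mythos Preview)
Your proposal is correct and follows exactly the approach the paper indicates: the paper does not give a detailed proof but simply remarks ``Gluing these categories together by Theorem \ref{t:ifl-thm2}'' before stating the corollary, and your argument is a careful unpacking of precisely that gluing/descent construction. One small point worth noting is that since the paper assumes varieties are separated, the intersection $U_\alpha \cap U_\beta$ of two affine opens is already affine, so your parenthetical about refining is unnecessary (though harmless).
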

\begin{corollary} Given any embedding $i: X \into V$ of a variety $X$
  into a smooth variety $V$, there are canonical equivalences
$i_{\natural}: \DmodX \to \text{mod}_X-\caD_V$ and
$i^{\natural}: \text{mod}_X-\caD_V \to \DmodX$.
\end{corollary}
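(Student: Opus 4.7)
The strategy is to reduce to the affine case already handled and then glue via Theorem \ref{t:ifl-thm2}. First, I would cover $V$ by open affines $\{V_\alpha\}$. Since $i$ is a closed embedding, each $U_\alpha := i^{-1}(V_\alpha)$ is closed in $V_\alpha$, hence affine (being closed in a smooth affine variety), and $\{U_\alpha\}$ is an open affine cover of $X$. For each $\alpha$ the restricted embedding $i_\alpha: U_\alpha \into V_\alpha$ falls under the previously stated affine version of the corollary, producing mutually quasi-inverse equivalences $(i_\alpha)_\natural$ and $(i_\alpha)^\natural$ between $\caD\text{-}\text{mod}_{U_\alpha}$ and $\text{mod}_{U_\alpha}\text{-}\caD_{V_\alpha}$.

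Next, I would define $i_\natural$ and $i^\natural$ globally by the formulas \eqref{e:dirfl} and \eqref{e:infl}. These formulas are themselves purely sheaf-theoretic on $V$ (involving $\otimes_{\caD_Z}$, tensor over $i^\bullet\caO_V$, $\Hom_{i^\bullet\caO_V}$, and sheaf-theoretic direct and inverse image), so both functors commute with restriction to open subsets of $V$ and restrict on each $V_\alpha$ to $(i_\alpha)_\natural$ and $(i_\alpha)^\natural$ respectively. Thus $i_\natural M$, for $M \in \DmodX$, is unambiguously specified by the recipe $(i_\natural M)|_{V_\alpha} := (i_\alpha)_\natural(M|_{U_\alpha})$; the collection of local D-modules so defined automatically glues to a global quasi-coherent right $\caD_V$-module supported on $i(X)$, because the restriction maps $M|_{U_\alpha} \to M|_{U_\alpha \cap U_\beta}$ in the definition of $\DmodX$ (from the previous corollary) pull back, under the local equivalences, to the restriction maps on $V$.

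Finally, verifying that $i_\natural$ and $i^\natural$ are mutually quasi-inverse is a local question: it reduces on each $V_\alpha$ to the fact that $(i_\alpha)_\natural$ and $(i_\alpha)^\natural$ are already mutually quasi-inverse by the affine case. Canonicity — that the equivalence is independent of auxiliary choices such as the open affine cover of $V$ — follows from Theorem \ref{t:ifl-thm2}: passing between two affine refinements of covers is comparison data of precisely the sort handled there, and the cocycle compatibility on triple overlaps is the same one that was used to define $\DmodX$ in the previous corollary.

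The main obstacle is the bookkeeping for the cocycle condition on triple overlaps $V_\alpha \cap V_\beta \cap V_\gamma$: one needs to check that the descent data for $i_\natural M$ obtained from the three pairwise comparisons agree as maps of D-modules, not merely as maps of sheaves. This boils down to the uniqueness clause in Theorem \ref{t:ifl-thm2} (which asserts that the equivalences between categories of D-modules for different embeddings compose correctly around closed diagrams) applied inside the affine setting, combined with the fact that the gluing data for $\DmodX$ itself was constructed to satisfy this very cocycle.
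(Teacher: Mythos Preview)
The paper states this corollary without proof: the appendix explicitly says ``we recall without proofs one way to define the D-modules we need,'' and this corollary is one of several results listed as standard background. Your approach---covering $V$ by smooth affines $V_\alpha$, pulling back to an affine cover $\{U_\alpha\}$ of $X$, invoking the affine case on each piece, and gluing---is the expected argument and is correct in outline.

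One point deserves clarification. You write that you would ``define $i_\natural$ and $i^\natural$ globally by the formulas \eqref{e:dirfl} and \eqref{e:infl},'' noting that they involve $\otimes_{\caD_Z}$. But those formulas are stated in the paper only for a closed embedding $Z \hookrightarrow V$ of \emph{smooth} varieties; for singular $X$ there is no sheaf of rings $\caD_X$ on $X$ over which to tensor (the paper's Remark \ref{r:lrdmod} and the discussion in \S\ref{ss:dx} emphasize exactly this). So the global formulas do not literally make sense here. Fortunately your argument does not actually rely on them: you immediately switch to the correct definition $(i_\natural M)|_{V_\alpha} := (i_\alpha)_\natural(M|_{U_\alpha})$ and glue. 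On each affine piece the equivalence $(i_\alpha)_\natural$ is essentially tautological---by the first corollary, $\caD\text{-mod}_{U_\alpha}$ is \emph{defined} to be canonically equivalent to $\text{mod}_{U_\alpha}\text{-}\caD_{V_\alpha}$---so the substance is entirely in the gluing, which as you say is governed by the same cocycle data (from Theorem \ref{t:ifl-thm2}) already used to construct $\DmodX$ in the preceding corollary. I would simply drop the sentence about the global formulas and go straight to the local-and-glue construction.
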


% This allows us to define canonically the category $\DmodX$
% of quasi-coherent (right) D-modules on an arbitrary affine variety
% $X$, without fixing an embedding.  Namely, let $i: X \to V$ be a
% closed embedding into a smooth variety $V$ (such as affine space),
% %$i: X \to V$ into a smooth variety,
% and let $N$ be a quasi-coherent right D-module on $V$ supported
% on $i(X)$.  Denote by $i^{\natural} N$ the corresponding object of
% $\DmodX$; conversely, given any $M \in \DmodX$,
% we denote by $i_{\natural} M$ the corresponding quasi-coherent right
% D-module on $V$ supported on $i(X)$. 

% Then, given an arbitrary variety $X$, we can define a quasi-coherent (right)
% D-module $M$ on $X$ to be an assignment, to every open affine subset $U \subseteq X$, of a quasi-coherent right D-module $M_U$ on $U$, together with isomorphisms $M_{U_1}|_{U_1 \cap U_2} \cong M_{U_2}|_{U_1 \cap U_2}$ satisfying the cocycle condition on triples of intersections. 
\begin{remark}\label{r:lrdmod}
  On a singular variety $X$ we refer only to D-modules, without
  specifying right or left, for the following reason.
%, because it technically does not make sense
%  to speak of a right vs. left D-module. 
  The category
$\DmodX$ of D-modules on $X$ is abstractly defined only up to canonical equivalence, and in general does not identify with modules over any sheaf of rings on $X$ itself.  Since the categories of left and right D-modules on a smooth variety $V$ are themselves canonically equivalent (via the functors $M \leftrightarrow M \otimes_{\caO_V} \Omega_V$ for $M$ a left D-module and $M \otimes_{\caO_V} \Omega_V$ the corresponding right D-module), using either one yields 
the same definition of the category $\DmodX$. The objects of this canonical
category $\DmodX$ can be thought of equivalently either as local collections of left D-modules or of right D-modules on embeddings of open subsets of $X$ into smooth varieties.  
% Indeed,  a D-module is
%   an object of an abstract category defined up to canonical
%   equivalence, and in fact given an embedding $i: X \into V$ into a
%   smooth variety $V$ this category is canonically equivalent both to
%   the category of right D-modules on $V$ supported on $i(X)$ and to the
%   category of left D-modules on $V$ supported on $i(X)$. This is because,
%   on a smooth variety $V$, the categories of left and right
%   $\caD_V$-modules are canonically equivalent: given a left
%   $\caD_V$-module $M$, there is a canonical structure of right
%   $\caD_V$ module on $M \otimes_{\caO_V} \Omega_V$, and the
%   quasi-inverse functor is
%   $N \mapsto N \otimes_{\caO_V} \Omega_V^{-1}$.  Nonetheless, on
%   smooth varieties, we will always use right D-modules rather than
%   left ones, as this is convenient in the definition of the D-modules
%   $\caD_X$ and $M(X,\mfg)$ that we study.
\end{remark}
\subsection{The D-module $\caD_X$} \label{ss:dx}

  There is a global sections functor for D-modules on singular varieties which is defined as follows.
% for $X$ affine, if 
  In the case $X$ is equipped with a closed embedding $i: X \to V$
%  is a closed embedding with $V$ an affine space, then
  into a smooth variety $V$, then
  $\Gamma_{\caD}(X,M) := \Hom_{i^{\bullet}\caO_V}(\caO_X,
  i^{\bullet}i_\natural M)$,
  for $M$ a D-module on $X$ and $i_\natural M$ the associated right
  D-module on $V$ supported on $i(X)$.  This is the subspace
%naturally a subspace
  of the global sections of $i_\natural M$ as a sheaf on $V$,
  $\Gamma(X,M)$, which are scheme-theoretically supported on $i(X)$,
  i.e., locally annihilated by the ideal sheaf of $X$.
%$I_X \subseteq \caD(V)$ of $X$. 
  This produces an $\caO(X)$-module, and by Kashiwara's equivalence it
  does not depend on the choice of embedding. 

  For a general variety, we can define the global sections functor
  on %right
  D-modules by gluing the functor on affine varieties (which by
  definition embed into smooth varieties).  This is well-defined
  since, for $U_1 \subseteq U_2$ affine,
  $\Gamma_{\caD}(U_1, M|_{U_1}) = \Gamma_{\caD}(U_2, M|_{U_2})
  \otimes_{\caO(U_2)} \caO(U_1)$.
  We therefore obtain an $\caO(X)$-module, $\Gamma_{\caD}(X,M)$, for
  an arbitrary variety $X$. In the case $X$ embeds into a smooth
  variety, we recover the same answer as before (by restricting the
  embedding to affine subvarieties). We caution, however, that
  \emph{even when $X$ is affine, the global sections functor is not in
    general exact}.
  % (i.e., not necessarily exact when $X$ is affine).

% Then, for a general variety, we let
  % $\Gamma_{\mathcal{D}}(X,M) \subseteq \Gamma(X,M)$ be the global
  % sections of $M$ as a sheaf which, on open affine subsets
  % $U \subseteq X$, lie in $\Gamma_{\caD}(U,M)$. This coincides, for affine varieties, with the preceding definition since the condition of being annihilated by the ideal of a subvariety of affine spaceis local.

  Next, given any (not-necessarily smooth) variety $X$, there is a
  canonical quasi-coherent D-module, denoted by $\caD_X$, such that
  $\Hom(\caD_X, N) = \Gamma_{\caD}(X, N)$ for all D-modules $N$ on
  $X$, i.e., $\caD_X$ represents the functor of global sections.  It
  may be defined as follows. Given any open (affine) subset
  $U \subseteq X$ and closed embedding $i: U \to V$ into a smooth
  variety, let $\mathcal{I}_U$ be the ideal sheaf of $i(U)$.  Then we
  have the D-module $\mathcal{I}_U \cdot \caD_V \setminus \caD_V$
  supported on $i(U)$, and we may set
  $\caD_U := i^\natural(\mathcal{I}_U \cdot \caD_V \setminus \caD_V)$.
  One may check explicitly that the definition does not depend on the
  choice of closed embedding and that
  $\Hom(\caD_U, N) = \Gamma_{\caD}(U, N)$ for all D-modules $N$ on
  $U$. Moreover, the definition is local:
  $\caD_{U_1} = \caD_{U_2}|_{U_1}$ for $U_1 \subseteq U_2$.
  % for all open affine $U \subseteq X$. 
  We see therefore that these glue to a D-module, $\caD_X$, on $X$.
  (Note that for quasi-projective varieties one need not worry about
  gluing, taking $U=X$.)  Even when $X$ is affine, this D-module is
  not, in general, projective, which explains why the global sections
  functor is not, in general, exact.

  In the case that $X$ is smooth, we can use the identity embedding
  $X \to X$ and consider D-modules to be modules over the ring of
  differential operators. In this case $\caD_X$ identifies with the
  sheaf of differential operators on $X$ as a right module over
  itself. In general, though, $\caD_X$ is only given by an assignment
  to each open affine subset $U \subseteq X$ together with an
  embedding $U \to V$ into an affine space of a sheaf $\caD_U$ on $V$
  (which by Kashiwara's equivalence does not depend on the embedding);
  so we cannot compare $\caD_X$ with the sheaf of rings of
  differential operators on $X$, as the two are different types of
  objects.
% (and the resulting sheaves on $V$
%  depend on the embedding, although Kashiwara's equivalence relates
%  these sheaves).
  % But, in general, $\caD_X$ is not necessarily a sheaf (for every affine open subset $U \subseteq X$ and embedding $U \to V$ into an affine space $V$, $\caD_U$ can be considered as a sheaf on $V$, although this sheaf depends on the choice $V$; 
  % for a general nonsingular variety there need not exist an embedding into a smooth variety).  
% More importantly,
% the
%   category of (quasi-coherent) right D-modules on $X$ is not the
%   category of (quasi-coherent) right modules over any sheaf of rings.

If $X$ is affine, it is actually true, although nontrivial, that the
global sections $\Gamma_\caD(\caD_X) = \Hom(\caD_X,\caD_X)$ of
$\caD_X$ identify with Grothendieck's ring of differential operators
on $X$, but it is still not true that quasi-coherent D-modules
are the same as right modules over this ring. In fact, for general
singular affine varieties $X$, the category of D-modules on $X$
need not be equivalent to the category of modules over any ring.

\subsection{Holonomic D-modules}
\begin{definition}
  A coherent right D-module on a smooth affine variety $V$ is a
  finitely-generated quasi-coherent right $\caD(V)$-module.  A coherent
  D-module $M$ on an arbitrary affine variety $X$  is one such that,
for any (equivalently every) closed embedding $i: X \to V$ into a smooth affine variety, the corresponding D-module on $V$ supported on $i(X)$ is coherent.  A coherent D-module $M$ on an arbitrary variety $X$ is one such that the restriction of $M$ to every open affine subset $U \subseteq X$ is coherent.
% a smooth, not necessarily affine variety
%   is one whose sections on any open affine subset $U$ form a coherent
%   $\caD(U)$-module.  A coherent D-module on an arbitrary
%   variety $X$ is one such that, for every open affine subset $U \subseteq X$,
% any (equivalently all) closed embeddings
%   $i: X \to V$ into a smooth variety, the corresponding right
%   D-module on $V$ is coherent.
\end{definition}
Now recall that, if $V$ is a smooth affine variety, then the ring
$\caD(V)$ of differential operators is equipped with a filtration
$\caD(V) = \bigcup_{m \geq 0} \caD_{\leq m}(V)$ by order of operator
such that $\gr(\caD(V)) := \bigoplus_{m \geq 0} \caD_{\leq m}(V) /
\caD_{\leq (m-1)}(V)$ is identified with the algebra $\caO(T^* V)$ of
functions on the total space of the cotangent bundle $T^* V$ of $V$.
\begin{definition}
  Given a quasi-coherent right D-module $M$ on a smooth affine variety $V$, a
  good (nonnegative) filtration is an filtration $M_{\leq 0} \subseteq M_{\leq 1} \subseteq \cdots$ of subsets of $M$ which is exhaustive ($M = \bigcup_{m \geq 0} M_{\leq m}$), with $M_{\leq m} \caD_{\leq n} \subseteq M_{\leq n}$ for $m,n \geq 0$, and such that $\gr M = \bigoplus_{m \geq 0} M_{\leq m} / M_{\leq m-1}$ is a finitely-generated $\gr \caD(V) = \caO(T^*V)$-module (with $M_{\leq -1} := 0$).
%  $M = \bigcup_{m \in \bZ} M_{\leq m}$
%  with $M_{\leq m}$ quasi-coherent $\caO_X$-modules
%   such that $\caD_{\leq n} \cdot M_{\leq m} \subseteq M_{\leq n+m}$
%   for all $m,n$, and:
% \begin{itemize}
% \item[(i)] $M_{\leq m} = 0$ for sufficiently negative $m$;
% \item[(ii)] $M = \bigcup_m M_{\leq m}$ (the filtration is exhaustive);
% \item[(iii)] $M_{\leq m}$ is finitely-generated over $\caO(V)$ for all $m$;
% \item[(iv)] There exists $m_0$ such that $\caD_{\leq n} \cdot M_{\leq
%     m} = M_{\leq (n+m)}$ for all $n$ and all $m \geq m_0$ (the
%   filtration is stable).
%\end{itemize}
\end{definition}
% The above conditions are equivalent to the condition that $M$ be
% Hausdorff ($\bigcap M_{\leq m} = 0$) and exhaustive and satisfy the
% condition that $\gr M$ be finitely generated (see, e.g.,
% \cite[Proposition 2.1.1]{Hotta}).

As explained in, e.g., \cite[Theorem 2.13]{Hotta}, every coherent
right D-module $M$ admits a good filtration:
% one can set $M_{\leq
%  m} = 0$ for $m < 0$, 
let $M_{\leq 0}$ be any finite-dimensional subspace
which generates $M$, and set $M_{\leq m} := \caD(V)_{\leq m} M_{\leq 0}$ for all $m
> 0$.  Conversely, all quasi-coherent D-modules admitting good
filtrations are coherent.
\begin{definition}\label{d:ss}
  Given a coherent right D-module $M$ on a smooth affine variety $V$
  equipped with a good filtration, the characteristic variety, $\SiSu(M)$,
  is defined to be the set-theoretic support of $\gr M$ over $T^* V$.
\end{definition}
One can check that the characteristic variety does not depend on the choice
of good filtration (this follows from \cite[Theorem 2.13]{Hotta}).
This allows one to extend the notion of characteristic variety to the
not-necessarily affine case, since the characteristic variety is defined
locally. We conclude that every coherent right D-module on a
smooth variety has a well-defined characteristic variety.
\begin{definition}
  A (nonzero) holonomic right D-module on a smooth irreducible variety
  $V$ is a coherent right D-module whose characteristic variety has
  dimension equal to the dimension of $V$.
\end{definition}
In fact, the characteristic variety is well known to be a coisotropic
subvariety of $T^* V$ (by a theorem of Sato, Kawai, and Kashiwara \cite{SKK-mfpde}; see also \cite{Gab-icv}),
%Gabber \cite{Gab-icv}),
% ( the integrability of the characteristic variety),
so the (nonzero) D-module is
holonomic if and only if this is Lagrangian (and hence has the minimal
possible dimension). By convention, the zero module is also holonomic.

\subsection{Direct and inverse image}\label{ss:d-i-im}
Given a map $f: X \to Y$ of smooth varieties, we have natural functors
% $f^{!}: \text{mod}-\caD_Y \to \text{mod}-\caD_X$ and
% $f_{*}: \text{mod}-\caD_X \to \text{mod}-\caD_Y$.  These are defined
% by via any choices of closed embeddings $X \to V$ and $Y \to W$ into
% smooth varieties $V$ and $W$, and this does not depend on the choice
% of closed embedding by Kashiwara's theorem.  There is also a derived
% version of this statement. Let $D^b(\text{mod}-\caD_X)$ denote the
% bounded derived category of quasi-coherent right D-modules (we
% use the bounded category for concreteness, but one can alternatively
% use the bounded above or bounded below derived categories).  Then
% the above functors have derived versions,
$f^!: D^b(\text{mod}-\caD_Y) \to D^b(\text{mod}-\caD_X)$ and
$f_*: D^b(\text{mod}-\caD_X) \to D^b(\text{mod}-\caD_Y)$. These are
not, in general, the derived functors of any functors on abelian
categories. However, when $f$ is affine, $f_*$ is the derived functor
of a right exact functor, and when $f$ is an open embedding, $f_*$ is
the derived functor of a left exact functor (and in this case $f^!$ is
the exact restriction functor).  In particular, if $f$ is a closed
embedding, then $f_*$ is the derived functor of the exact functor
$f_\natural$ (now viewed as having target equal to all quasi-coherent
D-modules on $Y$).  Also, when $f$ is a closed embedding, $f^!$ is the
derived functor of a left exact functor (which is given by the same
definition as $f^\natural$, now defined on all $\caD$-modules on $Y$
rather than merely those set-theoretically supported on $f(X)$).
In particular, when $f$ is a closed embedding, $f_*$ coincides with
$f_\natural$ (more precisely, $f_\natural M$ is the cohomology of
$f_* M$), and $f^!$ coincides with $f^\natural$ on $\caD$-modules
supported on $f(X)$ (more precisely, $f^\natural M$ is the cohomology
of $f^! M$ if $M$ is supported on $f(X)$).  
%When $f$ is an open embedding, then $f^*$ is the
%exact restriction functor.
% when $f$ is a closed embedding, then $f_*$ 
% they are the
% derived functors of the exact functors $f^\natural$ and $f_\natural$.
%% (and evaluated on a
%% D-module one obtains a complex concentrated in degree zero).
% , concretely obtained by applying $f^{!0}$ and $f_{*0}$ to bounded
% complexes of flat (i.e., locally projective) $\caD_W$ and
% $\caD_V$-modules, respectively (in terms of the embeddings $X \to V$
% and $Y \to W$) above;
The definitions are:
\begin{equation}
\label{e:dirdir}
f_* M := Rf_\bullet(M \otimes^L_{\caD_X} \caD_{X \to Y}), \quad  \caD_{X \to Y} := \caO_X \otimes_{f^\bullet \caO_Y}
f^{\bullet} \caD_Y;
\end{equation}
\begin{equation}\label{e:imdir}
  f^!(M):=f^\bullet(M) \otimes^L_{f^\bullet \caD_Y} \caD_{Y
    \leftarrow X}[\dim X - \dim Y], \quad
  \caD_{Y \leftarrow X} := \Omega_X \otimes_{\caO_X} \caD_{X \to Y}
  \otimes_{f^\bullet\caO_Y} f^\bullet \Omega_Y^{-1}.
\end{equation}
See \cite[\S 1.3, 1.5]{Hotta}.
\begin{theorem}(e.g., \cite[Theorem 3.2.3]{Hotta}) \label{t:holonomic-push}
Let $f: X \to Y$ be
  a map of smooth varieties and $M$ and $N$ bounded complexes of
  quasi-coherent right D-modules on $X$ and $Y$ whose cohomology
  D-modules are holonomic.  Then $f^! N$ and $f_* M$ are
  bounded complexes whose cohomology D-modules are holonomic.
\end{theorem}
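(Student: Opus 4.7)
The plan is to follow the classical strategy of reducing to single holonomic D-modules, factoring the map through the graph, and then handling closed immersions and smooth projections separately. First I would observe that the class of bounded complexes with holonomic cohomology forms a triangulated subcategory of $D^b(\text{mod}-\caD)$, since holonomic modules are closed under kernels, cokernels, and extensions inside coherent D-modules. Because $f^!$ and $f_*$ are triangulated functors, standard devissage via truncation triangles reduces the problem to showing that $H^i f^! N$ and $H^i f_* M$ are holonomic when $M$ and $N$ are single holonomic modules placed in degree $0$. I then factor $f = p \circ \iota_f$ through the graph embedding $\iota_f : X \hookrightarrow X \times Y$ (a closed immersion of smooth varieties) and the projection $p : X \times Y \to Y$ (a smooth morphism). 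Since $f^! \cong \iota_f^! \circ p^!$ and $f_* \cong p_* \circ \iota_{f*}$, it suffices to treat these two building-block cases.

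For a closed immersion $\iota : Z \hookrightarrow W$ of smooth varieties, Kashiwara's equivalence (Theorem \ref{t:ifl-thm}) already identifies $\iota_*$ and $\iota^!$ (on objects supported on $\iota(Z)$) with exact functors between holonomic abelian categories. Preservation of holonomicity follows from the fact that the characteristic variety transforms via the Lagrangian correspondence $T^*Z \leftarrow W \times_Z T^*W \hookrightarrow T^*W$ given by the conormal bundle of $Z$ in $W$: if $\dim \SiSu(M) = \dim Z$, then $\SiSu(\iota_\natural M)$ is of dimension $\dim W$, which is exactly the Lagrangian bound. For a smooth projection $p$ of relative dimension $r$, the inverse image $p^! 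N$ is (up to a shift by $r$ and a twist by the relative canonical sheaf) just the underlying pullback $p^\bullet N$, and its characteristic variety is the preimage of $\SiSu(N)$ along the cotangent map $dp^*$, hence again Lagrangian. So the only remaining case is $p_*$.

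The main obstacle is preservation of holonomicity under $p_*$, because $p$ is not proper in general. My plan is to compactify: by Nagata-Hironaka, embed $X$ as an open subvariety of a smooth projective $\bar X$ whose complement is a divisor, and correspondingly factor $p$ as $\bar p \circ j$, where $j : X \times Y \hookrightarrow \bar X \times Y$ is an open embedding and $\bar p : \bar X \times Y \to Y$ is proper. For the proper factor, Kashiwara's estimate
\[
  \SiSu\bigl(H^i \bar p_* M \bigr) \;\subseteq\; \bar p_\circ \SiSu(M)
\]
under the Lagrangian correspondence $T^*(\bar X \times Y) \supseteq \bar p^*(T^*Y) \to T^*Y$ yields holonomicity by a direct dimension count (combined with Kashiwara's proper pushforward coherence theorem). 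The genuinely hard input is that $j_*$ preserves holonomicity for an open embedding into a smooth variety with divisor complement; this is the content of Bernstein's theorem, whose proof rests on the existence of a b-function (Bernstein-Sato polynomial) of a holonomic generator along the complementary divisor, which guarantees that $j_* N'$ is coherent for $N'$ holonomic, after which the characteristic variety estimate returns holonomicity. Granting this classical input and combining it with the proper case, $p_*$ preserves holonomic cohomology, completing the argument for $f_*$ and hence for the theorem.
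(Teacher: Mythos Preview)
The paper does not actually prove this theorem: it is stated in the appendix on background material with a citation to \cite[Theorem 3.2.3]{Hotta}, and the appendix explicitly begins by saying that results are recalled ``without proofs.'' So there is no proof in the paper to compare against; your outline is essentially the standard textbook argument (graph factorization, Kashiwara for closed immersions, compactification plus Bernstein's $b$-function theorem for the open part), which is exactly what the cited reference does.

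That said, there is one point in your write-up that is not quite clean. You invoke Kashiwara's equivalence to handle $\iota^!$ for a closed immersion, but you explicitly restrict this to ``objects supported on $\iota(Z)$.'' In the application, you need $\iota_f^!$ applied to $p^! N$, and $p^! N$ is certainly not supported on the graph of $f$ in $X \times Y$. Kashiwara's equivalence by itself says nothing about $\iota^!$ on modules with arbitrary support, and your Lagrangian-correspondence remark only controls the characteristic variety once you already know the cohomology sheaves of $\iota^! M$ are coherent, which is part of what must be proved. The standard fix is to use the localization triangle $\iota_* \iota^! M \to M \to j_* j^* M \to$ for the complementary open immersion $j$: once you have granted Bernstein's theorem for $j_*$, holonomicity of $\iota_* \iota^! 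M$ follows from the triangle, and then Kashiwara's equivalence transfers it to $\iota^! M$. You have all the ingredients for this; you just need to wire them together in the right order rather than treating the closed-immersion $\iota^!$ case as independently ``easy.''
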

% In particular, if $M$ and $N$ are holonomic right D-modules on $X$ and
% $Y$, respectively, then $f^{!} N$ and $f_{*} M$ are holonomic
% right D-modules on $Y$ and $X$, respectively.
\begin{corollary}
  Let $X$ be an arbitrary variety and $M$ a quasi-coherent 
  D-module on $X$.  Then, given two closed embeddings
  $i_1: X \to V_1$ and $i_2: X \to V_2$, $(i_1)_{\natural} M$ is
  holonomic if and only if $(i_2)_{\natural} M$ is.
\end{corollary}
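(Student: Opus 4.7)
The plan is to reduce, via Theorem \ref{t:ifl-thm2}, the comparison of $(i_1)_\natural M$ and $(i_2)_\natural M$ to a statement about a single closed embedding of smooth varieties, and then to apply Theorem \ref{t:holonomic-push}. Concretely, I would first produce (at least after passing to an affine open cover of $X$) a third smooth ambient variety $V_3$ together with closed embeddings $i_3 : X \to V_3$ and $i_{13} : V_1 \to V_3$, $i_{23} : V_2 \to V_3$ satisfying $i_{13} \circ i_1 = i_3 = i_{23} \circ i_2$. One can take $V_3 = V_1 \times V_2$ and $i_3 = (i_1, i_2)$; the embeddings $i_{13}, i_{23}$ can be constructed by choosing local extensions of $i_2 \circ i_1^{-1}$ and $i_1 \circ i_2^{-1}$ to $V_1$ and $V_2$ respectively, which exist by smoothness of the $V_j$ after suitable shrinking. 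Since holonomicity is a local condition on the characteristic variety, this local reduction suffices.

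Next, by the functoriality of $\natural$-pushforward under composition of closed embeddings of smooth varieties, one has
\[
(i_{13})_\natural (i_1)_\natural M \;\cong\; (i_3)_\natural M \;\cong\; (i_{23})_\natural (i_2)_\natural M
\]
as quasi-coherent right $\caD_{V_3}$-modules supported on $i_3(X)$. Thus the corollary reduces to the following assertion: for a closed embedding $j : W \into V$ of smooth varieties and a coherent right D-module $N$ on $W$ supported on a prescribed closed subset, $N$ is holonomic if and only if $j_\natural N$ is; applied with $j = i_{13}$ and $N = (i_1)_\natural M$, and then with $j = i_{23}$ and $N = (i_2)_\natural M$, this yields the claim.

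The final step invokes Theorem \ref{t:holonomic-push}. In one direction, $j$ being a closed embedding gives $j_* N = j_\natural N$ (concentrated in a single cohomological degree), so $j_\natural N$ is holonomic whenever $N$ is. Conversely, if $j_\natural N$ is holonomic, then $j^!(j_\natural N)$ is a bounded complex with holonomic cohomology by Theorem \ref{t:holonomic-push}; since $j_\natural N$ is supported on $j(W)$, Kashiwara's equivalence (Theorem \ref{t:ifl-thm}) identifies $j^\natural (j_\natural N) \cong N$ with the unique nonzero cohomology of $j^!(j_\natural N)$ (up to a shift), so $N$ is holonomic. Combining the three steps yields the corollary.

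The main technical obstacle I anticipate is the globalization of $V_3$: constructing a single $V_3$ everywhere on $X$ requires care, but this can be sidestepped by working on an affine open cover of $X$ (on which $V_1$ and $V_2$ may be shrunk to affine smooth pieces, and $V_3 = V_1 \times V_2$ does the job). This is legitimate precisely because the characteristic variety, and hence holonomicity, is defined locally on the ambient smooth variety, so holonomicity of $(i_j)_\natural M$ on any closed subset of $V_j$ can be tested on any open cover.
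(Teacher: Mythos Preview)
Your approach is correct and is precisely the one the paper has in mind: the corollary is placed immediately after Theorem~\ref{t:holonomic-push} with no separate proof, and the paper has already noted (just before the first corollary of Theorem~\ref{t:ifl-thm2}) that for affine $X$ one can always produce the common roof $V_3$. Your argument unwinds exactly this: pass to a common $V_3$, use functoriality of $(-)_\natural$ to reduce to a single closed embedding of smooth varieties, and invoke Theorem~\ref{t:holonomic-push} in both directions together with Kashiwara's equivalence.

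One small correction: your justification for the existence of the graph embeddings $i_{13}, i_{23}$ appeals to ``smoothness of the $V_j$,'' but smoothness plays no role here. What you need is that (after shrinking so that $V_1$ and $V_2$ are affine) the map $i_2: X \to V_2$ lifts to a morphism $V_1 \to V_2$; this holds simply because $\caO(V_1) \twoheadrightarrow \caO(X)$ is surjective and $V_2$ is affine, so the ring map $\caO(V_2) \to \caO(X)$ factors through $\caO(V_1)$. Then $i_{13} := (\mathrm{id}_{V_1}, \widetilde{i_2}): V_1 \to V_1 \times V_2$ is the graph of this lift, automatically a closed embedding. With this adjustment your proof is complete and matches the paper's intended (but unwritten) argument.
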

Therefore, we can make the following definition:
\begin{definition}
  A quasi-coherent D-module $M$ on an affine variety $X$ is called
  holonomic if, for any closed embedding $i: X \to V$ into a smooth
  affine variety, $i_\natural M$ is holonomic. A D-module $M$ on an
  arbitrary variety $X$ is called holonomic if, for every open affine
  subset $U \subseteq V$, the restriction $M|_U$ of $M$ to $U$ is
  holonomic.
\end{definition}
With this definition in place, Theorem \ref{t:holonomic-push}
immediately generalizes to arbitrary varieties.  Namely, if $X \to Y$
is an arbitrary map of varieties, then one obtains canonical functors
$f^!: \DbmodY \to \DbmodX$ and $f_*: \DbmodX \to \DbmodY$ preserving
holonomicity:
\begin{corollary}
  Let $f: X \to Y$ be an arbitrary map of varieties and $M$ and $N$
  bounded complexes of quasi-coherent  D-modules on $X$ and
  $Y$ whose cohomology D-modules are holonomic.  Then $f^! N$ and
  $f_* M$ are bounded complexes whose cohomology D-modules are
  holonomic.
\end{corollary}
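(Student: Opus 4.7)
The plan is to reduce the assertion to the smooth case (Theorem \ref{t:holonomic-push}) using Kashiwara's equivalence and a factorization of $f$ through a smooth ambient variety. Since holonomicity of a $\caD$-module on a (possibly singular) variety is defined locally---namely, a D-module is holonomic iff its restriction to each member of an open affine cover is holonomic---I would first reduce to the case where $X$ and $Y$ are affine. Concretely, choose open affine covers $\{U_\alpha\}$ of $Y$ and, for each $\alpha$, a finite affine cover $\{V_{\alpha\beta}\}$ of $f^{-1}(U_\alpha)$; then $f^!$ and $f_*$ are computed from the restrictions $f|_{V_{\alpha\beta}} : V_{\alpha\beta} \to U_\alpha$ via standard \v{C}ech-type glueing, and holonomicity is preserved by the resulting finite direct sums and mapping cones.

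In the affine case, pick closed embeddings $i: X \hookrightarrow V$ and $j: Y \hookrightarrow W$ into smooth affine varieties, and factor $f$ through its graph as
\[
X \;\xrightarrow{\,k\,}\; V \times W \;\xrightarrow{\,p_W\,}\; W,
\]
where $k := (i \times j) \circ \Gamma_f$ is a closed embedding, $p_W$ is the smooth projection, and $p_W \circ k = j \circ f$. Using this factorization, the functors for the (possibly singular) map $f$ can be expressed as
\[
f_* M \;\cong\; j^\natural\bigl((p_W)_*\, k_\natural M\bigr), \qquad f^! N \;\cong\; k^\natural\bigl(p_W^!\, j_\natural N\bigr),
\]
with $(p_W)_*$, $p_W^!$ the smooth-case functors of Section \ref{ss:d-i-im} and $k_\natural, k^\natural, j_\natural, j^\natural$ the Kashiwara functors. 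The set-theoretic inclusions $p_W(k(X)) = j(f(X)) \subseteq j(Y)$ and $k(X) \subseteq V \times j(Y) = p_W^{-1}(j(Y))$ ensure the inner expressions are supported on $j(Y)$ and $k(X)$ respectively, so the outer applications of Kashiwara's equivalence are well-defined.

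Granting these formulas, holonomicity preservation is immediate: $k_\natural M$ and $j_\natural N$ have holonomic cohomology iff $M$ and $N$ do (this is just the definition of holonomicity on singular varieties); $(p_W)_* k_\natural M$ and $p_W^! j_\natural N$ have holonomic cohomology by Theorem \ref{t:holonomic-push} applied to the smooth morphism $p_W$; and $j^\natural$, $k^\natural$, being Kashiwara equivalences, preserve holonomicity.

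The main technical obstacle is verifying that these definitions are well-defined, i.e.~independent of the embeddings $i, j$ and of the affine cover chosen in the reduction step. This reduces to the compatibility content of Theorem \ref{t:ifl-thm2}, comparing two embeddings through a common third ambient variety, together with the functoriality of smooth $(p_W)_*$ and $p_W^!$ under composition with closed embeddings; once this bookkeeping is confirmed, the corollary follows at once.
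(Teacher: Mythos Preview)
Your proposal is correct and is precisely the standard reduction to the smooth case via Kashiwara's equivalence that the paper has in mind; in fact the paper gives no proof at all, simply stating that Theorem \ref{t:holonomic-push} ``immediately generalizes to arbitrary varieties'' once the definition of holonomicity on singular varieties is in place. Your write-up supplies the details (affine reduction, graph factorization through $V \times W$, application of the smooth-case theorem to the projection $p_W$) that the paper elides, so there is nothing to correct.
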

Observe that, when $X$ is a point, a holonomic D-module is merely
a finite-dimensional vector space (since finite generation reduces to
finite-dimensionality, and the support condition is trivial). We
therefore deduce:
\begin{corollary}\label{c:fd-pfwd}
  If $M$ is a complex of quasi-coherent D-modules on a
  variety $X$ with holonomic cohomology and $\pi: X \to \pt$ is the
  projection, then $\pi_* M$ is a complex with finite-dimensional
  cohomology. In particular, if $M$ is a holonomic D-module, then
  $H^0\pi_{*} M$ is a finite-dimensional vector space.
\end{corollary}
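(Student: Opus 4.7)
The plan is to apply the preceding corollary (on preservation of holonomicity under pushforward along an arbitrary morphism) to the structure map $\pi\colon X \to \pt$, and then to observe that holonomic D-modules on a point are just finite-dimensional vector spaces.

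First, I would invoke the immediately preceding corollary with $f = \pi\colon X \to \pt$ and $Y = \pt$: given any bounded complex $M \in \DbmodX$ with holonomic cohomology, $\pi_* M$ is a bounded complex of D-modules on $\pt$ whose cohomology D-modules are holonomic. So the only remaining content is to identify holonomic D-modules on a point with finite-dimensional vector spaces.

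For this, I would unwind the definitions. Take the (necessarily closed) embedding $\pt \to \pt$, so that $\caD_\pt \cong \bk$ (the ring of differential operators on a point is just the base field, with trivial order filtration $\caD_{\leq m}(\pt) = \bk$ for all $m \geq 0$ and $\gr \caD(\pt) \cong \caO(T^*\pt) = \bk$). A quasi-coherent right D-module on $\pt$ is thus just a $\bk$-vector space. Coherence means finite generation over $\bk$, i.e., finite-dimensionality. The characteristic variety of any coherent D-module on $\pt$ is automatically a subset of the zero-dimensional variety $T^*\pt = \pt$, so it has dimension $0 = \dim \pt$, and hence every coherent D-module on $\pt$ is holonomic. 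Consequently, holonomic D-modules on $\pt$ are precisely the finite-dimensional $\bk$-vector spaces.

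Combining these, each cohomology $H^i(\pi_* M)$ is finite-dimensional, and since $\pi_* M$ is a bounded complex, only finitely many such groups are nonzero; this is the first assertion. For the second, if $M$ itself is a holonomic D-module (placed in degree zero), then $\pi_* M$ is a bounded complex with holonomic (hence finite-dimensional) cohomology, and in particular its degree-zero cohomology $H^0 \pi_* M$ is a finite-dimensional vector space. I do not anticipate any serious obstacle: the argument is entirely a matter of assembling the preceding results, with the only subtlety being the routine identification of holonomic D-modules on a point with finite-dimensional vector spaces via Kashiwara's equivalence for the trivial embedding $\pt \hookrightarrow \pt$.
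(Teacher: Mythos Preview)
Your proposal is correct and matches the paper's argument exactly: the paper simply observes that a holonomic D-module on a point is a finite-dimensional vector space (finite generation over $\bk$ gives finite-dimensionality, and the characteristic-variety condition is trivial), and then invokes the preceding corollary on preservation of holonomicity under pushforward. Your write-up spells out a bit more detail, but the approach is identical.
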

% Similarly, if $i: \pt \to X$ is the inclusion of any point, then if
% $M$ is a complex with holonomic cohomology D-modules, then $i^!
% M$ is a complex with finite-dimensional cohomology; in particular, if $M$ is a holonomic D-module, then $H^0 i^! M$ is finite-dimensional.
%, and if $M$ is a
%holonomic D-module, then $i^{\natural} M$ is finite-dimensional.

\bibliographystyle{amsalpha}
\bibliography{../../research/bibtex/master}

\end{document}